\documentclass[12pt,a4paper]{amsart}
\usepackage[a4paper]{geometry}
\usepackage[utf8]{inputenc}
\usepackage{mathtools}
\usepackage{amssymb}
\usepackage{amsthm}
\usepackage{amsmath}
\usepackage{mathrsfs}
\usepackage[pagewise,mathlines]{lineno}
\usepackage{array}
\usepackage{booktabs}
\usepackage{multirow}
\usepackage{hyperref}
\usepackage[
type={CC},
modifier={by},
version={4.0},
]{doclicense}
\usepackage{bbm}
\usepackage[backend=biber,style=numeric,maxbibnames=10
]{biblatex}
\usepackage{comment}
\usepackage{enumitem}
\usepackage{xcolor}
\usepackage{mathtools}
\mathtoolsset{showonlyrefs}

\makeatletter
\newcommand{\namedlabel}[2]{%
	\phantomsection
	#1\def\@currentlabel{#1}\label{#2}%
}
\makeatother

\newtheorem{theorem}{Theorem}[section]
\newtheorem*{theorem*}{Theorem}
\newtheorem{corollary}[theorem]{Corollary}
\newtheorem{lemma}[theorem]{Lemma}
\newtheorem{prop}[theorem]{Proposition}
\newtheorem{definition}[theorem]{Definition}
\newtheorem{remark}[theorem]{Remark}
\newtheorem*{remark*}{Remark}

\numberwithin{equation}{section}

\newcommand{\ulu}{\boldsymbol{u}}
\newcommand{\ulv}{\boldsymbol{v}}
\newcommand{\ulg}{\boldsymbol{g}}
\newcommand{\ulpsi}{\boldsymbol{\psi}}
\newcommand{\ulphi}{\boldsymbol{\varphi}}

\newcommand{\ulf}{\boldsymbol{f}}
\newcommand{\ulzero}{\boldsymbol{0}}
\newcommand{\ulw}{\boldsymbol{w}}
\newcommand*\diff{\mathop{}\!\mathrm{d}}
\newcommand{\twoasts}{{2^*_s}}
\newcommand{\twoastsone}{{2^*_1}}
\newcommand{\Clip}[1]{L_{#1}}
\newcommand{\Nloc}{\mathscr{N}_{s,\mathrm{loc}}}
\newcommand{\Nlocnu}{\mathscr{N}_{s,\nu,\mathrm{loc}}}

\newcommand{\Nglob}{\mathscr{N}_{s,\mathrm{glob}}}

\newcommand{\NStri}{\mathscr{N}_{s,\mathrm{Stri}}}
\newcommand{\psnu}{p_{s,\nu}}
\newcommand{\thetasnu}{\theta_{s,\nu}}

\title{Existence of Solutions for time-dependent fractional Kohn-Sham Equations}

\author[S. Breteaux]{S{\'e}bastien Breteaux}
\address[S. Breteaux]{Universit{\'e} de Lorraine, CNRS, IECL, F-57000 Metz, France}
\email{sebastien.breteaux@univ-lorraine.fr}

\author[M. Fantechi]{Michele Fantechi}
\address[M. Fantechi]{Universit{\'e} de Lorraine, CNRS, IECL, F-57000 Metz, France}
\email{michele.fantechi@univ-lorraine.fr}

\author[J. Faupin]{J{\'e}r{\'e}my Faupin}
\address[J. Faupin]{Universit{\'e} de Lorraine, CNRS, IECL, F-57000 Metz, France}
\email{jeremy.faupin@univ-lorraine.fr}

\begin{document}
	\maketitle
	
	\begin{abstract}
		We consider time-dependent Kohn-Sham equations in dimension $3$ with a fractional dispersion relation $(1-\Delta)^s$, $s\in(0,\frac32)$, and a class of interaction terms including, in particular, external potentials, internal potentials associated to Hartree-type non-linearities, and exchange terms described by energy subcritical pure-power non-linearities. We prove the local existence of weak solutions in $H^s$ using an approximation procedure regularizing the non-linearities. Assuming that the interaction energies can be controlled by the kinetic energy, we show that the solutions can be extended to global solutions using energy estimates. If $s\in[1,\frac32)$, we establish in addition the well-posedness of the time-dependent Kohn-Sham equations using Strichartz estimates.
	\end{abstract}
	
	\section{Introduction}
	
	Density Functional Theory (DFT) is the most successful computational tool used in solid state physics, quantum chemistry, and beyond to understand large atoms, molecules, solids, and even large proteins \cite{engel2011density}. The energy of a system of $N$ electrons with interaction potential $w$ and external potential $V$ is typically given by an atomic Hamiltonian of the following type
	\begin{equation}\label{eq:N-body-Ham}
		H_N = \sum_{j=1}^N \Big( -\frac{\hbar^2}{2m} \Delta_{x_j} + V(x_j) \Big) + \sum_{1\leq j_1 < j_2\leq N} w(x_{j_1}-x_{j_2})\, ,
	\end{equation}
	where $\hbar$ is Planck's constant divided by $2\pi$, $m$ is the electron mass and $x_j$ stands for the position of the $j^{\text{th}}$ electron.  The electronic orbitals are computed through an $N$-body wave function~$ \Psi_N \in L^2_{\mathrm{asym}}(\mathbb{R}^{3N})$, where $\mathrm{asym}$ denotes the antisymmetric elements, which enforces Pauli’s principle. Note that any numerical approach to the problem faces an explosion of complexity due to the scaling for a large number of electrons $N$ and the rich structure of quantum many-body correlations; its practical use is therefore severely constrained to systems of limited size.
	
	The advantage of DFT lies in a reduction in the computational cost of determining the electronic orbitals of molecules by substituting the main object of study from the wave functions of each single electron to the (one-body) electronic density 
	\begin{equation}\label{eq:def_density}
		\rho_{\Psi_N}(x)=\int_{\mathbb{R}^{3(N-1)}}|\Psi_N(x,x_2,\cdots,x_N)|^2\diff x_2\cdots\diff x_N.
	\end{equation}
	This comes at the cost of finding a way to express the atomic Hamiltonian’s energy by an alternative energy functional that depends only on the electronic density.
	
	The first instance of a theory of this type was the Thomas–Fermi model \cite{enrico1927metodo,Thomas_1927}, which, however, fails to predict the binding of atoms \cite{Teller1962}. The turning point of DFT was the celebrated Hohenberg–Kohn Theorem \cite{HohenbergKohn}, which, at a physics level of rigor, provides in principle a way to construct an effective potential, dependent only on the electronic density, that reproduces the correct ground state of a many-body system of electrons. This allows for the computation of the ground state and the excited states by employing, in principle, only the electronic density.
	
	Further contributions by Kohn and Sham then provided a practical way to compute the electronic orbitals through the Kohn–Sham equations, which describe a fictional system of non-interacting electrons (consisting of a Slater determinant $ \frac{1}{\sqrt{N!}} \det \psi_j(x_k) $) minimizing the density functional, thereby attaining the correct solution. A rigorous mathematical formulation of the DFT energy functional was given by Levy and Lieb \cite{Levy1979,lieb1983density}.
	
	The fundamental idea is to consider the infimum of the energy $H_N$ as a function of the external potential $V$, expressed as the sum of two contributions
	\begin{equation}\label{eq:minimization_problem}
		\Big( \inf_{\| \Psi_N \|_2 =1} \langle \Psi_N, H_N \Psi_N \rangle \Big) (V) = \inf_{\substack{\sqrt{\rho} \in H^1 \\ \int\rho = 1}} \Big( \mathscr{E}_{\mathrm{LL}}(\rho) + \int_{\mathbb{R}^3} V(x) \rho(x) \diff  x  \Big)\, ,
	\end{equation}
	where $H^1$ is the usual Sobolev space and $ \mathscr{E}_{\mathrm{LL}} $ is the Levy–Lieb functional:	
	\begin{multline}
		\mathscr{E}_{\mathrm{LL}} (\rho) = \inf_{\| \Psi_N \|=1,\, \rho_{\Psi_N} = \rho}  \int_{\mathbb{R}^{3N}} \Big(\sum_{j=1}^N \frac{\hbar^2}{2m} | \nabla_{x_j} \Psi_N(x_1,\dots,x_N)|^2 \\
		+ \sum_{1\leq j_1<j_2\leq N} w(x_{j_1}-x_{j_2}) | \Psi_N(x_1,\dots,x_N)|^2 \Big)\diff x_1\cdots\diff x_N\, ,
	\end{multline}
	and $ \rho_{\Psi_N} $ denotes the density of the wave function $\Psi_N$ as in \eqref{eq:def_density}. Formally, the statement of the Hohenberg–Kohn Theorem can be summarized as the fact that the map $V \mapsto \rho(V) $, where~$\rho(V)$ is the electronic density of the minimizer of the right-hand side of \eqref{eq:minimization_problem}, is one-to-one up to a constant shift of the potential $V$.
	
	The energy functional for the Kohn–Sham equations, where we minimize over Slater determinants $ \Xi = \frac{1}{\sqrt{N!}} \det \psi_j(x_k)$, can be divided into different contributions,
	\begin{equation}
		\mathscr{K}(\rho) = \inf_{\substack{\psi_1, \dots, \psi_N \in H^1 \\ \rho_\Xi = \rho }} \int_{\mathbb{R}^3} \sum_{j=1}^N \frac{\hbar^2}{2m} | \nabla \psi_j (x) |^2 \diff  x
	\end{equation}
	being the kinetic contribution, and
	\begin{equation}
		\int_{\mathbb{R}^3} V(x) \rho_\Xi(x) \diff  x + \frac{1}{2} \int_{\mathbb{R}^3 \times \mathbb{R}^3 } w(x-y) \rho_\Xi(x) \rho_\Xi(y) \diff  x\diff  y
	\end{equation}
	forming, respectively, the external potential and Hartree (uncorrelated or classical energy) contributions to the energy, that can be computed exactly through the density. The last contribution is formed by the exchange–correlation term of the energy, which cannot be expressed exactly through the density and is defined as
	\begin{equation}
		\mathscr{G}_{\mathrm{xc}}(\rho) = \mathscr{E}_{\mathrm{LL}}(\rho) - \mathscr{K}(\rho) - \frac{1}{2} \int_{\mathbb{R}^3 \times \mathbb{R}^3 } w(x-y) \rho(x) \rho(y) \diff  x\diff  y\, .
	\end{equation}
	The exchange–correlation part of the energy in its general form is not known, and in practice, a large number of approximations are used instead. The basic approximation used in this context is the Local Density Approximation (LDA), where the exchange–correlation energy is approximated by a function of the electronic density
	\begin{equation}
		\mathscr{G}_{\mathrm{xc}}(\rho) \approx \int_{\mathbb{R}^3} g_{xc}(\rho(x)) \diff  x\, .
	\end{equation}
	The LDA is expected to hold under the assumption that the density varies slowly \cite{engel2011density}.
	The most basic approximation of the exchange term was computed formally in the original paper by Kohn and Sham \cite{kohnsham1965}, from the expansion of the exchange term of the uniform electron gas, in the case of a Coulomb interaction potential. The obtained non-linear term is a pure power of the electronic density,
	\begin{equation}\label{eq:exchange_power}
		\mathscr{G}_{\mathrm{xc}}(\rho) \approx \mu \int_{\mathbb{R}^3} \rho^{4/3}(x) \diff  x,
	\end{equation}
	for some $\mu \in \mathbb{R}$. Treating approximations of the exchange–correlation functional, including this case, is one of the motivations of the present work. This approximation is usually corrected further by terms dependent on the gradient of the density $ \nabla_x \rho(x) $, as well as higher-order terms, in practical applications of quantum chemistry. Approximations of the correlation term of the energy can be expressed in the LDA in high- or low-density regimes, yielding a contribution~$ g_{\mathrm{c}}(\rho(x)) = c \cdot \rho(x) $ at leading order \cite[Section 4.3]{engel2011density}. A rigorous mathematical analysis of the LDA arising as an approximation of the energy using the uniform electron gas was initiated in a series of works by Lewin, Lieb, and Seiringer \cite{lewin2018statistical,lewin2019local}. A comprehensive review of the mathematical results on DFT is given by the same authors in \cite{lewin2022universal}.
	
	The mathematical problem of existence, uniqueness, and regularity of solutions of the equations arising in the dynamical version of DFT, the time-dependent density functional theory (TDDFT), is the focus of the present paper. A dynamical analogue of the Hohenberg–Kohn Theorem is known in physics as the Runge–Gross Theorem and the van Leeuwen construction \cite{RungeGross1984,Leeuwen1999}. In a similar manner, these results construct a solution to the evolution of a molecule by constructing an effective time-dependent potential that is a function of the time-dependent electronic density only \cite[Section 7.1]{engel2011density}. A practical approach to the time-dependent problem is given by the dynamical version of the Kohn–Sham equations -- called the time-dependent Kohn–Sham equations -- in which the evolution is determined by the Euler–Lagrange equations derived from the density energy functional. The mathematical analysis of the derivation of the time-dependent Kohn-Sham equations is still in its early stages, and a number of problems have been identified \cite{Schrimer2025,FournaisetAl2016}.
	
	We consider here time-dependent Kohn–Sham equations as a starting point and do not address the problem of a rigorous derivation of the equations from first principles. In particular, we focus on the dynamical version of the LDA, the Adiabatic Local Density Approximation (ALDA), in which the additional assumption is that the effective potential arising in the density functional is time-adiabatic. In other words, we assume that the effective terms of the functional depend only on the time-dependent electronic density \cite[Section 7.3]{engel2011density}. On the other hand, we consider a generalized ``fractional'' dispersion relation $\omega(-i\nabla)=(1-\Delta)^s$ with $s\in(0,\frac{d}{2})$ (we work in dimension $d=3$ to simplify the exposition but our analysis straightforwardly extend to any dimension $d\ge3$), including both the usual non-relativistic dispersion relation (up to a trivial gauge transformation) $\omega(-i\nabla)=-\Delta$ as in \eqref{eq:N-body-Ham}, and the pseudo-relativistic dispersion relation $\omega(-i\nabla)=(1-\Delta)^{\frac12}$.

	\subsection{The Time-Dependent Fractional Kohn-Sham Equations}
	
	Let $\gamma$  be the $1$-particle reduced density matrix of the many-body wavefunction provided by a Slater determinant for any number of fictitious electrons,
	\begin{equation}\label{def:density_matrix}
		\gamma(t) = \sum_{k \geq 0} | \psi_k(t) \rangle \langle \psi_k(t) | , \;\ \;\ \psi_k (t) \in L^2(\mathbb{R}^3),
	\end{equation}
	and let  
	\begin{equation}\label{def:density}
		\rho_{\ulpsi(t)}(x) := \sum_{k \geq 0} | \psi_{k}(t,x) |^2 ,
	\end{equation}
	be the associated one body electronic density. The effective, dynamical, fractional Kohn-Sham equations associated with $(\psi_k(t,x))_k$, derived from a Density Functional Theory in the Adiabatic Local Density Approximation, as described in the introduction, usually read
	\begin{equation}
		\label{eq:Kohn_Sham}
		\begin{dcases}
			i \partial_t \psi_{k}(t,x) = \big((1- \Delta)^s  + \mu \rho_{\ulpsi(t)}(x)^\alpha +(w*\rho_{\ulpsi(t)})(x) +V(x)\big) \psi_{k}(t,x) \, , \quad \, \forall k \in \mathbb{N}\, ,\\
			\psi_k(0,x) = \varphi_k(x)\,,\, \quad\forall k \in \mathbb{N},
		\end{dcases}
	\end{equation}
	with $\mu\in\mathbb{R}$ a constant, $\alpha>0$ ``not too large'' in a sense we will precise later, $w$ an internal interaction potential as in the Hartree equation, and $V$ and external potential. Some regularity will be required on the initial states $(\varphi_k(0,x))_k$, see below.
	
	As eluded to above, the kinetic term in \eqref{eq:Kohn_Sham} is given by the generalized dispersion relation~$ \omega(-i \nabla) = (1-\Delta)^{s} $ (with $s\in(0,\frac{3}{2})$, recalling that we work in dimension $d=3$), including both the non-relativistic case $s=1$, with the Laplacian kinetic term (upon a simple gauge transformation to absorb $1$ in a global phase term) and the pseudo-relativistic case $s=\frac12$, with the kinetic term $ \langle \nabla \rangle =\sqrt{1-\Delta}$. The restriction to $s \in (0,\tfrac{3}{2})$ is a matter of convenience. In fact a similar analysis could be extended to the case $s \geq \frac{3}{2}$ taking care of modifying our assumptions to account for the estimates available beyond the critical Sobolev embedding.
	
	We in fact consider generalized time-dependent fractional Kohn-Sham equations of the form
	\begin{equation}
		\label{eq:Kohn_Sham2}
		\begin{dcases}
			i \partial_t \psi_{k}(t,x) = (1- \Delta)^s\psi_{k}(t,x)  + g_k\big((\psi_k(t))_k\big)(x)  \, , \quad\, \forall k \in \mathbb{N}\, ,\\
			\psi_k(0,x) = \varphi_k(x)\,,\, \quad \forall k \in \mathbb{N},
		\end{dcases}
	\end{equation}
	with the abstract ``non-linearities'' $g_k$ satisfying suitable assumptions that will be stated below. We restrict ourselves to the ALDA and consider functionals which depend only on the density. As remarked in the introduction, the non-linear terms of the density functional are not all known explicitly due the presence of the exchange-correlation functional. We thus make an effort to keep the model reasonably general. Recall that the explicitly known terms of the Levy-Lieb functional include the last two terms appearing on the right-hand side of \eqref{eq:Kohn_Sham}, namely the external potential contribution
	\begin{equation}\label{eq:NL-V}
		V(x) \, \psi_{k}(t,x),
	\end{equation}
	which is actually linear, and does not depend on the density $\rho_{\ulpsi}$, and the classical correlation term
	\begin{equation}\label{eq:NL-w}
		(w*\rho_{\ulpsi(t)})(x) \, \psi_{k}(t,x),
	\end{equation}
	also called the Hartree term.  We want to cover in addition local pure power non-linearities as in \eqref{eq:Kohn_Sham}, of the type 
	\begin{equation}\label{eq:NL-alpha}
		\mu \,\rho_{\ulpsi(t)}(x)^\alpha \, \psi_{k}(t,x),
	\end{equation}
	as proposed originally by Kohn and Sham, see \eqref{eq:exchange_power}.
	
	We will prove that our assumptions on the abstract non-linearities $g_k$ in \eqref{eq:Kohn_Sham2} are satisfied for sums of non-linearities given as in \eqref{eq:NL-V}, \eqref{eq:NL-w} or \eqref{eq:NL-alpha}, provided that $V$ and $w$ belong to suitables sums of $L^p$ spaces, and that the pure power non-linearity parameter $\alpha$ belongs to $[0 , \frac{2s}{3-2s} ) $ (which includes the case $\alpha=\frac13$ for $s\in[\frac38,\frac32)$). We will only consider non-linearities that can be controlled by the kinetic energy.
	
	We will assume that the initial condition is such that $(\|\varphi_k\|_{L^2})_k \in \ell^2 $ (we do \emph{not} require that $\|\varphi_k\|_{L^2}=1$) and we will show that the conservation law $ \| \psi_k(t) \|_{L^2} = \| \varphi_k \|_{L^2} $ holds for all integer $k$. Let us already observe that Eq.~\eqref{eq:Kohn_Sham2} has some formally conserved quantities, such as an analogue of the trace norm for the Hartree-Fock equation
	\begin{equation}
		\sum_{k \geq 0} \| \psi_k(t) \|^2_{L^2} 
		= \mathrm{Tr}(\gamma(t))\,,
	\end{equation}
	and an energy  
	\begin{equation}
		\frac{1}{2} \Big(\sum_{k \geq 0} \| \psi_k(t) \|_{H^s}^2\Big) + \mathscr{G}\big((\psi_k(t))_k\big),
	\end{equation}
	for some functional $\mathscr{G}$ that will be defined below. These conservation laws will play an important role both in our proof of the existence of local solutions to \eqref{eq:Kohn_Sham2} through the well-posedness of a regularized problem and for the extension of local solutions to global ones.

	\subsection{Notations}
	
	Before stating our assumptions, we introduce some notations that we will use throughout this text. 
	
	The symbol $\mathbb{N}$ stands for set of natural numbers, including $0$. The positive and negative parts of a real-valued function $f$ are defined by $f^{\pm}(x)=\max\{0,\pm f(x)\}$. 
	
	The open ball centered at $a\in X$ and of radius $r>0$ in a normed vector space $X$ is denoted by~$B_X(a,r)$. 
	
	When we consider a Banach space $B$ as a topological space endowed with the norm topology, we just write~$B$, but if we consider $B$ endowed with the weak topology, we write $B_{\mathrm{weak}}$. For any topological spaces~$X_1$ and~$X_2$, the space of continuous functions from~$X_1$ to~$X_2$ is denoted by $\mathscr{C}(X_1;X_2)$.  We will also consider, when~$X_1$ is an open subset of~$\mathbb{R}^m$ and~$X_2$ is a normed vector space, the space~$\mathscr{C}^k(X_1;X_2)$, $k\in\mathbb{N}\cup\{\infty\}$, of functions of class $\mathscr{C}^k$ from~$X_1$ to~$X_2$,  the subspace $\mathscr{C}^k_{\mathrm{c}}(X_1;X_2)$ of~$\mathscr{C}^k(X_1;X_2)$ of functions with compact support, and the space~$\mathscr{C}^{0,\alpha}(X_1;X_2)$, $\alpha>0$, of H\"older continuous functions of order $\alpha$ from $X_1$ to $X_2$. 
	
	The usual $\mathbb{C}$-valued Sobolev spaces on $\mathbb{R}^3$ are written as $H^s=H^s(\mathbb{R}^3;\mathbb{C})$ with $s \in \mathbb{R}$, while the Lebesgue spaces are denoted by $L^p=L^p(\mathbb{R}^3;\mathbb{C})$ with $p \in \left[1,\infty \right]$. The notation $L^{p^+}+L^\infty$ means
	\begin{equation*}
		L^{p^+}+L^\infty=\bigcup_{\varepsilon\in(0,\infty)}\big(L^{p+\varepsilon}+L^\infty\big),
	\end{equation*}
	and $\mathcal{M}_0$ denotes the space of finite, signed Radon measures over $\mathbb{R}^3$ endowed with the total variation norm.
	
	Likewise, for any real interval $I$ and Banach space $B$, the usual Lebesgue spaces are denoted by~$L^p(I;B)$, for~$p\in[1,\infty]$, and the Lebesgue-Sobolev spaces are denoted by~$W^{k,p}(I;B)$, with~$k\in\mathbb{N}$, $p\in[1,\infty]$. 
	
	Our normalization of the Fourier transform is the unitary version
	\begin{equation}
		\mathcal{F}(f)(\xi) \coloneq \hat{f}(\xi) = \frac{1}{(2 \pi)^{3/2}} \int_{\mathbb{R}^3}  e^{-i \xi \cdot x} f(x) \diff x.
	\end{equation}
	
	For sequences depending on the index $k \in \mathbb{N}$, that is corresponding to the different components of \eqref{eq:Kohn_Sham}, we will use bold letters. For example: 
	$\ulpsi=(\psi_k)_{k\in\mathbb{N}} $, $\ulphi=(\varphi_k)_{k\in\mathbb{N}}$, or more generally, any sequence depending on the parameter $k$ will be written with a bold font if we do not write the index $k$. For example the non-linearities in \eqref{eq:Kohn_Sham2} will also be denoted by
	\begin{equation}\label{eq:def-interaction}
		\boldsymbol{g}(\boldsymbol{\psi})=(g_{k}(\boldsymbol{\psi}))_{k\in\mathbb{N}} .
	\end{equation}
	Likewise, an operator $A$ acting on each element of the sequence will also be denoted by $A \ulu $, that is $A \ulu = (A u_k)_{k\in\mathbb{N}} $.
	Equation \eqref{eq:Kohn_Sham2} can be written handily using these notations:
	\begin{equation}\label{eq:Kohn-Sham-vectorised}
		\begin{dcases}
			i \partial_t \ulpsi(t) = (1-\Delta)^s \ulpsi(t) + \ulg (\ulpsi(t))  \\
			\ulpsi(0,x)=\ulphi(x) \,.
		\end{dcases}
	\end{equation}
	The typical non-linearities given by \eqref{eq:NL-V}, \eqref{eq:NL-w} and \eqref{eq:NL-alpha} can also be rewritten using these notations as
	\begin{equation}
		\ulg_V (\ulpsi) := V\,\ulpsi, \quad    \ulg_{\mathrm{H}} (\ulpsi) := \big(w \ast \rho_{\ulpsi}  \big) \, \ulpsi,\quad    \ulg_{\mathrm{KS}}(\ulpsi) := \mu\, \rho_{\ulpsi}^{\alpha}\, \ulpsi.
	\end{equation}
	
	For any Banach space $B$,   $\ell^2 (B)$ is the Banach space of sequences $ \ulu $ in $B^\mathbb{N}$ such that the norm
	\begin{equation}
		\|  \ulu  \|_{	\ell^2(B)} \vcentcolon = \sqrt{ \sum_{k \geq 0}  \| u_k \|_{B}^2 }
	\end{equation}
	is finite.
	In particular we will consider the spaces $ \ell^2 (L^p) $, $ \ell^2 (L^2\cap L^p) $, $ \ell^2 (L^2+ L^p) $, 
	and $ \ell^2 (H^s) $, 
	where $ \ell^2 (H^s) $ is a Hilbert space for the inner product:
	\begin{equation}
		\langle \ulu  , \ulv  \rangle_{\ell^2 (H^s) } \vcentcolon = \sum_{k \geq 0}  \langle u_k, v_k \rangle_{H^s} .
	\end{equation}
	The usual Sobolev embeddings in dimension $ d=3$ can be easily lifted to the previous norms. Namely, considering $s\in(0,\frac32)$, $\nu \in \left[2, \twoasts \right) $, where $$\twoasts=\frac{6}{3-2s},$$ we have
	\begin{equation}
		\ell^2(H^{s}) \xhookrightarrow{} \ell^2(L^2\cap L^\nu), \quad \ell^2(L^2+ L^{\nu^\prime}) \xhookrightarrow{}  \ell^2(H^{-s}),
	\end{equation}
	that is, the following inequalities hold (as they follow from the usual Sobolev inequalities):
	\begin{align}
		\label{eq:sobolev_embedding_plus}
		\| \ulpsi \|_{ \ell^2(L^2\cap L^\nu)} &\leq  C_{s} \| \ulpsi \|_{ \ell^2(H^{s})}, \\
		\label{eq:sobolev_embedding_minus}
		\| \ulpsi \|_{ \ell^2(H^{-s})} &\leq  C_{s} \| \ulpsi \|_{ \ell^2(L^2+ L^{\nu^\prime})},
	\end{align}
	for some positive constant $C_s$. Here and it what follows, $\nu'$ denotes the conjugate exponent of~$\nu$, namely $\frac1\nu+\frac{1}{\nu'}=1$.
	
	\begin{remark}
		Note that the energy norm $ \| \cdot \|_{\ell^2(H^s)}$ is nothing more that the usual trace norm for the density matrix $\gamma$ from \eqref{def:density_matrix} with a weight, that is
		\begin{equation}
			\| \ulpsi \|_{\ell^2(H^s)}^2 = \mathrm{Tr}( (1-\Delta)^{\frac{s}{2}} \gamma (1-\Delta)^{\frac{s}{2}}  ) .
		\end{equation}
		In this work we use the formulation $ \| \ulpsi \|_{\ell^2(H^s)}^2 $  as it is more compatible with the embeddings needed for the non-linear terms we consider. 
	\end{remark}
	
	For $\nu\in[2,\twoasts)$, we set
	\begin{equation}\label{eq:psnu}
		\psnu=\frac{\nu(\twoasts -2)}{2(\twoasts -\nu)}\quad\text{and}\quad\psnu'=\frac{\nu(\twoasts -2)}{\twoasts (\nu-2)},
	\end{equation}
	which satisfy $1/\psnu+1/\psnu'=1$.
	We also set
	\begin{equation}\label{eq:eta-thetasnu}
		\eta_{s,\nu}(x)=x^{1/\psnu'}+1 \quad\text{and}\quad \thetasnu (x)=x^{1/\psnu}+x\,.
	\end{equation}

	\subsection{Assumptions}
	
	We now specify the classes of non-linearities $\ulg(\ulpsi)$ we will consider in this text. The functional~$\ulg$ will be defined as a map $\ulg: \ell^2(L^2\cap L^\nu) \to \ell^2(L^2+L^{\nu'})$, for some $\nu\in[2,\twoasts)$, satisfying some of the following conditions: 
	\begin{itemize}
		\item[\namedlabel{(N1)}{assump1}] The first condition is the vanishing of $\ulg$ at $\ulzero$:
		\begin{equation}
			\ulg(\ulzero)=\ulzero\,; 
		\end{equation}
		\item[\namedlabel{(N2)}{ass2:nonlinearity_Lp_bound}]  The second condition is slightly weaker than a Lipschitz condition:
		\begin{multline}
			\forall M>0\,,\,\exists \Clip{M}>0\,, \forall \ulu,\ulv\in B_{\ell^2(H^s)}(\ulzero,M)\,,\\
			\| \ulg(\ulu) - \ulg (\ulv) \|_{\ell^2(L^2+L^{\nu^\prime})}  \leq \Clip{M} \, \| \ulu -\ulv \|_{\ell^2(L^2\cap L^{\nu})}\,;
		\end{multline}
		\item[\namedlabel{(N2')}{ass2stri:nonlinearity_Lp_bound}] The second condition is also considered in a stronger sense: There exist a set $J$ such that $\mathrm{card}(J)<\infty$ and real numbers $\nu_j\in[2,\min\{\twoasts,6\})$, $j\in J$, satisfying  
		\begin{align}
			& \ulg=\sum_{j\in J}\ulg_j \Big|_{\ell^2(L^2 \cap L^{\nu})}  \text{ where } \nu = \max_{j\in J} \nu_j\,, \quad \ulg_j \in \mathscr{C}\big(\ell^2(L^{\nu_j});\ell^2(L^{\nu'_j})\big)\,, 
		\end{align}
		and
		\begin{multline}
			\forall M>0\,,\,\exists \Clip{M}>0\,, \forall j\in J\,,\,\forall \ulu,\ulv\in B_{\ell^2(H^s)}(\ulzero,M)\,,\\
			\| \ulg_j(\ulu) - \ulg_j(\ulv) \|_{\ell^2(L^{\nu_j^\prime})}  \leq \Clip{M} \, \| \ulu -\ulv \|_{\ell^2( L^{\nu_j})}\,;
		\end{multline}
		\item[\namedlabel{(N3)}{ass3:nonlinear_energy_derivative}] The third condition states the existence of a function $\mathscr{G}$, called a non-linearity energy, whose Gateaux derivative is $\ulg$:
		\begin{align}
			\exists\, \mathscr{G} & : \ell^2(L^2\cap L^{\nu}) \to \mathbb{R} \text{ such that } \forall \ulu,\ulv \in \ell^2(L^2\cap L^{\nu})\,, \nonumber \\
			\frac{\diff}{\diff \tau}\mathscr{G}(\ulu+\tau \ulv) \Big|_{\tau=0} 
			&= \langle \ulg (\ulu), \ulv \rangle_{ \ell^2(L^2+L^{\nu'}),\ell^2(L^2\cap L^{\nu})}+\langle  \ulv,\ulg (\ulu) \rangle_{\ell^2(L^2\cap L^{\nu}), \ell^2(L^2+L^{\nu'})} \nonumber\\
			&=\vcentcolon \mathrm{Re}\langle \ulg (\ulu), \ulv \rangle_{ \ell^2(L^2+L^{\nu'}),\ell^2(L^2\cap L^{\nu})}\,; 
		\end{align}
		\item[\namedlabel{(N4)}{ass4:nonlinearity_global_estimate}] The fourth condition states that the negative part of the non-linearity energy $\mathscr{G}$ can be controlled by the energy and mass norms:
		\begin{align}
			\exists \, a\in(0,1),\, D:(0,\infty)\to(0,\infty) \text{ locally bounded, such that } \\
			\forall \ulu \in \ell^2(H^s)\,,\quad 
			\mathscr{G}^{-}(\ulu) \leq \frac{a}{2} \| \ulu \|_{\ell^2(H^s)}^2 + D( \| \ulu \|_{\ell^2(L^2)}) .
		\end{align}
	\end{itemize}
	
	
	\begin{remark}\label{rmk:N2prime_implies_N2}
		Note that the condition \ref{ass2stri:nonlinearity_Lp_bound} implies the condition \ref{ass2:nonlinearity_Lp_bound} because, for $\nu_j \in [2,\nu]$
		\begin{equation}
			\|\ulf\|_{\ell^2(L^2+L^{\nu'})} \lesssim \|\ulf\|_{\ell^2(L^{\nu_j'})} \quad \text{and}\quad \|\ulu\|_{\ell^2( L^{\nu_j})} \lesssim \|\ulu\|_{\ell^2(L^2\cap L^\nu)}\,.
		\end{equation}
	\end{remark}
	For $s\in(0,\frac32)$, we introduce the class
	\begin{align*}
		\Nloc =   \bigcup_{\nu \in \left[2, \twoasts \right)} \Nlocnu,
	\end{align*}
	where
	\begin{align*}
		\Nlocnu =  \left\{\ulg\in\mathscr{C}\big(\ell^2(L^2\cap L^{\nu});\ell^2(L^2+L^{\nu'})\big)
		\mid
		\ulg \text{ satisfies \ref{assump1}, \ref{ass2:nonlinearity_Lp_bound} and  \ref{ass3:nonlinear_energy_derivative}}\right\},
	\end{align*}
	and two distinct sub-classes of $\Nloc$:
	\begin{align}
		\Nglob = & \Big\{\ulg\in\Nloc \mid \ulg \text{ satisfies \ref{ass4:nonlinearity_global_estimate}}  \Big\}\,, \\
		\NStri = & \Big\{ \ulg \in \Nloc \mid \ulg \text{ satisfies \ref{ass2stri:nonlinearity_Lp_bound}} \Big\} .
	\end{align}
	
	We will suppose that $\ulg\in\Nloc$ throughout the text, in particular to prove the local existence of solutions to \eqref{eq:Kohn-Sham-vectorised} (see Theorem \ref{thm:local_existence} below), while $\ulg\in\Nglob$ will only be used to prove the global existence (see Theorem \ref{thm:global_existence}) and $\ulg \in \NStri $ will be assumed for $s \in [1,\frac32) $ to prove the uniqueness and well-posedness of solutions using Strichartz estimates (see Theorem \ref{thm:well_posedness}). Assuming both $ \ulg\in\Nglob $ and~$ \ulg \in \NStri $ yields the global well-posedness result Corollary \ref{cor:global_wellposedness}.

	In Section \ref{sec:examples} below, we will prove that the typical non-linearities given by \eqref{eq:NL-V}, \eqref{eq:NL-w} and~\eqref{eq:NL-alpha} belong to $\Nloc$, $\NStri$ or $\Nglob$ under suitable conditions on the external and internal potentials $V$ and $w$, and on the exponent $\alpha$ of the pure power non-linearity, as given in the following Table \ref{tab:non-linearities_classes}. Note that in the table, the class $\NStri$ is considered only in the relevant case $s \in [1,\tfrac{3}{2})$. 
	
	\begin{table}[ht]
		\begin{center}
			\begin{tabular}{@{}lcccc@{}}
				\toprule
				& & \multicolumn{3}{c}{Examples}\\ \cmidrule(r){3-5}
				\multicolumn{2}{l}{Class of} & Power & Hartree & External\tabularnewline
				\multicolumn{2}{l}{non-linearities} & $\rho^\alpha_{\ulpsi} \, \ulpsi $ & $(w * \rho_{\ulpsi})\,\ulpsi $  & potential $V \ulpsi $\\
				& $s\in$ & $\alpha\in$ & $w\in$ & $V\in$\\
				\cmidrule(r){1-2} 
				\cmidrule(r){3-5}
				$\Nloc$ & $\left(0,\dfrac{3}{4}\right]$ & $\left[0,\dfrac{2s}{3-2s}\right)$ & $L^{\frac{3^{+}}{4s}}+L^{\infty}$ & $L^{\frac{3^{+}}{2s}}+L^{\infty}$\\ \addlinespace
				& $\left(\dfrac{3}{4},\dfrac{3}{2}\right)$ &  $\left[0,\dfrac{2s}{3-2s}\right)$ & $\mathcal{M}_{0}+L^{\infty}$ & $L^{\frac{3^{+}}{2s}}+L^{\infty}$\\ \addlinespace
				$\Nglob$ & $\left(0,\dfrac{3}{2}\right)$ & $\left[0,\dfrac{2s}{3}\right)$ & $L^{\frac{3^{+}}{2s}}+L^{\infty}$ & $L^{\frac{3^{+}}{2s}}+L^{\infty}$\\ \addlinespace
				$\NStri$ & $\left[1,\dfrac{3}{2}\right)$ & $[0,2)$ & $\mathcal{M}_{0}+L^{\infty}$ & $L^{\frac{3^{+}}{2s}}+L^{\infty}$\\
				\bottomrule
			\end{tabular}
		\end{center}
		\caption{Examples of parameters yielding ``non-linearities'' in the $\mathscr{N}_{s,x}$ classes, depending on~$s$. Note that actually the conditions for $\Nglob$ are only relevant for $\mu \rho_{\ulpsi}^{\alpha}$ with $\mu<0$ and $w^{-}$ while the ``non-negative'' parts only need to satisfy the conditions to yield non-linearities in $\Nloc$.}
		\label{tab:non-linearities_classes}
	\end{table}
	The example non-linearities treated by our results are summarized in the next proposition. The proof is obtained by the results in Section \ref{sec:examples} and Remark \ref{rmk:N2prime_implies_N2}.
	
	\begin{prop}
		Let $s\in (0,\frac{3}{2})$. If
		\begin{enumerate}[label=(\roman*)]
			\item\label{range_alpha_loc} $\mu_1,\,\mu_2\in\mathbb{R}$ and $\alpha_1,\,\alpha_2\in \left[0,\frac{2s}{3-2s}\right)$, and
			\item\label{range_w_loc} $w$ is real and even, with $w\in L^{\frac{3^+}{4s}}+L^\infty$ when~$s\leq \frac{3}{4}$, or 
			$w\in \mathcal{M}_0+L^\infty$ when~$s> \frac{3}{4}$, and
			\item\label{range_V_loc} $V$ is real and such that $V\in L^{\frac{3^+}{2s}}+L^\infty$,
		\end{enumerate}
		and the term $\ulg$ is defined through
		\begin{equation}
			\ulg(\ulpsi) = \mu_1 \rho_{\ulpsi}^{\alpha_1} \, \ulpsi + \mu_2 \rho_{\ulpsi}^{\alpha_2} \, \ulpsi + (w*\rho_{\ulpsi}) \, \ulpsi + V\ulpsi\,,
		\end{equation}
		then $\ulg\in\Nloc$.
		
		If \ref{range_alpha_loc}, \ref{range_w_loc}, \ref{range_V_loc}, ($\mu_{j}>0$ or $\alpha_j<\frac{2s}{3}$, $j=1,2$) and $w^{-} \in L^{\frac{3^+}{2s}}+L^\infty$
		then $\ulg\in\Nglob$.
		
		Similarly if \ref{range_alpha_loc}, \ref{range_w_loc}, \ref{range_V_loc}, $s\geq 1$ and $\alpha_j<2$, $j=1,2$, then $\ulg\in\NStri$.
		
	\end{prop}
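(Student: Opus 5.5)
The proof goes in two stages. First, I show that each of the three classes $\Nloc$, $\Nglob$, $\NStri$ is stable under finite sums. Then I check the four elementary non-linearities $\mu\rho_{\ulpsi}^{\alpha}\ulpsi$, $(w*\rho_{\ulpsi})\ulpsi$ and $V\ulpsi$ one at a time; each gets its own exponent $\nu$ and energy $\mathscr{G}$.

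\emph{Stability under sums.} If $\ulg_1\in\mathscr{N}_{s,\nu_1,\mathrm{loc}}$ and $\ulg_2\in\mathscr{N}_{s,\nu_2,\mathrm{loc}}$, take $\nu=\max(\nu_1,\nu_2)$. Since $\nu_i\in[2,\nu]$, the embeddings of Remark \ref{rmk:N2prime_implies_N2} show that both maps belong to $\mathscr{C}(\ell^2(L^2\cap L^\nu);\ell^2(L^2+L^{\nu'}))$. Conditions \ref{assump1}, \ref{ass2:nonlinearity_Lp_bound} and \ref{ass3:nonlinear_energy_derivative} are additive, with $\mathscr{G}=\mathscr{G}_1+\mathscr{G}_2$. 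For \ref{ass2stri:nonlinearity_Lp_bound} I take the union of the index sets $J$. For \ref{ass4:nonlinearity_global_estimate} I use $(\mathscr{G}_1+\mathscr{G}_2)^-\le \mathscr{G}_1^-+\mathscr{G}_2^-$, which requires the sum of the two constants $a$ to stay below $1$. So I will prove \ref{ass4:nonlinearity_global_estimate} for each piece with an \emph{arbitrary} $a>0$, which the subcritical bounds below allow via Young's inequality. Terms with nonnegative energy ($\mu_j>0$, $w^+$, $V^+$) have $\mathscr{G}^-=0$, which is why only the negative parts need the stronger assumptions.

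\emph{Power term.} Put $F(z)=|z|_{\ell^2}^{2\alpha}z$ on $\ell^2(\mathbb{N};\mathbb{C})$, so that $\ulg_{\mathrm{KS}}(\ulpsi)(x)=\mu F(\ulpsi(x))$ and $\mathscr{G}=\frac{\mu}{\alpha+1}\int\rho_{\ulpsi}^{\alpha+1}$. The case $\alpha=0$ is linear and trivial. For $\alpha>0$, $F$ is $C^1$ and satisfies the pointwise estimate
\begin{equation}
|F(\ulu(x))-F(\ulv(x))|_{\ell^2}\lesssim(\rho_{\ulu}^\alpha+\rho_{\ulv}^\alpha)(x)\,|\ulu(x)-\ulv(x)|_{\ell^2}.
\end{equation}
To pass from this to sequence norms I use Minkowski's inequality twice: $\|\ulf\|_{\ell^2(L^{p})}\le\||\ulf|_{\ell^2}\|_{L^p}$ for $p\le2$, and $\||\ulu|_{\ell^2}\|_{L^q}\le\|\ulu\|_{\ell^2(L^q)}$ for $q\ge2$. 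Choose $\nu=2\alpha+2$; then $\nu<\twoasts$ exactly when $\alpha<\frac{2s}{3-2s}$. Hölder's inequality with $\frac1{\nu'}=\frac{2\alpha}{\nu}+\frac1\nu$, together with \eqref{eq:sobolev_embedding_plus}, gives \ref{ass2:nonlinearity_Lp_bound} with $\Clip{M}\sim M^{2\alpha}$. Continuity follows by the same argument, and the Gateaux derivative in \ref{ass3:nonlinear_energy_derivative} is obtained by differentiating $|z|^{2\alpha+2}$ pointwise and applying dominated convergence. For \ref{ass2stri:nonlinearity_Lp_bound} I use the same single index $\nu_j=2\alpha+2$; the constraint $\nu_j<6\le\twoasts$ (for $s\ge1$) is equivalent to $\alpha<2$.

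For \ref{ass4:nonlinearity_global_estimate} with $\mu<0$, I bound $\|\rho\|_{L^{\alpha+1}}\le\sum_k\|u_k\|_{L^{2\alpha+2}}^2$. I then apply the Gagliardo--Nirenberg inequality $\|u_k\|_{L^{2\alpha+2}}\lesssim\|u_k\|_{L^2}^{1-\theta}\|u_k\|_{H^s}^{\theta}$ with $\theta=\frac{3\alpha}{2s(\alpha+1)}$, followed by Hölder's inequality in $k$. This yields
\begin{equation}
\int\rho^{\alpha+1}\lesssim\|\ulu\|_{\ell^2(L^2)}^{2(1-\theta)(\alpha+1)}\|\ulu\|_{\ell^2(H^s)}^{2\theta(\alpha+1)}.
\end{equation}
The exponent $2\theta(\alpha+1)=3\alpha/s$ is $<2$ if and only if $\alpha<\frac{2s}{3}$, and Young's inequality then gives any $a>0$.

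\emph{Hartree and external terms.} Here $\mathscr{G}_{\mathrm H}=\frac12\iint w(x-y)\rho(x)\rho(y)$, where evenness of $w$ gives the factor structure of \ref{ass3:nonlinear_energy_derivative}, and $\mathscr{G}_V=\int V\rho$. For the Hartree term I write
\begin{equation}
\ulg_{\mathrm H}(\ulu)-\ulg_{\mathrm H}(\ulv)=(w*\rho_{\ulu})(\ulu-\ulv)+\big(w*(\rho_{\ulu}-\rho_{\ulv})\big)\ulv,
\end{equation}
and control $\rho_{\ulu}-\rho_{\ulv}$ through Cauchy--Schwarz in $k$. The $L^\infty$ parts of $w$ and $V$ are handled with $\nu=2$.
\begin{itemize}
\item If $w\in\mathcal{M}_0$ and $s>\frac34$: take $\nu=4<\min(\twoasts,6)$. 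Then $\rho\in L^2$, hence $w*\rho\in L^2$, and $(w*\rho)\psi\in L^{4/3}=L^{\nu'}$. This covers both $\Nloc$ and $\NStri$.
\item If $w\in L^p$ with $p>\frac3{4s}$: use Young's inequality with $\nu$ chosen slightly below $\twoasts$.
\item For $V\in L^p$ with $p>\frac3{2s}$: Hölder's inequality with $\frac1{\nu'}=\frac1p+\frac1\nu$, so $\nu=2p'<\twoasts$.
\end{itemize}
For \ref{ass4:nonlinearity_global_estimate} applied to $V^-$ and $w^-$ in $L^{\frac{3^+}{2s}}+L^\infty$, I run the same Gagliardo--Nirenberg argument as for the power term, which gives a sub-quadratic power of $\|\ulu\|_{\ell^2(H^s)}$.

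\emph{Main obstacle.} The delicate part is the bookkeeping between the norms $\ell^2(L^p)$ and the density $\rho$. One needs the correct direction of Minkowski's inequality on each side, and uniform constants $\Clip{M}$ on balls of $\ell^2(H^s)$. The borderline exponents in Table \ref{tab:non-linearities_classes} have to be matched exactly by the choice of $\nu$.
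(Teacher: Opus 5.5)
\paragraph{Overall.} Your route is essentially the paper's. Both verify each term separately by Minkowski, H\"older and Young, and both get (N4) from interpolation followed by Young's inequality. The paper packages the (N4) step as a general proposition (a growth bound on $\ulg$ integrated along rays via (N3)), whereas you estimate each energy directly; these are equivalent. Your explicit stability-under-sums argument, which takes an arbitrarily small $a$ for each piece, spells out something the paper leaves implicit. The $\Nloc$ and $\Nglob$ parts of your proposal are fine.

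\paragraph{The gap: the third assertion for the external potential when $s\in(1,\frac32)$.} For $V\in L^p$ you take $\nu=2p'$ and check only $\nu<\twoasts$. But (N2') requires $\nu_j<\min\{\twoasts,6\}=6$. Now $2p'<6$ holds iff $p>\frac32$, while (iii) only gives $p>\frac{3}{2s}$, which is below $\frac32$ when $s>1$. You did check $\nu_j<6$ for the power and Hartree terms, but not for $V$.

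\paragraph{This cannot be repaired.} No other splitting helps, because the claim itself is false. Take $V=|x|^{-2}\mathbf{1}_{\{|x|<1\}}$, which lies in $L^{\frac{3}{2s}+\varepsilon}$ for $0<\varepsilon<\frac32-\frac{3}{2s}$. Suppose $\ulg_V=\sum_j\ulg_j$ as in (N2'), with $\nu=\max_j\nu_j<6$.
\begin{itemize}
\item Apply the Lipschitz bounds with $\ulv=\ulzero$ and $\ulu=(f,0,0,\dots)$, $f\in H^s$. By linearity and homogeneity this gives $\|Vf\|_{L^2+L^{\nu'}}\lesssim\|f\|_{L^2\cap L^\nu}$ for all $f\in H^s$.
\item Since $L^2\subset L^{\nu'}$ on $B_1$, this implies $\|Vf\|_{L^{\nu'}(B_1)}\lesssim\|f\|_{L^2\cap L^\nu}$. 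Density and Fatou's lemma extend this to all $f\in L^2\cap L^\nu$.
\item Now take $f=|x|^{-\gamma}\mathbf{1}_{\{|x|<1\}}$ with $\max\{0,1-\frac3\nu\}\le\gamma<\frac3\nu$. This range is nonempty exactly because $\nu<6$.
\item Then $f\in L^2\cap L^\nu$, but $(2+\gamma)\nu'\ge 3$, so $Vf\notin L^{\nu'}(B_1)$. This is a contradiction.
\end{itemize}

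\paragraph{Consequence.} The paper's proposition on the external potential has the same oversight: its exponent $\nu(\varepsilon)$ tends to $\twoasts>6$ as $\varepsilon\to0$. For $V$, the $\NStri$ claim is correct as stated only at $s=1$, where (iii) reads $V\in L^{\frac32^+}+L^\infty$. For $s\in(1,\frac32)$ one must assume $V\in L^{\frac32^+}+L^\infty$. Under that assumption your H\"older argument goes through with $\nu=2p'<6$.
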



	\subsection{Main results}
	
	The notion of solutions we consider in our results is the following:
	
	\begin{definition}[Weak and strong solutions to the Kohn-Sham equations]\label{def:solutions}Let $s\in(0,\frac{3}{2})$ and~$ \ulg \in \Nloc$.
		\begin{itemize}
			\item A weak solution in $\ell^2(H^s)$ to the Kohn-Sham equations \eqref{eq:Kohn-Sham-vectorised} on an open time interval $I $ containging $0$ with initial condition $ \ulphi \in \ell^2(H^s) $ is a sequence $ \ulpsi \in L^\infty(I; \ell^2(H^s)) \cap W^{1,\infty}(I; \ell^2(H^{-s}))$ such that $ \ulpsi(0) = \ulphi $ and
			\begin{equation}
				i \partial_t \ulpsi(t) = (1-\Delta)^s \ulpsi(t) + \ulg(\ulpsi(t)) \;\ \text{ in } \ell^2(H^{-s}),
			\end{equation}
			for almost every $t \in I$, 
			or equivalently, 
			\begin{equation}
				\ulpsi(t) = e^{-it (1-\Delta)^s} \ulphi - i \int_{0}^{t}  e^{-i(t-\tau)(1-\Delta)^s} \ulg(\ulpsi(\tau)) \diff  \tau \;\ \text{ in } \ell^2(H^{-s}) ,
			\end{equation}
			for almost every $t \in I$.
			\item For $\sigma\ge0$ (and assuming that $\ulg$ is well-defined on $\ell^2(H^\sigma)$), a strong solution in $\ell^2(H^\sigma)$ to the Kohn-Sham equations \eqref{eq:Kohn-Sham-vectorised} on an open time interval $I $ containing $0$ is a sequence~$ \ulpsi$ in the space~$\mathscr{C}(I; \ell^2(H^\sigma)) \cap \mathscr{C}^1(I; \ell^2(H^{\sigma-2s}))$ such that $ \ulpsi(0) = \ulphi $ and
			\begin{equation}
				i \partial_t \ulpsi(t) = (1-\Delta)^s \ulpsi(t) + \ulg(\ulpsi(t)) \;\ \text{ in } \ell^2(H^{\sigma-2s}) ,
			\end{equation}
			for all $t\in I$. 
		\end{itemize}
	\end{definition}

	We can now state our results on the existence of weak solutions in $ \ell^2(H^{s}) $ for the Kohn-Sham equations \eqref{eq:Kohn-Sham-vectorised}. 
	
	
	\begin{theorem}\label{thm:local_existence}
		Let $s\in(0,\frac32)$. Suppose that $\ulg\in\Nloc$ and consider $M>0$. For any $ \ulphi$  in the closed ball $\bar{B}_{\ell^2(H^{s})}(0,M)$ there exists $T(M) >0 $ such that the Kohn-Sham equations \eqref{eq:Kohn-Sham-vectorised} with initial condition~$ \ulphi  $ have a local weak solution $\ulpsi$ in $\ell^2(H^s)$ on the time interval $I(M)=\big(-T(M),T(M)\big)$. This solution satisfies in addition 
		\begin{equation}
			\ulpsi \in \mathscr{C}\big(I(M);\ell^2(H^{s})_{\mathrm{weak}}\big) \cap \bigcap_{r \in [2,\twoasts ) } \mathscr{C}^{0,1/(2p_{s,r})}(I(M);\ell^2(L^r)) .
		\end{equation}
		Moreover,
		\begin{enumerate}[label=\roman*)]
			\item \label{it:cons-l2L2} The conservation of the mass of each particle, $\| \varphi_k \|_{L^2} = \| \psi_k(t) \|_{L^2} $, holds for all $k \in \mathbb{N}$ and every $ t \in I(M)$;
			\item \label{it:kin-bound} The kinetic energy bound $ \| \ulpsi \|_{L^{\infty}(I(M) ; \ell^2(H^{s}))}  \leq 2 M $ holds;
			\item \label{it:en-ineq} The energy inequality  $ \ \mathcal{E}(\ulpsi(t)) \leq \mathcal{E}(\ulphi ) $ holds for almost every  $t \in I(M)$.
		\end{enumerate}
	\end{theorem}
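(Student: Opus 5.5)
The plan is to follow the approach announced in the abstract: regularize the non-linearity, solve the regularized problem globally by a fixed point using its conservation laws, obtain bounds uniform in the regularization parameter, and pass to the limit by compactness. Here the energy is $\mathcal{E}(\ulu)=\frac12\|\ulu\|_{\ell^2(H^s)}^2+\mathscr{G}(\ulu)$.

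\emph{Regularization.} Fix a smoothing operator $J_n=(1-\Delta/n)^{-m}$ (or a Fourier cutoff $\chi(-\Delta/n)$) with $m$ large. Set $\ulg_n(\ulu)=J_n\ulg(J_n\ulu)$, which has energy $\mathscr{G}_n(\ulu)=\mathscr{G}(J_n\ulu)$. This preserves \ref{assump1}--\ref{ass3:nonlinear_energy_derivative} uniformly in $n$, since $J_n$ is a contraction on every $\ell^2(H^\sigma)$ and on $\ell^2(L^p)$. Because $J_n$ maps $\ell^2(L^2+L^{\nu'})$ into $\ell^2(H^s)$ with an $n$-dependent bound, and by \ref{ass2:nonlinearity_Lp_bound} with \eqref{eq:sobolev_embedding_plus}, $\ulg_n$ is locally Lipschitz on $\ell^2(H^s)$. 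A Duhamel fixed point in $\mathscr{C}(I_n;\ell^2(H^s))$ then yields a unique strong solution $\ulpsi_n$ on a maximal interval.

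For this solution, pairing the equation with $\psi_{n,k}$ and $\partial_t\ulpsi_n$ gives conservation of each $\|\psi_{n,k}\|_{L^2}$ and of $\mathcal{E}_n(\ulpsi_n)=\frac12\|\ulpsi_n\|^2_{\ell^2(H^s)}+\mathscr{G}_n(\ulpsi_n)$. For mass conservation one needs $\mathrm{Re}\langle g_k(\ulu),u_k\rangle$-type symmetry. I would check this from the gauge invariance implicit in \ref{ass3:nonlinear_energy_derivative}. If it is not derivable from the abstract assumptions, I would derive it by testing with $\tau\mapsto\mathscr{G}(e^{i\tau P_k}\ulu)$, with $P_k$ the projection on the $k$-th component, or else check it on the examples. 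This gap is a point to verify.

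\emph{Uniform time of existence.} Without \ref{ass4:nonlinearity_global_estimate} energy conservation alone gives no bound, so I would instead use a bootstrap with \ref{ass2:nonlinearity_Lp_bound}. As long as $\|\ulpsi_n(t)\|_{\ell^2(H^s)}\le 2M$, we have $|\mathscr{G}(\ulu)-\mathscr{G}(\ulv)|\le C_M\|\ulu-\ulv\|_{\ell^2(L^2\cap L^\nu)}$. Interpolating $L^\nu$ between $L^2$ and $L^{\twoasts}$ with exponent related to $\psnu$, and using $\|\ulpsi_n(t)-\ulpsi_n(0)\|_{\ell^2(H^{-s})}\le C_M|t|$ from the equation, gives
\[
\|\ulpsi_n(t)-\ulpsi_n(0)\|_{\ell^2(L^2\cap L^\nu)}\le C_M|t|^{1/(2\psnu)}
\]
(interpolating $H^0$ between $H^{-s}$ and $H^s$, then $L^\nu$). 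Hence $|\mathscr{G}_n(\ulpsi_n(t))-\mathscr{G}_n(\ulphi)|\le C_M|t|^{1/(2\psnu)}$. Energy conservation then gives $\frac12\|\ulpsi_n(t)\|^2\le\frac12M^2+C_M|t|^{1/(2\psnu)}<2M^2$ for $|t|<T(M)$, independent of $n$. This closes the bootstrap and proves \ref{it:kin-bound} uniformly. The same interpolation gives the uniform Hölder bounds in $\ell^2(L^r)$, and the equation gives uniform $W^{1,\infty}(I;\ell^2(H^{-s}))$ bounds.

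\emph{Passage to the limit, the main obstacle.} Weak-$*$ compactness gives $\ulpsi_n\rightharpoonup\ulpsi$ in $L^\infty(I;\ell^2(H^s))$. Arzelà--Ascoli gives convergence in $\mathscr{C}(I;\ell^2(H^s)_{\mathrm{weak}})$, by equicontinuity in $\ell^2(H^{-s})$ plus boundedness. The difficulty is that \ref{ass2:nonlinearity_Lp_bound} needs strong convergence in $\ell^2(L^2\cap L^\nu)$, while $\ell^2(H^s)\hookrightarrow\ell^2(L^\nu)$ is neither locally compact nor compact in $k$. The $L^2$ part is the real issue. Mass conservation gives $\|\psi_{n,k}(t)\|_{L^2}=\|\varphi_k\|_{L^2}$, and weak lower semicontinuity gives $\|\psi_k(t)\|_{L^2}\le\|\varphi_k\|_{L^2}$. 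To upgrade this, I would prove conservation of mass for the limit directly: each component satisfies a linear-type equation with source $g_k$, and pairing in $H^{-s}\times H^s$ (legitimate since $\ulpsi\in W^{1,\infty}(\ell^2(H^{-s}))\cap L^\infty(\ell^2(H^s))$) gives $\frac{d}{dt}\|\psi_k\|^2=2\,\mathrm{Im}\langle g_k(\ulpsi),\psi_k\rangle=0$. This argument is circular, however, since it requires first identifying the limit of $\ulg(\ulpsi_n)$.

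So I would first obtain strong convergence in $\ell^2(L^2_{\mathrm{loc}})$ from Rellich together with tightness in $k$. Tightness in $k$ holds because $\sum_{k\ge K}\|\psi_{n,k}(t)\|_{L^2}^2=\sum_{k\ge K}\|\varphi_k\|^2$. Tightness in $x$ I would get from a localized mass estimate $\frac{d}{dt}\int\chi_R|\psi_{n,k}|^2$ bounded by $C_M/R^{\min(1,2s)}$ using commutator bounds $[(1-\Delta)^s,\chi_R]$. With this tightness, $\ulpsi_n\to\ulpsi$ in $\mathscr{C}(I;\ell^2(L^2))$ and, by interpolation, in $\ell^2(L^\nu)$. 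Then \ref{ass2:nonlinearity_Lp_bound} and $J_n\to 1$ strongly give $\ulg_n(\ulpsi_n)\to\ulg(\ulpsi)$ in $\ell^2(H^{-s})$, so the Duhamel formula passes to the limit. Mass conservation \ref{it:cons-l2L2} follows from strong $L^2$ convergence. The energy inequality \ref{it:en-ineq} follows from weak lower semicontinuity of the $H^s$ norm, continuity of $\mathscr{G}$ along strongly convergent sequences, and $\mathscr{G}_n(\ulphi)\to\mathscr{G}(\ulphi)$.
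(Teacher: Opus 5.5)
Your first two steps match the paper's. You regularize as $J_n\ulg(J_n\,\cdot)$ with energy $\mathscr{G}(J_n\,\cdot)$, then close a bootstrap on $\|\ulpsi_n(t)\|_{\ell^2(H^s)}\le 2M$. That bootstrap uses energy conservation and the bound $|\mathscr{G}_n(\ulpsi_n(t))-\mathscr{G}_n(\ulphi)|\le C_M\thetasnu(C_M|t|^{1/2})$, obtained by interpolating $\ell^2(L^2)$ between $\ell^2(H^{-s})$ and $\ell^2(H^s)$. The paper uses $(1-\Delta/n)^{-s/2}$ and solves in $\ell^2(L^2)$, which makes no difference.

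The compactness step is where you differ, and the argument you discard as circular is exactly the paper's; it is not circular. You never need the weak limit $\ulf(t)$ of $\ulg_n(\ulpsi_n(t))$ in $\ell^2(L^2+L^{\nu'})$ to equal $\ulg(\ulpsi(t))$. It suffices that $\mathrm{Im}(\overline{\psi_k}f_k)=0$ a.e., and this can be checked on each ball $B_r(y)$ using only local compactness:
\begin{itemize}
\item $\mathrm{Im}\langle\psi_k,f_k\rangle_{B_r}$ differs from $\mathrm{Im}\langle J_n\psi_{n,k},g_k(J_n\ulpsi_n)\rangle_{B_r}$ by terms that vanish as $n\to\infty$.
\item They vanish by weak convergence of the nonlinearity, strong convergence on $B_r$ (Rellich) of $\psi_{n,k}\to\psi_k$ and $(J_n-1)\psi_{n,k}\to0$, and $J_n\to1$ acting on the fixed function $\mathbf{1}_{B_r}\psi_k$, so no commutator is needed.
\item The main term vanishes by the gauge property.
\end{itemize}
The linear equation $i\partial_t\psi_k=(1-\Delta)^s\psi_k+f_k$ then conserves each $\|\psi_k\|_{L^2}$. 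Hence $\|\ulpsi(t)\|_{\ell^2(L^2)}=\|\ulphi\|_{\ell^2(L^2)}=\|\ulpsi_n(t)\|_{\ell^2(L^2)}$, and weak convergence plus equal norms gives strong $\ell^2(L^2)$ convergence with no tightness in $x$ or $k$. The identification $\ulf=\ulg(\ulpsi)$ comes afterwards. Your Rellich-plus-tightness route can also work and gives convergence in $\mathscr{C}(I;\ell^2(L^2))$ directly. It costs commutator estimates for $(1-\Delta)^s$ and for $J_n$, plus componentwise mass conservation of the approximants.

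As written, though, your tightness-in-$x$ step has a gap. Besides the commutator with $(1-\Delta)^s$, $\frac{\diff}{\diff t}\sum_k\int\chi_R|\psi_{n,k}|^2$ contains $2\sum_k\mathrm{Im}\langle\chi_R\psi_{n,k},J_ng_k(J_n\ulpsi_n)\rangle$, which has no reason to be small in $R$. It is controlled only under the pointwise gauge property $\mathrm{Im}(\overline{u_k}\,g_k(\ulu))=0$ a.e. In that case it reduces to $2\sum_k\mathrm{Im}\langle[J_n,\chi_R]\psi_{n,k},g_k(J_n\ulpsi_n)\rangle=O(R^{-1})$ uniformly in $n$. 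This holds because $J_n$ is convolution with a positive kernel $K_n$ satisfying $\int|y|K_n(y)\,\diff y\lesssim n^{-1/2}$. So the componentwise global invariance you propose to extract from $\tau\mapsto\mathscr{G}(e^{i\tau P_k}\ulu)$ would not suffice for your own argument.

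You are right that no gauge property follows from (N1)--(N3). Take $g_0(\ulu)=\overline{u_0}$, $g_k(\ulu)=0$ for $k\ge1$, and $\mathscr{G}(\ulu)=\frac12\mathrm{Re}\int u_0^2$; this is an element of $\Nloc$ with $\nu=2$. For it, $\frac{\diff}{\diff t}\|\psi_0\|_{L^2}^2=2\,\mathrm{Im}\int\overline{\psi_0}^2$, which is nonzero at $t=0$ for $\varphi_0=(1+i)\phi$ with $\phi$ real. So item i) cannot be proved on $\Nloc$ alone.

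The paper's proof needs the same pointwise property: it is what kills $\mathrm{Im}\langle R^{[n]}\psi^{[n]}_k,g_k(R^{[n]}\ulpsi^{[n]})\rangle_{B_r(y)}$. The paper's appeal there to (N3) and to the reality of $\mathscr{G}$ does not establish it. So add it as a hypothesis. It holds for every $g_k(\ulu)=\tilde g(\rho_{\ulu})u_k$ with $\tilde g$ real, hence for the external potential, Hartree and pure power terms.
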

	
	Our proof of Theorem \ref{thm:local_existence} employs an approximation scheme similar to that used, e.g., in \cite[Theorem 3.3.5]{cazenave2003semilinear} for nonlinear Schrödinger equations. It is here suitably adapted to the framework of the Kohn-Sham equations. We emphasize that, due to this approximation procedure, we only obtain the existence of solutions, not their uniqueness, and only the energy inequality stated in \ref{it:en-ineq}, not the conservation of energy.
	
	Under additional assumptions on the non-linearity, the weak solutions in $\ell^2(H^s)$ constructed in the previous theorem can be extended to be global. 
	
	\begin{theorem}\label{thm:global_existence}
		Let $s\in(0,\frac32)$. Suppose that $\ulg\in\Nglob$ and consider $ \ulphi  \in \ell^2(H^{s}) $. There exists a global weak solution $ \ulpsi$ in~$\ell^2(H^s)$ to \eqref{eq:Kohn-Sham-vectorised}  with initial condition $ \ulphi $. This solution satisfies in addition
		\begin{equation}
			\ulpsi \in \mathscr{C}\big(\mathbb{R};\ell^2(H^{s})_{\mathrm{weak}}\big) \cap \bigcap_{r \in [2,\twoasts ) } \mathscr{C}_{\mathrm{loc}}^{0,1/(2p_{s,r})}(\mathbb{R};\ell^2(L^r)) ,
		\end{equation}
		and
		\begin{enumerate}[label=\roman*)]
			\item The conservation of the mass of each particle, $\| \varphi_k \|_{L^2} = \| \psi_k(t) \|_{L^2} $, holds for all $k \in \mathbb{N}$ and every $ t \in \mathbb{R}$;
			\item \label{it:en-ineq-glob} The energy inequality  $ \ \mathcal{E}(\ulpsi(t)) \leq \mathcal{E}(\ulphi ) $ holds for almost every  $t \in \mathbb{R}$.
		\end{enumerate}
	\end{theorem}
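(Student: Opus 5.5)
The plan is to iterate the local existence result, Theorem~\ref{thm:local_existence}, and use mass conservation together with the energy inequality and \ref{ass4:nonlinearity_global_estimate} to get an a priori bound on the $\ell^2(H^s)$ norm that is uniform in time. The local existence time $T(M)$ depends only on $M$, so a uniform bound $M_\ast$ gives a fixed step length $T(M_\ast)$, and the solution can be continued indefinitely.

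\textbf{Step 1: a priori bound.} Take the energy $\mathcal{E}(\ulu)=\frac12\|\ulu\|_{\ell^2(H^s)}^2+\mathscr{G}(\ulu)$. Let $\ulpsi$ be a weak solution constructed by Theorem~\ref{thm:local_existence} (or by concatenation of such solutions) with $\mathcal{E}(\ulpsi(t))\le\mathcal{E}(\ulphi)$ for a.e.\ $t$ and $\|\ulpsi(t)\|_{\ell^2(L^2)}=\|\ulphi\|_{\ell^2(L^2)}$. Using $\mathscr{G}\ge-\mathscr{G}^-$ and \ref{ass4:nonlinearity_global_estimate}, we get
\begin{equation}
\frac{1-a}{2}\|\ulpsi(t)\|_{\ell^2(H^s)}^2\le \mathcal{E}(\ulphi)+D(\|\ulphi\|_{\ell^2(L^2)})
\end{equation}
for a.e.\ $t$. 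Hence $\|\ulpsi(t)\|_{\ell^2(H^s)}\le M_\ast$, where $M_\ast$ is determined by $\ulphi$ alone. The continuity $\ulpsi\in\mathscr{C}(I;\ell^2(H^s)_{\mathrm{weak}})$ and weak lower semicontinuity of the norm extend this bound to every $t$. Here $\mathcal{E}(\ulphi)$ is finite: by \ref{ass3:nonlinear_energy_derivative}, \ref{ass2:nonlinearity_Lp_bound} and \ref{assump1}, $\mathscr{G}(\ulphi)-\mathscr{G}(\ulzero)=\int_0^1\mathrm{Re}\langle\ulg(\tau\ulphi),\ulphi\rangle\diff\tau$ is bounded. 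We may normalize $\mathscr{G}(\ulzero)=0$, which is harmless since only derivatives of $\mathscr{G}$ matter.

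\textbf{Step 2: continuation.} Set $T_\ast=T(M_\ast)$. Apply Theorem~\ref{thm:local_existence} on $(-T_\ast,T_\ast)$. Pick a time $t_1\in(T_\ast/2,T_\ast)$ at which the energy inequality holds; then $\|\ulpsi(t_1)\|_{\ell^2(H^s)}\le M_\ast$. Restart the local theorem from $\ulpsi(t_1)$ (using time-translation invariance of the autonomous equation) on $(t_1-T_\ast,t_1+T_\ast)$. Glue the two pieces at $t_1$ to get a solution on $(-T_\ast,t_1+T_\ast)$, and proceed in the same way backwards in time. The mild formulation is compatible with gluing via the semigroup property of $e^{-it(1-\Delta)^s}$, so the glued function is again a weak solution in the sense of Definition~\ref{def:solutions}. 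The continuity properties (weak $\ell^2(H^s)$ continuity and local H\"older continuity in $\ell^2(L^r)$) hold on each piece and match at the junction, so they hold locally on $\mathbb{R}$. Mass conservation for each $k$ also propagates piece by piece.

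\textbf{Main obstacle: energy inequality after gluing.} On the restarted piece, Theorem~\ref{thm:local_existence} gives $\mathcal{E}(\ulpsi(t))\le\mathcal{E}(\ulpsi(t_1))$, and we need this to be $\le\mathcal{E}(\ulphi)$. This is exactly why $t_1$ must be a time at which the inequality holds a.e., which is available since that set has full measure. Chaining inequalities then gives $\mathcal{E}(\ulpsi(t))\le\mathcal{E}(\ulphi)$ for a.e.\ $t\in\mathbb{R}$. Consequently the bound of Step 1 is reproduced at each stage with the same $M_\ast$, so every step has the same length of at least $T_\ast/2$, and after countably many steps the solution covers all of $\mathbb{R}$. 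A secondary point to check is that $\ulpsi\in W^{1,\infty}_{\mathrm{loc}}(\mathbb{R};\ell^2(H^{-s}))$ across the junctions. This follows from the equation, since $\ulg$ maps $H^s$-bounded sets into bounded subsets of $\ell^2(L^2+L^{\nu'})\hookrightarrow\ell^2(H^{-s})$ by \ref{ass2:nonlinearity_Lp_bound}, \ref{assump1} and \eqref{eq:sobolev_embedding_minus}.
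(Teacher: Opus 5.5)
Your proposal is correct and follows essentially the same route as the paper. You obtain a time-independent $\ell^2(H^s)$ bound from mass conservation, the energy inequality and \ref{ass4:nonlinearity_global_estimate}, then iterate Theorem~\ref{thm:local_existence} with the fixed step $T(M)$ and glue. Your choice to restart at interior times where the almost-everywhere energy inequality actually holds is slightly more careful than the paper, which restarts at the endpoint $T(M)$ of the open existence interval.
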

	
	To deduce Theorem \ref{thm:global_existence} from Theorem \ref{thm:local_existence}, we use, in particular, the fact that the condition~$\ulg\in\Nglob$ on the non-linearity implies that the non-linear terms in the energy can be controlled by the kinetic term. Hence the energy inequality \ref{it:en-ineq-glob} from Theorem \ref{thm:global_existence} also implies the kinetic energy bound $ \| \ulpsi \|_{L^{\infty}(\mathbb{R} ; \ell^2(H^{s}))}  \leq C $ (compare to \ref{it:kin-bound} of Theorem \ref{thm:local_existence}).
	
	\begin{remark}
		This global existence result could be extended to the case when a critical scaling in the energy is present in the following sense:
		\begin{equation*}
			\mathscr{G}^-(\ulu)  \leq K \| \ulu \|_{\ell^2(H^s)}^{2} D(\| \ulu \|_{\ell^2(L^2)})
		\end{equation*}
		for some constant $K>0$ and some positive locally bounded function $D$ such that $D(x) \rightarrow 0$ as~$x \rightarrow 0$. This estimate is critical in the sense that the non-linear contribution to the energy is controlled by the kinetic term but with a prefactor that depends on the size of the $\ell^2(L^2)$ norm of the solution. Assuming that the norm $ \| \ulphi \|_{\ell^2(L^2)}$ is small enough, the same type of argument can be used to obtain a global existence result.
	\end{remark}
	
	In the particular case where $s \in [1,\frac32) $ (including the non-relativistic case $s=1$), Strichartz estimates yield the uniqueness of the local solutions constructed in Theorem \ref{thm:local_existence}, which in turn allows us to lift them to strong $\ell^2(H^s)$ solutions. 
	
	\begin{theorem}\label{thm:well_posedness}
		Let $s\in[1,\frac32)$. Suppose that $\ulg\in\NStri$ and consider $ \ulphi  \in \ell^2(H^{s}) $. Then \eqref{eq:Kohn-Sham-vectorised} is well-posed, that is there exist unique minimal and maximal times of existence, $T_{\mathrm{min}}\in[-\infty,0)$ and $T_{\mathrm{max}}\in(0,\infty]$, and a unique strong $ \ell^2(H^s) $ solution $\ulpsi$ defined on 
		$ I_{\mathrm{max}}(\ulphi) = \left(T_{\mathrm{min}}, T_{\mathrm{max}} \right)$ such that
		\begin{enumerate}[label=\roman*)]
			\item The blow-up alternative holds:
			\begin{itemize}
				\item If $T_{\mathrm{min}} > -\infty $, then $  \| \ulpsi(t) \|_{\ell^2(H^s)} \xrightarrow[t \rightarrow T_{\mathrm{min}}]{} \infty$ and likewise,
				\item  if $T_{\mathrm{max}} < \infty $, then $ \| \ulpsi(t) \|_{\ell^2(H^s)} \xrightarrow[t \rightarrow T_{\mathrm{max}}]{} \infty$;
			\end{itemize}
			\item The mass of each particle is preserved:
			\begin{equation}
				\forall t \in \left(T_{\mathrm{min}}, T_{\mathrm{max}} \right)\,,\, \forall k \in \mathbb{N}, \;\  \| \psi_k(t) \|_{L^2} = \| \varphi_k \|_{L^2},
			\end{equation}
			and the energy is preserved:
			\begin{equation}
				\forall t \in \left(T_{\mathrm{min}}, T_{\mathrm{max}} \right) , \quad  \mathcal{E}(\ulpsi(t)) = \mathcal{E}(\ulphi);
			\end{equation}
			\item Let $\bar{I}$ be any closed finite interval which contains $0$ and $\Omega(\bar{I})\subset \ell^2(H^s)$ be the set of $\ulphi$ such that $\bar{I} \subset I_{\mathrm{max}}(\ulphi)$. Then $\Omega(\bar{I})$ is an open subset of $\ell^2(H^s)$ and the map 
			\begin{equation*}
				\Omega(\bar{I}) \ni \ulphi \mapsto \ulpsi \in \mathscr{C}(\bar{I};\ell^2(H^s))
			\end{equation*}
			is continuous (with the norm topology of $\ell^2(H^s)$ on $\Omega(\bar{I})$).
		\end{enumerate}
	\end{theorem}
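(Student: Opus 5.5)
The plan is the classical Kato/Cazenave fixed point argument in Strichartz spaces, carried out componentwise in $\ell^2$. For $s\in[1,\frac32)$ the free group $e^{-it(1-\Delta)^s}$ obeys Strichartz estimates with no loss of derivatives on admissible pairs $(q,r)$ with $\frac2q+\frac3r=\frac32$. For $s=1$ these are the standard Schrödinger estimates. For $s>1$ the symbol $(1+|\xi|^2)^s$ has a nondegenerate Hessian away from a neighbourhood of $\xi=0$, and near $\xi=0$ it behaves like $1+s|\xi|^2$; so dispersive decay $|t|^{-3/2}$ holds after a Littlewood--Paley decomposition and the Keel--Tao machinery applies. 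Since the estimates are linear, they lift to $\ell^2(L^r)$-valued versions by Minkowski's inequality, using $q,r\ge 2$:
\[
\|e^{-it(1-\Delta)^s}\ulphi\|_{\ell^2(L^q_tL^r_x)}\lesssim \|\ulphi\|_{\ell^2(L^2)}
\]
together with the inhomogeneous analogue. Interchanging the norms gives the $L^q_t\ell^2(L^r)$ bounds.

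\textbf{Local existence.} For each $j\in J$ I take the admissible pair $(q_j,\nu_j)$; since $\nu_j<6$, we have $q_j<\infty$. I then set
\[
X_T=L^\infty(I_T;\ell^2(H^s))\cap\bigcap_j L^{q_j}(I_T;\ell^2(W^{s,\nu_j}))
\]
and work on a ball in $X_T$ with the metric of $Y_T=L^\infty(\ell^2(L^2))\cap\bigcap_j L^{q_j}(\ell^2(L^{\nu_j}))$. By (N2') and Hölder in time,
\[
\|\ulg_j(\ulu)-\ulg_j(\ulv)\|_{L^{q_j'}\ell^2(L^{\nu_j'})}\le L_M T^{1-2/q_j}\|\ulu-\ulv\|_{L^{q_j}\ell^2(L^{\nu_j})}.
\]
Here $1-\frac{2}{q_j}>0$, and $L^\infty\ell^2(H^s)$ controls the radius $M$. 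This makes the Duhamel map a contraction in the $Y_T$ metric for $T=T(M)$ small.

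\textbf{Main obstacle.} Invariance of the $H^s$ bound is the hardest point, because (N2') gives no estimate on $s$ derivatives of $\ulg$. I would avoid this Kato-style step in the following way. First, the ball of radius $2M$ in $L^\infty\ell^2(H^s)$ is closed for the $Y_T$ metric by weak-$*$ lower semicontinuity. Second, I use Theorem~\ref{thm:local_existence}, which provides a weak solution with the bound $2M$ on $I(M)$. Any weak solution lies in $L^\infty\ell^2(H^s)\subset L^\infty\ell^2(L^2\cap L^{\nu})$ and satisfies Duhamel's formula. So the contraction estimate in $Y_T$ applied to two weak solutions with the same data gives uniqueness in the class of weak solutions. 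The same estimate gives Lipschitz dependence in $Y_T$ on the initial data, at fixed $H^s$ size.

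\textbf{Energy conservation and strong solutions.} Uniqueness upgrades the energy inequality \ref{it:en-ineq} to an equality. Indeed, the solution run backward from time $t$ is again a weak solution, so uniqueness gives $\mathcal E(\ulphi)\le\mathcal E(\ulpsi(t))$ for almost every $t$; since $\mathcal E$ is weakly l.s.c. along the weakly continuous trajectory, equality holds everywhere. Energy conservation plus mass conservation then give continuity of $t\mapsto\|\ulpsi(t)\|_{\ell^2(H^s)}$. Combined with weak continuity, this yields $\ulpsi\in\mathscr C(I;\ell^2(H^s))$. The equation then gives $\mathscr C^1(I;\ell^2(H^{-s}))$, using that $\ulg$ maps $\ell^2(L^2\cap L^\nu)$ continuously into $\ell^2(H^{-s})$.

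\textbf{Maximal solution, blow-up, continuity.} Uniqueness lets local solutions be glued into a maximal one. The existence time depends only on the $H^s$ norm, which gives the blow-up alternative. I expect a subtlety here: we need the norm to tend to $\infty$, not merely its $\limsup$. This follows because if $\|\ulpsi(t_n)\|\le M$ along some sequence $t_n\to T_{\max}$, then restarting from $t_n$ extends the solution by $T(M)$, a contradiction. For the openness of $\Omega(\bar I)$ and continuous dependence in $\mathscr C(\bar I;\ell^2(H^s))$, I first get Lipschitz dependence in $Y_T$ from the contraction. I then upgrade to the $H^s$ topology via energy conservation: if $\ulphi_n\to\ulphi$ in $\ell^2(H^s)$, then $\ulpsi_n(t)\rightharpoonup\ulpsi(t)$ weakly in $H^s$, uniformly in $t$. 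Convergence of masses and energies, using the continuity of $\mathscr G$ on $\ell^2(L^2\cap L^\nu)$ from (N2) and (N3), then gives convergence of the $H^s$ norms and hence strong convergence. Uniformity in $t$ follows by a compactness argument on $\bar I$, and the result is iterated along a partition of $\bar I$ into pieces of length $T(2\sup\|\ulpsi\|)$.
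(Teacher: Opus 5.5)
Your proposal is correct and follows essentially the paper's route. Existence comes from Theorem~\ref{thm:local_existence}; uniqueness of weak solutions comes from the $\ell^2$-valued Strichartz estimates, (N2') and H\"older in time on short intervals; energy equality follows from uniqueness and time reversal, giving norm continuity and strong solutions; and the blow-up alternative is the standard argument.

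The only real variation is in continuous dependence. You use the Strichartz difference estimate directly, which gives Lipschitz dependence in $L^\infty\ell^2(L^2)$ that you then upgrade by energy conservation. The paper instead uses a compactness argument that identifies weak limits through uniqueness; your version is more direct and quantitative.

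One small fix in the energy step. Lower semicontinuity of $t\mapsto\mathcal E(\ulpsi(t))$ only upgrades $\mathcal E(\ulpsi(t))\le\mathcal E(\ulphi)$ from almost every $t$ to every $t$; it does not give the reverse inequality. Obtain that reverse inequality at each $t$ by restarting from $\ulpsi(t)$ and applying lower semicontinuity to the restarted trajectory at time $-t$.
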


	The use of Strichartz estimates (with no loss of derivatives since $s\in[1,\frac32)$) to prove the uniqueness of solutions is adapted from e.g. \cite[Section 4.2]{cazenave2003semilinear}. Strichartz estimates could in fact be used to prove both existence and uniqueness by a contraction map method in appropriate norms. We refrain to do so in this paper.
	
	Combining the previous two theorems, we obtain:
	
	\begin{corollary}\label{cor:global_wellposedness}
		Let $s\in[1,\frac32)$. Suppose that $\ulg\in\Nglob\cap\NStri$ and consider $ \ulphi  \in \ell^2(H^{s}) $. Then the Kohn-Sham equations \eqref{eq:Kohn-Sham-vectorised} are well-posed in $\ell^2(H^s)$ and admit global solutions on the whole real line. 
	\end{corollary}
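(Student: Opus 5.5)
The plan is to combine Theorem \ref{thm:well_posedness}, which gives local well-posedness with a blow-up alternative, with the a priori bound coming from $\ulg\in\Nglob$, and so to rule out blow-up.

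Fix $\ulphi\in\ell^2(H^s)$ and let $\ulpsi$ be the unique maximal strong solution on $I_{\mathrm{max}}(\ulphi)=(T_{\mathrm{min}},T_{\mathrm{max}})$ given by Theorem \ref{thm:well_posedness}; this applies since $\ulg\in\NStri$ and $s\in[1,\frac32)$. Theorem \ref{thm:well_posedness} also gives uniqueness, continuous dependence on compact subintervals of the maximal interval, and conservation of mass and energy. Hence the only thing left to prove is $T_{\mathrm{max}}=+\infty$ and $T_{\mathrm{min}}=-\infty$, and by the blow-up alternative it suffices to show that $\|\ulpsi(t)\|_{\ell^2(H^s)}$ stays bounded on $I_{\mathrm{max}}(\ulphi)$.

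For this bound, write $\mathcal{E}(\ulu)=\frac12\|\ulu\|_{\ell^2(H^s)}^2+\mathscr{G}(\ulu)$. For every $t\in I_{\mathrm{max}}(\ulphi)$, conservation of energy together with $\mathscr{G}\ge-\mathscr{G}^-$ gives
\begin{equation*}
\tfrac12\|\ulpsi(t)\|_{\ell^2(H^s)}^2-\mathscr{G}^-(\ulpsi(t))\le\mathcal{E}(\ulpsi(t))=\mathcal{E}(\ulphi).
\end{equation*}
Conservation of mass for each $k$ gives $\|\ulpsi(t)\|_{\ell^2(L^2)}=\|\ulphi\|_{\ell^2(L^2)}$. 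Assumption \ref{ass4:nonlinearity_global_estimate} then yields
\begin{equation*}
\tfrac{1-a}{2}\|\ulpsi(t)\|_{\ell^2(H^s)}^2\le\mathcal{E}(\ulphi)+D(\|\ulphi\|_{\ell^2(L^2)}).
\end{equation*}
The right-hand side is finite and independent of $t$, because $\mathscr{G}(\ulphi)$ is defined on $\ell^2(H^s)\subset\ell^2(L^2\cap L^\nu)$. Since $a<1$, the $\ell^2(H^s)$ norm of $\ulpsi(t)$ is uniformly bounded on $I_{\mathrm{max}}(\ulphi)$.

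If $T_{\mathrm{max}}<\infty$, the blow-up alternative would force $\|\ulpsi(t)\|_{\ell^2(H^s)}\to\infty$, contradicting this bound; the same argument handles $T_{\mathrm{min}}$. So $I_{\mathrm{max}}(\ulphi)=\mathbb{R}$, and well-posedness on every bounded closed interval is inherited from Theorem \ref{thm:well_posedness}\,iii), since every such interval is now contained in $I_{\mathrm{max}}$. There is no real obstacle. The only point needing care is to use the exact energy conservation of the strong solution from Theorem \ref{thm:well_posedness}, which holds for all $t$, rather than the almost-everywhere energy inequality of Theorem \ref{thm:global_existence}.
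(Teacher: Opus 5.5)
Your argument is correct, and it takes a slightly different route from the paper. The paper states the corollary without a written proof beyond ``combining the previous two theorems'', i.e.\ Theorem \ref{thm:global_existence} together with Theorem \ref{thm:well_posedness}. In that argument, the global weak solution of Theorem \ref{thm:global_existence} is bounded in $\ell^2(H^s)$ on all of $\mathbb{R}$. By the uniqueness of weak solutions (Theorem \ref{thm:uniqueness}), it coincides on $I_{\mathrm{max}}(\ulphi)$ with the maximal strong solution. The blow-up alternative then forces $I_{\mathrm{max}}(\ulphi)=\mathbb{R}$.

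You bypass Theorem \ref{thm:global_existence} entirely. Instead you run the \ref{ass4:nonlinearity_global_estimate}-based bound (the analogue of \eqref{est:bound_dep_only_initial_cond}) directly on the strong solution, using its exact mass and energy conservation from Theorem \ref{thm:well_posedness}. Both routes rest on the same a priori estimate. Yours is more self-contained: it needs neither the gluing construction nor the step identifying the regularization-built global weak solution with the strong one. The paper's route reuses work already done, and as a by-product shows that the global weak solution of Theorem \ref{thm:global_existence} is the unique strong solution.
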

	
	Note that in the case $s=1$, the Kohn-Sham equations have a dispersion relation of the form~$ 1-\Delta $, and we can obtain a set of equations with the usual non-relativistic dispersion relation $ -\Delta $ by the simple unitary transformation $ U_t = e^{it \mathbbm{1}} $ which amounts of a global phase term:
	\begin{equation}
		i \partial_t (U_t \ulpsi)(t) = - U_t \ulpsi(t) + U_t (1- \Delta  ) \ulpsi(t) +  U_t \, \ulg(\ulpsi(t)) 
		= - \Delta U_t \ulpsi(t) + \ulg(U_t \ulpsi(t)).
	\end{equation}
	Assuming the following ``gauge'' condition to hold: 
	\begin{equation}\label{eq:gauge}
		\forall t\in \mathbb{R},\,\quad \ulg(e^{it}\ulu) = e^{it} \ulg(\ulu),
	\end{equation} 
	we deduce that $U_t \ulpsi$ is a solution to the usual non-relativistic Kohn-Sham equations. We will show in Section \ref{sec:examples} below that \eqref{eq:gauge} is satisfied for any non-linearity of the form given by \eqref{eq:NL-V}, \eqref{eq:NL-w} or \eqref{eq:NL-alpha}.

	\subsection{Comparison with the literature}
	
	Considering the well-posedness of the time-dependent Kohn–Sham equations in the local density approximation, a number of results are present in the literature. The case of the Kohn–Sham equations constrained to an open bounded domain is considered in \cite{JEROME2015995,BorzietAl2017}. In \cite{JEROME2015995}, a unique global weak solution is constructed under the hypotheses of a non-time-adiabatic memory term for the exchange–correlation energy contribution, with rather restrictive conditions: the term must be uniformly Lipschitz in time and densities in the energy space, the external potential must be positive and $\mathscr{C}^1$ in time and space, and the Coulomb interaction potential must be used. Borzi et al. \cite{BorzietAl2017} construct a unique strong solution in the case of an exchange–correlation term in the ALDA, which is again assumed to be Lipschitz in the energy space and in $L^2$, and in the presence of a time-dependent external potential assumed to be $W^{2,\infty}$.
	
	The case of electrons not constrained to a bounded domain is considered in \cite{pusateri2021long,dupuy2025linear} with different goals. In \cite{pusateri2021long}, global well-posedness results are provided for ALDA exchange–correlation terms of the pure power type, as in \eqref{eq:exchange_power}, but with a constraint on the power that excludes this relevant example. The analysis is carried out for half-density operators, which are the square root of the density operator of the Slater determinants of the Kohn–Sham equations. The asymptotic behavior in time for the resulting nonlinear equations is analyzed, providing an asymptotic completeness result in the noncritical case, in analogy with the scattering theory for the nonlinear Schrödinger equation. As we were finalizing this paper the very relevant pre-print \cite{kawamoto2026} appeared, where the authors prove global existence and modified scattering in the non-relativistic case $s=1$, for small initial data in dimensions $2$ and $3$ with the critical Coulomb potential $1/|\cdot|$ for $w$, the Kohn-Sham expression $ \mu \rho^{\frac{1}{d}} $ for the exchange term and no external potential $V$. 
	
	The approach of \cite{dupuy2025linear} is focused on the linear response theory arising from the time-dependent perturbation of the ground state of the density functional, and the well-posedness result is obtained under a number of hypotheses of regularity on the perturbation potential, which must be $H^2$, and on the exchange–correlation energy term, notably that it must be of class $\mathscr{C}^4$.
	
	In the current paper, we relax these assumptions, allowing for general time-independent external potentials and interaction potentials, and for an exchange–correlation term that is of class $\mathscr{C}^1$, allowing for instance for \eqref{eq:exchange_power} and therefore relaxing the requirements for a term whose mathematical properties are not completely understood.
	
	We briefly discuss here the technical difficulties of the current paper. We consider a reduced density matrix possibly with infinite rank, that is, a general positive trace-class operator, as the state of the system of electrons.
	The natural norm to use would then be the trace-class norm, which we however write in an alternative way, fixing the basis and exploiting the structure of the equations to help us overcome some technical problems. These technical issues arise from the interplay of the generalized kinetic term we consider and the regularity properties of the nonlinear terms. The general dispersion relation we consider here implies a loss of derivatives in the associated dispersive estimates for $s < 1$, which poses a problem in the presence of a nonlinear term that is not regular enough. The typical fixed-point argument in a Banach space can be adapted only for some terms. The classical correlation or Hartree term can be treated in the presence of the loss of derivatives \cite{breteaux2025propagation}, and some strategies allow us to treat an external potential, assuming for instance Kato smallness with respect to the kinetic term. In the case of a general exchange–correlation term, taking as an example~\eqref{eq:exchange_power}, the term is not regular enough to close the estimates in energy space without using the “good” dispersive properties of the kinetic term that are only available through the celebrated Strichartz estimates for the Laplacian operator (or more generally for $s\ge1$). Treating terms arising from the exchange–correlation functional that may lack the required regularity requires an approximation procedure to construct the solutions, which is the main technical problem overcome in this paper.

	\subsection{Outline of the article}
	Our paper is organized as follows. In Section \ref{sec:examples}, we prove that the conditions given in Table~\ref{tab:non-linearities_classes} are sufficient so that the non-linearities given by~\eqref{eq:NL-V}, \eqref{eq:NL-w} and \eqref{eq:NL-alpha} belong to $\Nloc$, $\Nglob$ or $\NStri$. In Section \ref{sec:prelim}, we establish several preliminary estimates allowing us to control the classes of non-linearities we consider in this paper and the regularization procedure employed in our construction of solutions to the Kohn-Sham equations. Sections \ref{sec:local-ex}, \ref{sec:global} and \ref{sec:Strichartz} are devoted to the proofs of Theorems \ref{thm:local_existence}, \ref{thm:global_existence} and \ref{thm:well_posedness} respectively. 
	
	\subsection*{Acknowledgements}
	This research was funded, in whole or in part, by l’Agence Nationale de la Recherche
	(ANR), project ANR-22-CE92-0013. We are grateful to Viviana Grasselli and Israel Michael Sigal for useful discussions. 
	
	\section{Examples of non-linear terms}\label{sec:examples}

	In this section, we prove that the main examples of ``non-linearities'' we have in mind, namely those associated to an external potential \eqref{eq:NL-V}, an internal convolution potential \eqref{eq:NL-w} and a local pure power non-linearity \eqref{eq:NL-alpha}, belong to the class $\Nloc$, $\NStri$ or $\Nglob$ under the conditions given in Table \ref{tab:non-linearities_classes}. Note that by Remark \ref{rmk:N2prime_implies_N2} it suffices to prove \ref{ass2stri:nonlinearity_Lp_bound} to obtain \ref{ass2:nonlinearity_Lp_bound}.
	We begin with the class $\NStri$ for pure power non-linearities in Subsection \ref{subsec:pure}, for convolution non-linearities in Subsection \ref{subsec:convol} and for external potentials in Subsection \ref{subsec:ext}. Subsection \ref{subsec:Nglob} concerns the subclass $\Nglob$ for the three previous examples. Subsection \ref{subsec:gauge} concerns the gauge condition \eqref{eq:gauge} mentioned in the introduction.

	\subsection{Local pure power non-linearity}\label{subsec:pure}
	
	Recall that  the local pure power non-linearity we consider is given by
	\begin{equation}
		\ulg_{\mathrm{KS}}(\ulu) = 	\mu \rho_{\ulu}^\alpha \, \ulu\, .
	\end{equation} 
	Obviously, we have $\ulg_{\mathrm{KS}}(\ulzero)=\ulzero$ and hence \ref{assump1} is satisfied.
	
	Now we verify that \ref{ass2stri:nonlinearity_Lp_bound} is satisfied. We start by considering pointwise estimates.
	\begin{lemma}
		Let $ \ulu = (u_k)_{k\in\mathbb{N}} $ and $ \ulv = (v_k)_{k\in\mathbb{N}} $, with $u_k,v_k:\mathbb{R}^3\to\mathbb{C}$ for all $k\in\mathbb{N}$. Let $x\in\mathbb{R}^3$ be such that $\ulu(x),\ulv(x)\in\ell^2(\mathbb{C})$. Then
		\begin{equation}
			\label{est:pointwise_densities_1/2}
			| \rho_{\ulu }^{\frac{1}{2}} (x) - \rho_{\ulv }^{\frac{1}{2}} (x) | \leq \rho_{\ulu -\ulv }^{\frac{1}{2}}(x).
		\end{equation}
		
		Moreover, if  $ \ulu \in \ell^2 (L^q) $ with $ q \in [2,\infty] $, then
		\begin{equation}
			\label{est:densities_Lp}
			\| \rho_{\ulu }^{\frac{1}{2}}  \|_{L^q} \leq \| \ulu  \|_{\ell^2(L^q)} .
		\end{equation}
	\end{lemma}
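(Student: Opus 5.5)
The pointwise estimate \eqref{est:pointwise_densities_1/2} is the reverse triangle inequality in the Hilbert space $\ell^2(\mathbb{C})$. Fix $x$ such that $\ulu(x),\ulv(x)\in\ell^2(\mathbb{C})$. By the definition \eqref{def:density}, $\rho_{\ulu}^{1/2}(x)=\|\ulu(x)\|_{\ell^2(\mathbb{C})}$ with $\ulu(x)=(u_k(x))_{k\in\mathbb{N}}$, and likewise for $\ulv$ and for $\ulu-\ulv$. Here $(\ulu-\ulv)(x)\in\ell^2(\mathbb{C})$ because $\ell^2(\mathbb{C})$ is a vector space. The triangle inequality for the $\ell^2(\mathbb{C})$ norm then gives
\[
\bigl|\|\ulu(x)\|_{\ell^2}-\|\ulv(x)\|_{\ell^2}\bigr|\le\|\ulu(x)-\ulv(x)\|_{\ell^2},
\]
which is exactly \eqref{est:pointwise_densities_1/2}. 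No further work is needed here.

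For \eqref{est:densities_Lp}, we write $\rho_{\ulu}^{1/2}=\bigl(\sum_k|u_k|^2\bigr)^{1/2}$, so
\[
\|\rho_{\ulu}^{1/2}\|_{L^q}^2=\Bigl\|\sum_k|u_k|^2\Bigr\|_{L^{q/2}}.
\]
Since $q\ge2$, we have $q/2\ge1$, and $L^{q/2}$ is a normed space. We apply Minkowski's inequality for countable sums of nonnegative functions, using monotone convergence to pass from finite partial sums to the full series, and get
\[
\Bigl\|\sum_k|u_k|^2\Bigr\|_{L^{q/2}}\le\sum_k\bigl\||u_k|^2\bigr\|_{L^{q/2}}=\sum_k\|u_k\|_{L^q}^2=\|\ulu\|_{\ell^2(L^q)}^2 .
\]
Taking square roots gives the claim. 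The case $q=\infty$ is covered by the same argument, since $L^\infty$ is also normed. Alternatively, one can bound pointwise $\sum_k|u_k(x)|^2\le\sum_k\|u_k\|_{L^\infty}^2$ for almost every $x$, using a countable union of null sets.

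The only delicate point is measurability and almost-everywhere finiteness. The finiteness of $\|\ulu\|_{\ell^2(L^q)}$ together with the Minkowski bound gives $\rho_{\ulu}<\infty$ almost everywhere, so $\ulu(x)\in\ell^2(\mathbb{C})$ for almost every $x$. The function $\rho_{\ulu}$ is measurable as a monotone limit of measurable partial sums. This justifies using the first part almost everywhere later on. I expect no real obstacle; the main care needed is in passing the infinite sum through the $L^{q/2}$ norm, which monotone convergence handles.
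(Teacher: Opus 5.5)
Your proposal is correct and follows the paper's proof essentially verbatim: the reverse triangle inequality in $\ell^2(\mathbb{C})$ for the pointwise bound, and Minkowski's inequality in $L^{q/2}$ applied to $\sum_k |u_k|^2$ for the $L^q$ bound. Your additional remarks on monotone convergence, the case $q=\infty$, and a.e.\ finiteness of $\rho_{\ulu}$ fill in details the paper leaves implicit.
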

	\begin{proof}
		The proof of \eqref{est:pointwise_densities_1/2} follows from observing that from the definition of the density $\rho_{\ulu }$, the square root of $\rho_{\ulu }$ is pointwise equal to the $ \ell^2 (\mathbb{C}) $ norm of the family $ \ulu  $:
		\begin{equation*}
			\forall x \in \mathbb{R}^3, \,\ \rho_{\ulu }^{\frac{1}{2}}(x) = \Bigg( \sum_{k \geq 0 }  |u_k(x)|^2 \Bigg)^{\frac{1}{2}} = \vcentcolon \| \ulu (x) \|_{ \ell^2 (\mathbb{C})}.
		\end{equation*}
		As such, the inverse triangle inequality must hold pointwise:
		\begin{equation*}
			| \rho_{\ulu }^{\frac{1}{2}} (x) - \rho_{\ulv }^{\frac{1}{2}} (x) |  = 	\big| \| \ulu (x) \|_{ \ell^2 (\mathbb{C})} - \| \ulv (x) \|_{ \ell^2 (\mathbb{C})} \big| \leq \| \ulu (x) - \ulv (x) \|_{ \ell^2(\mathbb{C})} = \rho_{\ulu -\ulv }^{\frac{1}{2}}(x)  \,,
		\end{equation*}
		which is \eqref{est:pointwise_densities_1/2}.
		
		To prove \eqref{est:densities_Lp},	it suffices to estimate, by Minkowski inequality (notice that $ q \geq 2$),
		\begin{equation*}
			\| \rho_{\ulu }^{\frac{1}{2}} \|_{L^q} = \| \rho_{\ulu } \|_{L^{\frac{q}{2}}}^{\frac{1}{2}} = \Big\| \sum_{k \geq 0}  |u_k(x) |^2  \Big\|_{L^{\frac{q}{2}}}^{\frac{1}{2}} \leq \Big( \sum_{k \geq 0}  \| u_k \|_{L^{q}}^2 \Big)^{\frac{1}{2}} = \|\ulu  \|_{\ell^2 (L^q) } .
		\end{equation*}
		
	\end{proof}
	
	We will need another pointwise estimate stated in the following lemma. 
	
	\begin{lemma} \label{lem:power_pointwise_estimate}	Let $ \ulu = (u_k)_{k\in\mathbb{N}} $ and $ \ulv = (v_k)_{k\in\mathbb{N}} $, with $u_k,v_k:\mathbb{R}^3\to\mathbb{C}$ for all $k\in\mathbb{N}$. Let $x\in\mathbb{R}^3$ be such that $\ulu(x),\ulv(x)\in\ell^2(\mathbb{C})$. Let $\alpha\ge0$. Then
		\begin{multline}
			| \rho_{\ulu }^\alpha (x) u_k(x) - \rho_{\ulv }^\alpha (x) v_k(x) | \\
			\leq \rho_{\ulv }^\alpha (x) \, | u_k(x) - v_k(x) | + (2\alpha+2) ( \rho_{\ulu }^\alpha (x) + \rho_{\ulv }^\alpha (x) )\, \rho_{\ulu -\ulv }^{\frac{1}{2}} (x) \frac{|u_k(x)|}{\rho_{\ulu }^{\frac{1}{2}}(x)} ,
		\end{multline}
		for all $k$ in $\mathbb{N}$, with the convention that the quotient is $0$ if $\rho_{\ulu }(x)=0$.
	\end{lemma}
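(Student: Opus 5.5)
Fix $x$ and $k$, and abbreviate $a=\rho_{\ulu}^{1/2}(x)=\|\ulu(x)\|_{\ell^2(\mathbb{C})}$ and $b=\rho_{\ulv}^{1/2}(x)$. The starting point is the splitting
\begin{equation*}
\rho_{\ulu}^\alpha u_k-\rho_{\ulv}^\alpha v_k=\rho_{\ulv}^\alpha(u_k-v_k)+(\rho_{\ulu}^\alpha-\rho_{\ulv}^\alpha)u_k .
\end{equation*}
The first term is exactly the first term on the right-hand side of the claim. It therefore suffices to show
\begin{equation*}
|a^{2\alpha}-b^{2\alpha}|\,|u_k|\le (2\alpha+2)(a^{2\alpha}+b^{2\alpha})\,\rho_{\ulu-\ulv}^{1/2}\,\frac{|u_k|}{a}.
\end{equation*}
If $a=0$, then $u_k(x)=0$ because $|u_k(x)|\le a$, and both sides vanish. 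So we may assume $a>0$ and divide by $|u_k|/a$, which reduces the claim to the scalar inequality
\begin{equation*}
a\,|a^{2\alpha}-b^{2\alpha}|\le(2\alpha+2)(a^{2\alpha}+b^{2\alpha})\,|a-b|.
\end{equation*}
This is enough, because by \eqref{est:pointwise_densities_1/2} we have $|a-b|\le\rho_{\ulu-\ulv}^{1/2}(x)$.

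To prove the scalar inequality I will split into cases according to the size of $b$ relative to $a$. If $b\le a/2$, then $|a-b|\ge a/2$. The left-hand side is at most $a\max(a^{2\alpha},b^{2\alpha})\le a(a^{2\alpha}+b^{2\alpha})$, which is bounded by $2|a-b|(a^{2\alpha}+b^{2\alpha})$. Since $2\le 2\alpha+2$, this case is done. If $b\ge 2a$, then $|a-b|\ge b-a\ge a$, so again the left-hand side is at most $a(a^{2\alpha}+b^{2\alpha})\le|a-b|(a^{2\alpha}+b^{2\alpha})$.

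The remaining case $a/2<b<2a$ is where the factor $2\alpha$ is needed. By the mean value theorem, $|a^{2\alpha}-b^{2\alpha}|=2\alpha c^{2\alpha-1}|a-b|$ for some $c$ between $a$ and $b$. This gives
\begin{equation*}
a\,|a^{2\alpha}-b^{2\alpha}|=2\alpha\,(a/c)\,c^{2\alpha}\,|a-b|.
\end{equation*}
Since $c$ lies between $a$ and $b$, we have $c^{2\alpha}\le\max(a^{2\alpha},b^{2\alpha})\le a^{2\alpha}+b^{2\alpha}$, and since $c>a/2$ we have $a/c<2$. Hence the left-hand side is at most $4\alpha(a^{2\alpha}+b^{2\alpha})|a-b|$. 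This exceeds the stated constant $2\alpha+2$ when $\alpha>1$, so the constant needs sharpening here.

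I expect this bookkeeping of constants to be the only real obstacle. To fix it, I will use the mean value bound only when $c\ge a$, where $a/c\le1$ and the bound becomes $2\alpha(a^{2\alpha}+b^{2\alpha})|a-b|$. When $c<a$, i.e.\ $b<a$, I will instead bound more carefully using $a=b+(a-b)$:
\begin{equation*}
a(a^{2\alpha}-b^{2\alpha})=b(a^{2\alpha}-b^{2\alpha})+(a-b)(a^{2\alpha}-b^{2\alpha}).
\end{equation*}
The second piece is at most $(a-b)a^{2\alpha}$. For the first piece, the mean value theorem with $c\ge b$ gives $b\,(2\alpha c^{2\alpha-1})(a-b)\le 2\alpha c^{2\alpha}(a-b)$ when $2\alpha\ge1$. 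When $2\alpha<1$, one has $bc^{2\alpha-1}\le b^{2\alpha}$ since $c\ge b$, giving $2\alpha b^{2\alpha}(a-b)$. In either case the first piece is at most $2\alpha(a^{2\alpha}+b^{2\alpha})(a-b)$. Adding the two pieces yields the constant $2\alpha+1\le2\alpha+2$. With this treatment of the case $b<a$, the case split by $a/2$ and $2a$ is no longer needed.
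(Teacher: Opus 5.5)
Your proof is correct, but it is organized differently from the paper's.

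The paper writes $\rho_{\ulu}^\alpha u_k=\rho_{\ulu}^{\alpha+\frac12}\cdot u_k\rho_{\ulu}^{-\frac12}$, i.e. a power $2\alpha+1$ of the modulus $\rho_{\ulu}^{1/2}$ times the $k$-th component of a unit vector. It then splits the difference into two parts:
\begin{itemize}
\item a radial part $|\rho_{\ulu}^{\alpha+\frac12}-\rho_{\ulv}^{\alpha+\frac12}|\,|u_k|\rho_{\ulu}^{-\frac12}$;
\item an angular part $\rho_{\ulv}^{\alpha+\frac12}|u_k\rho_{\ulu}^{-\frac12}-v_k\rho_{\ulv}^{-\frac12}|$.
\end{itemize}
Because the exponent $2\alpha+1$ is at least $1$, the mean value theorem applies to the radial part with a bounded, nondecreasing derivative and no case distinction. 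The angular part yields $\rho_{\ulv}^\alpha|u_k-v_k|$ plus one extra copy of $\rho_{\ulv}^\alpha\rho_{\ulu-\ulv}^{1/2}|u_k|\rho_{\ulu}^{-1/2}$, which is where the constant $2\alpha+2$ comes from.

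Your linear splitting $\rho_{\ulv}^\alpha(u_k-v_k)+(\rho_{\ulu}^\alpha-\rho_{\ulv}^\alpha)u_k$ instead reduces everything to the one-variable inequality $a|a^{2\alpha}-b^{2\alpha}|\le C(a^{2\alpha}+b^{2\alpha})|a-b|$. The cost is that $t\mapsto t^{2\alpha}$ has a singular derivative at $0$ when $\alpha<\frac12$. That is why you need the weight $a$, the split into $b\ge a$ and $b<a$, and the rewriting $a=b+(a-b)$.

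What your route buys:
\begin{itemize}
\item you never divide by $\rho_{\ulv}^{1/2}$;
\item you get the slightly better constant $2\alpha+1$;
\item the factor $|u_k|$ is carried through exactly, which is what makes the sum over $k$ collapse in the proof of Proposition~\ref{lem:power_nonlinear_estimates}.
\end{itemize}
On the last point, the paper's step \eqref{est:split_step_pointwise2} discards the factor $|u_k|\rho_{\ulu}^{-1/2}$ using $|u_k|\le\rho_{\ulu}^{1/2}$; that factor must be kept there to reach the stated form.

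Two simplifications for your write-up:
\begin{itemize}
\item Drop the initial three-case analysis with thresholds $a/2$ and $2a$ altogether.
\item In the case $b<a$, the subcases according to whether $2\alpha\ge1$ are unnecessary. Since $b\le c$, you already have $b\,c^{2\alpha-1}\le c^{2\alpha}\le a^{2\alpha}$ for every $\alpha\ge0$.
\end{itemize}
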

	\begin{proof}
		If $\rho_{\ulv}(x)=0$ then $v_k(x)=0$ for all $k\in\mathbb{N}$ and the statement is obvious. Assume then that $\rho_{\ulv}(x)\neq0$. Likewise, il $\rho_{\ulu}(x)=0$ then the statement is obvious and we therefore also assume that $\rho_{\ulu}(x)\neq0$. We omit the $x$ in the notation for simplicity.
		
		We begin by a basic estimate, using the triangle inequality:
		\begin{equation}
			| \rho_{\ulu}^\alpha  u_k  - \rho_{\ulv}^\alpha  v_k |  
			\leq   | \rho_{\ulu}^{\alpha+\frac{1}{2}}  - 			\rho_{\ulv}^{\alpha+\frac{1}{2}}  | \Bigg| 			\frac{u_k}{\rho_{\ulu}^{\frac{1}{2}}}  \Bigg| + \rho_{\ulv}^{\alpha+\frac{1}{2}}  \Bigg| 	\frac{u_k}{\rho_{\ulu}^{\frac{1}{2}}} - \frac{v_k}{\rho_{\ulv}^{\frac{1}{2}}} \Bigg| . \label{est:split_step_pointwise}
		\end{equation}
		The first term of the sum can be estimated by considering that $ \rho_{\sharp}^{\alpha+\frac{1}{2}}  =  ( \rho_{\sharp}^{\frac{1}{2}}  )^{2 \alpha + 1} $ and, 
		if $c\geq1 $, by Lagrange's mean value theorem
		\begin{equation*}
			| a^c - b^c | \leq c (a^{c-1} + b^{c-1})\, |a-b| .
		\end{equation*}
		From these considerations, we get
		\begin{equation*}
			| \rho_{\ulu}^{\alpha+\frac{1}{2}}  - \rho_{\ulv}^{\alpha+\frac{1}{2}}  | \leq (2\alpha +1) ( \rho_{\ulu}^{\alpha}  +  \rho_{\ulv}^{\alpha} ) \, |  \rho_{\ulu}^{\frac{1}{2}}  - \rho_{\ulv}^{\frac{1}{2}}  | .
		\end{equation*}
		Applying the last inequality, $|u_k|\le \rho_{\ulu}^{\frac{1}{2}}$ and \eqref{est:pointwise_densities_1/2} into the first term of \eqref{est:split_step_pointwise}, we get
		\begin{equation}\label{est:split_step_pointwise2}
			| \rho_{\ulu}^{\alpha+\frac{1}{2}}  - \rho_{\ulv}^{\alpha+\frac{1}{2}}  | \Bigg| 			\frac{u_k}{\rho_{\ulu}^{\frac{1}{2}}}  \Bigg|\leq  (2\alpha+1)( \rho_{\ulu}^{\alpha}  +  \rho_{\ulv}^{\alpha} ) \, \rho_{\ulu-\ulv}^{\frac{1}{2}} .
		\end{equation}
		To estimate the second term in \eqref{est:split_step_pointwise}, we simply group the denominators and add and subtract appropriately,
		\begin{multline*}
			\rho_{\ulv}^{\alpha+\frac{1}{2}}  \Bigg|  \frac{u_k}{\rho_{\ulu}^{\frac{1}{2}}} - \frac{v_k}{\rho_{\ulv}^{\frac{1}{2}}} \Bigg| = \frac{\rho_{\ulv}^{\alpha} }{\rho_{\ulu}^{\frac{1}{2}}} \big| u_k \rho_{\ulv}^{\frac{1}{2}} - v_k \rho_{\ulu}^{\frac{1}{2}} \big| \\
			\leq   \frac{\rho_{\ulv}^{\alpha} }{\rho_{\ulu}^{\frac{1}{2}}} \big( | u_k \rho_{\ulv}^{\frac{1}{2}} - u_k \rho_{\ulu}^{\frac{1}{2}} | + | u_k \rho_{\ulu}^{\frac{1}{2}} - v_k \rho_{\ulu}^{\frac{1}{2}} | \big) 
			=  \rho_{\ulv}^{\alpha}  \Bigg| \frac{ u_k}{ \rho_{\ulu}^{\frac{1}{2}}  } \Bigg| \big|  \rho_{\ulu}^{\frac{1}{2}}  - \rho_{\ulv}^{\frac{1}{2}}  \big| +  \rho_{\ulv}^{\alpha}  | u_k-v_k | .
		\end{multline*}
		Applying again \eqref{est:pointwise_densities_1/2}, we obtain
		\begin{equation}\label{est:split_step_pointwise3}
			\rho_{\ulv}^{\alpha+\frac{1}{2}}  \Bigg| \frac{u_k}{\rho_{\ulu}^{\frac{1}{2}}} - \frac{v_k}{\rho_{\ulv}^{\frac{1}{2}}} \Bigg| \leq \rho_{\ulv}^{\alpha}  \Bigg| \frac{u_k}{ \rho_{\ulu}^{\frac{1}{2}}  } \Bigg| \rho_{\ulu-\ulv}^{\frac{1}{2}}   +  \rho_{\ulv}^{\alpha}  | u_k-v_k | .
		\end{equation}
		The desired estimate follows from \eqref{est:split_step_pointwise}, \eqref{est:split_step_pointwise2} and \eqref{est:split_step_pointwise3}.
	\end{proof}
	
	Now we restrict the possible powers to handle only long range interactions. The next proposition shows that \ref{ass2stri:nonlinearity_Lp_bound} holds for the local pure power non-linearity. 
	
	\begin{prop}\label{lem:power_nonlinear_estimates}
		Let $\alpha\geq 0$. If $ \nu(\alpha)$ 
		is defined by 
		\begin{equation}\label{eq:nu(alpha)}
			\nu(\alpha) := 2(\alpha+1), 
		\end{equation}
		then, for all $ \ulu,\ulv\in\ell^2(L^{\nu(\alpha)}) $, 
		\begin{equation}
			\label{est:power_Lp}
			\big\| \rho_{\ulu}^\alpha \ulu - \rho_{\ulv}^\alpha \ulv \big\|_{\ell^2(L^{\nu^\prime(\alpha)})} \leq 2(2\alpha+2)  \big( \| \ulu \|_{\ell^2(L^{\nu(\alpha)})}^{2\alpha} + \| \ulv \|_{\ell^2(L^{\nu(\alpha)})}^{2\alpha} \big) \| \ulu-\ulv \|_{\ell^2(L^{\nu(\alpha)})} ,
		\end{equation}	
		with $\frac{1}{\nu(\alpha)}+\frac{1}{\nu'(\alpha)}=1$.
		
		If $s\in(0,\frac32)$ and $ \alpha \in \left[ 0, \frac{2s}{3-2s} \right) $, then  \ref{ass2stri:nonlinearity_Lp_bound} holds with $\nu=\nu(\alpha)\in[2,\twoasts)$ for the local pure power non-linearity.
	\end{prop}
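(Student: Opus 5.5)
Starting from the pointwise bound of Lemma~\ref{lem:power_pointwise_estimate}, I would take the $\ell^2$ norm over $k$ at each fixed $x$, then apply H\"older in $x$. For fixed $x$, the lemma gives
\begin{equation*}
\Big(\sum_k |\rho_{\ulu}^\alpha u_k-\rho_{\ulv}^\alpha v_k|^2\Big)^{\frac12}\le \rho_{\ulv}^\alpha\,\rho_{\ulu-\ulv}^{\frac12}+(2\alpha+2)(\rho_{\ulu}^\alpha+\rho_{\ulv}^\alpha)\,\rho_{\ulu-\ulv}^{\frac12},
\end{equation*}
since $\big(\sum_k|u_k-v_k|^2\big)^{1/2}=\rho_{\ulu-\ulv}^{1/2}$ and $\sum_k |u_k|^2/\rho_{\ulu}=1$ (or $0$). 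The right-hand side is therefore bounded by $2(2\alpha+2)(\rho_{\ulu}^\alpha+\rho_{\ulv}^\alpha)\rho_{\ulu-\ulv}^{1/2}$.

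The main point to handle is the order of norms: the target norm is $\ell^2(L^{\nu'})$, with the $\ell^2$ sum outside the $L^{\nu'}$ norm. Since $\nu'\le 2$, Minkowski's inequality goes the right way: $\|\ulf\|_{\ell^2(L^{\nu'})}\le \big\|(\sum_k|f_k|^2)^{1/2}\big\|_{L^{\nu'}}$. This is the reverse of the direction used in \eqref{est:densities_Lp}, and is the case $q/2\ge1$ replaced by the exponent $2/\nu'\ge1$.

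Next I apply H\"older with $\frac1{\nu'}=\frac{2\alpha}{\nu}+\frac1\nu$, which holds because $\nu=2\alpha+2$. This gives
\begin{equation*}
\|\rho_{\ulu}^\alpha\rho_{\ulu-\ulv}^{1/2}\|_{L^{\nu'}}\le \|\rho_{\ulu}^{1/2}\|_{L^\nu}^{2\alpha}\,\|\rho_{\ulu-\ulv}^{1/2}\|_{L^\nu}.
\end{equation*}
By \eqref{est:densities_Lp} with $q=\nu\ge2$, this is at most $\|\ulu\|_{\ell^2(L^\nu)}^{2\alpha}\|\ulu-\ulv\|_{\ell^2(L^\nu)}$. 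The same holds with $\ulu$ replaced by $\ulv$ in the power, and \eqref{est:power_Lp} follows. The case $\alpha=0$ is trivial.

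For \ref{ass2stri:nonlinearity_Lp_bound}, I take $J$ a singleton, $\nu_j=\nu(\alpha)$ and $\ulg_j=\mu\rho^\alpha\ulu$. I need $\nu(\alpha)\in[2,\min\{\twoasts,6\})$. First, $2\alpha+2<\twoasts=\frac{6}{3-2s}$ is equivalent to $\alpha<\frac{2s}{3-2s}$. Also $\twoasts\le6$ only when $s\le1$, so I must additionally check $\nu<6$. In the regimes of Table~\ref{tab:non-linearities_classes} for $\NStri$ this is the condition $\alpha<2$; otherwise I would note that the min with $6$ is only relevant there.

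Continuity $\ell^2(L^\nu)\to\ell^2(L^{\nu'})$ follows from the Lipschitz estimate. On $B_{\ell^2(H^s)}(\ulzero,M)$, the embedding \eqref{eq:sobolev_embedding_plus} gives $\|\ulu\|_{\ell^2(L^\nu)}\le C_sM$, so I can take $L_M=4(2\alpha+2)|\mu|(C_sM)^{2\alpha}$.
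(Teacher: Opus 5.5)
Your proof is correct and follows the paper's route: Minkowski with $\nu'\le 2$ to put the $\ell^2$ sum inside the $L^{\nu'}$ norm, the pointwise Lemma~\ref{lem:power_pointwise_estimate}, H\"older with $\frac{1}{\nu'}=\frac{2\alpha}{\nu}+\frac{1}{\nu}$, then \eqref{est:densities_Lp} and the Sobolev embedding. The only cosmetic difference is that you combine the two pointwise terms by the triangle inequality in $\ell^2(\mathbb{C})$, which gives the constant $2\alpha+3\le 2(2\alpha+2)$ directly, whereas the paper squares and uses $(a+b)^2\le 2a^2+2b^2$.

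Your remark on the cap $\nu_j<6$ is well taken. The paper's proof never checks it, and for $s\in(1,\frac32)$ the conclusion \ref{ass2stri:nonlinearity_Lp_bound} with $\nu=\nu(\alpha)$ does need the extra hypothesis $\alpha<2$ (automatic when $s\le 1$). This matches the range given in Table~\ref{tab:non-linearities_classes}.
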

	\begin{proof}
		The first step is to use Minkowski inequality: observing that $ 1\le\nu^\prime(\alpha)\le2 $, we have
		\begin{align}
			\big\| \rho_{\ulu}^\alpha \ulu - \rho_{\ulv}^\alpha \ulv  \big\|_{\ell^2(L^{\nu^\prime(\alpha)})} 
			& \leq \big\| \| \rho_{\ulu}^\alpha \ulu - \rho_{\ulv}^\alpha \ulv \|_{\ell^2(\mathbb{C})} \big\|_{L^{\nu^\prime(\alpha)}} . \label{eq:Minkowski}
		\end{align}
		We are left to consider the $ \ell^2(\mathbb{C}) $ norm of the non-linearity for almost every $ x $ in $\mathbb{R}^3 $. To do this, we consider the pointwise estimate obtained in Lemma~\ref{lem:power_pointwise_estimate} and apply it to this term:
		\begin{align*}
			\| & \rho_{\ulu}^\alpha (x) \ulu (x) -  \rho_{\ulv}^\alpha (x) \ulv (x) \|_{\ell^2(\mathbb{C})}^2 = \sum_{k \geq 0}  | \rho_{\ulu}^\alpha (x) u_k (x) - \rho_{\ulv}^\alpha (x) v_k (x) |^2  \\
			&\leq  \sum_{k \geq 0}  \big( \rho_{{\ulv}}^\alpha (x) | u_k(x) - v_k(x) | + (2\alpha+2) ( \rho_{{\ulu}}^\alpha (x) + \rho_{{\ulv}}^\alpha (x) ) \rho_{{\ulu}-{\ulv}}^{\frac{1}{2}} (x) \frac{|u_k(x)|}{\rho_{{\ulu}}^{\frac{1}{2}}(x)} \big)^2 \\
			&\leq  2 \rho_{{\ulv}}^{2\alpha} (x) \sum_{k \geq 0}  | u_k(x) - v_k(x) |^2 + 2 (2\alpha+2)^2 ( \rho_{{\ulu}}^\alpha (x) + \rho_{{\ulv}}^\alpha (x) )^2 \frac{ \rho_{{\ulu}-{\ulv}} (x)}{\rho_{{\ulu}}(x)} \sum_{k \geq 0}  |u_k(x)|^2  \\
			&\leq  2 \rho_{{\ulv}}^{2\alpha} (x) \rho_{{\ulu}-{\ulv}}(x) +2 (2\alpha+2)^2 ( \rho_{{\ulu}}^\alpha (x) + \rho_{{\ulv}}^\alpha (x) )^2\rho_{{\ulu}-{\ulv}} (x)  \,,
		\end{align*}
		where the convexity inequality of the square and the definition of the density \eqref{def:density} have been used. Taking the square root and estimating, we get
		\begin{equation*}
			\| \rho_{\ulu}^\alpha (x) \ulu (x) - \rho_{\ulv}^\alpha (x) \ulv (x) \|_{\ell^2(\mathbb{C})} \leq 2(2\alpha+2)  \big( \rho_{{\ulu}}^\alpha (x) + \rho_{{\ulv}}^\alpha (x) \big) \rho_{{\ulu}-{\ulv}}^{\frac{1}{2}} (x) 
		\end{equation*}
		for almost every $ x$ in $\mathbb{R}^3 $.
		
		Next, we take the $L^{\nu^\prime(\alpha)} $ norm and use H{\"o}lder's inequality with 
		\begin{equation*}
			\frac1{\nu'(\alpha)} = \frac{1}{p(\alpha)} + \frac1{\nu(\alpha)}, \text{  that is  } p(\alpha) = \frac{1+\alpha}{ \alpha} .
		\end{equation*} 
		Together with \eqref{eq:Minkowski}, this yields
		\begin{align*}
			\big\| \rho_{\ulu}^\alpha \ulu - \rho_{\ulv}^\alpha  \ulv \big\|_{\ell^2(L^{\nu^\prime(\alpha)})} & \leq 2(2\alpha+2)\| \big( \rho_{{\ulu}}^\alpha  + \rho_{{\ulv}}^\alpha  \big) \rho_{{\ulu}-{\ulv}}^{\frac{1}{2}}  \|_{L^{\nu^\prime(\alpha)}}   \\
			& \leq
			2(2\alpha+2)  \big( \| \rho_{{\ulu}}^\alpha  \|_{L^{p(\alpha)}} +  \| \rho_{{\ulv}}^\alpha  \|_{L^{p(\alpha)}} \big) \| \rho_{{\ulu}-{\ulv}}^{\frac{1}{2}}  \|_{L^{\nu(\alpha)}}\\
			& =
			2(2\alpha+2)  \big( \| \rho_{{\ulu}}^\frac12  \|_{L^{\nu(\alpha)}}^{2\alpha} +  \| \rho_{{\ulv}}^\frac12  \|_{L^{\nu(\alpha)}}^{2\alpha} \big) \| \rho_{{\ulu}-{\ulv}}^{\frac{1}{2}}  \|_{L^{\nu(\alpha)}},
		\end{align*} 
		where in the equality we have used that $2\alpha p(\alpha)=\nu(\alpha)$.
		Using the estimate for the $L^q$ norm of the densities \eqref{est:densities_Lp}, we obtain \eqref{est:power_Lp}. 
		
		By the Sobolev embedding \eqref{eq:sobolev_embedding_plus}, observing that $ \nu(\alpha) \in \left[ 2 , \twoasts  \right) $, we see that \eqref{est:power_Lp} implies
		\begin{equation}
			\label{est:power_Hs}
			\big\| (\rho_{\ulu}^\alpha u_k - \rho_{\ulv}^\alpha v_k )_k \big\|_{\ell^2(L^{\nu^\prime(\alpha)})} \leq 2(2\alpha+2)  C_s^{2 \alpha} \big( \| {\ulu} \|_{\ell^2(H^s)}^{2\alpha} + \| {\ulv} \|_{\ell^2(H^s)}^{2\alpha} \big) \| {\ulu} - {\ulv} \|_{\ell^2(L^{\nu(\alpha)})},
		\end{equation}
		which in turn shows that \ref{ass2stri:nonlinearity_Lp_bound} holds.
	\end{proof}
	
	Now we turn to the verification of \ref{ass3:nonlinear_energy_derivative}. The energy of the local power non-linearity is the function
	\begin{equation}
		\mathscr{G}_{\mathrm{KS}}(\ulu) = \frac{\mu}{2(\alpha+1)} \int_{\mathbb{R}^3} \rho_{\ulu}^{\alpha +1} (x) \diff  x .
	\end{equation}
	Note that it follows from \eqref{est:densities_Lp} with $q=\nu(\alpha)=2(\alpha+1)$ that $\mathscr{G}_{\mathrm{KS}}(\ulu)$ is well-defined for~$\ulu$ in~$\ell^2(L^{\nu(\alpha)})\supset \ell^2(L^2\cap L^{\nu(\alpha)})$. 
	
	\begin{prop}
		Let $s\in(0,\frac32)$, $ \alpha \in \left[0, \frac{2s}{3-2s} \right) $ and $\nu=\nu(\alpha)$ be defined by \eqref{eq:nu(alpha)}. Then, for all~$ \ulu,\ulv\in\ell^2(L^{\nu}) $, 
		\begin{equation}
			\frac{\diff }{\diff \tau} \mathscr{G}_{\mathrm{KS}}(\ulu + \tau \ulv) |_{\tau = 0} = \mu \mathrm{Re} \langle \rho_{\ulu}^\alpha \ulu, \ulv \rangle_{\ell^2(L^{\nu^\prime}) , \ell^2(L^{\nu})} .
		\end{equation}
		Hence \ref{ass3:nonlinear_energy_derivative} holds with $\nu=\nu(\alpha)\in[2,\twoasts)$ for the local pure power non-linearity.
	\end{prop}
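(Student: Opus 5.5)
The plan is to compute the derivative pointwise and then justify exchanging it with the integral by dominated convergence. For a.e.\ $x$ with $\ulu(x),\ulv(x)\in\ell^2(\mathbb{C})$, set
\begin{equation*}
f(\tau)=\rho_{\ulu+\tau\ulv}(x)=\|\ulu(x)+\tau\ulv(x)\|_{\ell^2(\mathbb{C})}^2,
\end{equation*}
which is a real quadratic polynomial in $\tau$:
\begin{equation*}
f(\tau)=\rho_{\ulu}(x)+2\tau\,\mathrm{Re}\sum_k\overline{u_k(x)}v_k(x)+\tau^2\rho_{\ulv}(x).
\end{equation*}
Since $\alpha+1\ge1$, the map $y\mapsto y^{\alpha+1}$ is $\mathscr{C}^1$ on $[0,\infty)$. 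The chain rule then gives
\begin{equation*}
\frac{\diff}{\diff\tau}\frac{f(\tau)^{\alpha+1}}{2(\alpha+1)}\Big|_{\tau=0}=\rho_{\ulu}^\alpha(x)\,\mathrm{Re}\sum_k\overline{u_k(x)}v_k(x).
\end{equation*}
This holds also at points where $\rho_{\ulu}(x)=0$ when $\alpha>0$, since there both sides vanish. When $\alpha=0$, $f$ is polynomial and there is no issue, with the convention $\rho^0=1$. Integrating this pointwise derivative, multiplied by $\mu$, formally yields $\mu\,\mathrm{Re}\langle\rho_{\ulu}^\alpha\ulu,\ulv\rangle$. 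Here $\sum_k\int\overline{\rho_{\ulu}^\alpha u_k}v_k$ is the duality pairing $\ell^2(L^{\nu'})\times\ell^2(L^\nu)$; this pairing is well-defined because Proposition~\ref{lem:power_nonlinear_estimates} (with $\ulv=\ulzero$) shows $\rho_{\ulu}^\alpha\ulu\in\ell^2(L^{\nu'})$.

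The main step is the interchange of derivative and integral. For $0<|\tau|\le1$, write the difference quotient via the mean value theorem:
\begin{equation*}
\frac{f(\tau)^{\alpha+1}-f(0)^{\alpha+1}}{2(\alpha+1)\tau}=f(\theta)^\alpha\,\mathrm{Re}\sum_k\overline{(u_k+\theta v_k)(x)}\,v_k(x),\qquad \theta\in(0,\tau).
\end{equation*}
By Cauchy--Schwarz in $k$, this is bounded by
\begin{equation*}
\rho_{\ulu+\theta\ulv}^{\alpha+\frac12}\rho_{\ulv}^{\frac12}\le\big(\rho_{\ulu}^{\frac12}+\rho_{\ulv}^{\frac12}\big)^{2\alpha+1}\rho_{\ulv}^{\frac12},
\end{equation*}
using the pointwise triangle inequality $\rho_{\ulu+\theta\ulv}^{1/2}\le\rho_{\ulu}^{1/2}+|\theta|\rho_{\ulv}^{1/2}$ from the preceding lemma.

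This dominating function is integrable by H\"older with exponents $\frac{\nu}{2\alpha+1}$ and $\nu$, since $(2\alpha+1)+1=2(\alpha+1)=\nu$. Combined with \eqref{est:densities_Lp} (with $q=\nu$), it is bounded by
\begin{equation*}
\big(\|\ulu\|_{\ell^2(L^\nu)}+\|\ulv\|_{\ell^2(L^\nu)}\big)^{2\alpha+1}\|\ulv\|_{\ell^2(L^\nu)}<\infty.
\end{equation*}
Dominated convergence then gives the formula for all $\ulu,\ulv\in\ell^2(L^\nu)$.

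Finally, for \ref{ass3:nonlinear_energy_derivative}: $\ell^2(L^2\cap L^\nu)\subset\ell^2(L^\nu)$, and for $\alpha<\frac{2s}{3-2s}$ one has $\nu=2(\alpha+1)\in[2,\twoasts)$. The pairing on $\ell^2(L^2+L^{\nu'})\times\ell^2(L^2\cap L^\nu)$ restricts to the $\ell^2(L^{\nu'})\times\ell^2(L^\nu)$ pairing when $\ulg_{\mathrm{KS}}(\ulu)\in\ell^2(L^{\nu'})$. The only delicate point I anticipate is handling the endpoint $\alpha=0$ and the points where the density vanishes, and both are covered by the polynomial structure of $f$ noted above.
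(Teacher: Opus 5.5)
Your proposal is correct and follows essentially the same route as the paper: you differentiate $\rho_{\ulu+\tau\ulv}^{\alpha+1}$ pointwise, dominate uniformly for $|\tau|\le 1$ by $(\rho_{\ulu}^{1/2}+\rho_{\ulv}^{1/2})^{2\alpha+1}\rho_{\ulv}^{1/2}$, get integrability from H\"older with $(2\alpha+1)+1=\nu$ together with \eqref{est:densities_Lp}, and conclude by dominated convergence. The differences are cosmetic: you bound the difference quotient via the mean value theorem and carry out the H\"older step directly instead of citing the proof of Proposition~\ref{lem:power_nonlinear_estimates}, and you treat the cases $\alpha=0$ and $\rho_{\ulu}(x)=0$ more explicitly than the paper does.
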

	\begin{proof}
		For a.e. $x\in\mathbb{R}^3$, we have 
		\begin{equation}
			\frac{\diff }{\diff \tau}\rho_{\ulu+\tau\ulv}^{\alpha +1} (x)=(\alpha + 1) \rho_{\ulu + \tau \ulv}^{\alpha} (x) \frac{\diff }{\diff \tau} \rho_{\ulu + \tau \ulv}(x),
		\end{equation}
		with 
		\begin{equation*} 
			\frac{\diff }{\diff \tau} \rho_{\ulu + \tau \ulv}(x) = \sum_{k \geq 0} ( 2 \mathrm{Re} ( \overline{u}_k(x) v_k(x) ) + 2 \tau | v_k |^2(x) ) = 2 \mathrm{Re} \langle \ulu(x) + \tau \ulv(x) , \ulv(x)  \rangle_{\ell^2(\mathbb{C})} ,
		\end{equation*}
		Therefore we can estimate, for $\tau\in(-1,1)$,
		\begin{align}
			\Big|\frac{\diff }{\diff \tau}\rho_{\ulu+\tau\ulv}^{\alpha +1} (x)\Big|\le2(\alpha + 1) \langle \rho_{|\ulu|+ |\ulv|}^{\alpha} (x)(|\ulu|(x) + |\ulv|(x)) , |\ulv|(x)  \rangle_{\ell^2(\mathbb{C})}.
		\end{align}
		We claim that the right-hand side is integrable over $\mathbb{R}^3$. Indeed we have
		\begin{multline*}
			\int_{\mathbb{R}^3}\langle \rho_{|\ulu|+ |\ulv|}^{\alpha} (x)(|\ulu|(x) + |\ulv|(x)) , |\ulv|(x)  \rangle_{\ell^2(\mathbb{C})}\diff x\\
			\le\big\|\|\rho_{|\ulu|+ |\ulv|}^{\alpha}(x)(|\ulu| + |\ulv|)(x)\|_{\ell^2(\mathbb{C})}\big\|_{L^{\nu'}}\big\|\|\ulv(x)\|_{\ell^2(\mathbb{C})}\big\|_{L^\nu}.
		\end{multline*}
		From the proof of Proposition \ref{lem:power_nonlinear_estimates}, we know that the first factor is finite, while Minkowski's inequality (since $\nu\ge2$) gives that $\big\|\|\ulv(x)\|_{\ell^2(\mathbb{C})}\big\|_{L^\nu}\le\|\ulv\|_{\ell^2(L^\nu)}<\infty$.
		
		We have thus proven that $\tau\mapsto \mathscr{G}_{\mathrm{KS}}(\ulu + \tau \ulv)$ is differentiable on $(-1,1)$, with
		\begin{align*}
			\frac{\diff }{\diff \tau} \mathscr{G}_{\mathrm{KS}}(\ulu + \tau \ulv) 
			& = \mu \int_{\mathbb{R}^3} \mathrm{Re} \langle \rho_{\ulu + \tau \ulv}^{\alpha} (x) (\ulu + \tau \ulv)(x) , \ulv(x)  \rangle_{\ell^2(\mathbb{C})} \diff  x \\
			& =  \mathrm{Re} \langle \ulg_{\mathrm{KS}}(\ulu + \tau \ulv), \ulv \rangle_{\ell^2(L^{\nu^\prime}) , \ell^2(L^{\nu}) } .
		\end{align*}
		This yields the result by setting $\tau=0$.
	\end{proof}

	\subsection{Hartree non-linearity}\label{subsec:convol}
	Recall that the classical correlation term in the density functional gives rise to a non-linear term of the Hartree type,    
	\begin{equation}\label{eq:GH}
		\ulg_{\mathrm{H}}(\ulu) = (w \ast \rho_{\ulu} )\ulu .
	\end{equation}
	Here, depending on the value of $s$, $ w $ can be either a real function or a finite signed Radon measure over $\mathbb{R}^3$ both assumed to be even. Note that in the latter case, taking the Dirac delta measure $w=\delta_0$, we recover a pure power non-linearity as in the previous subsection, with~$\alpha=1$.
	
	From the expression \eqref{eq:GH} of $\ulg_{\mathrm{H}}$, we obviously see that \ref{assump1} is satisfied. The next lemma will allow us to verify that \ref{ass2stri:nonlinearity_Lp_bound} holds under suitable conditions.
	
	Recall that $\mathcal{M}_0$ stands for the set of finite, signed Radon measures over $\mathbb{R}^3$, equipped with the total variation norm $\|\cdot\|_{\mathcal{M}_0}$.
	\begin{lemma}\label{lm:Hartree}
		Let $\nu\in[2,4]$. For all $ w \in L^{\frac{\nu}{2(\nu-2)}}$ and $\ulu, \, \ulv \in \ell^2(L^\nu) $, 
		\begin{equation}\label{est:Hartree_Lp_1}
			\| (w \ast \rho_{\ulu}) \ulu - (w \ast \rho_{\ulv}) \ulv \|_{\ell^2(L^{\nu^\prime})} \leq \frac{3}{2} \| w \|_{L^{\frac{\nu}{2(\nu-2)}}} ( \| \ulu  \|_{\ell^2(L^\nu)}^2 +\| \ulv \|_{\ell^2(L^{\nu})}^2  ) \| \ulu-\ulv \|_{\ell^2(L^{\nu})} .
		\end{equation}
		For all $ w \in \mathcal{M}_0$ and $\ulu, \, \ulv \in \ell^2(L^4) $,
		\begin{equation}\label{est:Hartree_Lp_2}
			\| (w \ast \rho_{\ulu}) \ulu - (w \ast \rho_{\ulv}) \ulv \|_{\ell^2(L^{4/3})} \leq \frac{3}{2} \| w \|_{\mathcal{M}_0} ( \| \ulu  \|_{\ell^2(L^4)}^2 +\| \ulv \|_{\ell^2(L^{4})}^2  ) \| \ulu-\ulv \|_{\ell^2(L^{4})} .
		\end{equation}
	\end{lemma}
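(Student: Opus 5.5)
We split the difference algebraically as
\begin{equation*}
(w\ast\rho_{\ulu})\ulu-(w\ast\rho_{\ulv})\ulv=(w\ast\rho_{\ulu})(\ulu-\ulv)+\big(w\ast(\rho_{\ulu}-\rho_{\ulv})\big)\ulv ,
\end{equation*}
and estimate each term in $\ell^2(L^{\nu'})$. Since the scalar factor $w\ast\rho$ is common to all components $k$, we have pointwise $\|(f\,\ulu)(x)\|_{\ell^2(\mathbb{C})}=|f(x)|\,\rho_{\ulu}^{1/2}(x)$. By Minkowski's inequality ($\nu'\le2$), as in \eqref{eq:Minkowski}, it therefore suffices to bound $\|f\rho_{\ulu}^{1/2}\|_{L^{\nu'}}$ for the relevant scalar function $f$. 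By Hölder with $\frac1{\nu'}=\frac{\nu-2}{\nu}+\frac1\nu$, this is at most $\|f\|_{L^{\nu/(\nu-2)}}\|\rho_{\ulu}^{1/2}\|_{L^\nu}$, and the last factor is bounded by $\|\ulu\|_{\ell^2(L^\nu)}$ by \eqref{est:densities_Lp}.

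To bound $f=w\ast\rho$ in $L^{\nu/(\nu-2)}$, we use Young's inequality with $\rho\in L^{\nu/2}$. The exponents must satisfy $1+\frac{\nu-2}{\nu}=\frac{2(\nu-2)}{\nu}+\frac{2}{\nu}$, which holds exactly for $w\in L^{\nu/(2(\nu-2))}$. The constraint $\nu\le4$ ensures this exponent is at least $1$, and at $\nu=4$ the exponent is $1$, where $L^1$ may be replaced by $\mathcal{M}_0$ via Young's inequality for measures. Applied to the first term, this gives $\|w\|\,\|\ulu\|_{\ell^2(L^\nu)}^2\|\ulu-\ulv\|_{\ell^2(L^\nu)}$.

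The second term is the main point: we must bound $\|\rho_{\ulu}-\rho_{\ulv}\|_{L^{\nu/2}}$ linearly in $\ulu-\ulv$. Pointwise, using \eqref{est:pointwise_densities_1/2},
\begin{equation*}
|\rho_{\ulu}-\rho_{\ulv}|=|\rho_{\ulu}^{1/2}-\rho_{\ulv}^{1/2}|\,(\rho_{\ulu}^{1/2}+\rho_{\ulv}^{1/2})\le\rho_{\ulu-\ulv}^{1/2}(\rho_{\ulu}^{1/2}+\rho_{\ulv}^{1/2}).
\end{equation*}
Hölder in $L^{\nu/2}$ with two $L^\nu$ factors, followed by \eqref{est:densities_Lp}, then gives
\begin{equation*}
\|\rho_{\ulu}-\rho_{\ulv}\|_{L^{\nu/2}}\le\|\ulu-\ulv\|_{\ell^2(L^\nu)}\big(\|\ulu\|_{\ell^2(L^\nu)}+\|\ulv\|_{\ell^2(L^\nu)}\big).
\end{equation*}
Combining this with Young's inequality and the extra factor $\|\ulv\|_{\ell^2(L^\nu)}$, the second term is bounded by $\|w\|\,\|\ulu-\ulv\|\,(\|\ulu\|\|\ulv\|+\|\ulv\|^2)$.

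Finally we collect terms:
\begin{equation*}
\|\ulu\|^2+\|\ulu\|\|\ulv\|+\|\ulv\|^2\le\tfrac32\big(\|\ulu\|^2+\|\ulv\|^2\big)
\end{equation*}
by $ab\le\frac12(a^2+b^2)$, which yields the constant $\frac32$ in both \eqref{est:Hartree_Lp_1} and \eqref{est:Hartree_Lp_2}.

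The only delicate points are checking the Hölder and Young exponents and justifying Young's inequality for measures in the case $\nu=4$ (by duality, or by Minkowski's integral inequality applied to $\int\rho(\cdot-y)\,\diff w(y)$). We expect no real obstacle beyond these checks.
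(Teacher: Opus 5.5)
Your proof is correct and is essentially the paper's argument. It uses the same splitting $(w\ast\rho_{\ulu})(\ulu-\ulv)+\big(w\ast(\rho_{\ulu}-\rho_{\ulv})\big)\ulv$, H\"older with $\frac{1}{\nu'}=\frac{\nu-2}{\nu}+\frac1\nu$, Young with $1+\frac{\nu-2}{\nu}=\frac{2(\nu-2)}{\nu}+\frac2\nu$ (Minkowski for $w\in\mathcal{M}_0$), and the factorization $\rho_{\ulu}-\rho_{\ulv}=(\rho_{\ulu}^{1/2}+\rho_{\ulv}^{1/2})(\rho_{\ulu}^{1/2}-\rho_{\ulv}^{1/2})$ with \eqref{est:pointwise_densities_1/2}; the only cosmetic difference is that the paper applies H\"older componentwise in $k$ rather than through the pointwise $\ell^2(\mathbb{C})$ norm and Minkowski, and your linear bound $\|\rho_{\ulu}-\rho_{\ulv}\|_{L^{\nu/2}}\le(\|\ulu\|_{\ell^2(L^\nu)}+\|\ulv\|_{\ell^2(L^\nu)})\|\ulu-\ulv\|_{\ell^2(L^\nu)}$ is the correctly homogeneous one (the paper's display shows squared norms there, evidently a typo), which is what produces the constant $\frac32$.
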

	\begin{proof}
		We begin by proving \eqref{est:Hartree_Lp_1}. Consider $ \nu \in \left[ 2,  \twoasts  \right) $. Using first the H{\"o}lder inequality, as~$1/\nu^{\prime} = (\nu-2)/\nu + 1/\nu$, and then Young's inequality, as $ 1 + (\nu-2)/\nu = 2(\nu-2)/\nu + 2/\nu$, we obtain
		\begin{align*}
			\| (w \ast \rho_{\ulu}) &\ulu - (w \ast \rho_{\ulv})  \ulv \|_{\ell^2(L^{\nu^\prime})} \\
			& \leq \| w \ast \rho_{\ulu} \|_{L^{\frac{\nu}{\nu-2}}}  \| \ulu - \ulv \|_{\ell^2(L^{\nu})} + \| w \ast (\rho_{\ulu} - \rho_{\ulv}) \|_{L^{\frac{\nu}{\nu-2}}} \| \ulv \|_{\ell^2(L^{\nu})}  \\
			& \leq \| w \|_{L^{\frac{\nu}{2(\nu-2)}}} \| \rho_{\ulu} \|_{L^{\frac{\nu}{2}}} \| \ulu - \ulv \|_{\ell^2(L^{\nu})} + \| w \|_{L^{\frac{\nu}{2(\nu-2)}}} \| \rho_{\ulu} - \rho_{\ulv} \|_{L^{\frac{\nu}{2}}} \| \ulv \|_{\ell^2(L^{\nu})} .
		\end{align*}
		The identity 
		\begin{equation*}
			\rho_{\ulu} - \rho_{\ulv} = ( \rho_{\ulu}^{1/2} + \rho_{\ulv}^{1/2} ) ( \rho_{\ulu}^{1/2} - \rho_{\ulv}^{1/2} ), 
		\end{equation*}
		together with \eqref{est:pointwise_densities_1/2}, \eqref{est:densities_Lp} and the H{\"o}lder inequality, yields
		\begin{equation*}
			\| \rho_{\ulu} \|_{L^{\frac{\nu}{2}}} \leq \| \ulu  \|_{\ell^2(L^{\nu})}^2 , \quad \| \rho_{\ulu} - \rho_{\ulv} \|_{L^{\frac{\nu}{2}}} \leq (\| \ulu  \|_{\ell^2(L^{\nu})}^2 +\| \ulv \|_{\ell^2(L^{\nu})}^2  ) \| \ulu-\ulv \|_{\ell^2(L^{\nu})} .
		\end{equation*}
		Composing the estimates, we obtain \eqref{est:Hartree_Lp_1}.
		
		Now we prove \eqref{est:Hartree_Lp_2}. Consider $ s > \frac{3}{4} $, $ w \in \mathcal{M}_0 $ and take $\nu=4$. We proceed as in the previous case, the only difference being that the convolutions $ (w \ast f) (x) \coloneq \int_{\mathbb{R}^3} f (x-y) \diff w (y) $ are bounded linear operators on $L^p, \, p \in \left[1,\infty \right] $ as
		\begin{equation*}
			\| w \ast f \|_{L^p_x} \leq \int_{\mathbb{R}^3} \| f(x-y) \|_{L^p_y} \diff |w| (x) \leq \| w \|_{\mathcal{M}_0} \| f \|_{L^p},
		\end{equation*}
		by Minkowski inequality.
		Therefore,
		\begin{equation*}
			\| w \ast \rho_{\ulu} \|_{L^2} \leq \| w \|_{\mathcal{M}_0} \| \rho_{\ulu} \|_{L^2}, \quad \| w \ast (\rho_{\ulu} - \rho_{\ulv} ) \|_{L^2} \leq \| w \|_{\mathcal{M}_0} \| \rho_{\ulu} - \rho_{\ulv} \|_{L^2} .
		\end{equation*}
		Following the same steps as before, \eqref{est:Hartree_Lp_2} follows. 
	\end{proof}
	
	Using Lemma \ref{lm:Hartree}, it is not difficult to deduce the following proposition.
	
	\begin{prop}\label{prop:Hartree(N2)}$ $
		\begin{enumerate}[label=\roman*)]
			\item Let $s\in(\frac34,\frac32)$. For all $w=w_1+w_2\in\mathcal{M}_0+L^\infty$, $\ulg_{\mathrm{H}}=\ulg_{\mathrm{H},1}+\ulg_{\mathrm{H},2}$ satisfies \ref{ass2stri:nonlinearity_Lp_bound} with~$\nu_1=4$ and~$\nu_2=2$.
			\item Let $s\in(0,\frac34]$ and $\varepsilon>0$. For all $w=w_1+w_2\in L^{\frac{3}{4s}+\varepsilon}+L^\infty$, $\ulg_{\mathrm{H}}=\ulg_{\mathrm{H},1}+\ulg_{\mathrm{H},2}$ satisfies \ref{ass2stri:nonlinearity_Lp_bound} with 
			\begin{equation}\label{eq:nu(epsilon)}
				\nu_1=\nu(\varepsilon) = \frac{2(3+4s\varepsilon)}{3+4s\varepsilon-2s}
			\end{equation}
			and $\nu_2=2$.
		\end{enumerate}
	\end{prop}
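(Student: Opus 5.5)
The plan is to split $\ulg_{\mathrm{H}}(\ulu)=(w_1*\rho_{\ulu})\ulu+(w_2*\rho_{\ulu})\ulu=:\ulg_{\mathrm{H},1}(\ulu)+\ulg_{\mathrm{H},2}(\ulu)$ and verify \ref{ass2stri:nonlinearity_Lp_bound} for each piece separately, with $J=\{1,2\}$. The argument has three ingredients. First, check that each $\nu_j$ lies in $[2,\min\{\twoasts,6\})$. Second, show that $\ulg_{\mathrm{H},j}$ maps $\ell^2(L^{\nu_j})$ continuously into $\ell^2(L^{\nu_j'})$; continuity follows directly from the Lipschitz-type estimates below, since these hold on bounded sets of $\ell^2(L^{\nu_j})$. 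Third, obtain the local Lipschitz bound on $\ell^2(H^s)$-balls by combining Lemma~\ref{lm:Hartree} with the Sobolev embedding \eqref{eq:sobolev_embedding_plus}.

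The $L^\infty$ part $w_2$ is common to both cases, and I take $\nu_2=2$. We can apply \eqref{est:Hartree_Lp_2} with $L^2$ in place of $L^4$; more directly, we use $\|w_2*f\|_{L^\infty}\le\|w_2\|_{L^\infty}\|f\|_{L^1}$ together with $\|\rho_{\ulu}\|_{L^1}=\|\ulu\|_{\ell^2(L^2)}^2$. Copying the proof of Lemma~\ref{lm:Hartree} (Hölder with exponents $\infty$ and $2$, and $\|\rho_{\ulu}-\rho_{\ulv}\|_{L^1}\le(\|\ulu\|_{\ell^2(L^2)}+\|\ulv\|_{\ell^2(L^2)})\|\ulu-\ulv\|_{\ell^2(L^2)}$) gives
\[
\|\ulg_{\mathrm{H},2}(\ulu)-\ulg_{\mathrm{H},2}(\ulv)\|_{\ell^2(L^2)}\lesssim\|w_2\|_{L^\infty}\big(\|\ulu\|_{\ell^2(L^2)}^2+\|\ulv\|_{\ell^2(L^2)}^2\big)\|\ulu-\ulv\|_{\ell^2(L^2)}.
\]
Formally this is the case $\nu=2$ of \eqref{est:Hartree_Lp_1}, where $\frac{\nu}{2(\nu-2)}=\infty$. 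On $B_{\ell^2(H^s)}(\ulzero,M)$ the prefactor is at most $2M^2\|w_2\|_{L^\infty}$.

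For case i), $s>\frac34$ gives $\twoasts=\frac{6}{3-2s}>4$, so $\nu_1=4\in[2,\min\{\twoasts,6\})$. Estimate \eqref{est:Hartree_Lp_2}, combined with $\|\ulu\|_{\ell^2(L^4)}\le C_s\|\ulu\|_{\ell^2(H^s)}\le C_sM$, yields $L_M=3C_s^2M^2\|w_1\|_{\mathcal{M}_0}$.

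For case ii), take $w_1\in L^{q}$ with $q=\frac{3}{4s}+\varepsilon$. Solving $\frac{\nu}{2(\nu-2)}=q$ gives $\nu=\frac{4q}{2q-1}$, which simplifies to \eqref{eq:nu(epsilon)}. We need $\nu(\varepsilon)\in[2,4]$ for Lemma~\ref{lm:Hartree}, and this requires $q\ge1$. Here $s\le\frac34$ gives $\frac{3}{4s}\ge1$, so $q>1$ and $\nu(\varepsilon)<4$. We also need $\nu(\varepsilon)<\twoasts$: since $\nu$ is decreasing in $q$ and equals $\twoasts$ exactly at $q=\frac{3}{4s}$, the assumption $\varepsilon>0$ gives strict inequality. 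Finally $\twoasts\le 4<6$ in this range of $s$. Then \eqref{est:Hartree_Lp_1} and \eqref{eq:sobolev_embedding_plus} give the Lipschitz bound as in case i).

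The only delicate point is this exponent bookkeeping: confirming that $\nu(\varepsilon)\in[2,4]\cap[2,\twoasts)$, and that the strict inequality $\nu<\twoasts$ really requires $\varepsilon>0$. Everything else is a direct application of Lemma~\ref{lm:Hartree}.
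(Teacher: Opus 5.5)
Your proposal is correct and follows the paper's proof. Like the paper, you split $w=w_1+w_2$, apply Lemma~\ref{lm:Hartree} (treating the $L^\infty$ part as the formal case $\nu=2$ of \eqref{est:Hartree_Lp_1}) and then the Sobolev embedding, and your explicit check that $\nu(\varepsilon)\in[2,4]\cap[2,\twoasts)$ and $\nu_j<6$ supplies bookkeeping the paper leaves implicit. The passing remark about applying \eqref{est:Hartree_Lp_2} ``with $L^2$ in place of $L^4$'' is wrong and should be dropped, since $w_2\in L^\infty$ is not a finite measure; the direct $L^\infty$--$L^1$ argument you give right after it is the correct one.
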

	
	\begin{proof}
		In both cases, it suffices to split the Hartree non-linearity in two convolution terms of the form~$ \ulg_1(\ulu) = (w_{\mathrm{H},1} \ast \rho_{\ulu}) \ulu$ and $\ulg_2(\ulu) = (w_{\mathrm{H},2} \ast \rho_{\ulu}) \ulu $ and then apply the previous lemma, yielding the estimates 
		\begin{equation}    
			\| (w_2 \ast \rho_{\ulu}) \ulu - (w_2 \ast \rho_{\ulv}) \ulv \|_{\ell^2(L^{2})} \leq  3 C_{s}^2 \| w_2 \|_{L^\infty} ( \| \ulu  \|_{\ell^2(H^s)} +\| \ulv \|_{\ell^2(H^s)}  )^2 \| \ulu-\ulv \|_{\ell^2(L^{2})} ,
		\end{equation}
		and either
		\begin{equation}
			\| (w_1 \ast \rho_{\ulu}) \ulu - (w_1 \ast \rho_{\ulv}) \ulv \|_{\ell^2(L^{4/3})} \leq  3 C_{s}^2 \| w_1 \|_{\mathcal{M}_0} ( \| \ulu  \|_{\ell^2(H^s)} +\| \ulv \|_{\ell^2(H^s)}  )^2 \| \ulu-\ulv \|_{\ell^2(L^{4})}
		\end{equation}
		if $s\in(\frac34,\frac32)$, or
		\begin{equation}
			\| (w_1 \ast \rho_{\ulu}) \ulu - (w_1 \ast \rho_{\ulv}) \ulv \|_{\ell^2(L^{\nu(\varepsilon)^\prime})} \leq  3 C_{s}^2 \| w_1 \|_{L^{\frac{3}{4s}+\varepsilon}} ( \| \ulu  \|_{\ell^2(H^s)} +\| \ulv \|_{\ell^2(H^s)}  )^2 \| \ulu-\ulv \|_{\ell^2(L^{\nu(\varepsilon)})} ,\end{equation}
		if $s\in(0,\frac34]$.

	\end{proof}
	It remains to verify that condition \ref{ass3:nonlinear_energy_derivative} holds. The energy associated with the Hartree term is given by
	\begin{equation}
		\mathscr{G}_{\mathrm{H}}(\ulu) = \frac{1}{4} \int_{\mathbb{R}^3} (w \ast \rho_{\ulu})(x) \, \rho_{\ulu}(x) \diff x.
	\end{equation}
	From the proof of Lemma \ref{lm:Hartree}, it follows that $\mathscr{G}_{\mathrm{H}}(\ulu)$ is well-defined under the conditions of Proposition \ref{prop:Hartree(N2)}, namely that $w\in\mathcal{M}_0+L^\infty$ and $\ulu\in \ell^2(L^2\cap L^{4})$ if $s\in(\frac34,\frac32)$, or that~$w$ is in~$L^{\frac{3}{4s}+\varepsilon}+L^\infty$ and $\ulu\in \ell^2(L^2\cap L^{\nu(\varepsilon)})$ if $s\in(0,\frac34]$, with $\nu(\varepsilon)$ given by \eqref{eq:nu(epsilon)}. The next proposition shows that \ref{ass3:nonlinear_energy_derivative} holds under the same conditions.
	\begin{prop}
		Assume that $w$ is real and even. Suppose that either:
		\begin{enumerate}[label=\roman*)]
			\item $s\in(\frac34,\frac32)$, $\nu=4$, $w\in\mathcal{M}_0+L^\infty$ and $\ulu,\ulv\in \ell^2(L^2\cap L^{4})$, or
			\item $s\in(0,\frac34]$, $w\in L^{\frac{3}{4s}+\varepsilon}+L^\infty$ for some $\varepsilon>0$, and $\ulu,\ulv\in \ell^2(L^2\cap L^{\nu(\varepsilon)})$ with $\nu=\nu(\varepsilon)$ given by \eqref{eq:nu(epsilon)}
		\end{enumerate}
		holds. Then
		\begin{equation}\label{eq:diffHartree}
			\frac{\diff}{\diff \tau} \mathscr{G}_{\mathrm{H}}(\ulu + \tau \ulv) \Big|_{\tau=0} = \mathrm{Re} \langle (w \ast \rho_{\ulu} ) \ulu, \ulv \rangle_{\ell^2(L^2+L^{\nu^\prime}),\ell^2(L^2\cap L^\nu)} .
		\end{equation}
		In particular, \ref{ass3:nonlinear_energy_derivative} holds with the same $\nu$ as in Proposition \ref{prop:Hartree(N2)}.
	\end{prop}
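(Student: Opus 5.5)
The plan is to exploit the fact that $\tau\mapsto\mathscr{G}_{\mathrm{H}}(\ulu+\tau\ulv)$ is a polynomial of degree four in $\tau$, so no limiting argument under the integral is needed. The only work is to check that every coefficient is finite and to use the evenness of $w$ to symmetrise.

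\textbf{Step 1: the bilinear form.} Introduce
\begin{equation*}
B(f,h)=\int_{\mathbb{R}^3}(w\ast f)(x)\,h(x)\diff x ,
\end{equation*}
so that $\mathscr{G}_{\mathrm{H}}(\ulu)=\frac14 B(\rho_{\ulu},\rho_{\ulu})$. Split $w=w_1+w_2$ as in Proposition~\ref{prop:Hartree(N2)}. The form is bounded on the following spaces.
\begin{itemize}
\item For $w_2\in L^\infty$: $|B_2(f,h)|\le\|w_2\|_{L^\infty}\|f\|_{L^1}\|h\|_{L^1}$ on $L^1\times L^1$.
\item For $w_1\in\mathcal{M}_0$ in case i): $|B_1(f,h)|\le\|w_1\|_{\mathcal{M}_0}\|f\|_{L^2}\|h\|_{L^2}$ on $L^2\times L^2$, by the Minkowski bound used in the proof of Lemma~\ref{lm:Hartree}.
\item For $w_1\in L^{\frac{3}{4s}+\varepsilon}$ in case ii): $B_1$ is bounded on $L^{\nu/2}\times L^{\nu/2}$ by Young and Hölder. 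Here $\nu=\nu(\varepsilon)$ is chosen exactly so that $\frac{3}{4s}+\varepsilon=\frac{\nu}{2(\nu-2)}$, as in Lemma~\ref{lm:Hartree}.
\end{itemize}
Since $w$ is real and even, Fubini (justified by the finiteness of $\iint|w(x-y)||f(y)||h(x)|$, which the same bounds give with $|w|,|f|,|h|$) shows that $B$ is real-bilinear and symmetric on real functions. For a measure $w_1$ one uses $\diff|w_1|$ and the reflection invariance of $w_1$.

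\textbf{Step 2: expansion of the density.} Pointwise,
\begin{equation*}
\rho_{\ulu+\tau\ulv}=\rho_{\ulu}+2\tau\,\mathrm{Re}\langle\ulu,\ulv\rangle_{\ell^2(\mathbb{C})}+\tau^2\rho_{\ulv}.
\end{equation*}
Each of the three functions $\rho_{\ulu}$, $\mathrm{Re}\langle\ulu,\ulv\rangle_{\ell^2(\mathbb{C})}$, $\rho_{\ulv}$ lies in $L^1\cap L^{\nu/2}$. This follows from the pointwise Cauchy--Schwarz bound $|\langle\ulu,\ulv\rangle_{\ell^2(\mathbb{C})}|\le\rho_{\ulu}^{1/2}\rho_{\ulv}^{1/2}$, from \eqref{est:densities_Lp}, and from Hölder's inequality, using $\ulu,\ulv\in\ell^2(L^2\cap L^\nu)$; in case i) $\nu/2=2$. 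Plugging the expansion into $\frac14B(\rho,\rho)$ and using bilinearity and symmetry gives a polynomial in $\tau$ with finite coefficients. Its derivative at $\tau=0$ is
\begin{equation*}
\tfrac14\cdot 2\,B\big(\rho_{\ulu},2\,\mathrm{Re}\langle\ulu,\ulv\rangle_{\ell^2(\mathbb{C})}\big)=\mathrm{Re}\int_{\mathbb{R}^3}(w\ast\rho_{\ulu})\sum_k\overline{u_k}\,v_k\diff x .
\end{equation*}

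\textbf{Step 3: identification with the duality pairing.} It remains to identify this integral with the pairing in \eqref{eq:diffHartree}. We know $(w\ast\rho_{\ulu})\ulu\in\ell^2(L^2+L^{\nu'})$ by the estimates of Lemma~\ref{lm:Hartree} applied with $\ulv=\ulzero$ (the $w_2$ part lands in $\ell^2(L^2)$ and the $w_1$ part in $\ell^2(L^{\nu'})$). For such elements the pairing with $\ell^2(L^2\cap L^\nu)$ is given by $\sum_k\int$. Exchanging $\sum_k$ and $\int$ is allowed by dominated convergence, since $\sum_k|u_k||v_k|\le\rho_{\ulu}^{1/2}\rho_{\ulv}^{1/2}$ and $|w\ast\rho_{\ulu}|\rho_{\ulu}^{1/2}\rho_{\ulv}^{1/2}$ is integrable by the bounds of Step 1.

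\textbf{Main obstacle.} I expect the delicate point to be the rigorous justification of Fubini and the symmetry $B(f,h)=B(h,f)$ when $w_1$ is only a measure and the functions are mixed-integrability (e.g.\ $w_2\ast\rho$ paired with $L^1$ functions and $w_1\ast\rho$ with $L^{\nu/2}$ ones). I would handle it by proving all bounds first for $|w|$ and $|f|,|h|$, which gives absolute convergence, and only then applying Fubini and the change of variables $y\mapsto-y$ using evenness.
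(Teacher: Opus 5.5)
Your argument is correct, but it is organized differently from the paper's.

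\textbf{The paper's route.} The paper differentiates the integrand $(w\ast\rho_{\ulu+\tau\ulv})\,\rho_{\ulu+\tau\ulv}$ pointwise in $\tau$. It then builds a dominating function valid for all $\tau\in(-1,1)$ from the H\"older--Young bounds of Lemma~\ref{lm:Hartree}, and differentiates under the integral sign. Only at the end does it use that $w$ is real and even, to merge the two resulting terms into $\int(w\ast\rho_{\ulu+\tau\ulv})\,\mathrm{Re}\langle\ulu+\tau\ulv,\ulv\rangle_{\ell^2(\mathbb{C})}$.

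\textbf{Your route.} You observe that $\rho_{\ulu+\tau\ulv}$ is a quadratic polynomial in $\tau$ with coefficients in $L^1\cap L^{\nu/2}$. Hence $\mathscr{G}_{\mathrm{H}}(\ulu+\tau\ulv)$ is literally a quartic polynomial in $\tau$. Its linear coefficient is $\frac14\big(B(\rho_{\ulu},b)+B(b,\rho_{\ulu})\big)$ with $b=2\,\mathrm{Re}\langle\ulu,\ulv\rangle_{\ell^2(\mathbb{C})}$, and evenness of $w$ enters only through the symmetry of $B$.

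\textbf{Comparison.} Your approach needs no domination and no differentiation under the integral, and it gives smoothness in $\tau$ for free. It also makes explicit the Fubini justification of the symmetrization, which the paper passes over with ``using that $w$ is real and even''. The paper's template, on the other hand, is the same one it uses for the power energy $\int\rho_{\ulu}^{\alpha+1}$, where no polynomial structure exists, so it treats both examples uniformly. The same integrability bounds underlie both routes.

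\textbf{Minor point.} The pieces of the splitting $w=w_1+w_2$ need not be individually even. You can replace each $w_j$ by $\tfrac12\big(w_j+w_j(-\,\cdot)\big)$ without changing $w$, so your appeal to the reflection invariance of $w_1$ is harmless.
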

	\begin{proof}
		Consider for instance the case $(ii)$. For a.e. $x\in\mathbb{R}^3$, we have
		\begin{equation*}
			\frac{\diff}{\diff \tau} \big( (w \ast \rho_{\ulu + \tau \ulv} )(x) \, \rho_{\ulu + \tau \ulv}(x) \big)
			= (w \ast \frac{\diff}{\diff \tau} \rho_{\ulu + \tau \ulv} )(x)  \, \rho_{\ulu + \tau \ulv}(x) + (w \ast \rho_{\ulu +\tau \ulv } )(x)  \, \frac{\diff}{\diff \tau} \rho_{\ulu + \tau \ulv} (x) ,
		\end{equation*}
		with $ \frac{\diff}{\diff \tau} \rho_{\ulu + \tau \ulv} (x) = 2 \mathrm{Re} \langle {\ulu} + \tau {\ulv} , \ulv\rangle_{\ell^2(\mathbb{C})} (x) $. For $\tau\in(-1,1)$, we can then estimate
		\begin{multline*}
			\Big|\frac{\diff}{\diff \tau} \big( (w \ast \rho_{\ulu + \tau \ulv} )(x)  \, \rho_{\ulu + \tau \ulv}(x) \big)\Big|\\
			\le 2\langle|w| \ast ( |\ulu| + |\ulv|) , |\ulv| \rangle_{\ell^2(\mathbb{C})} (x)  \, \rho_{|\ulu| + |\ulv|}(x) + 2(|w| \ast \rho_{|\ulu| + |\ulv| } )(x)  \, \langle |\ulu| + |\ulv| , |\ulv| \rangle_{\ell^2(\mathbb{C})} (x) .
		\end{multline*}
		Using Hölder and Young inequalities as in the proof of Lemma \ref{lm:Hartree}, we see that the right-hand side is an integrable function.
		Therefore,
		\begin{multline*}
			\frac{\diff}{\diff \tau} \mathscr{G}_{\mathrm{H}}(\ulu + \tau \ulv) \\
			= \frac{1}{2} \int_{\mathbb{R}^3} \big( (w \ast \mathrm{Re} \langle {\ulu} + \tau {\ulv} , \ulv \rangle_{\ell^2(\mathbb{C})} ) (x)  \, \rho_{\ulu + \tau \ulv}(x) + (w \ast \rho_{\ulu +\tau \ulv } )(x)  \, \mathrm{Re} \langle {\ulu} + \tau {\ulv} , \ulv\rangle_{\ell^2(\mathbb{C})} (x) \big) \diff x .
		\end{multline*}
		Using that $w$ is real and even we deduce that
		\begin{align*}
			\frac{\diff}{\diff \tau} \mathscr{G}_{\mathrm{H}}(\ulu + \tau \ulv)  &=  \int_{\mathbb{R}^3} (w \ast \rho_{\ulu +\tau \ulv } )(x)   \, \mathrm{Re} \langle {\ulu} + \tau {\ulv} , \ulv\rangle_{\ell^2(\mathbb{C})} (x) \diff x \\
			& = \mathrm{Re} \langle (w \ast \rho_{\ulu +\tau \ulv } ) ( \ulu +\tau \ulv) , \ulv \rangle_{\ell^2(L^2+L^{\nu^\prime}),\ell^2(L^2\cap L^{\nu})} ,
		\end{align*}
		which implies \eqref{eq:diffHartree}.
	\end{proof}
	
	\subsection{External potential}\label{subsec:ext}
	
	We consider here the contribution of an external potential, which we recall is treated as a ``non-linearity'' for convenience. Recall that
	\begin{equation*}
		\ulg_V (\ulu) := V\ulu
	\end{equation*}
	Obviously, $\ulg_V$ satisfies \ref{assump1}, and it is not difficult to verify that \ref{ass2stri:nonlinearity_Lp_bound} and \ref{ass3:nonlinear_energy_derivative} also hold for~$V$ in suitable $L^p$ spaces, as we explain in this subsection.
	
	The next lemma is proved by the H{\"o}lder inequality.
	\begin{lemma}
		Let $\nu\ge2$ and $V \in L^{\frac{\nu}{\nu-2}}$. Then, for all $\ulu,\ulv\in\ell^2(L^\nu)$, 
		\begin{equation}\label{est:ext_pot_Lp}
			\| V \ulu - V \ulv \|_{\ell^2(L^{\nu'})} \leq \| V \|_{L^{\frac{\nu}{\nu-2}}} \| \ulu - \ulv \|_{\ell^2(L^\nu)} . 
		\end{equation}
	\end{lemma}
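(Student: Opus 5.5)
The estimate \eqref{est:ext_pot_Lp} is linear in the difference. Since $V\ulu - V\ulv = V(\ulu-\ulv)$, it suffices to set $\ulw = \ulu - \ulv \in \ell^2(L^\nu)$ and prove
\begin{equation*}
\|V\ulw\|_{\ell^2(L^{\nu'})} \le \|V\|_{L^{\frac{\nu}{\nu-2}}}\,\|\ulw\|_{\ell^2(L^\nu)} .
\end{equation*}
The plan is to work componentwise and then sum in $\ell^2$.

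For each fixed $k\in\mathbb{N}$, apply H\"older's inequality in $x$ with the exponent relation
\begin{equation*}
\frac{1}{\nu'} = 1-\frac1\nu = \frac{\nu-2}{\nu} + \frac{1}{\nu} .
\end{equation*}
This relation is admissible since $\nu\ge2$, so that $\frac{\nu}{\nu-2}\in(1,\infty]$, with the convention $\frac{\nu}{\nu-2}=\infty$ when $\nu=2$. It gives
\begin{equation*}
\|V w_k\|_{L^{\nu'}} \le \|V\|_{L^{\frac{\nu}{\nu-2}}}\,\|w_k\|_{L^\nu} .
\end{equation*}

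Squaring this bound and summing over $k$ yields
\begin{equation*}
\sum_k \|V w_k\|_{L^{\nu'}}^2 \le \|V\|_{L^{\frac{\nu}{\nu-2}}}^2 \sum_k \|w_k\|_{L^\nu}^2 .
\end{equation*}
Taking square roots gives the claim with constant exactly $1$. Unlike the power and Hartree cases, no Minkowski inequality in $\ell^2(\mathbb{C})$ is needed, because $V$ does not depend on $k$.

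There is no real obstacle in this argument. The only points to check are the endpoint $\nu=2$, where the estimate reads $\|V\ulw\|_{\ell^2(L^2)}\le \|V\|_{L^\infty}\|\ulw\|_{\ell^2(L^2)}$, and measurability, which is immediate. In the intended application, a decomposition $V=V_1+V_2\in L^{\frac{3}{2s}+\varepsilon}+L^\infty$ gives \ref{ass2stri:nonlinearity_Lp_bound}. One takes $\nu_2=2$ for $V_2$ and chooses $\nu_1$ so that $\frac{\nu_1}{\nu_1-2}=\frac{3}{2s}+\varepsilon$, which lies in $[2,\twoasts)$ because $\varepsilon>0$.
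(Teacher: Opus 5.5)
Your proof is correct and follows the paper's approach: the paper only states that the lemma follows from H\"older's inequality, which is exactly what you apply componentwise with $\frac{1}{\nu'}=\frac{\nu-2}{\nu}+\frac{1}{\nu}$ before summing in $\ell^2$.
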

	Applying this lemma we easily obtain the following proposition.
	\begin{prop}\label{prop:external}
		Let $s\in(0,\frac32)$ and $\varepsilon>0$. Let $V=V_1+V_2\in L^{\frac{3}{2s}+\varepsilon}+L^\infty$. Then~$\ulg_V=\ulg_{V_1}+\ulg_{V_2}$ satisfies~\ref{ass2stri:nonlinearity_Lp_bound} with
		\begin{equation}\label{eq:nu(epsilon)2}
			\nu_1=\nu(\varepsilon)=\frac{6+4s\varepsilon}{3+2s\varepsilon -2s}
		\end{equation}
		and $\nu_2=2$.
	\end{prop}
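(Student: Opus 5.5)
The plan is to split $\ulg_V$ as $\ulg_{V_1}+\ulg_{V_2}$, treat each piece with the preceding lemma (estimate \eqref{est:ext_pot_Lp}) for a suitable exponent, and then check the two requirements of \ref{ass2stri:nonlinearity_Lp_bound}: $\nu_j\in[2,\min\{\twoasts,6\})$ and $\ulg_j\in\mathscr{C}(\ell^2(L^{\nu_j});\ell^2(L^{\nu_j'}))$ with the Lipschitz bound.

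For $V_2\in L^\infty$ we take $\nu_2=2$. Then $\frac{\nu_2}{\nu_2-2}=\infty$, and \eqref{est:ext_pot_Lp} (or simply $|V_2u_k-V_2v_k|\le\|V_2\|_{L^\infty}|u_k-v_k|$ pointwise) gives
\begin{equation*}
\|V_2\ulu-V_2\ulv\|_{\ell^2(L^2)}\le\|V_2\|_{L^\infty}\|\ulu-\ulv\|_{\ell^2(L^2)} .
\end{equation*}
This is a global Lipschitz bound, so $\Clip{M}$ can be taken independent of $M$, and continuity follows.

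For $V_1\in L^{\frac{3}{2s}+\varepsilon}$ we choose $\nu_1$ so that $\frac{\nu_1}{\nu_1-2}=\frac{3}{2s}+\varepsilon=:q$. Solving gives $\nu_1=\frac{2q}{q-1}=\frac{2(3+2s\varepsilon)}{3+2s\varepsilon-2s}$, which is \eqref{eq:nu(epsilon)2}. Since $q>1$, we have $\nu_1>2$. Because $\nu\mapsto\frac{\nu}{\nu-2}$ is decreasing and equals $\frac{3}{2s}$ at $\nu=\twoasts$, the condition $q>\frac{3}{2s}$ gives $\nu_1<\twoasts$. Estimate \eqref{est:ext_pot_Lp} then yields the required Lipschitz bound with constant $\|V_1\|_{L^q}$, again uniform in $M$, and hence continuity.

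The only point needing care is the constraint $\nu_1<6$, relevant when $s>1$ where $\twoasts>6$. It holds because $\frac{\nu_1}{\nu_1-2}=q>\frac{3}{2s}>1$ and $\nu_1<6$ is equivalent to $q>\frac32$. If $q\le\frac32$, which can happen for $s>1$ with small $\varepsilon$, I would first use $V_1\in L^{q}\subset L^{3/2+\delta}+L^\infty$ (split $V_1=V_1\mathbbm{1}_{|V_1|>1}+V_1\mathbbm{1}_{|V_1|\le1}$), or equivalently enlarge $\varepsilon$. The $L^\infty$ part is absorbed into $V_2$, and the adjusted exponent is still given by formula \eqref{eq:nu(epsilon)2} for the adjusted $\varepsilon$. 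Finally we set $\nu=\max\{\nu_1,\nu_2\}=\nu_1$ and note that the sum restricted to $\ell^2(L^2\cap L^\nu)$ equals $\ulg_V$, which completes \ref{ass2stri:nonlinearity_Lp_bound}.
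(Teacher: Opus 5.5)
Your main argument is the paper's one-line proof. You apply the H\"older bound \eqref{est:ext_pot_Lp} to each piece, with $\nu_2=2$ for $V_2\in L^\infty$ and $\nu_1$ determined by $\frac{\nu_1}{\nu_1-2}=q:=\frac{3}{2s}+\varepsilon$. You then get $2<\nu_1<\twoasts$ from the monotonicity of $\nu\mapsto\frac{\nu}{\nu-2}$. You are also right that this does not give the constraint $\nu_1<6$ built into \ref{ass2stri:nonlinearity_Lp_bound}, which the paper passes over in silence. Indeed $\nu_1<6$ iff $q>\frac32$: this is automatic for $s\le1$, but false for $s>1$ and $\varepsilon\le\frac32-\frac{3}{2s}$.

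Your repair of that case, however, fails.
\begin{itemize}
\item The splitting $V_1=V_1\mathbbm{1}_{|V_1|>1}+V_1\mathbbm{1}_{|V_1|\le1}$ gives $L^q\subset L^p+L^\infty$ only for $p\le q$: on $\{|V_1|>1\}$, the bound $|V_1|^p\le|V_1|^q$ requires $p\le q$. So the singular part is in general no better than $L^q$, and you cannot reach $L^{\frac32+\delta}+L^\infty$ when $q\le\frac32$.
\item ``Enlarging $\varepsilon$'' is a stronger hypothesis, not a consequence of the given one.
\end{itemize}

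In fact no repair exists when $q<\frac32$. Let $V=V_1=|x|^{-a}\mathbbm{1}_{|x|<1}$ with $2\le a<3/q$, so $V_1\in L^q$. Suppose \ref{ass2stri:nonlinearity_Lp_bound} held for $\ulg_V$ with any finite family of exponents $\nu_j<6$, and set $\nu=\max_j\nu_j$. Then Remark~\ref{rmk:N2prime_implies_N2}, with $\ulu=(w,0,0,\dots)$ and $\ulv=\ulzero$, gives $\|V_1w\|_{L^2+L^{\nu'}}\lesssim\|w\|_{L^2\cap L^\nu}$ for all $w$ in a small $H^s$-ball.

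Test this with $w=\phi(\cdot/r)$, where $\phi$ is a bump equal to $1$ near $0$. These functions lie in any $H^s$-ball for small $r$, since $s<\frac32$.
\begin{itemize}
\item Since $\nu'\le2$, the left side is at least $c\|V_1w\|_{L^{\nu'}(B_1)}\gtrsim r^{3/\nu'-a}$, or is infinite.
\item The right side is $\lesssim r^{3/\nu}$.
\end{itemize}
Letting $r\to0$ forces $a\le 3-\frac{6}{\nu}<2$, a contradiction.

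So for $s>1$ the proposition, and the $V$ entry in the $\NStri$ row of the table in the introduction, needs the extra hypothesis $\frac{3}{2s}+\varepsilon>\frac32$, i.e.\ $V\in L^{\frac32+}+L^\infty$. A correct write-up proves that case and states the restriction, rather than claiming a reduction to it.
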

	The energy related to the external potential is simply
	\begin{equation}
		\mathscr{G}_V(\ulu) = \frac{1}{2} \int_{\mathbb{R}^3} V(x) \rho_{\ulu}(x) \diff x. 
	\end{equation}
	Using \eqref{est:densities_Lp} and Hölder's inequality, one verifies that $\mathscr{G}_V(\ulu)$ is well-defined under the conditions of Proposition \ref{prop:external}, for any $\ulu\in\ell^2(L^2\cap L^{\nu(\varepsilon)})$. The next proposition follows from direct computations similarly as in the previous subsections. 
	\begin{prop}
		Let $s\in(0,\frac32)$, $\varepsilon>0$ and $\nu=\nu(\varepsilon)$ be given by \eqref{eq:nu(epsilon)2}. Assume $V$ real and~$V\in L^{\frac{3}{2s}+\varepsilon}+L^\infty$. Then, for all $\ulu,\ulv\in\ell^2(L^2\cap L^\nu)$,
		\begin{equation}
			\frac{\diff}{\diff \tau} \mathscr{G}_V(\ulu + \tau \ulv) |_{\tau=0} = \mathrm{Re} \langle V \ulu, \ulv \rangle_{\ell^2(L^2+L^{\nu^\prime}),\ell^2(L^2\cap L^\nu)} .
		\end{equation}
		Hence \ref{ass3:nonlinear_energy_derivative} is satisfied with $\nu=\nu(\varepsilon)$.
	\end{prop}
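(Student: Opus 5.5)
The plan is to differentiate $\tau\mapsto\mathscr{G}_V(\ulu+\tau\ulv)$ under the integral sign, as was done for $\mathscr{G}_{\mathrm{KS}}$ and $\mathscr{G}_{\mathrm{H}}$. Split $V=V_1+V_2$ with $V_1\in L^{\frac{3}{2s}+\varepsilon}$ and $V_2\in L^\infty$. The choice \eqref{eq:nu(epsilon)2} of $\nu=\nu(\varepsilon)$ is exactly the one for which $\frac{\nu}{\nu-2}=\frac{3}{2s}+\varepsilon$, so that $V_1\in L^{\frac{\nu}{\nu-2}}$.

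For a.e. $x$ the identity
\begin{equation*}
\rho_{\ulu+\tau\ulv}(x)=\rho_{\ulu}(x)+2\tau\,\mathrm{Re}\langle\ulu(x),\ulv(x)\rangle_{\ell^2(\mathbb{C})}+\tau^2\rho_{\ulv}(x)
\end{equation*}
holds. Hence
\begin{equation*}
\frac{\diff}{\diff\tau}\Big(\tfrac12 V\rho_{\ulu+\tau\ulv}\Big)(x)=V(x)\,\mathrm{Re}\langle\ulu(x)+\tau\ulv(x),\ulv(x)\rangle_{\ell^2(\mathbb{C})} .
\end{equation*}
For $\tau\in(-1,1)$ this is dominated by $|V(x)|\,\rho_{\ulu}^{1/2}(x)\rho_{\ulv}^{1/2}(x)+|V(x)|\,\rho_{\ulv}(x)$, by Cauchy–Schwarz in $\ell^2(\mathbb{C})$.

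The key step is to show that this majorant is integrable. For the $V_1$ part, apply Hölder with exponents $\frac{\nu}{\nu-2},\nu,\nu$ and then \eqref{est:densities_Lp} with $q=\nu$. This gives the bound $\|V_1\|_{L^{\frac{\nu}{\nu-2}}}\big(\|\ulu\|_{\ell^2(L^\nu)}+\|\ulv\|_{\ell^2(L^\nu)}\big)\|\ulv\|_{\ell^2(L^\nu)}$. For the $V_2$ part, use the $L^\infty$ bound together with \eqref{est:densities_Lp} at $q=2$. The same estimates show that $\mathscr{G}_V$ itself is finite, so dominated convergence lets one differentiate under the integral sign.

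Evaluate at $\tau=0$. Then identify $\int V\,\mathrm{Re}\langle\ulu,\ulv\rangle_{\ell^2(\mathbb{C})}\diff x$ with $\mathrm{Re}\langle V\ulu,\ulv\rangle$ in the duality pairing between $\ell^2(L^2+L^{\nu'})$ and $\ell^2(L^2\cap L^\nu)$. Two facts are used here: $V$ is real, and $V\ulu\in\ell^2(L^2+L^{\nu'})$ by \eqref{est:ext_pot_Lp}. Interchanging the sum over $k$ with the integral is justified by the same Hölder and Minkowski bounds.

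There is no real obstacle. The only points needing care are the exponent bookkeeping for $\nu(\varepsilon)$ and the justification of the sum–integral interchange.
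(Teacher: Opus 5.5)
Your proposal is correct and follows the route the paper indicates. The paper gives no detailed proof, saying only that the result follows ``from direct computations similarly as in the previous subsections'', that is, differentiation under the integral sign with an integrable majorant built from H\"older's inequality and \eqref{est:densities_Lp}, as for $\mathscr{G}_{\mathrm{KS}}$ and $\mathscr{G}_{\mathrm{H}}$. Your check that $\frac{\nu}{\nu-2}=\frac{3}{2s}+\varepsilon$ and your Fubini justification for interchanging the sum over $k$ with the integral supply exactly the details the paper leaves implicit.
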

	
	\subsection{The class \texorpdfstring{$\Nglob$}{Nglob}}\label{subsec:Nglob}
	In this subsection we show how the conditions imposed on the non-linear terms considered in the previous examples can be restricted in order to obtain non-linearities belonging the the subclass $\Nglob$. The argument is based on the following proposition.
	
	\begin{prop}\label{lem:global_ex_estim}
		Let $s\in(0,\frac32)$, $\nu\in[2,2_s^*)$ and $\ulg\in\Nlocnu$. 
		Let
		\begin{equation}\label{eq:cond-delta}
			0\le\delta<2\times\frac{\frac1\nu-\frac1{2_s^*}}{\frac12-\frac1\nu}
		\end{equation}
		and suppose that there is $K\ge0$ such that
		\begin{equation}\label{est:nonlinearity_global_estimate} 
			\| \ulg (\ulu) \|_{\ell^2(L^2+ L^{\nu'})} \leq K \| \ulu \|_{\ell^2(L^2\cap L^\nu)}^{1+\delta},
		\end{equation}
		for all $\ulu\in \ell^2(L^2\cap L^\nu)$. 
		Then for all $0<a<1$, there are $C=C(a,\delta,s,\nu)\ge0$ and $q=q(\delta,s,\nu)\ge1$ such that 
		\begin{equation}
			|\mathscr{G}(\ulu)|  \leq \frac{a}{2} \| \ulu  \|_{\ell^2(H^{s})}^2 + C  ( \| \ulu  \|_{\ell^2(L^2)}^{q} + | \mathscr{G}(\boldsymbol{0}) | )
		\end{equation}
		for all $\ulu\in \ell^2(H^s)$. In particular, $\ulg\in\Nglob$. 
	\end{prop}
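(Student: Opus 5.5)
The plan is to write $\mathscr{G}(\ulu)$ as $\mathscr{G}(\ulzero)$ plus an integral of $\ulg$ along the ray $\tau\ulu$, bound the integrand by \eqref{est:nonlinearity_global_estimate}, and then trade the resulting $\ell^2(L^2\cap L^\nu)$ norm for a small multiple of the kinetic energy plus a power of the mass, by interpolation and Young's inequality.

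\emph{Step 1: integrate along the ray.} Fix $\ulu\in\ell^2(H^s)$ and set $f(\tau)=\mathscr{G}(\tau\ulu)$ for $\tau\in[0,1]$. Applying \ref{ass3:nonlinear_energy_derivative} at the point $\tau\ulu$ in the direction $\ulu$ shows that $f$ is differentiable, with
\begin{equation*}
f'(\tau)=\mathrm{Re}\langle \ulg(\tau\ulu),\ulu\rangle_{\ell^2(L^2+L^{\nu'}),\ell^2(L^2\cap L^\nu)} .
\end{equation*}
Since $\ulg\in\mathscr{C}(\ell^2(L^2\cap L^\nu);\ell^2(L^2+L^{\nu'}))$, the derivative $f'$ is continuous. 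Hence $\mathscr{G}(\ulu)-\mathscr{G}(\ulzero)=\int_0^1 f'(\tau)\diff\tau$.

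\emph{Step 2: bound the integrand.} By duality, $|f'(\tau)|\le \|\ulg(\tau\ulu)\|_{\ell^2(L^2+L^{\nu'})}\|\ulu\|_{\ell^2(L^2\cap L^\nu)}$. Inserting \eqref{est:nonlinearity_global_estimate} and integrating in $\tau$ gives
\begin{equation*}
|\mathscr{G}(\ulu)|\le |\mathscr{G}(\ulzero)|+\tfrac{K}{2+\delta}\,\|\ulu\|_{\ell^2(L^2\cap L^\nu)}^{2+\delta}.
\end{equation*}

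\emph{Step 3: interpolation.} This is the only point that needs care. Define $\theta\in[0,1)$ by $\frac1\nu=\frac{1-\theta}{2}+\frac{\theta}{\twoasts}$, that is,
\begin{equation*}
\theta=\frac{\frac12-\frac1\nu}{\frac12-\frac1{\twoasts}} .
\end{equation*}
Componentwise Hölder gives $\|u_k\|_{L^\nu}\le\|u_k\|_{L^2}^{1-\theta}\|u_k\|_{L^{\twoasts}}^{\theta}$. A further Hölder inequality in $\ell^2$, with exponents $\frac1{1-\theta}$ and $\frac1\theta$ applied to the squares, lifts this to
\begin{equation*}
\|\ulu\|_{\ell^2(L^\nu)}\le\|\ulu\|_{\ell^2(L^2)}^{1-\theta}\,\|\ulu\|_{\ell^2(L^{\twoasts})}^{\theta}
\le C\,\|\ulu\|_{\ell^2(L^2)}^{1-\theta}\,\|\ulu\|_{\ell^2(H^s)}^{\theta},
\end{equation*}
where the last step uses the critical Sobolev embedding $H^s\hookrightarrow L^{\twoasts}$, valid for $s<\frac32$. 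Consequently
\begin{equation*}
\|\ulu\|_{\ell^2(L^2\cap L^\nu)}^{2+\delta}\lesssim \|\ulu\|_{\ell^2(L^2)}^{2+\delta}+\|\ulu\|_{\ell^2(L^2)}^{(2+\delta)(1-\theta)}\,\|\ulu\|_{\ell^2(H^s)}^{(2+\delta)\theta}.
\end{equation*}

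\emph{Step 4: Young's inequality.} Condition \eqref{eq:cond-delta} is exactly $(2+\delta)\theta<2$. So the exponent $p=\frac{2}{(2+\delta)\theta}$ satisfies $p>1$ (when $\theta=0$, i.e.\ $\nu=2$, there is no $H^s$ factor and nothing to do). Young's inequality with conjugate exponents $p,p'$ gives
\begin{equation*}
\|\ulu\|_{\ell^2(L^2)}^{(2+\delta)(1-\theta)}\,\|\ulu\|_{\ell^2(H^s)}^{(2+\delta)\theta}\le \epsilon\,\|\ulu\|_{\ell^2(H^s)}^2+C_\epsilon\,\|\ulu\|_{\ell^2(L^2)}^{(2+\delta)(1-\theta)p'} .
\end{equation*}
Choose $\epsilon$ so that $\epsilon\,\tfrac{K}{2+\delta}$, times the implicit constant above, equals $\frac a2$. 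This leaves two mass terms, with exponents $2+\delta$ and $(2+\delta)(1-\theta)p'$. Since $\|\ulu\|_{\ell^2(L^2)}$ is nonnegative, both are bounded by $1+\|\ulu\|_{\ell^2(L^2)}^{q}$ with $q=\max\{2+\delta,(2+\delta)(1-\theta)p'\}\ge1$. This is the claimed bound on $|\mathscr{G}(\ulu)|$ (with the constant $1$ absorbed into $C$).

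\emph{Step 5: membership in $\Nglob$.} Since $\mathscr{G}^-\le|\mathscr{G}|$, condition \ref{ass4:nonlinearity_global_estimate} holds with $D(x)=C(x^q+|\mathscr{G}(\ulzero)|)+1$. This $D$ is positive and locally bounded, so $\ulg\in\Nglob$.
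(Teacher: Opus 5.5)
Your proof follows the paper's route. You integrate the Gateaux derivative from \ref{ass3:nonlinear_energy_derivative} along $\tau\ulu$, insert \eqref{est:nonlinearity_global_estimate}, interpolate $L^\nu$ between $L^2$ and $L^{\twoasts}$, and close with Young. Your $\theta$ is the paper's $1/\psnu'$, and \eqref{eq:cond-delta} is exactly $(2+\delta)\theta<2$. At one point you are more careful than the paper. Its proof feeds $\|\ulu\|_{\ell^2(L^2\cap L^\nu)}$ directly into \eqref{eq:bound-hoelder-sobolev-general-r}, which only bounds the $\ell^2(L^\nu)$ norm, and so it silently drops the contribution $\|\ulu\|_{\ell^2(L^2)}^{2+\delta}$ that you keep.

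The one step that fails as written is the end of Step 4. Bounding both mass powers by $1+x^q$ introduces an additive constant. That constant cannot be ``absorbed into $C$'' in $C(\|\ulu\|_{\ell^2(L^2)}^q+|\mathscr{G}(\ulzero)|)$: if $\mathscr{G}(\ulzero)=0$, this right-hand side tends to $0$ with $\ulu$. So your inequality is strictly weaker than the one claimed, although it still gives $\ulg\in\Nglob$, since \ref{ass4:nonlinearity_global_estimate} allows any locally bounded $D$.

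To obtain the stated form, absorb the power $2+\delta$ using $\|\ulu\|_{\ell^2(L^2)}\le\|\ulu\|_{\ell^2(H^s)}$. Set $x=\|\ulu\|_{\ell^2(L^2)}$ and $q_2=(2+\delta)(1-\theta)p'=\frac{2(2+\delta)(1-\theta)}{2-(2+\delta)\theta}$. Note that $q_2\ge 2+\delta$, with equality if and only if $\delta\theta=0$. There are three cases.
\begin{enumerate}
\item If $\theta=0$ (that is, $\nu=2$), only $x^{2+\delta}$ occurs, so take $q=2+\delta$.
\item If $\delta=0$, then $q_2=2$ and both mass terms are multiples of $x^2$, so take $q=2$.
\item If $\delta\theta>0$, then $2<2+\delta<q_2$. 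Young's inequality applied to $x^{2+\delta}=(x^2)^{\lambda}(x^{q_2})^{1-\lambda}$ gives $c\,x^{2+\delta}\le\frac a4 x^2+C x^{q_2}\le\frac a4\|\ulu\|_{\ell^2(H^s)}^2+Cx^{q_2}$. Run your Step 4 with $\frac a4$ in place of $\frac a2$, and take $q=q_2$.
\end{enumerate}
In every case you get $|\mathscr{G}(\ulu)|\le\frac a2\|\ulu\|_{\ell^2(H^s)}^2+C\|\ulu\|_{\ell^2(L^2)}^q+|\mathscr{G}(\ulzero)|$. Enlarging $C$ to be at least $1$, this is the form in the statement.
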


	Before proving this proposition, we apply it to the examples considered in the previous subsections. 
	\begin{corollary}
		Let $s\in(0,\frac32)$. 
		\begin{enumerate}[label=\roman*)]
			\item If ($\mu\geq 0$ and $0<\alpha<\frac{2s}{3-2s}$) or ($\mu<0$ and $0<\alpha < \frac{2s}{3} $), then $\ulg_{\mathrm{KS}}\in\Nglob$,
			\item Assume $w$ is real and even. If ($w^{+} \in L^{\frac{3^+}{4s}} + L^{\infty}$ for~$s\leq 3/4$ or $w^{+} \in L^{\frac{3^+}{4s}} + L^{\infty}$ for~$s> 3/4$) and $w^{-} \in L^{\frac{3^+}{2s}} + L^{\infty}$, then $\ulg_{\mathrm{H}}\in\Nglob$.
			\item Assume $V$ real. If $V \in L^{\frac{3^+}{2s}} + L^{\infty}$, then $\ulg_V\in\Nglob$.
		\end{enumerate}
	\end{corollary}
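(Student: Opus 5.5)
The plan is to derive each item from Proposition \ref{lem:global_ex_estim}, or, for non-negative contributions, directly from the definition of \ref{ass4:nonlinearity_global_estimate}. Membership in $\Nloc$ for each example is already established in Subsections \ref{subsec:pure}--\ref{subsec:ext}. So it only remains to check \ref{ass4:nonlinearity_global_estimate}.

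Two structural observations help. First, if $\mathscr{G}\ge0$ then $\mathscr{G}^-=0$, and \ref{ass4:nonlinearity_global_estimate} holds trivially. Second, condition \ref{ass4:nonlinearity_global_estimate} is stable under sums. Indeed, $(\mathscr{G}_1+\mathscr{G}_2)^-\le\mathscr{G}_1^-+\mathscr{G}_2^-$, and if each $\mathscr{G}_j^-$ is bounded with constant $a/2$ for some small $a$, we can add them. This is possible because Proposition \ref{lem:global_ex_estim} gives the bound for \emph{every} $a\in(0,1)$. Hence we may split each example into a non-negative part and a part to which Proposition \ref{lem:global_ex_estim} applies.

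\textbf{Item (i).} If $\mu\ge0$, then $\mathscr{G}_{\mathrm{KS}}\ge0$ and we are done. If $\mu<0$, we take $\nu=\nu(\alpha)=2(\alpha+1)$ and $\delta=2\alpha$. By \eqref{est:power_Lp} with $\ulv=\ulzero$,
\[
\|\ulg_{\mathrm{KS}}(\ulu)\|_{\ell^2(L^{\nu'})}\le C\|\ulu\|_{\ell^2(L^\nu)}^{1+2\alpha},
\]
which is \eqref{est:nonlinearity_global_estimate}. The main check is condition \eqref{eq:cond-delta}. With $\frac1\nu=\frac1{2(\alpha+1)}$ and $\frac1{2_s^*}=\frac{3-2s}6$, we have $\frac12-\frac1\nu=\frac{\alpha}{2(\alpha+1)}$, so \eqref{eq:cond-delta} reads
\[
2\alpha<\frac{4(\alpha+1)}{\alpha}\Big(\frac1{2(\alpha+1)}-\frac{3-2s}6\Big).
\]
Multiplying out gives $\alpha^2<2-\frac{(3-2s)(\alpha+1)}{3}\cdot 2$. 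Equivalently, $3\alpha^2+(6-4s)\alpha<4s$, i.e.\ $(3\alpha-2s)(\alpha+2)<0$, which holds exactly when $\alpha<\frac{2s}3$. This matches the hypothesis.

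\textbf{Item (ii).} We split $w=w^+-w^-$. The $w^+$ part gives $\mathscr{G}\ge0$ only if $w^+$ is positive definite, which is not automatic. So instead we keep $\frac14\int(w^+*\rho)\rho\ge0$ pointwise. This works because $w^+\ge0$ and $\rho\ge0$, so that energy term is non-negative. For $w^-\in L^{\frac3{2s}+\varepsilon}+L^\infty$, we write $w^-=w_1+w_2$. The $L^\infty$ piece is controlled directly by $\|w_2\|_\infty\|\ulu\|_{\ell^2(L^2)}^4$. For $w_1$, we apply Proposition \ref{lem:global_ex_estim} to $(w_1*\rho_{\ulu})\ulu$ with $\nu$ close to $2$. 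Using Lemma \ref{lm:Hartree}, we need $w_1\in L^{\frac{\nu}{2(\nu-2)}}$ and $\delta=2$. The requirement $\frac{\nu}{2(\nu-2)}\le\frac{3}{2s}+\varepsilon$ fixes $\nu$, and one must also check $2<2(\frac1\nu-\frac1{2_s^*})/(\frac12-\frac1\nu)$. Since $w_1\in L^p$ for all exponents between $\frac3{2s}+\varepsilon$ and $\infty$ on the unit-bounded part, we may shrink to a nearby exponent if needed.

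\textbf{Item (iii).} Similarly, we split $V^-=V_1+V_2\in L^{\frac3{2s}+\varepsilon}+L^\infty$ and ignore $V^+$, since $\int V^+\rho\ge0$. We bound $V_2$ trivially, and apply Proposition \ref{lem:global_ex_estim} with $\delta=0$ to $V_1$ using \eqref{est:ext_pot_Lp}; condition \eqref{eq:cond-delta} is automatic when $\delta=0$.

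The main obstacle is the exponent bookkeeping in (ii). One must choose the pair $(\nu,\delta)$ compatible both with Young's inequality and with \eqref{eq:cond-delta}, and the needed inequality reduces to $s$-dependent arithmetic of the same kind as in (i). A secondary point is that Proposition \ref{lem:global_ex_estim} needs $\ulg\in\Nlocnu$ for the single piece being estimated; this holds by the subsections above, using the energy formula for that piece.
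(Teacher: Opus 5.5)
Your proposal is correct and uses the same tool as the paper: Proposition~\ref{lem:global_ex_estim}, with $\nu=2(\alpha+1)$, $\delta=2\alpha$ in (i), $\delta=2$ in (ii) and $\delta=0$ in (iii). Where you differ is the sign decomposition.

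The paper's proof applies the proposition to the whole non-linearity. As written, it therefore only covers $\alpha<\frac{2s}{3}$ in (i), and $w$ itself (not just $w^-$) in $L^{\frac{3}{2s}+\varepsilon}+L^\infty$ in (ii). That the case $\mu\ge0$ and the part $w^+$ only need the $\Nloc$ conditions is merely asserted in the caption of Table~\ref{tab:non-linearities_classes}. Your three observations are exactly what justifies the statement as formulated:
\begin{itemize}
\item $\mathscr{G}_{\mathrm{KS}}\ge0$ when $\mu\ge0$;
\item $\frac14\int(w^+*\rho_{\ulu})\rho_{\ulu}\ge0$ pointwise;
\item $(A-B)^-\le B$ for $A,B\ge0$.
\end{itemize}
Your remark that the proposition holds for every $a\in(0,1)$ correctly handles sums. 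Treating the bounded pieces directly via $\|\rho_{\ulu}\|_{L^1}=\|\ulu\|_{\ell^2(L^2)}^2$ is also simpler than routing them through the $\ell^2(L^2+L^{\nu'})$ estimate.

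Three things need repair.

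\textbf{Algebra in (i).} Your displayed reduction has two slips that happen to cancel. Multiplying by $\alpha/2$ gives $\alpha^2<1-\frac{(3-2s)(\alpha+1)}{3}$, that is $3\alpha^2+(3-2s)\alpha-2s<0$, that is $(3\alpha-2s)(\alpha+1)<0$. Your intermediate line is off by a factor $2$, and $3\alpha^2+(6-4s)\alpha-4s$ does not factor as $(3\alpha-2s)(\alpha+2)$. The conclusion $\alpha<\frac{2s}{3}$ is nonetheless right.

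\textbf{Exponent check in (ii).} The check you leave open is one line. For $\nu>2$ and $\delta=2$, \eqref{eq:cond-delta} reads $\frac12-\frac1\nu<\frac1\nu-\frac1{2_s^*}$, i.e.\ $\nu<\frac{6}{3-s}$. This is equivalent to $\frac{\nu}{2(\nu-2)}>\frac{3}{2s}$. So choose $\nu$ with $\frac{\nu}{2(\nu-2)}=\frac{3}{2s}+\varepsilon$ exactly. Since this exponent exceeds $1$, you get $\nu\in(2,4)$ as Lemma~\ref{lm:Hartree} requires, and also $\nu<2_s^*$; no shrinking of exponents is needed. This is precisely where the hypothesis $w^-\in L^{\frac{3^+}{2s}}+L^\infty$ enters.

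\textbf{Evenness of the split in (ii).} The Hartree energy identity \ref{ass3:nonlinear_energy_derivative} was proved for even kernels, so your split of $w^-$ should preserve evenness. Take $w_1=w^-\mathbf{1}_{\{w^->c\}}$ and $w_2=w^-\mathbf{1}_{\{w^-\le c\}}$ with $c$ large. Both are even, $w_2\in L^\infty$, and $w_1\in L^{\frac{3}{2s}+\varepsilon}$.
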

	\begin{proof}
		It suffices to apply Proposition \ref{lem:global_ex_estim}, observing that the assumptions of that proposition hold in each case. Indeed,
		\begin{enumerate}[label=\roman*)]
			\item  For the pure power non-linearity, by estimate \eqref{est:power_Lp}, the assumptions of Proposition \ref{lem:global_ex_estim} hold with $\nu=2(\alpha+1)$ and $\delta = 2 \alpha $, provided that $\alpha < \frac{2s}{3} $. 
			\item For the Hartree non-linearity, by estimate \eqref{est:Hartree_Lp_1}, the assumptions of Proposition \ref{lem:global_ex_estim} hold with $\nu=\nu(\varepsilon)$ (with $\varepsilon>0$, see \eqref{eq:nu(epsilon)}) and $\delta=2$, provided that
			$w \in L^{\frac{3}{2s}+\varepsilon} + L^{\infty}.$
			\item For the external potential, by estimate \eqref{est:ext_pot_Lp}, the assumptions of Proposition \ref{lem:global_ex_estim} hold with  $\nu=\nu(\varepsilon)$ (with $\varepsilon>0$, see \eqref{eq:nu(epsilon)2}) and $\delta = 0$, provided that
			$V \in L^{\frac{3}{2s}+\varepsilon} + L^{\infty}.$
		\end{enumerate}
	\end{proof}

	Now we turn to the proof of Proposition \ref{lem:global_ex_estim}
	\begin{proof}[Proof of Proposition \ref{lem:global_ex_estim}]
		Using \ref{ass3:nonlinear_energy_derivative} we can estimate 
		\begin{multline}
			| \mathscr{G}(\ulu) | \leq | \mathscr{G}(\boldsymbol{0}) | + \int_{0}^1 \big|\mathrm{Re} \langle \ulg(\tau \ulu) , \ulu \rangle_{\ell^2(L^2+L^{\nu^\prime}),\ell^2(L^2\cap L^{\nu})} \big| \diff \tau \\
			\leq | \mathscr{G}(\boldsymbol{0}) | + \sup_{\tau \in (0,1)} \| \ulg(\tau \ulu) \|_{\ell^2(L^2+L^{\nu^\prime})} \| \ulu \|_{\ell^2(L^2\cap L^{\nu})}\,.
		\end{multline}
		Therefore \eqref{est:nonlinearity_global_estimate} yields
		\begin{equation}
			| \mathscr{G}(\ulu) | \leq | \mathscr{G}(\boldsymbol{0}) | + K \| \ulu \|_{\ell^2(L^2\cap L^\nu)}^{2+\delta} .
		\end{equation}
		Applying Lemma \ref{lem:interpolation_ineq} below, more precisely the estimate \eqref{eq:bound-hoelder-sobolev-general-r}, we obtain
		\begin{equation}
			| \mathscr{G}(\ulu) | \leq | \mathscr{G}(\boldsymbol{0}) | + K \| \ulu \|_{\ell^2(H^s)}^{(2+\delta)\frac{\twoasts }{\twoasts -2} \frac{\nu-2}{\nu} } \| \ulu \|_{\ell^2(L^2)}^{(2+\delta)\frac{2}{\twoasts -2}\frac{\twoasts -\nu}{\nu} } .
		\end{equation}
		The condition \eqref{eq:cond-delta} implies that
		\begin{equation}
			b=b(\delta,s,\nu) \coloneq (2+\delta)\frac{\twoasts }{\twoasts -2} \frac{\nu-2}{\nu} < 2,
		\end{equation}
		which allows us to estimate, by Young's inequality,
		\begin{equation}
			| \mathscr{G}(\ulu) | \leq \frac{a}{2} \| \ulu \|_{\ell^2(H^s)}^2 + ( C \| \ulu \|_{\ell^2(L^2)}^{q} + | \mathscr{G}(\boldsymbol{0}) | ),
		\end{equation}
		where 
		\begin{align*} 
			&C = C(a,\delta,s,\nu) = K^{\frac{2}{2-b}}a^{-\frac{b}{2-b}} b^{\frac{b}{2-b}} , \\
			&q = q(\delta,s,\nu) = (2+\delta)\frac{2}{2-b}\frac{2}{\twoasts -2} \frac{\twoasts -\nu}{\nu} .
		\end{align*}
		We have proven that condition \ref{ass4:nonlinearity_global_estimate} holds with the locally bounded function
		\begin{equation}
			D(x) = Cx^{q(\delta,s,\nu)} + | \mathscr{G}(\boldsymbol{0}) | .
		\end{equation}
	\end{proof}

	\subsection{The gauge condition}\label{subsec:gauge} 
	
	The gauge condition \eqref{eq:gauge} is satisfied in all the examples considered here. In fact, these arise from density functionals, and 
	\begin{equation}
		\forall k \in \mathbb{N}, \;\ g_k(\ulu) = \tilde{g}(\rho_{\ulu}) u_k,
	\end{equation}
	for some function $\tilde{g} : L^\nu \rightarrow L^{\frac{\nu}{\nu-2}} $ where $\nu$ is the associated coefficient appearing in Condition~\ref{ass2stri:nonlinearity_Lp_bound}. This function can be verified to be well defined by the same estimates used in the previous sections. Therefore for any $t \in \mathbb{R}$ the relation 
	\begin{equation}
		\ulg(e^{it}\ulu) = \tilde{g}(\rho_{e^{it}\ulu}) e^{it} \ulu = e^{it} \tilde{g}(\rho_{\ulu}) \ulu 
	\end{equation}
	is proven by the invariance of the density $ \rho_{e^{it} \ulu} = \rho_{\ulu} $ with respect to the gauge transformation. 
	
	\section{Preliminary estimates}\label{sec:prelim}
	
	In this section we derive preliminary results that will be important tools in our proof of the existence of solutions to the Kohn-Sham equations. We begin with estimates on non-linearities belonging to the class $\Nloc$ in Subsection \ref{subsec:control-NL}. Next, in Subsection \ref{subsec:regul}, we introduce a regularization operator and establish several of its properties.  
	
	\subsection{Control of the non-linearities}\label{subsec:control-NL}
	The non-linearities we consider are constrained to be controlled by the kinetic energy term which is encoded in condition \ref{ass2:nonlinearity_Lp_bound}.
	In practice a sufficient condition is that $ \nu \in \left[ 2 , \twoasts  \right) $, $ \twoasts  = \frac{6}{3-2s}$, as implied by the Sobolev embeddings \eqref{eq:sobolev_embedding_plus}--\eqref{eq:sobolev_embedding_minus}. Here we prove some basic estimates in the norms of~$ \ell^2(L^2\cap L^\nu) $ and $ \ell^2(H^s) $ .

	Recall that the notations $\psnu$, $\eta_{s,\nu}$, $\thetasnu$ have been introduced in \eqref{eq:psnu}--\eqref{eq:eta-thetasnu}. 
	
	\begin{lemma}\label{lem:interpolation_ineq}
		Let $s\in(0,\frac32)$ and $ \nu \in [2,\twoasts )$. Then, for all $\ulf  \in \ell^2(H^s) $,
		\begin{equation}\label{eq:bound-hoelder-sobolev-general-r}
			\| \ulf  \|_{\ell^2(L^{\nu})} 
			\leq C_s^{1/\psnu'} \, \| \ulf  \|_{\ell^2(H^{s})}^{1/\psnu'} \, \| \ulf  \|_{\ell^2(L^2)}^{1/\psnu}
		\end{equation}
		and
		\begin{equation}\label{eq:bound-hoelder-sobolev-general-nu-and-two}
			\| \ulf  \|_{\ell^2(L^2\cap L^{\nu})} 
			\leq \sqrt{2}\,\eta_{s,\nu}\big(C_s \| \ulf\|_{\ell^2(H^{s})}\big) \, \thetasnu \big(\| \ulf  \|_{\ell^2(L^2)}\big)\,.
		\end{equation}
	\end{lemma}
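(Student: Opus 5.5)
The plan is to prove \eqref{eq:bound-hoelder-sobolev-general-r} componentwise by Hölder interpolation between $L^2$ and $L^{\twoasts}$, then lift it to the $\ell^2$ level, and finally deduce \eqref{eq:bound-hoelder-sobolev-general-nu-and-two} by elementary manipulations.

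\textbf{Step 1: the interpolation exponents.} Write
\begin{equation*}
\frac1\nu=\frac{\lambda}{2}+\frac{1-\lambda}{\twoasts}.
\end{equation*}
Solving gives $\lambda=\frac{2(\twoasts-\nu)}{\nu(\twoasts-2)}=1/\psnu$ and $1-\lambda=\frac{\twoasts(\nu-2)}{\nu(\twoasts-2)}=1/\psnu'$, which match \eqref{eq:psnu}. Hölder's inequality then gives, for each $k$,
\begin{equation*}
\|f_k\|_{L^\nu}\le \|f_k\|_{L^2}^{1/\psnu}\|f_k\|_{L^{\twoasts}}^{1/\psnu'}.
\end{equation*}

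\textbf{Step 2: summing over $k$.} Square this and sum over $k$. Apply Hölder in $\ell^1$ with the conjugate exponents $\psnu$ and $\psnu'$: the factor $\|f_k\|_{L^2}^{2/\psnu}$ is placed in $\ell^{\psnu}$ and the factor $\|f_k\|_{L^{\twoasts}}^{2/\psnu'}$ in $\ell^{\psnu'}$. This yields
\begin{equation*}
\|\ulf\|_{\ell^2(L^\nu)}\le \|\ulf\|_{\ell^2(L^2)}^{1/\psnu}\,\|\ulf\|_{\ell^2(L^{\twoasts})}^{1/\psnu'}.
\end{equation*}
Then apply the critical homogeneous Sobolev embedding $\|f_k\|_{L^{\twoasts}}\le C_s\|f_k\|_{H^s}$ componentwise. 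We take $C_s$ at least as large as this critical constant; it is consistent with \eqref{eq:sobolev_embedding_plus}, which is stated only for $\nu<\twoasts$, and enlarging $C_s$ causes no harm. This gives \eqref{eq:bound-hoelder-sobolev-general-r}.

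The one point needing care is the endpoint $\nu=2$. There $1/\psnu'=0$, the bound is trivial, and the argument in Step 2 should be read with $\psnu=1$ and $\psnu'=\infty$, where the convention $x^0=1$ applies.

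\textbf{Step 3: the $L^2\cap L^\nu$ bound.} Take the norm on $L^2\cap L^\nu$ to be $(\|\cdot\|_{L^2}^2+\|\cdot\|_{L^\nu}^2)^{1/2}$ (or the max/sum; the $\sqrt2$ absorbs the difference). Then
\begin{equation*}
\|\ulf\|_{\ell^2(L^2\cap L^\nu)}\le \sqrt2\max\big(\|\ulf\|_{\ell^2(L^2)},\|\ulf\|_{\ell^2(L^\nu)}\big).
\end{equation*}
Write $X=C_s\|\ulf\|_{\ell^2(H^s)}$ and $y=\|\ulf\|_{\ell^2(L^2)}$. By Step 2, $\|\ulf\|_{\ell^2(L^\nu)}\le X^{1/\psnu'}y^{1/\psnu}$. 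Both candidates are dominated by $(X^{1/\psnu'}+1)(y^{1/\psnu}+y)=\eta_{s,\nu}(X)\,\thetasnu(y)$: the term $y$ is bounded by $1\cdot y$, and the term $X^{1/\psnu'}y^{1/\psnu}$ is bounded by the cross term $X^{1/\psnu'}\cdot y^{1/\psnu}$. This proves \eqref{eq:bound-hoelder-sobolev-general-nu-and-two}.

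I expect no real obstacle. The only subtlety is checking the $\ell^2$-Hölder step and the normalization of the intersection norm, which fixes the constant $\sqrt2$.
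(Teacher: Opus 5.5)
Your proof is correct and follows essentially the same route as the paper: componentwise H\"older interpolation between $L^2$ and $L^{\twoasts}$, the endpoint Sobolev embedding, H\"older in $k$ with the conjugate pair $\psnu,\psnu'$, and an elementary comparison with $\eta_{s,\nu}\,\thetasnu$. The one cosmetic difference is that the paper bounds $\|\ulf\|_{\ell^2(L^2\cap L^\nu)}$ by $\sqrt2\big(\|\ulf\|_{\ell^2(L^2)}+\|\ulf\|_{\ell^2(L^\nu)}\big)$ rather than by $\sqrt2$ times the maximum; your maximum bound would fail if $L^2\cap L^\nu$ carried the sum norm, but since $\eta_{s,\nu}(X)\,\thetasnu(y)$ dominates $y+X^{1/\psnu'}y^{1/\psnu}$, using the sum instead leaves your conclusion unaffected.
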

	
	\begin{remark}
		In our application we need a strictly positive $1/\psnu$ exponent to be able to obtain some useful estimates when the kinetic energy of the sequences is assumed to be bounded. This is why we do not use the endpoint Sobolev embedding. 
	\end{remark}
	\begin{proof}[Proof of Lemma~\ref{lem:interpolation_ineq}]
		By the H\"older inequality, we have, for $ 2 \leq \nu < \twoasts  $ and for all $f\in L^2$,
		\begin{equation*}
			\| f \|_{L^{\nu}} \leq \| f \|_{L^{\twoasts }}^{1/\psnu'} \| f \|_{L^2}^{1/\psnu} .
		\end{equation*}
		Composed with the Sobolev embedding, this yields
		\begin{equation*}
			\| f \|_{L^{\nu}} \leq   C_s^{1/\psnu'} \| f \|_{H^{s}}^{1/\psnu'} \| f \|_{L^2}^{1/\psnu} . 
		\end{equation*}
		This inequality can be lifted to the $ \ell^2(L^p) $ norms by the H{\"o}lder inequality in $ \ell^2(\mathbb{C}) $: As $ \psnu $ and~$\psnu' $ are H{\"o}lder conjugate coefficients, we can estimate
		\begin{align*}
			\| \ulf  \|_{\ell^2(L^{\nu})} &\leq  \Big( \sum_{k \geq 0} C_s^{2/\psnu'} \| f_k \|_{H^{s}}^{2/\psnu} \| f_k \|_{L^2}^{2/\psnu} \Big)^{\frac{1}{2}}  \nonumber \\
			&\leq    C_s^{1/\psnu'} \| \ulf  \|_{\ell^2(H^{s})}^{1/\psnu'}  \| \ulf  \|_{\ell^2(L^2)}^{1/\psnu}
		\end{align*}
		which yields \eqref{eq:bound-hoelder-sobolev-general-r}.
		
		To estimate $\| \ulf  \|_{\ell^2(L^2\cap L^{\nu})}$, we use
		\begin{equation*}
			\| \ulf  \|_{\ell^2(L^2\cap L^{\nu})}
			\leq \sqrt{2} \|\ulf\|_{\ell^2(L^2)} +\sqrt{2} \|\ulf\|_{\ell^2(L^\nu)}
		\end{equation*}
		along with \eqref{eq:bound-hoelder-sobolev-general-r}, which implies \eqref{eq:bound-hoelder-sobolev-general-nu-and-two}.
	\end{proof}
	
	Now we turn to an Hölder continuity estimate in time for functions with values in $\ell^2(H^s)$.
	
	\begin{lemma}\label{lem:estimate_holder_t}
		Let $s\in(0,\frac32)$. Consider any interval $ I \subset \mathbb{R} $ and any sequence $\ulu $ that depends on~$t$ in~$I$ such that
		$${\ulu} \in L^\infty\big(I; \ell^2(H^{s})\big) \cap W^{1,\infty} \big(I; \ell^2(H^{-s})\big). $$ Then, for all $t$ and $t^\prime$ in $I$, 
		\begin{equation}\label{est:holder_time}
			\| {\ulu}(t') -  {\ulu}(t) \|_{\ell^2(L^{2})} 
			\leq  \sqrt{2} \max\big\{ \|{\ulu}\|_{L^\infty(I;\ell^2(H^{s}))} , \| \partial_t {\ulu}\|_{L^\infty(I;\ell^2(H^{-s}))} \big\}  |t'-t|^{\frac{1}{2}} .
		\end{equation}
	\end{lemma}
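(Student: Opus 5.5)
The plan is to use the duality between $H^s$ and $H^{-s}$ together with the fundamental theorem of calculus in $\ell^2(H^{-s})$. Write $M_1=\|\ulu\|_{L^\infty(I;\ell^2(H^s))}$ and $M_2=\|\partial_t\ulu\|_{L^\infty(I;\ell^2(H^{-s}))}$.

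First, since $\ulu\in W^{1,\infty}(I;\ell^2(H^{-s}))$, after choosing the absolutely continuous (indeed Lipschitz) representative we have
\[
\ulu(t')-\ulu(t)=\int_t^{t'}\partial_t\ulu(\tau)\diff\tau \quad\text{in }\ell^2(H^{-s}),
\]
hence $\|\ulu(t')-\ulu(t)\|_{\ell^2(H^{-s})}\le M_2|t'-t|$. Using the $L^\infty$ bound in $\ell^2(H^s)$, one also gets $\|\ulu(t)\|_{\ell^2(H^s)}\le M_1$ for every $t\in I$, not just almost every $t$. To see this, approximate $t$ by points where the bound holds and note that the continuous $\ell^2(H^{-s})$-valued representative converges. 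A bounded sequence in the Hilbert space $\ell^2(H^s)$ has a weakly convergent subsequence, and its weak limit coincides with the $\ell^2(H^{-s})$ limit. Weak lower semicontinuity of the norm then gives the bound.

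Next comes the interpolation step. For $\ulw=\ulu(t')-\ulu(t)$ I write, componentwise in Fourier variables,
\[
\|w_k\|_{L^2}^2=\langle w_k,w_k\rangle\le \|w_k\|_{H^s}\|w_k\|_{H^{-s}}.
\]
This is the Cauchy--Schwarz inequality applied after splitting $|\hat w_k|^2=\langle\xi\rangle^{s}|\hat w_k|\cdot\langle\xi\rangle^{-s}|\hat w_k|$. Summing over $k$ and applying Cauchy--Schwarz in $\ell^2$ gives
\[
\|\ulw\|_{\ell^2(L^2)}^2\le\|\ulw\|_{\ell^2(H^s)}\|\ulw\|_{\ell^2(H^{-s})}\le 2M_1\cdot M_2|t'-t|.
\]
Taking square roots yields
\[
\|\ulw\|_{\ell^2(L^2)}\le\sqrt2\,\sqrt{M_1M_2}\,|t'-t|^{1/2}\le\sqrt2\max\{M_1,M_2\}|t'-t|^{1/2},
\]
which is exactly \eqref{est:holder_time}.

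The only delicate point is the pointwise (for all $t$) meaning of the statement, that is, justifying the everywhere $\ell^2(H^s)$ bound for the continuous representative. I would handle this by the weak compactness argument above. The rest is a direct computation.
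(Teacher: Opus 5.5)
Your proposal is correct and follows the paper's proof essentially line by line. Both use the fundamental theorem of calculus in $\ell^2(H^{-s})$, then the interpolation bound $\|\ulw\|_{\ell^2(L^2)}^2\le\|\ulw\|_{\ell^2(H^s)}\|\ulw\|_{\ell^2(H^{-s})}$ (obtained by Cauchy--Schwarz in Fourier space and then in $\ell^2$), and finally $\sqrt{2M_1M_2}\le\sqrt2\max\{M_1,M_2\}$. Your weak-compactness argument, showing that the continuous representative satisfies the $\ell^2(H^s)$ bound for every $t$ and not just almost every $t$, is a correct detail that the paper uses implicitly without comment.
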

	\begin{proof}
		As $ \ulu$ belongs to $W^{1,\infty}(I;\ell^2(H^{-s})) $ we can write the fundamental theorem of calculus formula for all $t$ and $t'$ in $I$: 
		\begin{equation*}
			\ulu(t') -\ulu(t) = \int_{t}^{t'}  \partial_\tau \ulu (\tau) \diff  \tau  .
		\end{equation*}
		Recall the duality estimate
		\begin{equation*}
			\| f \|_{L^2} \leq \| f \|_{H^{s}}^{1/2} \| f \|_{H^{-s}}^{1/2} ,
		\end{equation*}
		for all $f\in H^s$, and observe that a similar estimate holds for any $ \ulf $ in $\ell^2(H^{s}) $:
		\begin{equation*}
			\| \ulf  \|_{\ell^2(L^2)} \leq \Big( \sum_{k \geq 0}  \| f_k \|_{H^{s}} \| f_k \|_{H^{-s}} \Big)^{\frac{1}{2}} \leq \| \ulf  \|_{\ell^2(H^{s})}^{\frac{1}{2}} \| \ulf  \|_{\ell^2(H^{-s})}^{\frac{1}{2}} ,
		\end{equation*}
		by the previous estimate and the Cauchy-Schwarz inequality in $ \ell^2$. 
		By making use of this inequality and the fundamental theorem of calculus above, we get
		\begin{align*}
			\| \ulu(t')-\ulu(t) \|_{\ell^2(L^2)} &\leq \| \ulu(t')-\ulu(t) \|_{\ell^2(H^{s})}^{\frac{1}{2}} \| \ulu(t')-\ulu(t) \|_{\ell^2(H^{-s})}^{\frac{1}{2}} \\
			&\leq \sqrt{2} \| \ulu \|_{L^{\infty}(I;\ell^2(H^{s}))}^{\frac{1}{2}} \Big\| \int_{t}^{t'}   \partial_\tau \ulu(\tau)\diff  \tau \Big\|_{\ell^2(H^{-s})}^{\frac{1}{2}} \\
			&\leq \sqrt{2} \| \ulu \|_{L^{\infty}(I;\ell^2(H^{s}))}^{\frac{1}{2}} \, \| \partial_{\tau} \ulu \|_{L^{\infty}(I;\ell^2(H^{-s}))}^{\frac{1}{2}} \, |t'-t|^{\frac{1}{2}} ,
		\end{align*}
		which implies \eqref{est:holder_time}.
	\end{proof}
	
	The next lemma provides a useful estimate for non-linearities in the class $\Nloc$.
	
	\begin{lemma}\label{lem:hoelder-regularity-non-linearity}Let $s\in(0,\frac32)$, $\nu\in[2,\twoasts)$ and $\ulg\in\Nlocnu$. 
		Then, for any $M>0$, there exists~$C=C(s,\nu,M)>0$
		such that, for all sequences $ \ulu$ and $\ulv$ in $B_{\ell^2(H^{s})}(0,M)$,
		\begin{equation}\label{est:Lnu_L2_norms}
			\| \ulg (\ulu) -\ulg (\ulv) \|_{\ell^2(L^2+L^{\nu^\prime})} \leq C \, \thetasnu \big(\| \ulu-\ulv \|_{\ell^2(L^2)}\big)
		\end{equation}
		and
		\begin{equation}\label{est:nonlinear_energy_L2}
			| \mathscr{G}(\ulu) - \mathscr{G}(\ulv) | \leq C \, \thetasnu \big(\| \ulu-\ulv \|_{\ell^2(L^2)}\big) .
		\end{equation}
	\end{lemma}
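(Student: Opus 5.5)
Both estimates follow by combining the quasi-Lipschitz condition \ref{ass2:nonlinearity_Lp_bound} with the interpolation inequality of Lemma~\ref{lem:interpolation_ineq}, applied to the difference $\ulf=\ulu-\ulv$.

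For \eqref{est:Lnu_L2_norms}, fix $M>0$ and let $\ulu,\ulv\in B_{\ell^2(H^s)}(0,M)$. Condition \ref{ass2:nonlinearity_Lp_bound} gives
\begin{equation*}
\| \ulg (\ulu) -\ulg (\ulv) \|_{\ell^2(L^2+L^{\nu^\prime})} \leq \Clip{M}\, \| \ulu-\ulv \|_{\ell^2(L^2\cap L^{\nu})}.
\end{equation*}
We then apply \eqref{eq:bound-hoelder-sobolev-general-nu-and-two} with $\ulf=\ulu-\ulv$. Since $\|\ulu-\ulv\|_{\ell^2(H^s)}<2M$ and $\eta_{s,\nu}$ is nondecreasing, the first factor is bounded by $\sqrt2\,\eta_{s,\nu}(2C_sM)$. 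This yields \eqref{est:Lnu_L2_norms} with
\begin{equation*}
C=\sqrt2\,\Clip{M}\,\eta_{s,\nu}(2C_sM).
\end{equation*}
The positivity of $1/\psnu$ is what makes $\thetasnu$ vanish at $0$, so the bound is a genuine modulus of continuity in the $\ell^2(L^2)$ norm.

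For \eqref{est:nonlinear_energy_L2}, we use \ref{ass3:nonlinear_energy_derivative} along the segment $\ulw_\tau=\ulv+\tau(\ulu-\ulv)$, $\tau\in[0,1]$. Each $\ulw_\tau$ lies in $B_{\ell^2(H^s)}(0,M)$ by convexity of the ball. The main technical point is to justify
\begin{equation*}
\mathscr{G}(\ulu)-\mathscr{G}(\ulv)=\int_0^1 \mathrm{Re}\langle \ulg(\ulw_\tau),\ulu-\ulv\rangle\,\diff\tau .
\end{equation*}
Condition \ref{ass3:nonlinear_energy_derivative} gives the derivative of $\tau\mapsto\mathscr{G}(\ulw_\tau)$ at every point, since $\ulw_{\tau+h}=\ulw_\tau+h(\ulu-\ulv)$. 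This derivative is continuous in $\tau$, because $\ulg\in\mathscr{C}(\ell^2(L^2\cap L^\nu);\ell^2(L^2+L^{\nu'}))$ and $\tau\mapsto\ulw_\tau$ is continuous into $\ell^2(L^2\cap L^\nu)$. Hence the function is $\mathscr{C}^1$ and the fundamental theorem of calculus applies.

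We then bound the integrand by duality:
\begin{equation*}
|\mathscr{G}(\ulu)-\mathscr{G}(\ulv)|\le \sup_{\tau\in[0,1]}\|\ulg(\ulw_\tau)\|_{\ell^2(L^2+L^{\nu'})}\,\|\ulu-\ulv\|_{\ell^2(L^2\cap L^\nu)}.
\end{equation*}
By \ref{assump1} and \ref{ass2:nonlinearity_Lp_bound} applied to the pair $(\ulw_\tau,\ulzero)$, together with the Sobolev embedding \eqref{eq:sobolev_embedding_plus},
\begin{equation*}
\|\ulg(\ulw_\tau)\|_{\ell^2(L^2+L^{\nu'})}\le \Clip{M}\,\|\ulw_\tau\|_{\ell^2(L^2\cap L^\nu)}\le \Clip{M}\,C_sM .
\end{equation*}
The remaining factor $\|\ulu-\ulv\|_{\ell^2(L^2\cap L^\nu)}$ is bounded by $\sqrt2\,\eta_{s,\nu}(2C_sM)\,\thetasnu(\|\ulu-\ulv\|_{\ell^2(L^2)})$, exactly as in the first step. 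Taking the larger of the two constants obtained gives a single $C(s,\nu,M)$ valid for both estimates.
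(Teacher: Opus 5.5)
Your proof is correct and takes essentially the same route as the paper. The first estimate is \ref{ass2:nonlinearity_Lp_bound} composed with \eqref{eq:bound-hoelder-sobolev-general-nu-and-two}; the second is the fundamental theorem of calculus along the segment, duality, \ref{ass2:nonlinearity_Lp_bound} against $\ulzero$, and the same interpolation bound. Your justification of the $\mathscr{C}^1$ property via continuity of $\ulg$, and your keeping the $\sqrt2$ factor from \eqref{eq:bound-hoelder-sobolev-general-nu-and-two} that the paper silently drops, are small improvements in care over the paper's write-up.
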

	\begin{proof}
		Using \ref{ass2:nonlinearity_Lp_bound} and composing with \eqref{eq:bound-hoelder-sobolev-general-nu-and-two}, we obtain
		\begin{align*}
			\big\| \ulg(\ulu)-\ulg(\ulv) \big\|_{\ell^2(L^2 +L^{\nu^\prime})} 
			&\leq   \Clip{M}\,   \eta_{s,\nu}(C_s \|  \ulu-\ulv \|_{\ell^2(H^{s})}) \, \thetasnu (\|  \ulu-\ulv  \|_{\ell^2(L^2)}) \\
			& \leq \Clip{M}\,\eta_{s,\nu}(C_s 2M) \, \thetasnu (\|  \ulu-\ulv \|_{\ell^2(L^2)}) \,.
		\end{align*}
		This proves \eqref{est:Lnu_L2_norms}.
		
		The second estimate is obtained similarly by using the fact that $\mathscr{G}$ is Gateau differentiable with differential $\ulg $ by \ref{ass3:nonlinear_energy_derivative}:
		\begin{align*}
			\big| \mathscr{G}(\ulu) -\mathscr{G}(\ulv) \big| 
			& =  \Big| \int_{0}^1 \frac{\diff}{\diff \tau}  \mathscr{G}( \tau \ulu + (1-\tau) \ulv) \diff  \tau \Big|  \\
			& = \Big| \int_{0}^1  \mathrm{Re}\langle \ulu -\ulv , \ulg(\tau \ulu + (1-\tau) \ulv ) \rangle_{\ell^2(L^2\cap L^{\nu}) , \ell^2(L^2+L^{\nu^\prime})}\diff  \tau \Big|  \\
			&\leq  \sup_{ \tau \in [0,1]} \| \ulg(\tau \ulu + (1-\tau) \ulv ) \|_{\ell^2(L^2+L^{\nu^\prime})} \, \|\ulu-\ulv \|_{\ell^2(L^2\cap L^{\nu})} ,
		\end{align*}
		where we used the duality product between $L^2+L^{\nu^\prime}$ and $L^2\cap L^{\nu} $ and the Cauchy-Schwarz inequality. By the non-linearity estimate \ref{ass2:nonlinearity_Lp_bound} (with the second sequence identically zero) and the bound
		\[\| \tau \ulu + (1-\tau) \ulv \|_{\ell^2(H^s)} \leq \tau\|  \ulu  \|_{\ell^2(H^s)}+ (1-\tau)\|  \ulv \|_{\ell^2(H^s)}\leq \tau M +  (1-\tau)M = M\,,\]
		we obtain
		%
		\begin{equation*}
			\| \ulg(\tau \ulu + (1-\tau) \ulv ) \|_{ \ell^2(L^2+L^{\nu^\prime})} 
			\leq  \Clip{M}  \| \tau \ulu + (1-\tau) \ulv \|_{\ell^2(L^2\cap L^{\nu})}  
			\leq  C_s M \Clip{M} ,
		\end{equation*}
		where we have used the Sobolev embedding and the fact that $ 0\leq \tau \leq 1$. The $\ell^2(L^2\cap L^{\nu})$ norm of~$\ulu-\ulv$ can be estimated by \eqref{eq:bound-hoelder-sobolev-general-nu-and-two} to get
		\begin{align*}
			\|\ulu-\ulv \|_{\ell^2(L^2\cap L^{\nu})} 
			&\leq   \eta_{s,\nu}(C_{s} \| \ulu - \ulv \|_{\ell^2(H^{s})}) \, \thetasnu ( \| \ulu - \ulv \|_{\ell^2(L^2)}) \\
			&\leq  \eta_{s,\nu}( C_{s} \,2M) \, \thetasnu ( \| \ulu-\ulv \|_{\ell^2(L^2)}) .
		\end{align*}
		Composing these estimates, we obtain
		\begin{equation*}
			\big| \mathscr{G}(\ulu) -\mathscr{G}(\ulv) \big| 
			\leq    C_{s} M \Clip{M}\,\eta_{s,\nu}( C_s\, 2M ) \, \thetasnu ( \| \ulu-\ulv \|_{\ell^2(L^2)}) .\label{estimate_energy_difference}
		\end{equation*}
		which implies \eqref{est:nonlinear_energy_L2}.
	\end{proof}
	
	
	\begin{remark}
		The class of non-linearities considered here vanish at $0$, $\ulg(\ulzero) = \ulzero $. Considering the previous estimates with $ \ulv = \ulzero $ and $ \|\ulu\|_{\ell^2(H^s)}\leq M $, we obtain the estimates
		\begin{align}
			\label{est:single_nonlinearity_Lp}
			& \| \ulg (\ulu)  \|_{\ell^2(L^2+L^{\nu^\prime})} \leq \Clip{M} \, \| \ulu \|_{\ell^2(L^2\cap L^\nu)}. 
		\end{align}
	\end{remark}

	\subsection{Regularization operator}\label{subsec:regul}
	
	
	Our construction of weak solutions to the Kohn-Sham equation relies on a regularization procedure adapted from e.g. \cite{cazenave1988cauchy,strauss1970weak}. 
	We introduce a family of operators with a number of properties that we recall in this subsection,  adapting \cite[Section 6.1.2]{grafakos2009modern} in a straightforward way. 
	
	For $s\in(0,\frac32)$, we define the regularization operator $R^{[n]}:L^p\to L^p$, for all $n\in\mathbb{N}, \, n \geq 1$ and $p\in[1,\infty]$, by setting
	\begin{equation}
		\forall f \in L^p, \quad (R^{[n]}f)(x) := ( \mathcal{R}^{[n]} \ast f ) (x) , \quad \mathcal{R}^{[n]}(y) := \frac{n^{3/2}}{2^{3} \pi^{3/2} \Gamma(\frac{s}{2})} \int_{0}^{\infty}  e^{-\tau} \tau^{-\frac{5-s}{2}} e^{-\frac{n}{4 \tau} |y|^2} \diff \tau .
	\end{equation}
	In the next lemma we recall that $R^{[n]}$ is indeed a well-defined bounded operator on $L^p$ for any~$p\in[1,\infty]$.

	\begin{prop}\label{prop:restriction-operator-on-Lp}
		Let $s\in(0,\frac32)$ and $p\in[1,\infty]$. For all $n\geq1$, the operators $R^{[n]}$ are uniformly bounded in $L^p$,
		\begin{equation}
			\label{est:reg_Lp_simple}
			\forall n \geq 1, \,\ \| R^{[n]} \|_{\mathcal{B}(L^p,L^p)} \leq 1 .
		\end{equation}
		Moreover, for $p \neq \infty$, they converge strongly to the identity as $n \rightarrow \infty $,
		\begin{equation}
			\label{lim:reg_strong_conv_simple}
			\forall f \in L^p, \quad \| (R^{[n]} - \mathbb{I} ) f \|_{L^p} \xrightarrow[n \rightarrow \infty]{} 0 .
		\end{equation}
	\end{prop}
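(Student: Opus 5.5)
The plan is to identify the kernel $\mathcal{R}^{[n]}$ as an $L^1$-normalized approximate identity: a nonnegative function of integral one, obtained by rescaling a fixed profile. Both claims then follow from standard facts about convolution.

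\textbf{Step 1 (positivity and scaling).} The integrand defining $\mathcal{R}^{[n]}(y)$ is nonnegative, so $\mathcal{R}^{[n]}\ge0$. Substituting shows that $\mathcal{R}^{[n]}(y)=n^{3/2}\mathcal{R}^{[1]}(\sqrt{n}\,y)$, which is an $L^1$-dilation of the fixed profile $\mathcal{R}^{[1]}$.

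\textbf{Step 2 (total mass).} By Tonelli we may exchange the $y$ and $\tau$ integrations. The Gaussian integral gives
\[
\int_{\mathbb{R}^3} e^{-\frac{n}{4\tau}|y|^2}\diff y=(4\pi\tau/n)^{3/2}.
\]
Hence $\int\mathcal{R}^{[n]}$ reduces to a constant times $\int_0^\infty e^{-\tau}\tau^{\frac{s}{2}-1}\diff\tau=\Gamma(\tfrac s2)$. This is the Bessel-potential computation of \cite[Section 6.1.2]{grafakos2009modern}, i.e.\ the kernel of $(1-\Delta/n)^{-s/2}$, whose Fourier multiplier $(1+|\xi|^2/n)^{-s/2}$ equals $1$ at $\xi=0$. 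I expect the prefactor to be normalized so that $\|\mathcal{R}^{[n]}\|_{L^1}=1$. If the constants as printed do not match exactly, I would note that only finiteness of the $L^1$ norm and $n$-independence matter, and then normalize. Checking this constant bookkeeping is the only real obstacle.

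\textbf{Step 3 (uniform bound).} Young's inequality, or Minkowski's integral inequality as already used for $w\in\mathcal{M}_0$ in Lemma~\ref{lm:Hartree}, gives
\[
\|R^{[n]}f\|_{L^p}\le\|\mathcal{R}^{[n]}\|_{L^1}\|f\|_{L^p}=\|f\|_{L^p}
\]
for all $p\in[1,\infty]$. This is \eqref{est:reg_Lp_simple}.

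\textbf{Step 4 (strong convergence).} Fix $p<\infty$ and use $\int\mathcal{R}^{[n]}=1$ to write
\[
R^{[n]}f-f=\int\mathcal{R}^{[1]}(z)\big(f(\cdot-z/\sqrt n)-f\big)\diff z .
\]
Minkowski's inequality then gives $\|(R^{[n]}-\mathbb{I})f\|_{L^p}\le\int\mathcal{R}^{[1]}(z)\,\|f(\cdot-z/\sqrt n)-f\|_{L^p}\diff z$. The integrand is bounded by $2\|f\|_{L^p}\mathcal{R}^{[1]}(z)$, which is integrable. It also tends to $0$ pointwise in $z$, by continuity of translations in $L^p$ for $p<\infty$. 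Dominated convergence therefore yields \eqref{lim:reg_strong_conv_simple}.
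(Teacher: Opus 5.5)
Your proposal is correct and follows the paper's proof: Young's inequality for \eqref{est:reg_Lp_simple}, then the approximate-identity argument via Minkowski's inequality and continuity of translations in $L^p$. The constant check you left open closes exactly, since $\frac{n^{3/2}}{2^3\pi^{3/2}\Gamma(s/2)}(4\pi\tau/n)^{3/2}=\tau^{3/2}/\Gamma(\tfrac s2)$ gives $\|\mathcal{R}^{[n]}\|_{L^1}=1$, so no renormalization is needed (and none would be allowed, because the statement requires mass exactly $1$, not merely finite).

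The only difference is in the final step. You use the dilation identity $\mathcal{R}^{[n]}(y)=n^{3/2}\mathcal{R}^{[1]}(\sqrt n\,y)$ to finish with a single dominated-convergence argument in $z$, whereas the paper splits the $y$-integral into $B_\delta(0)$ and its complement and computes the tail mass explicitly. Your version is shorter and also avoids the inconsistent $(2\pi)^{3/2}$ normalization factors in that part of the paper's argument.
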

	\begin{remark}
		Eq.~\eqref{est:reg_Lp_simple} implies both
		\begin{equation}
			\label{est:reg_Lp}
			\forall n \geq 1, \,\ \| R^{[n]} \|_{\mathcal{B}(L^2\cap L^p,L^2\cap L^p)} \leq 1 , \quad \text{and} \quad \| R^{[n]} \|_{\mathcal{B}(L^2+ L^p,L^2+ L^p)} \leq 1
		\end{equation}
		while \eqref{lim:reg_strong_conv_simple} yields, for $p\neq \infty$,
		\begin{equation}
			\label{lim:reg_strong_conv_cap}
			\forall f \in L^2\cap L^p, \, \| (R^{[n]} - \mathbb{I} ) f \|_{L^2\cap L^p} 
			\xrightarrow[n \rightarrow \infty]{} 0
		\end{equation}
		and
		\begin{equation}
			\quad \forall  f \in L^2+ L^p, \, \| (R^{[n]} - \mathbb{I} ) f \|_{L^2+L^p} 
			\xrightarrow[n \rightarrow \infty]{} 0.\label{lim:reg_strong_conv_plus}
		\end{equation}
	\end{remark}
	\begin{proof}[Proof of Proposition \ref{prop:restriction-operator-on-Lp}]
		One can derive the uniform bound in norm, Eq.~\eqref{est:reg_Lp_simple}, from Young's inequality: 
		\begin{align*}
			\| (R^{[n]}f) \|_{L^p} &\leq  \frac{n^{3/2}}{2^{3} \pi^{3/2} \Gamma(\frac{s}{2})} \int_{0}^{\infty}  e^{-\tau} \tau^{-\frac{5-s}{2}} \| e^{-\frac{n}{4 \tau} |\cdot|^2} \ast f \|_{L^p} \diff \tau \\
			&\leq  \Bigg( \frac{n^{3/2}}{2^{3} \pi^{3/2} \Gamma(\frac{s}{2})} \int_{0}^{\infty}  e^{-\tau} \tau^{-\frac{5-s}{2}} \| e^{-\frac{n}{4 \tau} |\cdot|^2}  \|_{L^1} \diff \tau \Bigg) \| f \|_{L^p}  \\
			&\leq  \Bigg( \frac{n^{3/2}}{2^{3} \pi^{3/2} \Gamma(\frac{s}{2})} \int_{0}^{\infty}  e^{-\tau} \tau^{-\frac{5-s}{2}} \big( \frac{4 \tau \pi}{n}\big)^{\frac{3}{2}} \diff \tau \Bigg) \| f \|_{L^p}  \\
			&=  \| f \|_{L^p} .
		\end{align*}
		
		The strong convergence in the sense of operators, Eq.~\eqref{lim:reg_strong_conv_simple}, follows from the fact that the~$\mathcal{R}^{[n]}$ are approximations of the identity, being superpositions of peaked gaussians: We have that
		\begin{equation*}
			\int_{\mathbb{R}^3} \mathcal{R}^{[n]}(y) \diff  y = (2 \pi)^{3/2} , 
		\end{equation*}
		and therefore, for a.e. $x\in\mathbb{R}^3$,
		\begin{align*}
			((R^{[n]}-\mathbb{I})f)(x) = & \frac{1}{(2 \pi)^{3/2} } \int_{\mathbb{R}^3}  \mathcal{R}^{[n]}(y) f(x-y)\diff  y - \frac{1}{(2 \pi)^{3/2} } \int_{\mathbb{R}^3} \mathcal{R}^{[n]}(y) f(x) \diff  y  \\
			= & \frac{1}{(2 \pi)^{3/2} } \int_{\mathbb{R}^3}  \mathcal{R}^{[n]}(y) (f(x-y)-f(x)) \diff  y.
		\end{align*}
		Recall that $ p \in [1,\infty) $ and $ \mathcal{R}^{[n]}(y) >0 $ for all $y\in\mathbb{R}^3$. Applying the Minkowski inequality, we obtain
		\begin{align*}
			\| (R^{[n]}-\mathbb{I})f \|_{L^p} \leq & \frac{1}{(2 \pi)^{3/2} } \big\| \| \mathcal{R}^{[n]}(y) |f(x-y)-f(x)| \|_{L^1_y} \big\|_{L^p_x}  \\
			\leq & \frac{1}{(2 \pi)^{3/2} } \big\| \| \mathcal{R}^{[n]}(y) |f(x-y)-f(x)| \|_{L^p_x} \big\|_{L^1_y}  \\
			\leq & \int_{\mathbb{R}^3}  \frac{\mathcal{R}^{[n]}(y)}{(2 \pi)^{3/2} } \Big( \int_{\mathbb{R}^3} |f(x-y) -f(x) |^p \diff x \Big)^{\frac{1}{p}} \diff y .
		\end{align*}
		We consider any $ \varepsilon >0 $ and a ball $ B_\delta(0)\subset\mathbb{R}^3$, with $\delta >0$ depending on $\varepsilon$ such that, by continuity of the translation operator in the $L^p$ space,
		\begin{equation*}
			\forall y \in B_\delta(0), \;\ \| f(\cdot-y)-f(\cdot) \|_{L^p} = \Big( \int_{\mathbb{R}^3} |f(x-y) -f(x) |^p \diff x \Big)^{\frac{1}{p}} < \varepsilon .
		\end{equation*}
		We split the integral in the $y$ variable in the integral over the ball $ B_\delta(0)$ and on the complement set $ \mathbb{R}^3\setminus B_\delta(0) $, to get
		\begin{align}
			\| (R^{[n]}-\mathbb{I})f \|_{L^p} &\leq   \int_{B_\delta(0)}  \frac{\mathcal{R}^{[n]}(y)}{(2 \pi)^{3/2} } \| f(x-y)-f(x) \|_{L^p} \diff y \notag \\
			& \quad + \int_{\mathbb{R}^3\setminus B_\delta(0)}  \frac{\mathcal{R}^{[n]}(y)}{(2 \pi)^{3/2} } \| f(x-y) - f(x) \|_{L^p} \diff y \notag \\
			&\leq  \varepsilon + 2 \| f \|_{L^p} \int_{\mathbb{R}^3\setminus B_\delta(0)} \frac{\mathcal{R}^{[n]}(y)}{(2 \pi)^{3/2} } \diff y. \label{eq:a0}
		\end{align}
		We prove that the integral of $\mathcal{R}^{[n]}(y)$ over the complement of the ball converges to 0 as $ n \rightarrow  \infty $, by direct computation,
		\begin{align}
			\int_{\mathbb{R}^3\setminus B_\delta(0)}  \frac{\mathcal{R}^{[n]}(y)}{(2 \pi)^{3/2} } \diff y & =  \frac{n^{3/2}}{2^{3} \pi^{3/2} \Gamma(\frac{s}{2})} \int_{0}^{\infty} e^{-\tau} \tau^{-\frac{5-s}{2}}  \diff \tau  \int_{\mathbb{R}^3\setminus B_\delta(0)}  e^{-\frac{n}{4 \tau} |y|^2}  \diff y \\
			& =  \frac{n^{3/2}}{2^{3} \pi^{3/2} \Gamma(\frac{s}{2})} \int_{0}^{\infty}  e^{-\tau} \tau^{-\frac{5-s}{2}} \diff \tau \, 4 \pi \int_{\delta}^{\infty}  R^2 e^{-\frac{n}{4 \tau} R^2} \diff R \\
			& =  \frac{n^{3/2}}{2^{3} \pi^{3/2} \Gamma(\frac{s}{2})} \int_{0}^{\infty}  e^{-\tau} \tau^{-\frac{5-s}{2}} \diff \tau\, 4 \pi \int_{ \big( \frac{n}{4 \tau} \big)^{1/2} \delta}^{\infty} \Big( \frac{n}{4 \tau} \Big)^{-\frac{3}{2}} \tilde{R}^2 e^{-\tilde{R}^2}  \diff \tilde{R} \\
			& =  \frac{4}{ \pi^{1/2} \Gamma(\frac{s}{2})} \int_{0}^{\infty}  e^{-\tau} \tau^{-\frac{2-s}{2}} \diff \tau\int_{ \big( \frac{n}{4 \tau} \big)^{1/2} \delta}^{\infty}  \tilde{R}^2 e^{-\tilde{R}^2}\diff \tilde{R} .
		\end{align}
		Notice that as a sequence, 
		\begin{equation}
			\int_{ \big( \frac{n}{4 \tau} \big)^{1/2} \delta}^{\infty}  \tilde{R}^2 e^{-\tilde{R}^2} \diff \tilde{R}
		\end{equation}
		converges pointwise to $0$ as $ n \rightarrow \infty $ and the integrand is uniformly bounded by
		\begin{equation}
			\frac{4}{\pi^{1/2} \Gamma(\frac{s}{2})} e^{-\tau} \tau^{-\frac{2-s}{2}}
		\end{equation}
		up to a finite constant, which integrates to $ 4/\pi^{1/2} $. Then by the dominated convergence theorem, we have that for any $ \varepsilon >0 $, $ \exists N >0: \, \forall n \geq N $ 
		\begin{equation}
			\int_{\mathbb{R}^3\setminus B_\delta(0)}  \frac{\mathcal{R}^{[n]}(y)}{(2 \pi)^{3/2}} \diff y  < \frac{\varepsilon}{2 \| f \|_{L^p}} .
		\end{equation}
		Together with \eqref{eq:a0}, this concludes the proof of~\eqref{lim:reg_strong_conv_simple}.
	\end{proof}


	We also remark the following basic estimates in $H^\sigma$ spaces.
	
	\begin{lemma}
		Let $s\in(0,\frac32)$. For all $n\geq1$, $R^{[n]}:L^2\to L^2$ identifies to the Fourier multiplier 
		\begin{equation}\label{eq:R(n)L^2}
			R^{[n]} \vcentcolon= \Big(1-\frac{1}{n} \Delta \Big)^{-\frac{s}{2}} = \Big\langle \frac{ - i \nabla}{\sqrt{n}} \Big\rangle^{-s} .
		\end{equation}
		In particular, for all $\sigma\in\mathbb{R}$ and all $n\geq1$, $R^{[n]}$ extends to the a bounded operator in $\mathcal{B}(H^\sigma)$ such that 
		\begin{equation}\label{eq:bound_R(n)_Hsigma}
			\|R^{[n]}\|_{\mathcal{B}(H^\sigma)}=1,
		\end{equation}
		and
		\begin{equation}\label{lim:reg_strong_conv_Hs}
			\forall f \in H^\sigma(\mathbb{R}^3) ,\;\ \lim_{n \rightarrow \infty} \| (R^{[n]} - \mathbb{I}) f \|_{H^\sigma} = 0 .
		\end{equation}
		Moreover the following estimates hold:
		\begin{align}
			&\forall f \in L^2(\mathbb{R}^3), \;\ \| R^{[n]} f \|_{H^{s}} \leq n^{s/2} \| f \|_{L^2} \;\ \label{est:Hs_reg}\\
			&\forall f \in H^{-s}(\mathbb{R}^3), \;\ \| R^{[n]} f \|_{L^2} \leq n^{s/2} \| f \|_{H^{-s}}  .\label{est:Hs_reg2}
		\end{align}
	\end{lemma}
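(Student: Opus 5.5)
The plan is to compute the Fourier transform of the kernel $\mathcal{R}^{[n]}$ explicitly, using the subordination identity behind its definition. With the unitary normalization, the Gaussian $e^{-\frac{n}{4\tau}|y|^2}$ has Fourier transform $(2\tau/n)^{3/2}e^{-\tau|\xi|^2/n}$. Since $\mathcal{R}^{[n]}\ge0$ is integrable, Fubini applies and gives
\begin{equation*}
\widehat{\mathcal{R}^{[n]}}(\xi)=\frac{n^{3/2}}{2^3\pi^{3/2}\Gamma(\frac s2)}\Big(\frac{2}{n}\Big)^{3/2}\int_0^\infty e^{-\tau(1+|\xi|^2/n)}\tau^{\frac s2-1}\diff\tau .
\end{equation*}
The $\tau$-integral equals $\Gamma(\frac s2)(1+|\xi|^2/n)^{-s/2}$, using $\int_0^\infty e^{-\tau a}\tau^{\frac s2-1}\diff\tau=\Gamma(\frac s2)a^{-s/2}$. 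Since convolution satisfies $\widehat{\mathcal{R}*f}=(2\pi)^{3/2}\widehat{\mathcal{R}}\hat f$, the prefactors collapse. I will check that they give exactly $(2\pi)^{-3/2}\cdot(2\pi)^{3/2}=1$, consistent with the total-mass identity $\int\mathcal{R}^{[n]}=(2\pi)^{3/2}$ from the proof of Proposition \ref{prop:restriction-operator-on-Lp}. This yields \eqref{eq:R(n)L^2} on $L^1\cap L^2$, and by density and boundedness on $L^2$ the identity holds on all of $L^2$. Tracking these normalization constants is the only real obstacle. If the constant turns out wrong, I would fix it directly by evaluating both sides at $\xi=0$ through the mass identity.

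Given the multiplier form $m_n(\xi)=\langle\xi/\sqrt n\rangle^{-s}$, the remaining claims follow from Plancherel. Since $0<m_n\le1$ with $m_n(0)=1$, $R^{[n]}$ commutes with $\langle\nabla\rangle^\sigma$ and extends to $H^\sigma$ with norm at most $1$. The norm equals $1$ because the supremum of $m_n$ is $1$: test against functions with Fourier support concentrated near $\xi=0$. For the strong convergence \eqref{lim:reg_strong_conv_Hs}, I write
\begin{equation*}
\|(R^{[n]}-\mathbb{I})f\|_{H^\sigma}^2=\int|m_n(\xi)-1|^2\langle\xi\rangle^{2\sigma}|\hat f|^2\diff\xi .
\end{equation*}
The integrand tends to $0$ pointwise and is dominated by $\langle\xi\rangle^{2\sigma}|\hat f|^2\in L^1$, so dominated convergence finishes the argument.

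For \eqref{est:Hs_reg}, I need to bound $\langle\xi\rangle^s m_n(\xi)$. I use
\begin{equation*}
\frac{1+|\xi|^2}{1+|\xi|^2/n}\le n \quad\text{for } n\ge1,
\end{equation*}
which gives $\langle\xi\rangle^sm_n(\xi)\le n^{s/2}$, and Plancherel then yields \eqref{est:Hs_reg}. The same pointwise bound, written as $m_n\le n^{s/2}\langle\xi\rangle^{-s}$, gives \eqref{est:Hs_reg2} with $f\in H^{-s}$: $\|R^{[n]}f\|_{L^2}=\|m_n\hat f\|_{L^2}\le n^{s/2}\|\langle\xi\rangle^{-s}\hat f\|_{L^2}$.
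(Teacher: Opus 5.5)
Your proof is correct and follows the paper's route: you identify the symbol of $\mathcal{R}^{[n]}$ via the Gaussian Fourier transform and the Gamma-function (subordination) identity, and you then get the $H^\sigma$ norm, the strong convergence (by dominated convergence) and both $n^{s/2}$ estimates from Plancherel and the pointwise bound $\langle\xi\rangle^{s}\langle\xi/\sqrt n\rangle^{-s}\le n^{s/2}$. One caution concerns your cross-check. Your own bookkeeping gives $\widehat{\mathcal{R}^{[n]}}(0)=(2\pi)^{-3/2}$, which means $\int\mathcal{R}^{[n]}=1$ (the Young-inequality computation in the proof of Proposition~\ref{prop:restriction-operator-on-Lp} gives the same), so the identity $\int\mathcal{R}^{[n]}=(2\pi)^{3/2}$ you quote is a normalization slip in the paper and contradicts your result rather than confirming it. Do not use it as your fallback for fixing constants, since it would introduce a spurious factor $(2\pi)^{3/2}$ in the multiplier.
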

	\begin{proof}
		The action of the operators $R^{[n]}:L^2\to L^2$ is a consequence of the formula 
		\begin{equation*}
			A^{-\frac{s}{2}} = \frac{1}{\Gamma(\frac{s}{2})} \int_{0}^{+\infty}  e^{-\tau A} \tau^{\frac{s}{2}} \frac{\diff \tau}{\tau}
		\end{equation*}
		and the 
		well-known form of the Fourier transform of a Gaussian. Indeed we have, for all $y\in\mathbb{R}^3$,
		\begin{equation*}
			\mathcal{R}^{[n]}(y) = \mathcal{F}^{-1} \Big( \frac{1}{\Gamma(\frac{s}{2})} \int_{0}^{+\infty}  e^{-\frac{\tau}{n} |\xi|^2} e^{-\tau} \tau^{\frac{s}{2}} \frac{\diff \tau}{\tau} \Big) (y) = \frac{n^{3/2}}{2^{3/2} \Gamma(\frac{s}{2})} \int_{0}^{+\infty}  e^{-\tau} \tau^{-\frac{5-s}{2}} e^{-\frac{n}{4 \tau} |y|^2} \diff \tau,
		\end{equation*}
		which yields
		\begin{equation}\label{eq:R(n)L^2a}
			\widehat{(R^{[n]}f)} (\xi) = ( \big(1 + |\cdot|^2/n \big)^{-\frac{s}{2}} \hat{f} ) (\xi), 
		\end{equation}
		for all $\xi\in\mathbb{R}^3$. This proves \eqref{eq:R(n)L^2}.
		
		
		By \eqref{eq:R(n)L^2a}, we see that $R^{[n]}$ acts as the multiplication operator by $(1 + |\cdot|^2/n )^{-\frac{s}{2}}$ in Fourier space. Since $\|(1 + |\cdot|^2/n )^{-\frac{s}{2}}\|_{L^\infty}=1$, this implies \eqref{eq:bound_R(n)_Hsigma}. The strong convergence properties follows from the pointwise convergence
		\begin{equation}
			\forall \xi \in \mathbb{R}^3, \;\ \lim_{n \rightarrow \infty} \langle  | \xi |/\sqrt{n} \rangle^{-1} = 1,
		\end{equation}
		the basic bound
		\begin{equation}
			\Bigg| (1+\frac{1}{n} | \xi |^2 )^{-s} -1 \Bigg| \leq 1 ,
		\end{equation}
		and the dominated convergence theorem.
		
		Finally the direct computation
		\begin{align}
			\| R^{[n]} f \|_{H^{s}}^2 = \int_{\mathbb{R}^3}  \frac{(1 + | \xi |^2)^s}{(1 + \frac{| \xi|^2}{n})^s} | \hat{f} (\xi) |^2 \diff \xi \leq n^s \int_{\mathbb{R}^3}  | \hat{f} (\xi) |^2 \diff \xi,
		\end{align}
		establishes \eqref{est:Hs_reg}. The estimate in the $L^2$ and $H^{-s} $ norms is obtained by applying \eqref{est:Hs_reg} to~$ \langle \nabla \rangle^{-\frac{s}{2}} f $.
	\end{proof}

	\section{Local existence} \label{sec:local-ex}
	
	This section is devoted to the proof of Theorem \ref{thm:local_existence}, using an approximation procedure adapted from the works of \cite{cazenave1988cauchy,strauss1970weak}. We will make use of some results from \cite{cazenave2003semilinear}.  The first step is to consider a sequence of Kohn-Sham equations depending on a parameter $n$ with regularized non-linearities. At fixed $n$, one can prove the existence of a solution, and then, up to a subsequence, show that the solutions converge as $n\to \infty$. Finally one verifies that the limit is a solution to the original Kohn-Sham equations. 
	
	\subsection{The regularized Kohn-Sham equations}
	
	We now introduce our regularized version of the Kohn Sham equations. The original interaction~$\ulg(\ulu)$ is defined for $ \ulu$  in $\ell^2 (L^2\cap L^{\nu}) $ by the sequence in \eqref{eq:def-interaction}. The regularized version~$\ulg^{[n]}(\ulu)=(g^{[n]}_{k}(\ulu))_{k\in\mathbb{N}}$ is defined, for $n\geq1$, by
	\begin{equation}
		\forall k \in \mathbb{N}\,,\quad g^{[n]}_{k}(\ulu) := R^{[n]} \, g_k( R^{[n]} \ulu ) ,
	\end{equation}
	where $R^{[n]}\ulu := (R^{[n]}u_k)_{k\in\mathbb{N}} $. 
	
	The action of the regularization is sufficient to make the regularized non-linearity locally Lipschitz in $\ell^2(L^2) $, by using the Sobolev embeddings \eqref{eq:sobolev_embedding_plus}--\eqref{eq:sobolev_embedding_minus}, and the continuity of $R^{[n]}$ from $H^s$ to $L^2$ or $L^2$ to $H^{-s}$ stated in \eqref{est:Hs_reg}. 
	\begin{lemma} Let $s\in(0,\frac32)$. If $\ulg\in\Nloc$ and $M>0$, for any $ \ulu$ and $\ulv$ in $B_{\ell^2(L^2)}(0,M)$ and any $ n \geq 1$,
		\begin{equation}
			\label{est:loc_lipsch_L2}
			\| \ulg^{[n]}(\ulu) - \ulg^{[n]}(\ulv) \|_{\ell^2(L^2)}
			\leq C_s^2n^s \Clip{n^{s/2}M}   \|\ulu-\ulv\|_{\ell^2(L^2)}.
		\end{equation}
	\end{lemma}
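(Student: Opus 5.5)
The plan is to peel off the two regularization operators in turn, using the smoothing bounds \eqref{est:Hs_reg}--\eqref{est:Hs_reg2} on the outside and the Sobolev embeddings \eqref{eq:sobolev_embedding_plus}--\eqref{eq:sobolev_embedding_minus} on the inside, so that \ref{ass2:nonlinearity_Lp_bound} can be applied at radius $n^{s/2}M$.

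\textbf{Outer regularization.} Apply \eqref{est:Hs_reg2} componentwise and sum in $\ell^2$. This gives
\begin{equation*}
\| \ulg^{[n]}(\ulu) - \ulg^{[n]}(\ulv) \|_{\ell^2(L^2)} \le n^{s/2} \| \ulg(R^{[n]}\ulu) - \ulg(R^{[n]}\ulv) \|_{\ell^2(H^{-s})}.
\end{equation*}
Then use \eqref{eq:sobolev_embedding_minus}, with the $\nu$ for which $\ulg\in\Nlocnu$, to bound the right-hand side by
\begin{equation*}
C_s n^{s/2} \| \ulg(R^{[n]}\ulu) - \ulg(R^{[n]}\ulv) \|_{\ell^2(L^2+L^{\nu'})}.
\end{equation*}

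\textbf{Lipschitz step.} To use \ref{ass2:nonlinearity_Lp_bound}, first place $R^{[n]}\ulu$ and $R^{[n]}\ulv$ in an $\ell^2(H^s)$ ball. By \eqref{est:Hs_reg}, summed over $k$,
\begin{equation*}
\|R^{[n]}\ulu\|_{\ell^2(H^s)} \le n^{s/2}\|\ulu\|_{\ell^2(L^2)} < n^{s/2}M,
\end{equation*}
and the same holds for $\ulv$. Both therefore lie in $B_{\ell^2(H^s)}(\ulzero, n^{s/2}M)$, so \ref{ass2:nonlinearity_Lp_bound} with constant $\Clip{n^{s/2}M}$ gives
\begin{equation*}
\| \ulg(R^{[n]}\ulu) - \ulg(R^{[n]}\ulv) \|_{\ell^2(L^2+L^{\nu'})} \le \Clip{n^{s/2}M} \|R^{[n]}(\ulu-\ulv)\|_{\ell^2(L^2\cap L^\nu)}.
\end{equation*}

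\textbf{Inner regularization.} Apply \eqref{eq:sobolev_embedding_plus} and then \eqref{est:Hs_reg} once more:
\begin{equation*}
\|R^{[n]}(\ulu-\ulv)\|_{\ell^2(L^2\cap L^\nu)} \le C_s \|R^{[n]}(\ulu-\ulv)\|_{\ell^2(H^s)} \le C_s n^{s/2}\|\ulu-\ulv\|_{\ell^2(L^2)}.
\end{equation*}
Chaining the three displays gives the factor $C_s^2 n^{s} \Clip{n^{s/2}M}$, which is \eqref{est:loc_lipsch_L2}.

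The main point to watch is that the ball radius in \ref{ass2:nonlinearity_Lp_bound} is measured in $\ell^2(H^s)$, while the hypothesis only bounds $\ulu,\ulv$ in $\ell^2(L^2)$. The $\ell^2(L^2)\to\ell^2(H^s)$ bound from \eqref{est:Hs_reg}, at the cost of $n^{s/2}$, is exactly what moves the radius from $M$ to $n^{s/2}M$. A minor check is that \eqref{est:Hs_reg} and \eqref{est:Hs_reg2} lift to $\ell^2$ norms; this is immediate because $R^{[n]}$ acts diagonally in $k$.
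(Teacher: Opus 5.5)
Your proposal is correct and matches the paper's proof step for step. You bound the outer $R^{[n]}$ via \eqref{est:Hs_reg2} and \eqref{eq:sobolev_embedding_minus}, apply \ref{ass2:nonlinearity_Lp_bound} at radius $n^{s/2}M$ (justified by \eqref{est:Hs_reg}), and bound the inner $R^{[n]}$ via \eqref{eq:sobolev_embedding_plus} and \eqref{est:Hs_reg}; your citation of \eqref{est:Hs_reg2} for the outer step is the precise reference, and your strict bound $<n^{s/2}M$ correctly places the regularized sequences in the open ball that \ref{ass2:nonlinearity_Lp_bound} requires.
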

	\begin{proof}
		First, the boundedness of $R^{[n]}$ from $L^2$ to $H^{-s}$ stated in \eqref{est:Hs_reg} yields
		\begin{equation}
			\| \ulg^{[n]}(\ulu) -\ulg^{[n]}(\ulv)\|_{\ell^2(L^2)} \leq n^{\frac{s}{2}} \| \ulg(R^{[n]}\ulu) -\ulg(R^{[n]}\ulv) \|_{\ell^2(H^{-s})}. 
		\end{equation}
		Then, by the Sobolev embedding \eqref{eq:sobolev_embedding_minus}, 
		\begin{equation}
			\| \ulg^{[n]}(\ulu) -\ulg^{[n]}(\ulv)\|_{\ell^2(L^2)} \leq C_{s} n^{\frac{s}{2}} \| \ulg(R^{[n]}\ulu) -\ulg(R^{[n]}\ulv) \|_{\ell^2(L^2+L^{\nu^\prime})}\,,
		\end{equation}
		where $\nu$ is such that $\ulg\in\Nlocnu$. As $\|R^{[n]}\ulu\|_{\ell^2(H^{s})}\leq n^{s/2}M$ and $\|R^{[n]}\ulv\|_{\ell^2(H^{s})}\leq n^{s/2}M$ by \eqref{est:Hs_reg}, the condition \ref{ass2:nonlinearity_Lp_bound} on the non-linearity gives
		\begin{equation*}
			\| \ulg^{[n]}(\ulu) -\ulg^{[n]}(\ulv) \|_{\ell^2(L^2)}
			\leq C_{s} \, n^{\frac{s}{2}} \,
			\Clip{n^{s/2}M} \|R^{[n]}(\ulu-\ulv) \|_{\ell^2(L^2\cap L^{\nu})} \,.
		\end{equation*}
		Using again the boundedness of $R^{[n]}$ from $L^2$ to $H^s$ \eqref{est:Hs_reg}, and the Sobolev embedding \eqref{eq:sobolev_embedding_plus}, we obtain
		\begin{equation}
			\| \ulg^{[n]}(\ulu) -\ulg^{[n]}(\ulv) \|_{\ell^2(L^2)}	\leq C_s^2\, n^s \Clip{n^{s/2}M} \|\ulu-\ulv \|_{\ell^2(L^2)},
		\end{equation}
		which concludes the proof.
	\end{proof}
	
	We consider now an approximated problem given by the Kohn-Sham equations with a regularized non-linearity. The equations for the $n$-dependent solutions $ \ulpsi^{[n]} $ are
	\begin{equation}
		\label{eq:Kohn_Sham_reg}
		\begin{dcases}
			i \partial_t \ulpsi^{[n]}(t) = (1-\Delta)^s \ulpsi^{[n]}(t) +  \ulg^{[n]} (\ulpsi^{[n]}(t)) \, ,\\
			\ulpsi^{[n]}(t=0)= \ulphi  \, .
		\end{dcases}
	\end{equation}
	The definition of the energy has to be adapted for these regularized equations:
	\begin{equation*}
		\mathcal{E}^{[n]}(\ulpsi) = \frac{1}{2} \| \ulpsi \|_{\ell^2(H^{s})}^2 +  \mathscr{G}^{[n]}(\ulpsi) \quad \text{for} \quad \ulpsi \in \ell^2(H^{s})\,,
	\end{equation*}
	where the regularized non-linearity energy is simply obtained by applying the regularizing operator to each element of the sequence and computing the original non-linearity energy
	\begin{equation}\label{def:approx_nonlin_energy}
		\mathscr{G}^{[n]}(\ulpsi) = \mathscr{G} (R^{[n]}\ulpsi)= \mathscr{G} ((R^{[n]}\psi_k)_k) .
	\end{equation}

	We have the following theorem deduced from \cite[Theorem 3.3.1]{cazenave2003semilinear}.
	\begin{theorem}\label{thm:existence_regularized}
		Let $s\in(0,\frac32)$, $n \geq 1$ and $\ulg\in\Nloc$. Consider the equations \eqref{eq:Kohn_Sham_reg} with initial condition $ \ulphi  \in \ell^2(L^2) $. There exists a global strong solution
		\[
		\ulpsi^{[n]} \in \mathscr{C}\big(\mathbb{R};\ell^2(L^2)\big) \cap \mathscr{C}^1 \big(\mathbb{R};\ell^2(H^{-2s})\big)
		\]
		such that:
		\begin{enumerate}[label=\roman*)]
			\item The $ \ell^2(L^2) $ norm is conserved: $ \forall t \in \mathbb{R},\;\ \| \ulpsi^{[n]}(t) \|_{\ell^2(L^2)} = \| \ulphi  \|_{\ell^2(L^2)} $;
			\item \label{it:cons_en} If $ \ulphi  \in \ell^2(H^{s}) $, then 
			\begin{equation*}
				\ulpsi^{[n]} \in \mathscr{C}\big(\mathbb{R};\ell^2(H^{s})\big) \cap \mathscr{C}^1\big(\mathbb{R};\ell^2(H^{-s})\big)
			\end{equation*}
			and the energy is conserved: 
			\[ \forall t \in \mathbb{R}, \;\ \mathcal{E}^{[n]}\big(\ulpsi^{[n]}(t)\big) = \mathcal{E}^{[n]}(\ulphi ) ; \]
			\item If $ \ulphi  \in \ell^2(H^{2s}) $, then $ \ulpsi^{[n]} \in \mathscr{C}\big(\mathbb{R};\ell^2(H^{2s})\big) \cap \mathscr{C}^1\big(\mathbb{R};\ell^2(L^2)\big) $.
		\end{enumerate}
	\end{theorem}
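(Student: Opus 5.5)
We intend to apply \cite[Theorem 3.3.1]{cazenave2003semilinear} in the Hilbert space $X=\ell^2(L^2)$, with the skew-adjoint operator $A=-i(1-\Delta)^s$ (acting componentwise, domain $\ell^2(H^{2s})$). Then $e^{tA}$ is a unitary group on every $\ell^2(H^\sigma)$. The non-linear part is $F(\ulu)=-i\ulg^{[n]}(\ulu)$.

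\textbf{Local well-posedness in $\ell^2(L^2)$.} By \eqref{est:loc_lipsch_L2}, $\ulg^{[n]}$ is Lipschitz from $\ell^2(L^2)$ into itself on bounded sets, and $\ulg^{[n]}(\ulzero)=\ulzero$ by \ref{assump1}. A standard fixed point argument for the Duhamel formula then gives a unique maximal solution $\ulpsi^{[n]}\in\mathscr{C}(I_{\max};\ell^2(L^2))$, with a blow-up alternative in the $\ell^2(L^2)$ norm.

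\textbf{Mass conservation.} We first take $\ulphi\in\ell^2(H^{2s})$. Since $R^{[n]}$ maps $L^2$ into $H^s$ and $\ulg$ maps $\ell^2(H^s)$ into $\ell^2(H^{-s})$, $\ulg^{[n]}$ is locally Lipschitz on $\ell^2(H^{2s})$, with $\|\ulg^{[n]}(\ulu)\|_{\ell^2(H^{2s})}$ controlled by $n^{s}$ times a function of $\|\ulu\|_{\ell^2(L^2)}$. Hence the solution stays in $\mathscr{C}(\ell^2(H^{2s}))\cap\mathscr{C}^1(\ell^2(L^2))$ as long as it exists in $\ell^2(L^2)$, which is item iii). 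For such solutions we compute $\frac{\diff}{\diff t}\|\ulpsi^{[n]}\|^2_{\ell^2(L^2)}=2\,\mathrm{Im}\langle \ulg^{[n]}(\ulpsi),\ulpsi\rangle$. Since $R^{[n]}$ is self-adjoint, this equals $2\,\mathrm{Im}\langle \ulg(R^{[n]}\ulpsi),R^{[n]}\ulpsi\rangle$, which we must show vanishes.

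This is the delicate point. The pairing $\langle\ulg(\ulu),\ulu\rangle$ is real by the gauge structure: apply \ref{ass3:nonlinear_energy_derivative} with $\ulv=i\ulu$. For that we need $\tau\mapsto\mathscr{G}(e^{i\tau}\ulu)$ to be constant. I would either prove this invariance or, failing that, add the gauge condition \eqref{eq:gauge}, which holds in all the examples of Section \ref{sec:examples}. Indeed, \eqref{eq:gauge} gives $\mathscr{G}(e^{i\tau}\ulu)=\mathscr{G}(\ulu)+\int_0^\tau\mathrm{Re}\langle \ulg(e^{i\sigma}\ulu),ie^{i\sigma}\ulu\rangle\,\diff\sigma$, and the integrand then does not depend on $\sigma$. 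I expect this step to be the main obstacle, since the abstract class $\Nloc$ does not obviously encode phase invariance.

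\textbf{Passing to general data and globalising.} For general $\ulphi\in\ell^2(L^2)$ we extend mass conservation by density, using continuous dependence in $\ell^2(L^2)$ from the Lipschitz bound. Conservation of the $\ell^2(L^2)$ norm combined with the blow-up alternative then gives global existence, i.e. item i).

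\textbf{Item ii).} For $\ulphi\in\ell^2(H^s)$, $\ulg^{[n]}$ is again locally Lipschitz on $\ell^2(H^s)$, with bounds depending only on the $\ell^2(L^2)$ norm, so regularity persists globally. For energy conservation we take $H^{2s}$ data and compute $\frac{\diff}{\diff t}\mathcal{E}^{[n]}(\ulpsi)=\mathrm{Re}\langle (1-\Delta)^s\ulpsi+\ulg^{[n]}(\ulpsi),\partial_t\ulpsi\rangle$. Here we use \ref{ass3:nonlinear_energy_derivative} together with the chain rule through $R^{[n]}$, and the continuity of $\ulg$ from \ref{ass2:nonlinearity_Lp_bound} to pass from the Gateaux to a $\mathscr{C}^1$ derivative along the curve. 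Substituting the equation, $\partial_t\ulpsi=-i\big((1-\Delta)^s\ulpsi+\ulg^{[n]}(\ulpsi)\big)$, so the expression is $\mathrm{Re}\big(i\|\cdot\|^2\big)=0$. Finally we conclude by density, using continuity of $\mathcal{E}^{[n]}$ on $\ell^2(H^s)$ (Lemma \ref{lem:hoelder-regularity-non-linearity}) and continuous dependence.
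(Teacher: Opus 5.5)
Your plan matches the paper's: its proof is a one-line application of \cite[Theorem 3.3.1]{cazenave2003semilinear} with $X=\ell^2(L^2)$, $X_A=\ell^2(H^s)$, $D(A)=\ell^2(H^{2s})$, resting on the $\ell^2(L^2)$-Lipschitz bound \eqref{est:loc_lipsch_L2}. Your suspicion about the phase is justified, and it is a defect of the statement rather than of your argument. The identity $\mathrm{Im}\langle \ulg(\ulu),\ulu\rangle=0$ does not follow from \ref{assump1}--\ref{ass3:nonlinear_energy_derivative}. For instance, $\ulg(\ulu)=\overline{\ulu}$ with $\mathscr{G}(\ulu)=\mathrm{Re}\sum_k\int u_k^2$ belongs to $\Nloc$. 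Since the kernel of $R^{[n]}$ is real, $\ulg^{[n]}(\ulu)=(R^{[n]})^2\overline{\ulu}$. For $\ulphi=(1+i)\ulv$ with $\ulv\neq\ulzero$ real-valued in $\ell^2(H^{2s})$, one gets $\frac{\diff}{\diff t}\|\ulpsi^{[n]}(t)\|^2_{\ell^2(L^2)}\big|_{t=0}=-2\,\mathrm{Im}\sum_k\int (R^{[n]}\varphi_k)^2=-4\|R^{[n]}\ulv\|^2_{\ell^2(L^2)}<0$. So item i) fails for general $\ulg\in\Nloc$, and the quoted theorem of Cazenave relies on a charge hypothesis that the paper never verifies. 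Assuming \eqref{eq:gauge}, or directly $\mathrm{Im}\langle\ulg(\ulu),\ulu\rangle=0$, is the right repair. Your identity only shows that the integrand is independent of $\sigma$, so you must still take $\tau=2\pi$ to conclude that it vanishes.

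The genuine gap is in your regularity steps. You claim that $\ulg^{[n]}$ is locally Lipschitz on $\ell^2(H^{2s})$ and on $\ell^2(H^s)$, with $\|\ulg^{[n]}(\ulu)\|_{\ell^2(H^{2s})}$ controlled by $\|\ulu\|_{\ell^2(L^2)}$. This does not follow from the facts you quote, and it is false in general. The operator $R^{[n]}=\langle -i\nabla/\sqrt n\rangle^{-s}$ gains exactly $s$ derivatives. For $\ulg\in\Nloc$ one only knows $\ulg(R^{[n]}\ulu)\in\ell^2(L^2+L^{\nu'})\subset\ell^2(H^{-s})$, however smooth $\ulu$ is. So $\ulg^{[n]}(\ulu)$ lies in general only in $\ell^2(L^2)$. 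Already for $s=1$ and the admissible potential term $V\ulu$ with $V=|x|^{-8/5}$ near the origin, $\ulg^{[n]}(\ulu)\notin\ell^2(H^1)$ for smooth $\ulu$ with $R^{[n]}u_k(0)\neq0$ for some $k$. Hence you cannot rerun the fixed point in $\ell^2(H^{2s})$ or $\ell^2(H^s)$, and continuous dependence in $\ell^2(H^s)$ is not available for your final density step.

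What works, and is what Cazenave's proof does, uses only \eqref{est:loc_lipsch_L2}:
\begin{itemize}
\item For $\ulphi\in\ell^2(H^{2s})$, Duhamel's formula and Gronwall give $\|\ulpsi^{[n]}(t+h)-\ulpsi^{[n]}(t)\|_{\ell^2(L^2)}\le C|h|$, locally uniformly in $t$. So the forcing $\ulg^{[n]}(\ulpsi^{[n]})$ lies in $W^{1,\infty}_{\mathrm{loc}}(\ell^2(L^2))$. Linear semigroup theory (datum in $D(A)$, forcing in $W^{1,1}(X)$) then yields $\mathscr{C}(\ell^2(H^{2s}))\cap\mathscr{C}^1(\ell^2(L^2))$. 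This is item iii), and it legitimizes your mass and energy computations.
\item For $\ulphi\in\ell^2(H^s)$, approximate by $\ell^2(H^{2s})$ data. Use continuous dependence in $\ell^2(L^2)$ only, together with the continuity of $\mathscr{G}^{[n]}=\mathscr{G}\circ R^{[n]}$ on bounded sets of $\ell^2(L^2)$, which follows from \eqref{est:Hs_reg} and Lemma \ref{lem:hoelder-regularity-non-linearity}.
\item Conservation of $\mathcal{E}^{[n]}$ for the approximants gives a local $\ell^2(H^s)$ bound. Weak lower semicontinuity gives $\mathcal{E}^{[n]}(\ulpsi^{[n]}(t))\le\mathcal{E}^{[n]}(\ulphi)$, and restarting at time $t$ and using uniqueness gives the reverse inequality.
\item Since $t\mapsto\mathscr{G}^{[n]}(\ulpsi^{[n]}(t))$ is continuous, $t\mapsto\|\ulpsi^{[n]}(t)\|_{\ell^2(H^s)}$ is continuous. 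Together with weak continuity this gives $\ulpsi^{[n]}\in\mathscr{C}(\ell^2(H^s))$, and $\mathscr{C}^1(\ell^2(H^{-s}))$ is then read off from the equation.
\end{itemize}
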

	\begin{proof}[Proof of Theorem \ref{thm:existence_regularized}]
		This is an application of \cite[Theorem 3.3.1]{cazenave2003semilinear} with the operator~$ A = -\bigoplus_{k \geq 0} (1-\Delta)^s $ and the choice of spaces $ X = \ell^2(L^2) $, $ X_A = \ell^2(H^{s}) $ and $ \mathcal{D}(A) = \ell^2(H^{2s}) $ with the relative duals taken with respect to the $L^2$ duality product. 
		The crucial point is that the interaction is now locally Lipschitz in the base space $ \ell^2(L^2) $ by \eqref{est:loc_lipsch_L2}. As such the typical argument by the Banach contraction theorem can be applied directly in $ \ell^2(L^2) $. 
	\end{proof}
	
	\subsection{Proof of Theorem \ref{thm:local_existence}}
	
	We consider, for all $n\geq1$, a global solution $ \ulpsi^{[n]} \in \mathscr{C}(\mathbb{R};\ell^2(H^{s})) $ of the regularized problem~\eqref{eq:Kohn_Sham_reg} provided by Theorem \ref{thm:existence_regularized}. The strategy to prove Theorem~\ref{thm:local_existence} is to show that, up to a subsequence, these solutions converge in the limit $ n \rightarrow \infty $ to a local solution of the Kohn-Sham equations \eqref{eq:Kohn-Sham-vectorised}. 
	We divide the proof of Theorem~\ref{thm:local_existence} in several Lemmata.
	\begin{lemma}[Bounds on an interval independent of \texorpdfstring{$n$}{n}]\label{lem:interval-indep-n}
		Let $s\in(0,\frac32)$, $\ulg\in\Nloc$, $M>0$ and~$ \ulphi  \in \ell^2(H^{s}) $ such that $ \| \ulphi  \|_{\ell^2(H^{s})} \leq M$. There exist $T(M)>0$ and $\tilde{C}(M)>0$ such that, with $I(M)=(-T(M),T(M))$,
		\begin{equation}
			\begin{dcases} \label{est:uniform_est_theorem}
				\sup_{n \geq 1} \| \ulpsi^{[n]} \|_{L^{\infty}(I(M);\ell^2(H^{s}))} \leq 2M \, , \\
				\sup_{n \geq 1} \| \partial_t \ulpsi^{[n]} \|_{L^{\infty}(I(M);\ell^2(H^{-s}))} \leq \tilde{C}(M) ,
			\end{dcases}				
		\end{equation}
		where, for all $n \geq 1$, $\ulpsi^{[n]}$ is the solution to the regularized Kohn-Sham equations \eqref{eq:Kohn_Sham_reg} provided by Theorem \ref{thm:existence_regularized}.
	\end{lemma}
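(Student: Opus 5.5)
We will exploit the conservation laws of the regularized problem from Theorem \ref{thm:existence_regularized}, namely the conservation of the $\ell^2(L^2)$ norm and of the regularized energy $\mathcal{E}^{[n]}$, together with the $\ell^2(L^2)$-Hölder control of the non-linearity energy from Lemma \ref{lem:hoelder-regularity-non-linearity}. The first bound in \eqref{est:uniform_est_theorem} will come from a continuity (bootstrap) argument in time. The second bound will then be read off from the equation itself.

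\textbf{Step 1: bootstrap for the $\ell^2(H^s)$ bound.} Fix $n$ and let $T_n=\sup\{t>0:\|\ulpsi^{[n]}(\tau)\|_{\ell^2(H^s)}\le 2M \text{ for all } |\tau|\le t\}$. This is positive by continuity, since $\ulpsi^{[n]}\in\mathscr{C}(\mathbb{R};\ell^2(H^s))$ and $\|\ulphi\|_{\ell^2(H^s)}\le M<2M$. For $|t|<T_n$, energy conservation gives
\begin{equation*}
\tfrac12\|\ulpsi^{[n]}(t)\|_{\ell^2(H^s)}^2=\tfrac12\|\ulphi\|_{\ell^2(H^s)}^2+\mathscr{G}(R^{[n]}\ulphi)-\mathscr{G}(R^{[n]}\ulpsi^{[n]}(t)).
\end{equation*}
By \eqref{eq:bound_R(n)_Hsigma}, both $R^{[n]}\ulphi$ and $R^{[n]}\ulpsi^{[n]}(t)$ lie in the closed ball of radius $2M$ of $\ell^2(H^s)$. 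We may apply Lemma \ref{lem:hoelder-regularity-non-linearity} with radius $3M$, which gives a constant independent of $n$, together with the $L^2$-contractivity of $R^{[n]}$:
\begin{equation*}
|\mathscr{G}(R^{[n]}\ulphi)-\mathscr{G}(R^{[n]}\ulpsi^{[n]}(t))|\le C(M)\,\thetasnu\big(\|\ulpsi^{[n]}(t)-\ulphi\|_{\ell^2(L^2)}\big).
\end{equation*}

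\textbf{Step 2: small $L^2$ displacement, uniform in $n$.} This is the main point. We estimate $\|\ulpsi^{[n]}(t)-\ulphi\|_{\ell^2(L^2)}$ through Lemma \ref{lem:estimate_holder_t} on the interval $(-t,t)\subset(-T_n,T_n)$. For this we need a uniform bound on $\partial_t\ulpsi^{[n]}$ in $\ell^2(H^{-s})$, which comes from the equation. First, $\|(1-\Delta)^s\ulpsi^{[n]}\|_{\ell^2(H^{-s})}\le 2M$. Second, using $\|R^{[n]}\|_{\mathcal{B}(H^{-s})}=1$, the embedding \eqref{eq:sobolev_embedding_minus} and the bound \eqref{est:single_nonlinearity_Lp} together with \eqref{eq:sobolev_embedding_plus},
\begin{equation*}
\|\ulg^{[n]}(\ulpsi^{[n]})\|_{\ell^2(H^{-s})}\le C_s\|\ulg(R^{[n]}\ulpsi^{[n]})\|_{\ell^2(L^2+L^{\nu'})}\le C_s L_{2M}\,C_s\,2M.
\end{equation*}
Hence $\|\partial_t\ulpsi^{[n]}\|_{L^\infty((-T_n,T_n);\ell^2(H^{-s}))}\le\tilde C(M):=2M+2C_s^2L_{2M}M$, which is independent of $n$. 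Lemma \ref{lem:estimate_holder_t} then yields $\|\ulpsi^{[n]}(t)-\ulphi\|_{\ell^2(L^2)}\le\sqrt2\max\{2M,\tilde C(M)\}|t|^{1/2}$.

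\textbf{Step 3: closing the bootstrap.} Combining the previous steps, for $|t|<T_n$ we get
\begin{equation*}
\|\ulpsi^{[n]}(t)\|_{\ell^2(H^s)}^2\le M^2+2C(M)\,\thetasnu\big(c(M)|t|^{1/2}\big).
\end{equation*}
Since $\thetasnu(x)\to0$ as $x\to0$, we can choose $T(M)>0$, independent of $n$, such that the right-hand side is at most $2M^2<4M^2$ for $|t|\le T(M)$. Suppose $T_n\le T(M)$. Then continuity gives $\|\ulpsi^{[n]}(\pm T_n)\|_{\ell^2(H^s)}\le\sqrt2M<2M$, so the bound $2M$ persists slightly beyond $T_n$, contradicting the definition of $T_n$. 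Therefore $T_n>T(M)$ for every $n$. This gives both estimates in \eqref{est:uniform_est_theorem} on $I(M)$, the second by Step 2.

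The main obstacle is ensuring that every constant is uniform in $n$. This requires using the regularization only through its contractivity properties, namely $\|R^{[n]}\|\le1$ on $H^{\pm s}$ and on $L^2$, and never through the $n^{s/2}$ bounds \eqref{est:Hs_reg}. It also requires that the smallness in time enters only through the $L^2$ increment, via $\thetasnu$.
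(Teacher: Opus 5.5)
Your proposal is correct and follows essentially the same route as the paper: a continuity argument on the maximal time where $\|\ulpsi^{[n]}(t)\|_{\ell^2(H^s)}\le 2M$ holds, an $n$-uniform bound on $\partial_t\ulpsi^{[n]}$ in $\ell^2(H^{-s})$ read off from the equation, Lemma~\ref{lem:estimate_holder_t} for the $\ell^2(L^2)$ increment, and energy conservation combined with \eqref{est:nonlinear_energy_L2} and the $L^2$-contractivity of $R^{[n]}$ to close. The remaining differences are cosmetic and do not affect the argument: you threshold at $2M^2$ instead of $3M^2$, you use $\|R^{[n]}\|_{\mathcal{B}(H^{-s})}=1$ rather than its boundedness on $L^2+L^{\nu'}$, and you take radius $3M$ to avoid the open-ball convention.
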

	\begin{proof}[Proof of Lemma~\ref{lem:interval-indep-n}]
		Define, for all $ n \geq 1$, 
		\begin{equation}\label{def:taun}
			\tau_n = \sup \left\{ \tau >0 \,\Big|\, \forall t \in (-\tau, \tau), \; \| \ulpsi^{[n]}(t) \|_{\ell^2(H^{s})} \leq 2M  \right\} .
		\end{equation}
		Taking the $\ell^2(H^{-s})$ norm of both sides of
		\begin{equation*}
			i \partial_t \ulpsi^{[n]}(t) = (1-\Delta)^s \ulpsi^{[n]}(t) +  \ulg^{[n]}(\ulpsi^{[n]}(t)) ,
		\end{equation*}
		yields, for all $t$ in  $(-\tau_n, \tau_n) $,
		\begin{equation*}
			\| \partial_t \ulpsi^{[n]}(t) \|_{\ell^2(H^{-s})} 
			\leq  \| (1-\Delta)^s \ulpsi^{[n]}(t) \|_{\ell^2(H^{-s})}
			+  \|  \ulg^{[n]}(\ulpsi^{[n]}(t)) \|_{\ell^2(H^{-s})} .
		\end{equation*}
		By the basic estimate $ \| (1-\Delta)^s f \|_{H^{-s}} \leq \| f \|_{H^{s}} $ and the Sobolev embedding \eqref{eq:sobolev_embedding_minus}, we obtain
		\begin{align*}
			\| \partial_t \ulpsi^{[n]}(t) \|_{\ell^2(H^{-s})} \leq \|  \ulpsi^{[n]}(t) \|_{\ell^2(H^{s})}
			+  C_{s} \| \ulg^{[n]}(\ulpsi^{[n]}(t)) \|_{\ell^2(L^2+L^{\nu^\prime})} .
		\end{align*}
		Next it follows from the non-linearity estimate \eqref{est:single_nonlinearity_Lp} and the boundedness of the regularization operator \eqref{est:reg_Lp} that, for all $t$ in $(-\tau_n, \tau_n) $,
		\begin{align*}
			\| \partial_t \ulpsi^{[n]}(t) \|_{\ell^2(H^{-s})} 
			& \leq  2M 
			+  C_{s} \Clip{2M}  \| \ulpsi^{[n]}(t) \|_{\ell^2(L^2\cap L^\nu)}  \\
			& \leq  2M + C_{s}^2 \Clip{2M} \, \| \ulpsi^{[n]}(t) \|_{\ell^2(H^s)}  .
		\end{align*}
		Therefore, we get a bound of the time derivative over the intervals $ (-\tau_n,\tau_n)$ uniformly with respect to $ n \geq 1$:
		\begin{equation}\label{eq:b0}
			\sup_{n \geq 1} \| \partial_t \ulpsi^{[n]}(t) \|_{L^{\infty}((-\tau_n,\tau_n);\ell^2(H^{-s}))} \leq 2M(1 +  C_s^2  \Clip{2M}  ) = \vcentcolon \tilde{C}(M) \,.
		\end{equation}
		From \eqref{est:holder_time} in Lemma \ref{lem:estimate_holder_t}, we deduce a bound for all $t$ and $t^\prime$ in $(-\tau_n, \tau_n)$, uniformly with respect to $ n \geq 1$: 
		\begin{equation}\label{est:L2_holder_continuity}
			\| \ulpsi^{[n]}(t)-\ulpsi^{[n]}(t^\prime) \|_{\ell^2(L^2)} \leq 2 \max\{2M, \tilde{C}(M)\}\, | t-t^\prime |^{\frac12} .
		\end{equation}
		From the conservation of energy of the regularized equations (see \ref{it:cons_en} in Theorem \ref{thm:existence_regularized}) we obtain, for all $t$ in $(-\tau_n, \tau_n) $,
		\begin{equation*}
			\mathcal{E}^{[n]}(\ulpsi^{[n]}(t)) = \frac{1}{2} \| \ulpsi^{[n]}(t) \|_{\ell^2(H^{s})}^2 +  \mathscr{G}^{[n]}(\ulpsi^{[n]}(t)) = \frac{1}{2} \| \ulphi  \|_{\ell^2(H^{s})}^2 + \mathscr{G}^{[n]}(\ulphi ),
		\end{equation*}
		which in turn allows us to estimate, for all $t$ in $(-\tau_n, \tau_n) $,
		\begin{equation*}
			\| \ulpsi^{[n]}(t) \|_{\ell^2(H^{s})}^2 \leq \| \ulphi  \|_{\ell^2(H^{s})}^2 +2  \big| \mathscr{G}^{[n]}(\ulpsi^{[n]}(t)) - \mathscr{G}^{[n]}(\ulphi ) \big| .
		\end{equation*}
		Using \eqref{est:nonlinear_energy_L2} in Lemma \ref{lem:hoelder-regularity-non-linearity}, we deduce that, for all $t$ in $(-\tau_n, \tau_n) $,
		\begin{equation*}
			\| \ulpsi^{[n]}(t) \|_{\ell^2(H^{s})}^2 \leq \| \ulphi \|_{\ell^2(H^{s})}^2 +2  C(M) \, \thetasnu (\| R^{[n]}\ulpsi^{[n]}(t)-R^{[n]}\ulphi \|_{\ell^2(L^2)})  ,
		\end{equation*}
		and by the boundedness of the regularizing operator \eqref{est:reg_Lp} and the H{\"o}lder in time estimate \eqref{est:holder_time} we obtain, for all $t$ in $(-\tau_n, \tau_n) $,
		\begin{align*}
			\| \ulpsi^{[n]}(t) \|_{\ell^2(H^{s})}^2 
			&\leq M^2 + 2  C(M) \, \thetasnu \big( 2 \max\{2M,\tilde{C}(M)\}  |t|^{1/2} \big) ,
		\end{align*}
		as we have the assumption $ \| \ulphi \|_{\ell^2(H^{s})} \leq M $.  Note that the right-hand side does not depend on $n$. As such, we have obtained an estimate,  uniform with respect to $ n \geq 1$ over all the intervals~$ (-\tau_n, \tau_n) $, of the energy norm for the solution $\ulpsi^{[n]}(t) \in \ell^2(H^{s}) $ to the approximated problem.
		
		We now construct a time interval independent of $n \geq 1$ such that a uniform bound holds for $\ulpsi^{[n]}(t)$. We define $ T(M) > 0 $ (independent of $n$) such that
		\begin{equation}
			C(M) \,\thetasnu \big( 2 \max\{2M,\tilde{C}(M)\}  |T(M)|^{1/2}\big) = M^2 \,,
		\end{equation}
		which is possible as $\thetasnu (x)\xrightarrow[x\to0]{}0$ and $\thetasnu (x)\xrightarrow[x\to\infty]{}\infty$. 
		Then, we obtain by the previous estimate that
		\begin{equation}
			\forall n \geq 1, \, \forall t \in \left( -T(M), T(M) \right) \cap (-\tau_n, \tau_n), \;\ \| \ulpsi^{[n]}(t) \|_{\ell^2(H^{s})} \leq \sqrt{3}M .
		\end{equation} 
		Note that we must have $ \forall n \geq 1, \, T(M) \leq \tau_n $ as otherwise the bound by $\sqrt{3}M$ would hold on the interval $ (-\tau_n,\tau_n) $, contradicting the definition of $\tau_n$ in \eqref{def:taun}, by continuity of $t\mapsto \ulpsi^{[n]}(t)$. Therefore there exists an interval $ I(M) = \left( -T(M),T(M) \right) $, independent of $n$, that is contained in all the intervals $(-\tau_n,\tau_n)$. On this interval the following uniform bound holds:
		\begin{equation}
			\sup_{n \geq 1} \| \ulpsi^{[n]}(t) \|_{L^{\infty}(I(M);\ell^2(H^{s}))} \leq 2M .
		\end{equation}
		Together with \eqref{eq:b0}, this concludes the proof of the two  estimates in \eqref{est:uniform_est_theorem}.
	\end{proof}

	The uniform bounds \eqref{est:uniform_est_theorem} are the starting points to apply a compactness argument, from which we will deduce the existence of a solution to the Kohn-Sham equations \eqref{eq:Kohn-Sham-vectorised}. 
	
	\begin{lemma}[Convergence of the solutions and  of the non-linearities by compactness]\label{lem:convergence-solutions}Let~$s$ in~$(0,\frac32)$, $\nu\in[2,\twoasts)$ and $\ulg\in\Nlocnu$. Let $ \ulphi  \in \ell^2(H^{s}) $ and $ M \geq \| \ulphi  \|_{\ell^2(H^{s})} $. Let $I(M)$ be the time-interval constructed in Lemma~\ref{lem:interval-indep-n}, such that the uniform bounds \eqref{est:uniform_est_theorem} hold for the sequence of solutions $(\ulpsi^{[n]})_{n \geq 1}$ to the regularized Kohn-Sham equations \eqref{eq:Kohn_Sham_reg} provided by Theorem \ref{thm:existence_regularized}.
		
		Then there exist
		\begin{enumerate}[label=\roman*)]
			\item \begin{equation}
				\ulpsi \in \mathscr{C}({I(M)}; \ell^2(H^{s})_{\mathrm{weak}}) \cap W^{1,\infty}(I(M);\ell^2(H^{-s}))
			\end{equation}
			such that
			\begin{equation}\label{est:energy_bound_solution}
				\| \ulpsi \|_{L^{\infty}(I(M); \ell^2(H^{s}))} \leq 2M , \quad \| \ulpsi \|_{W^{1,\infty}(I(M);\ell^2(H^{-s}))} \leq \tilde{C}(M),
			\end{equation}
			\item
			\begin{equation}
				\ulf \in \mathscr{C}(I(M); \ell^2(H^{-s})),
			\end{equation}
			such that $\| \ulf \|_{L^{\infty}(I(M);\ell^2(L^2+L^{\nu^\prime}))} < \infty$, 
			\item  a subsequence (still denoted by $\ulpsi^{[n]}$) such that, for all $t$ in $I(M)$,
			\begin{equation}\label{lim:weak_convergence_sequence}
				\ulpsi^{[n]}(t) \xrightarrow[n\to\infty]{\mathrm{w}-\ell^2(H^{s})} \ulpsi(t),
			\end{equation}
			the sequence $ \ulg^{[n]}(\ulpsi^{[n]}(t)) $ is bounded in $\ell^2(L^2+L^{\nu^\prime})$, and
			\begin{equation}\label{lim:weak_convergence_nonlince}
				\ulg^{[n]}(\ulpsi^{[n]}(t)) \xrightarrow[n\to\infty]{\mathrm{w}-\ell^2(L^2+L^{\nu^\prime})} \ulf(t) \,.
			\end{equation}
		\end{enumerate}
	\end{lemma}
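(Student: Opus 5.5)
We will run an Arzelà–Ascoli type compactness argument based on the uniform bounds \eqref{est:uniform_est_theorem} of Lemma~\ref{lem:interval-indep-n}. We first choose a countable dense subset $D=\mathbb{Q}\cap I(M)$. For each $t\in D$, the sequence $\ulpsi^{[n]}(t)$ is bounded by $2M$ in the Hilbert space $\ell^2(H^s)$. The sequence $\ulg^{[n]}(\ulpsi^{[n]}(t))$ is bounded in $\ell^2(L^2+L^{\nu'})$: indeed, \eqref{est:reg_Lp} and \eqref{est:single_nonlinearity_Lp} give the bound $\Clip{2M}C_s 2M$, since $\|R^{[n]}\ulpsi^{[n]}(t)\|_{\ell^2(H^s)}\le 2M$ by \eqref{eq:bound_R(n)_Hsigma}. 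A diagonal extraction then yields a subsequence along which both sequences converge weakly, the first in $\ell^2(H^s)$ and the second in $\ell^2(H^{-s})$, for every $t\in D$. For the second sequence we use that $\ell^2(L^2+L^{\nu'})$ embeds continuously into the Hilbert space $\ell^2(H^{-s})$. If true weak convergence in $\ell^2(L^2+L^{\nu'})$ itself is required, we instead use the dual pairing with $\ell^2(L^2\cap L^\nu)$, which is separable and is the predual in the weak-$*$ sense for $\nu<\infty$.

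Next we extend the convergence from $D$ to all $t\in I(M)$ by equicontinuity. The second bound in \eqref{est:uniform_est_theorem} gives $\|\ulpsi^{[n]}(t)-\ulpsi^{[n]}(t')\|_{\ell^2(H^{-s})}\le\tilde C(M)|t-t'|$ uniformly in $n$. Combined with the uniform $\ell^2(H^s)$ bound, a standard $\varepsilon/3$ argument shows that $\ulpsi^{[n]}(t)$ is weakly Cauchy in $\ell^2(H^s)$ for every $t$. Testing against a dense set of $\ell^2(H^s)$ suffices, because of boundedness. We define $\ulpsi(t)$ as the weak limit. Weak lower semicontinuity gives $\|\ulpsi\|_{L^\infty\ell^2(H^s)}\le2M$, and passing to the limit in the Lipschitz bound gives the $W^{1,\infty}(I(M);\ell^2(H^{-s}))$ bound. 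Weak continuity into $\ell^2(H^s)$ follows from $\ell^2(H^{-s})$ continuity together with boundedness in $\ell^2(H^s)$.

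For the nonlinearities we need equicontinuity in time of $\ulg^{[n]}(\ulpsi^{[n]}(t))$ in $\ell^2(H^{-s})$, uniformly in $n$. By \eqref{est:reg_Lp}, \eqref{est:Lnu_L2_norms} and the boundedness of $R^{[n]}$ on $\ell^2(L^2)$, we get
\begin{equation*}
\|\ulg^{[n]}(\ulpsi^{[n]}(t))-\ulg^{[n]}(\ulpsi^{[n]}(t'))\|_{\ell^2(L^2+L^{\nu'})}\le C\,\thetasnu\big(\|\ulpsi^{[n]}(t)-\ulpsi^{[n]}(t')\|_{\ell^2(L^2)}\big).
\end{equation*}
By \eqref{est:L2_holder_continuity}, the right-hand side is at most $C\thetasnu(c|t-t'|^{1/2})$, which is a uniform modulus of continuity. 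The same density argument then gives weak convergence for all $t$ to a limit $\ulf(t)$. The limit inherits this modulus in $\ell^2(L^2+L^{\nu'})\hookrightarrow\ell^2(H^{-s})$, by weak lower semicontinuity of the norm, so $\ulf\in\mathscr{C}(I(M);\ell^2(H^{-s}))$ and $\ulf$ is bounded in $L^\infty(\ell^2(L^2+L^{\nu'}))$.

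We expect the main obstacle to be functional-analytic: identifying weak limits in the non-Hilbert space $\ell^2(L^2+L^{\nu'})$ and making sure that norm bounds survive the limit there. We plan to handle this by working throughout with the duality against $\ell^2(L^2\cap L^\nu)$. Limits are first taken in $\ell^2(H^{-s})$ and then shown to lie in $\ell^2(L^2+L^{\nu'})$ with the same bound. For this we use that $\ell^2(L^2+L^{\nu'})$ is isometric to the dual of $\ell^2(L^2\cap L^\nu)$, which gives weak-$*$ sequential compactness on bounded sets because the predual is separable.
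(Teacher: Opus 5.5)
Your proof is correct and is essentially the paper's argument. The paper obtains both limits by invoking \cite[Proposition 1.1.2]{cazenave2003semilinear}, which is the same diagonal extraction on a countable dense set of times plus equicontinuity that you carry out by hand, with the same uniform bounds and the same modulus $C\,\thetasnu(c|t-t'|^{1/2})$ for the non-linearities, and it gets the $W^{1,\infty}$ bound from \cite[Remark 1.3.13 ii)]{cazenave2003semilinear}. Two points deserve to be stated explicitly. A Lipschitz map into $\ell^2(H^{-s})$ lies in $W^{1,\infty}(I(M);\ell^2(H^{-s}))$ only because this space is reflexive. Your weak-$*$ limits in $\ell^2(L^2+L^{\nu'})$ are genuine weak limits because $\ell^2(L^2\cap L^\nu)$ is reflexive; this is how the paper phrases it.
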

	\begin{proof}[Proof of Lemma~\ref{lem:convergence-solutions}]
		Our compactness argument follows by an application of the Banach--Alaoglu theorem with a suitable density and diagonal argument for the time variable for which we refer to \cite[Proposition 1.1.2]{cazenave2003semilinear} applied with the spaces
		\begin{equation}\label{def:space_choices_XBY}
			X = \ell^2(H^{s}), \;\ B=\ell^2(L^2) , \;\ Y= \ell^2(H^{-s}).
		\end{equation}
		Note that the following embeddings hold:
		\begin{equation}
			\ell^2(H^{s}) \xhookrightarrow{} \ell^2(L^2) \xhookrightarrow{} \ell^2(H^{-s}) .
		\end{equation}
		The family $( \ulpsi^{[n]}(t) )_n $ is equicontinuous in the $ \ell^2(H^{-s}) $ norm by the easy previous embedding~$ \| \ulpsi^{[n]} \|_{\ell^2(H^{-s})} \leq \| \ulpsi^{[n]} \|_{\ell^2(L^2)} $ together with \eqref{est:holder_time} and \eqref{est:uniform_est_theorem}. Indeed, for all $t$ and $t^\prime$ in $I(M)$, we have
		\begin{equation}
			\| \ulpsi^{[n]}(t) - \ulpsi^{[n]}(t^\prime) \|_{\ell^2(H^{-s})} \leq 2 \max\{2M, \tilde{C}(M)\} |t-t^\prime|^{\frac{1}{2}} ,
		\end{equation}

		and hence the sequence $ (\ulpsi^{[n]}(t))_n$ belongs to $\mathscr{C}^{0,\frac{1}{2}}(\overline{I(M)};\ell^2(H^{-s}))$, uniformly with respect to~$ n \geq 1$. By~\eqref{est:uniform_est_theorem}, $ (\ulpsi^{[n]}(t))_n$ is also uniformly bounded in the $ \ell^2(H^{s}) $ norm for all $ t$ in~$\overline{I(M)}$ and~$ n \geq 1$. Moreover, as $ \ell^2(H^{s}) $ is reflexive, we can apply \cite[Proposition 1.1.2]{cazenave2003semilinear} to obtain that there exist
		\begin{equation}
			\ulpsi \in \mathscr{C}\big(\overline{I(M)}; \ell^2(H^{-s})\big)\cap \mathscr{C}\big(\overline{I(M)}; \ell^2(H^{s})_{\mathrm{weak}}\big),
		\end{equation}
		and a subsequence (which is still denoted by $\ulpsi^{[n]}(t)$) 
		which satisfies \eqref{lim:weak_convergence_sequence}. 
		Notice that, by lower semicontinuity of the norm in the weak topology, the first bound in \eqref{est:energy_bound_solution} holds.
		
		We can strengthen the convergence of $\ulpsi^{[n]}$ to $\ulpsi$ by using \cite[Remark 1.3.13 ii)]{cazenave2003semilinear}: As~$ \ell^2(H^{-s}) $ is reflexive,
		\begin{equation}
			\ulpsi^{[n]} \in W^{1,\infty}\big(I(M); \ell^2(H^{-s})\big), 
		\end{equation}
		by Theorem \ref{thm:existence_regularized}  (since for each $ n \geq 1$, $\ulpsi^{[n]}$ belongs to $\mathscr{C}^1(\mathbb{R};\ell^2(H^{-s})) $) and as the weak convergence in~$ \ell^2(H^{s}) $ implies the weak convergence in~$\ell^2(H^{-s})$, 
		we conclude from \cite[Remark 1.3.13 ii)]{cazenave2003semilinear} that  
		\begin{equation}
			\ulpsi \in W^{1,\infty}\big(I(M);\ell^2(H^{-s})\big) , 
		\end{equation}
		and that
		\begin{equation}
			\| \ulpsi \|_{W^{1,\infty}(I(M);\ell^2(H^{-s}))} \leq \tilde{C}(M) ,
		\end{equation}
		where we used in addition the uniformity with respect to $ n \geq 1$ of estimate \eqref{est:uniform_est_theorem}.
		
		From the uniform estimates \eqref{est:uniform_est_theorem} we derive a uniform estimate for the non-linearity in the interval $ I(M) $. From the non-linearity estimate \eqref{est:single_nonlinearity_Lp} and the bound of the regularization operator \eqref{est:reg_Lp}, we obtain that, for all $t \in I(M) $,
		\begin{equation}
			\| \ulg^{[n]}(\ulpsi^{[n]}(t)) \|_{\ell^2(L^2+L^{\nu^\prime})} \leq \Clip{2M} \| \ulpsi^{[n]}(t) \|_{\ell^2(L^2\cap L^{\nu})}
			\leq C_s \Clip{2M} \| \ulpsi^{[n]}(t) \|_{\ell^2(H^s)}
			\leq 2 C_s M \Clip{2M} ,
		\end{equation}
		as $\| \ulpsi^{[n]}(t) \|_{\ell^2(H^s)}\leq 2M$. This yields the following uniform estimate with respect to $n \geq 1 $:
		\begin{equation}\label{est:uniform_theorem_nonlin}
			\| \ulg^{[n]}(\ulpsi^{[n]}(t)) \|_{L^{\infty}(I(M);\ell^2(L^2+L^{\nu^\prime}))} \leq 2 C_s M \Clip{2M} .
		\end{equation}
		The non-linearity is also uniformly Hölder continuous in time. Indeed, by \eqref{est:Lnu_L2_norms} and the boundedness of the regularizing operator \eqref{est:reg_Lp} we get, for all $ t, \, t^\prime \in I(M) $,
		\begin{equation}
			\| \ulg^{[n]}(\ulpsi^{[n]}(t)) - \ulg^{[n]}(\ulpsi^{[n]}(t^\prime)) \|_{\ell^2(L^2+L^{\nu^\prime})} \leq C(M)\, \thetasnu  \big( \| \ulpsi^{[n]}(t)-\ulpsi^{[n]}(t^\prime) \|_{\ell^2(L^2)}\big),
		\end{equation}
		where $\thetasnu$ is defined in \eqref{eq:eta-thetasnu}.  Now, by \eqref{est:holder_time} we obtain
		\begin{equation}\label{eq:equicontinuity-approximated-non-linearity}
			\| \ulg^{[n]}(\ulpsi^{[n]}(t)) - \ulg^{[n]}(\ulpsi^{[n]}(t^\prime)) \|_{\ell^2(L^2+L^{\nu^\prime})}  
			\leq C(M)  \, \thetasnu \big(2 \max\{2M , \tilde{C}(M)\}  |t-t^\prime|^{1/2}\big) ,
		\end{equation}
		where we have used \eqref{est:uniform_est_theorem} and the fact that $\thetasnu $ is non-decreasing. 
		In particular, $t \mapsto \ulg^{[n]}(\ulpsi^{[n]}(t))$ is uniformly equicontinuous with respect to $n$ from the interval $ I(M) $ to $\ell^2(L^2+L^{\nu^\prime})$. 
		We can now proceed again by a compactness argument to obtain a weak limit of the non-linearity. We apply again \cite[Proposition 1.1.2]{cazenave2003semilinear}, now with the spaces 
		\begin{equation}
			X= \ell^2(L^2+ L^{\nu'}), \quad  Y = \ell^2(H^{-s}) .
		\end{equation}
		Note that $\ell^2(L^2+ L^{\nu'}) \xhookrightarrow{} \ell^2(H^{-s})$ by the Sobolev embedding \eqref{eq:sobolev_embedding_minus} and that $  \ell^2(L^2+ L^{\nu'})  $ is reflexive. Using the bound \eqref{eq:equicontinuity-approximated-non-linearity} and the uniform equicontinuity of $t \mapsto \ulg^{[n]}(\ulpsi^{[n]}(t))$ from $I(M)$ to $ \ell^2(H^{-s})$ (by the Sobolev embedding \eqref{eq:sobolev_embedding_minus}), we deduce from \cite[Proposition 1.1.2]{cazenave2003semilinear} that there exist a limit element 
		\begin{equation}
			\ulf \in \mathscr{C}(\overline{I(M)}; \ell^2(H^{-s})) ,
		\end{equation}
		and a subsequence, denoted again by $ \ulg^{[n]}(\ulpsi^{[n]}(t)) $, such that
		\begin{equation}
			\forall t \in \overline{I(M)}, \;\  \ulg^{[n]}(\ulpsi^{[n]}(t)) \xrightarrow{\mathrm{w}-\ell^2(L^2+ L^{\nu'})} \ulf(t) ,
		\end{equation}
		and $ \ulf \in L^\infty( I(M) ; \ell^2(L^2+ L^{\nu'}) )$. This concludes the proof of the lemma. 
	\end{proof}

	\begin{remark}\label{rmk:weak_convergence_ellp}
		Observe that any weak limit $\ulpsi^{[n]} \to \ulpsi$ in $ \ell^2(H^{-s}) $ as $n\to\infty$ implies a weak limit for any fixed $k$ in $\mathbb{N} $ by considering a sequence $ \ulw = (w_{k}  \delta_{k,\tilde{k}})_{\tilde{k}} $, $ w_k \in H^s $. More precisely, if $\ulpsi^{[n]} \to \ulpsi$ in $ \ell^2(H^{-s}) $ as $n\to\infty$, then we have that 
		\begin{equation}
			\forall k \in \mathbb{N} , \, \forall w_k \in H^{s}, \;\ \langle \psi^{[n]}_{k} , w_k \rangle_{H^{-s},H^s} \xrightarrow[n\to\infty]{} \langle \psi_k , w_k \rangle_{H^{-s},H^s} 
		\end{equation} 
		and, similarly, if $\ulf^{[n]}$ converges weakly to $\ulf$ in $ \ell^2(L^2+L^{\nu'}) $ with $\nu$ in $[2,2^*_s)$
		then
		\begin{equation}
			\forall k \in \mathbb{N}, \, \forall w_k \in H^{s}, \;\ \langle f^{[n]}_{k} , w_k \rangle_{L^2+L^{\nu'},L^2\cap L^\nu} \xrightarrow[n\to\infty]{} \langle f_k , w_k \rangle_{L^2+L^{\nu'},L^2\cap L^\nu} 
		\end{equation} 
		using the appropriate Sobolev embeddings \eqref{eq:sobolev_embedding_plus}--\eqref{eq:sobolev_embedding_minus} to obtain the weak convergence testing against functions in $ H^{s} $.
	\end{remark}
	
	To conclude the proof of Theorem~\ref{thm:local_existence}, it remains to show that the limit $\ulpsi$ constructed in Lemma~\ref{lem:convergence-solutions} satisfies the Kohn-Sham equations \eqref{eq:Kohn-Sham-vectorised}. Before delving into that, we recall a technical result involving the convergence of restrictions to regular bounded sets of weakly convergent sequences in Sobolev spaces.
	
	\begin{lemma}
		\label{lem:restriction}
		Let $R>0$ and $ y \in \mathbb{R}^3$. Denote by $ B_R(y) \subset \mathbb{R}^3 $ the open ball of radius $R$ centered at $y$. Then the restriction operator 
		\begin{equation}
			\mathrm{r}_{B_R(y)}: H^s(\mathbb{R}^3) \rightarrow H^s(B_R(y))
		\end{equation}
		is a continuous linear map, where the Sobolev space on the ball is defined as the interpolation space of $L^2(B_R(y))$ and $ H^1(B_R(y))$.
		Let $(u_n)_n \subset H^s(\mathbb{R}^3) $ be a weakly convergent subsequence to~$u \in H^s(\mathbb{R}^3)$, then 
		\begin{equation}
			\mathrm{r}_{B_R(y)} (u_n) \xrightarrow[n\to\infty] {\mathrm{w}-H^s(B_R(y))} \mathrm{r}_{B_R(y)} (u) ,
		\end{equation}
		and for any $\nu \in [2, \twoasts  ) $, there exists a subsequence $(u_{n_m})_m$ such that
		\begin{equation}
			\mathrm{r}_{B_R(y)} (u_{n_m}) \xrightarrow[m\to\infty]{\mathrm{s}-(L^2+L^\nu)(B_R(y))} \mathrm{r}_{B_R(y)}(u).
		\end{equation}
	\end{lemma}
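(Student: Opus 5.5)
The proof has three parts: continuity of the restriction, weak convergence of the restrictions, and a compactness argument (Rellich--Kondrachov) for the strong convergence.

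\textbf{Continuity of the restriction.} Restriction is bounded $L^2(\mathbb{R}^3)\to L^2(B_R(y))$ and $H^1(\mathbb{R}^3)\to H^1(B_R(y))$, both with norm at most $1$. The space $H^s(\mathbb{R}^3)$ is the complex interpolation space $[L^2,H^1]_s$ for $s\in(0,1]$. By definition, $H^s(B_R(y))$ is the corresponding interpolation space on the ball. Interpolation of operators therefore gives boundedness of $\mathrm{r}_{B_R(y)}$ from $H^s(\mathbb{R}^3)$ to $H^s(B_R(y))$. For $s\in(1,\frac32)$ the statement should be read with the analogous interpolation between $H^1$ and $H^2$. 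Alternatively, one can observe that the restriction maps $H^s(\mathbb{R}^3)$ continuously into $H^{\min(s,1)}(B_R(y))$, which is all that is needed afterwards.

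\textbf{Weak convergence.} This follows from continuity alone. A bounded linear map between Banach spaces is weak--weak continuous, since for $\ell\in H^s(B_R(y))^*$ the composition $\ell\circ \mathrm{r}_{B_R(y)}$ lies in $H^s(\mathbb{R}^3)^*$. Hence $\mathrm{r}_{B_R(y)}u_n\rightharpoonup \mathrm{r}_{B_R(y)}u$.

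\textbf{Strong convergence along a subsequence.} The ball is a bounded Lipschitz domain. By the fractional Rellich--Kondrachov theorem, the embedding $H^{s}(B_R(y))\hookrightarrow L^\nu(B_R(y))$ is compact for every $\nu<\twoasts$; for $s>1$ one can use $H^1$ and the $H^1$ exponent, or directly the compact embedding of $H^s$. To justify this, I would extend from the ball to $\mathbb{R}^3$ with a bounded extension operator and then use the compact embedding $H^s\to L^2$ on bounded sets. Compactness in $L^\nu$ for $2\le\nu<\twoasts$ then follows by Hölder interpolation between $L^2$ and $L^{\twoasts}$, together with the uniform bound in $L^{\twoasts}$ coming from the Sobolev embedding. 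This interpolation step is the one place needing care, and it is exactly where the strict inequality $\nu<\twoasts$ is used.

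Since $(u_n)$ is weakly convergent, it is bounded in $H^s(\mathbb{R}^3)$, so the restrictions are bounded in $H^s(B_R(y))$. Compactness yields a subsequence $\mathrm{r}(u_{n_m})$ converging strongly in $L^\nu(B_R(y))$, and hence in $(L^2+L^\nu)(B_R(y))$, to some limit $v$. Strong convergence implies weak convergence, and weak convergence in $H^s(B_R(y))$ implies weak convergence in $L^2(B_R(y))$. Uniqueness of weak limits then identifies $v=\mathrm{r}(u)$. Note that on a bounded set $L^\nu\subset L^2$, so $(L^2+L^\nu)(B_R(y))=L^2(B_R(y))$ with equivalent norms; in particular, convergence in $L^2$ on the ball already suffices.
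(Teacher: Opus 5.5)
Your proposal is correct and follows essentially the paper's route: continuity of $\mathrm{r}_{B_R(y)}$, which you derive by interpolating the trivial $L^2$ and $H^1$ bounds (rightly noting that $s>1$ requires interpolating between $H^1$ and $H^2$), whereas the paper cites Lions--Magenes and Triebel; weak convergence through weak--weak continuity, which is the paper's adjoint argument; and strong convergence from the compact embedding $H^s(B_R(y))\hookrightarrow L^\nu(B_R(y))$, $\nu<\twoasts$, which you justify by extension, local Rellich compactness and H\"older interpolation against a uniform $L^{\twoasts}$ bound, whereas the paper cites a fractional compact embedding theorem. One remark: your observation that $(L^2+L^\nu)(B_R(y))=L^2(B_R(y))$ is correct for the lemma as stated, but the paper later uses the lemma to get convergence in $(L^2\cap L^\nu)(B_r(y))$, so the step to keep is your H\"older interpolation with the $L^{\twoasts}$ bound taken on all of $\mathbb{R}^3$, not the ``$H^1$ exponent'' shortcut, which for $s>1$ only reaches $\nu<6<\twoasts$.
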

	\begin{proof}
		The continuity of the restriction operator is proven in \cite[Theorem 9.1]{lions2012non} while a proof that the definition by interpolation of the fractional Sobolev spaces coincides with the definition using the Sobolev-Slobodeckij norms can be found in \cite[Remark 2, Section 4.4.1]{triebel1978interpolation}. Recall that the Sobolev embeddings in these spaces are compact, as shown in \cite[Theorem 7.1, Corollary 7.2]{dinezza2012521}.
		Let~$(u_n)_n \subset H^s(\mathbb{R}^3) $ be such that $ u_n \rightharpoonup u $ in $ H^{s}(\mathbb{R}^3) $. Then for any $ \tilde{v} \in H^{s}(B_R(y))^\prime $, 
		\begin{equation}
			\langle \tilde{v}, \mathrm{r}_{B_R(y)} u_n \rangle_{H^{s}(B_R)^\prime,H^{s}(B_R)} = \langle \mathrm{r}_{B_R(y)}^\prime \tilde{v}, u_n \rangle_{H^{s}(\mathbb{R}^3)^\prime,H^{s}(\mathbb{R}^3)} ,
		\end{equation}
		by definition of the adjoint operator of $ \mathrm{r}_{B_R(y)} $ in the continuous dual space and therefore
		\begin{equation}
			\langle \mathrm{r}_{B_R(y)}^\prime \tilde{v}, u_n \rangle_{H^{s}(\mathbb{R}^3)^\prime,H^{s}(\mathbb{R}^3)} 
			\xrightarrow[n\to\infty]{}  \langle \mathrm{r}_{B_R(y)}^\prime \tilde{v}, u \rangle_{H^{s}(\mathbb{R}^3)^\prime,H^{s}(\mathbb{R}^3)} 
			=  \langle \tilde{v}, \mathrm{r}_{B_R(y)} u \rangle_{H^{s}(B_R)^\prime,H^{s}(B_R)} .
		\end{equation}
		Hence  $ \mathrm{r}_{B_R(y)} u_n \rightharpoonup \mathrm{r}_{B_R(y)} u $ in $ H^{s}(B_R(y)) $. The compactness property of the Sobolev embedding then implies that there exists a subsequence $(u_{n_m})_m$ of $u_n$ such that $ \mathrm{r}_{B_R(y)} u_{n_m} \to \mathrm{r}_{B_R(y)} u  $ in~$L^{\tilde{\nu}}(B_R(y))$ for $\tilde{\nu}\in[2,\twoasts)$. Thus, in particular, taking both $\tilde{\nu}=2$ and $\tilde{\nu}=\nu$ and extracting subsequences successively, one gets the existence of a subsequence which converges in the space~$(L^2+L^{{\nu}})(B_R(y))$.
	\end{proof}
	
	We have now all the tools to prove the local existence theorem. 
	
	\begin{proof}[Proof of Theorem~\ref{thm:local_existence}]
		Let $(\ulpsi^{[n]})_{n \geq 1}$ be the sequence of solutions to the regularized Kohn-Sham equations \eqref{eq:Kohn_Sham_reg} provided by Theorem \ref{thm:existence_regularized} and let $\ulpsi$ be the limit of $\ulpsi^{[n]}$ constructed in Lemma~\ref{lem:convergence-solutions}. The last crucial steps in the proof of Theorem~\ref{thm:local_existence} involve showing that the limit~$ \ulpsi $ satisfy in a weak sense  \eqref{eq:Kohn-Sham-vectorised}.
		
		Rewriting the regularized Kohn-Sham equations \eqref{eq:Kohn_Sham_reg} in a weak sense gives that, for all~$k$ in~$\mathbb{N}$, $\chi \in \mathscr{C}_{c}^\infty (I(M);\mathbb{R})$ and $\ulw \in \ell^2(H^{s})$, 
		\begin{align}
			\int_{I(M)}  \Big(i\langle  \ulpsi^{[n]}(t) , \ulw \rangle \partial_{t} \chi(t) + \big\langle (1-\Delta)^s \ulpsi^{[n]}(t) +  \ulg^{[n]} (\ulpsi^{[n]}(t)) , \ulw \big\rangle \chi(t)  \Big)\diff  t = 0,
		\end{align} 
		where the bracket $ \langle \cdot,\cdot \rangle $ should be understood as the duality product between~$ \ell^2(H^{-s})$ and~$\ell^2(H^{s})$. 
		Remark~\ref{rmk:weak_convergence_ellp} applied to the sequences $(\ulpsi^{[n]}(t))_n$ and $(\ulg^{[n]}(\ulpsi^{[n]}(t)))_n$ allows us to consider each equation individually at fixed $k$. From the uniformity of the bounds \eqref{est:uniform_est_theorem} and \eqref{est:uniform_theorem_nonlin} and the boundedness of the interval, we can use Lebesgue's dominated convergence theorem to conclude that for all~$k$ in $\mathbb{N}$, $\chi$ in $\mathscr{C}_{c}^\infty (I(M);\mathbb{R})$ and $w_k$ in $H^{s}$, 
		\begin{align}
			\int_{I(M)} \Big( i\langle  \psi_{k}(t) , w_k \rangle \partial_{t} \chi(t) + \langle (1-\Delta)^s \psi_{k}(t) + f_{k}(t) , w_k \rangle  \chi(t)\Big)  \diff  t= 0 .
		\end{align} 
		This implies that the $ \psi_k$, which all belong to $H^s$, are weak solutions to the equations
		\begin{equation}
			\label{eq:Kohn_Sham_with_f}
			\begin{dcases}
				i \partial_t \psi_{k}(t,x) = (1-\Delta)^s \psi_{k}(t,x) + f_{k}(t,x) \text{ in } H^{-s} \, , \;\ \forall t \in I(M), \, \forall k \in \mathbb{N}, \\
				\psi_k(0,x)= \varphi_k (x) \, , \, \forall k \in \mathbb{N}.
			\end{dcases}
		\end{equation}
		Indeed, by integration by parts in time, as for any $k \geq 0$, $ \psi_k \in W^{1,\infty} (I(M);H^{-s})$, we obtain
		\begin{equation*}
			\int_{I(M)} \langle - i \partial_{t} \psi_{k}(t) +  (1-\Delta)^s \psi_{k}(t) + f_{k}(t) , w_k \rangle  \chi(t)  \diff  t= 0 .
		\end{equation*}
		Therefore, for all $k \geq 0$, for all $w_k \in H^s $, and for almost all $t$ in $I(M)$,
		\begin{equation*}
			\langle - i \partial_{t} \psi_{k}(t) +  (1-\Delta)^s \psi_{k}(t) + f_{k}(t) , w_k \rangle = 0.
		\end{equation*}
		Hence we conclude that for almost all $t$ in $I(M)$ and for all $k \geq 0$, 
		\[i \partial_{t} \psi_{k}(t) = (1-\Delta)^s \psi_{k}(t) + f_{k}(t)\]
		in $H^{-s}$. 
		
		The next step consists in proving that we can replace $ f_{k} (t) $ by $  g_{k}(\ulpsi(t)) $. 
		First we claim that
		\begin{equation}\label{eq:imaginary_part_vanishes}
			\forall t \in I(M), \, \forall k \in \mathbb{N}, \;\ \mathrm{Im} \big( \overline{\psi_k(t,x)} f_{k}(t,x) \big) = 0  \;\ \text{a.e. in } \mathbb{R}^3.  
		\end{equation}
		Indeed, consider any ball $ B_r(y)$, $ r >0$ and $ y \in \mathbb{R}^3 $. To prove that the imaginary part vanishes almost everywhere, we prove that the following integral vanishes for any choice of the ball:
		\begin{equation}
			\mathrm{Re} \langle i \psi_k(t) , f_{k}(t) \rangle_{(L^2\cap L^{\nu})(B_r(y)),(L^2+L^{\nu'})(B_r(y))} = 0 ,
		\end{equation}
		where the application of the restriction operator $r_{B_R(y)}$ from Lemma \ref{lem:restriction} is implicit and we omit it.
		We rewrite the term (where we omit to specify the duality product) in the following way:
		\begin{subequations}
			\begin{align}
				\mathrm{Re} \langle i \psi_k(t) , f_{k}(t) \rangle &=  \mathrm{Re} \langle i \psi_k(t) , f_{k}(t) -R^{[n]} g_{k}(R^{[n]}\ulpsi^{[n]}(t)) \rangle \, \label{subeq:vanishing-realpart-scalar-product1}\\ 
				&\quad + \mathrm{Re} \langle i \psi_k(t) , R^{[n]} g_{k}(R^{[n]}\ulpsi^{[n]}(t)) - g_{k}(R^{[n]}\ulpsi^{[n]}(t)) \rangle \,  \label{subeq:vanishing-realpart-scalar-product2}\\
				&\quad + \mathrm{Re} \langle i \psi_k(t) - i \psi^{[n]}_{k}(t) ,  g_{k}(R^{[n]}\ulpsi^{[n]}(t)) \rangle \, \label{subeq:vanishing-realpart-scalar-product3}\\
				&\quad + \mathrm{Re} \langle i \psi^{[n]}_{k}(t) -i R^{[n]} \psi^{[n]}_{k}(t) , g_{k}(R^{[n]}\ulpsi^{[n]}(t)) \rangle \, \label{subeq:vanishing-realpart-scalar-product4}\\
				&\quad + \mathrm{Re} \langle i R^{[n]} \psi^{[n]}_{k}(t) , g_{k}(R^{[n]}\ulpsi^{[n]}(t)) \, \rangle . \label{subeq:vanishing-realpart-scalar-product5}
			\end{align}
		\end{subequations}
		We now show that each of these terms converges to zero as $ n \rightarrow  \infty $ or is identically equal to zero. 
		\begin{itemize}
			\item For \eqref{subeq:vanishing-realpart-scalar-product1}: By the convergence \eqref{lim:weak_convergence_nonlince} and Lemma~\ref{lem:restriction} clearly the non-linearity converges weakly, also when restricted to the ball $B_r(y) $, and $ \psi_k(t) \in (L^2\cap L^{\nu})(B_r(y)) $. Therefore
			\begin{equation}
				\langle i \psi_k(t) , f_{k}(t) -R^{[n]} g_{k}(R^{[n]}\ulpsi^{[n]}(t)) \rangle_{(L^2\cap L^{\nu})(B_r(y)), (L^2+L^{\nu'})(B_r(y))} 
			\end{equation}
			vanishes as $ n \rightarrow \infty $.
			\item For \eqref{subeq:vanishing-realpart-scalar-product2}: Here we use the boundedness of $  (g_{k}(R^{[n]}\ulpsi^{[n]}(t))_n$ in $L^2+L^{\nu'} $ stated in Lemma~\ref{lem:convergence-solutions} (as follows from \eqref{lim:weak_convergence_nonlince}), together with the strong convergence of $ ((R^{[n]} - \mathbb{I}) \psi_k(t))_n $ given by \eqref{lim:reg_strong_conv_cap}. Thus,
			\begin{multline}
				\langle i \psi_k(t) , R^{[n]} g_{k}(R^{[n]}\ulpsi^{[n]}(t)) - g_{k}(R^{[n]}\ulpsi^{[n]}(t))  \rangle_{(L^2\cap L^{\nu})(B_r(y)), (L^2+L^{\nu'})(B_r(y))}   \\
				= \langle i (R^{[n]}- \mathbb{I})\psi_k(t) ,  g_{k}(R^{[n]}\ulpsi^{[n]}(t))  \rangle_{(L^2\cap L^{\nu})(B_r(y)), (L^2+L^{\nu'})(B_r(y))}
			\end{multline}
			vanishes as $ n \rightarrow \infty $.
			\item For \eqref{subeq:vanishing-realpart-scalar-product3}: By \eqref{lim:weak_convergence_sequence}, we have weak convergence in $\ell^2(H^{s})$ on $\mathbb{R}^3$ of $ \ulpsi^{[n]}(t) $ to $ \ulpsi(t)$ and, by Lemma~\ref{lem:restriction}, we have strong convergence in $ (L^2\cap L^{\nu})(B_r(y)) $. From the uniform bound on~$ g_{k}(R^{[n]}\ulpsi^{[n]}(t)) $ implied by \eqref{lim:weak_convergence_nonlince} in Lemma~\ref{lem:convergence-solutions} on the whole interval, we then deduce that
			\begin{equation}
				\langle i \psi_k(t) - i \psi^{[n]}_{k}(t) ,  g_{k}(R^{[n]}\ulpsi^{[n]}(t)) \rangle_{(L^2\cap L^{\nu})(B_r(y)), (L^2+L^{\nu'})(B_r(y))}
			\end{equation}
			vanishes as $ n \rightarrow \infty $.
			
			\item For \eqref{subeq:vanishing-realpart-scalar-product4}: The strong convergence in $ H^{-s} $, provided by \eqref{lim:reg_strong_conv_Hs}, of $ (R^{[n]}-\mathbb{I}) h$, for any $h$ in $H^{-s} $, implies that $ (R^{[n]}-\mathbb{I}) \psi^{[n]}_{k}(t) $ converges weakly in $H^{s} $. Indeed, we have
			\begin{equation}
				\langle (R^{[n]}-\mathbb{I}) \psi^{[n]}_{k}(t) , h \rangle_{H^{s} , H^{-s}} = \langle  \psi^{[n]}_{k}(t) , (R^{[n]}-\mathbb{I}) h \rangle_{H^{s} , H^{-s}},
			\end{equation} 
			and the right-hand side converges to zero by the fact that $ (\psi^{[n]}_{k}(t))_n $ is weakly convergent in $\ell^2(H^s)$ by Lemma \ref{lem:convergence-solutions}, and thus uniformly bounded, together with the aforementioned strong convergence property. Restricting to the ball and using Lemma~\ref{lem:restriction}, we can lift the weak convergence to the strong convergence of $ (R^{[n]}-\mathbb{I}) \psi^{[n]}_{k}(t) $ in $ (L^2\cap L^{\nu})(B_r(y)) $ as $n\to \infty$. Since in addition the sequence $ (g_{k} (R^{[n]} \ulpsi^{[n]}))_n $ is uniformly bounded by Lemma~\ref{lem:convergence-solutions}, we obtain that
			\begin{equation}
				\langle i \psi^{[n]}_{k}(t) -i R^{[n]} \psi^{[n]}_{k}(t) , g_{k}(R^{[n]}\ulpsi^{[n]}(t)) \rangle_{(L^2\cap L^{\nu})(B_r(y)),(L^2+L^{\nu'})(B_r(y))}
			\end{equation}
			vanishes as $ n \rightarrow \infty $.
			\item For \eqref{subeq:vanishing-realpart-scalar-product5}: This last term vanishes identically as it is purely imaginary by \ref{ass3:nonlinear_energy_derivative} applied with $\ulu = R^{[n]}\ulpsi^{[n]}$ and $\ulv = (\boldsymbol{1}_{B_r(y)}R^{[n]}\psi^{[n]}_{k'}\delta_{k,k'})_{k'\in\mathbb{N}}$, since
			\begin{equation*}
				\mathrm{Re}\langle i R^{[n]} \psi^{[n]}_k(t), g_{k}(R^{[n]}\ulpsi^{[n]}(t)) \rangle_{(L^2\cap L^{\nu})(B_r(y)) , (L^2+L^{\nu'})(B_r(y))} 
				= i \frac{\diff}{\diff \tau}\mathscr{G}(\ulu+\tau\ulv)\Big|_{\tau=0} ,
			\end{equation*}
			and $\mathscr{G}$ is a real valued function.
		\end{itemize}
		
		Hence \eqref{eq:imaginary_part_vanishes} is proven.
		
		Now, for each $ k \in \mathbb{N} $, we consider the real part of the duality product between $ H^{s}$ and $H^{-s} $ of the limit equations \eqref{eq:Kohn_Sham_with_f} with $ i \psi_k(t) $. We find, for all $t \in I(M)$ and $k \in \mathbb{N},  $ 
		\begin{multline}
			\mathrm{Re} \langle i \psi_k(t) , i \partial_t \psi_k(t) \rangle_{H^{s},H^{-s}} 
			=  \mathrm{Re} \langle i \psi_k(t) , (1-\Delta)^s \psi_k(t) \rangle_{H^{s},H^{-s}} + \mathrm{Re} \langle i \psi_k(t) , f_{k}(t) \rangle_{H^{s},H^{-s}} = 0\,,
		\end{multline}
		as the first term in the right-hand side of the first equality vanishes by self-adjointness of~$ (1-\Delta)^s $, and the second term by \eqref{eq:imaginary_part_vanishes}, since $ \mathrm{Re} \langle i \psi_k(t) , f_k(t) \rangle = \mathrm{Im} \langle \psi_k(t) , f_k(t) \rangle = 0 $. The previous equality holds for $t$ in the whole interval $I(M)$ and therefore implies that 
		\begin{equation}
			\forall t \in I(M), \,\ k \in \mathbb{N}, \;\ \frac{\diff }{\diff t} \Big( \frac{1}{2} \| \psi_k(t) \|_{L^2}^2 \Big) = 0 .
		\end{equation}
		Hence we have the conservation of the $L^2$-norm of $\psi_k(t)$ for any $k$ and $t\in I(M)$: 
		\begin{equation}
			\forall t \in I(M), \,\ \forall k \in \mathbb{N}, \;\  \| \psi_k(t) \|_{L^2} = \| \varphi_k \|_{L^2} ,
		\end{equation}
		and in particular the $ \ell^2(L^2) $ norm of $\ulpsi(t)$ is also conserved:
		\begin{equation}
			\forall t \in I(M),  \;\ \| \ulpsi(t) \|_{\ell^2(L^2)} = \| \ulphi \|_{\ell^2(L^2)} .
		\end{equation}
		This conservation of the norm, along with the conservation of the norm of the solutions to the approximated problem given by Theorem \ref{thm:existence_regularized}, imply
		\begin{equation}
			\forall t \in I(M),\, \forall n \geq 1,  \;\  \| \ulphi \|_{\ell^2(L^2)} = \| \ulpsi^{[n]}(t) \|_{\ell^2(L^2)} 
			= \| \ulpsi(t) \|_{\ell^2(L^2)}.
		\end{equation}
		The weak convergence of $\ulpsi^{[n]}(t)$ to $\ulpsi(t)$ in $\ell^2(L^2)$ (implied by the weak convergence in $\ell^2(H^s)$ given by Lemma \ref{lem:convergence-solutions}) combined with the equality of the $\ell^2(L^2)$-norms imply the strong convergence: 
		\begin{equation}\label{lim:strong_convergence_ell_L2}
			\forall t \in I(M),  \;\ \|\ulpsi^{[n]}(t)-\ulpsi(t)\|_{\ell^2(L^2)} \xrightarrow[n\to\infty]{}0  .
		\end{equation}
		
		The strong convergence \eqref{lim:strong_convergence_ell_L2} can now be to lifted to $ \ell^2(L^2\cap L^r) $, for any $2\leq r<\twoasts$, thanks the uniform bound in the energy norm $ \ell^2(H^{s}) $. Indeed, by using \eqref{eq:bound-hoelder-sobolev-general-nu-and-two}, we have, for any~$2\leq r<\twoasts$,
		\begin{multline}
			\| \ulpsi(t) - \ulpsi^{[n]}(t) \|_{\ell^2(L^2\cap L^{r})} \leq   \eta_{s,\nu}(C_s \| \ulpsi(t) - \ulpsi^{[n]}(t) \|_{\ell^2(H^{s})}) \, \thetasnu  (\| \ulpsi(t) - \ulpsi^{[n]}(t) \|_{\ell^2(L^2)})  \\
			\leq   \eta_{s,\nu}(C_s ( 4M )) \, \thetasnu (\| \ulpsi(t) - \ulpsi^{[n]}(t) \|_{\ell^2(L^2)}) ,
		\end{multline} 
		where $\eta_{s,\nu}$ and $\thetasnu$ are defined in \eqref{eq:eta-thetasnu} and we have used the uniform bound \eqref{est:uniform_est_theorem} in the second inequality. The previous estimate together with \eqref{lim:strong_convergence_ell_L2} implies that, for any $ 2 \leq r < \twoasts  $
		\begin{equation}
			\forall t \in I(M),  \;\ \ulpsi^{[n]}(t) \xrightarrow[n\to\infty]{\mathrm{s}-\ell^2(L^2\cap L^r)} \ulpsi(t) .
		\end{equation}
		
		Now we can prove that $ \ulf(t) = \ulg(\ulpsi(t)) $. Recall from Lemma \ref{lem:convergence-solutions} that
		\begin{equation}\label{lim:weak_convergence_nonlince-a}
			\forall t \in I(M), \;\ \ulg^{[n]}(\ulpsi^{[n]}(t)) \xrightarrow[n\to\infty]{\mathrm{w}-\ell^2(L^2+L^{\nu^\prime})} \ulf(t) \,.
		\end{equation}
		Let us prove that
		\begin{equation}\label{lim:weak_convergence_nonlince-b}
			\forall t \in I(M), \;\ \ulg^{[n]}(\ulpsi^{[n]}(t)) \xrightarrow[n\to\infty]{\mathrm{s}-\ell^2(L^2+L^{\nu^\prime})} \ulg(\ulpsi(t)) \,.
		\end{equation}
		We estimate, for all $ t \in I(M) $, the norm of the difference by
		\begin{subequations}
			\begin{align}
				&\big\| \ulg^{[n]}(\ulpsi^{[n]}(t)) - \ulg(\ulpsi(t)) \big\|_{\ell^2(L^2+ L^{\nu'})}\\
				&=\big\| R^{[n]} \ulg(R^{[n]}  \ulpsi^{[n]}(t)) - \ulg(\ulpsi(t)) \big\|_{\ell^2(L^2+ L^{\nu'})}  \nonumber\\
				& \leq \| R^{[n]} \|_{\mathcal{B}(L^2+L^{\nu'})}  \big\| \ulg(R^{[n]}\ulpsi^{[n]}(t) ) - \ulg( R^{[n]} \ulpsi(t) ) \big\|_{\ell^2(L^2+ L^{\nu'})} \label{subeq:identification-limit-non-linearity1}\\
				&\quad + \| R^{[n]} \|_{\mathcal{B}(L^2+L^{\nu'})}  \big\| \ulg( R^{[n]} \ulpsi(t) ) - \ulg( \ulpsi(t) ) \big\|_{\ell^2(L^2+ L^{\nu'})}  \label{subeq:identification-limit-non-linearity2}\\
				&\quad +  \big\| ( R^{[n]} - \mathbb{I}) \ulg( \ulpsi(t) ) \big\|_{\ell^2(L^2+ L^{\nu'})} . \label{subeq:identification-limit-non-linearity3}
			\end{align}
		\end{subequations}
		Recalling that $ \| R^{[n]} \|_{\mathcal{B}(L^2+L^{\nu^\prime})} \leq 1 $ by \eqref{est:reg_Lp}, let us consider each term on the right-hand side separately:
		\begin{itemize}
			\item For \eqref{subeq:identification-limit-non-linearity1}: By the non-linearity estimate \eqref{est:Lnu_L2_norms}, recalling the uniform bounds \eqref{est:uniform_est_theorem} and \eqref{est:energy_bound_solution}, we have that
			\begin{align}
				\big\| \ulg(R^{[n]}\ulpsi^{[n]}(t) ) - \ulg( R^{[n]} \ulpsi(t) ) \big\|_{\ell^2(L^2+ L^{\nu'})} 
				& \leq C(M) \, \thetasnu (\| R^{[n]}\ulpsi^{[n]}_{k}(t)-R^{[n]}\ulpsi_k(t) \|_{\ell^2(L^2)})  \\
				& \leq C(M) \, \thetasnu  (\| \ulpsi^{[n]}(t)-\ulpsi(t) \|_{\ell^2(L^2)}),
			\end{align}
			where we have used \eqref{est:reg_Lp} and \eqref{lim:reg_strong_conv_Hs} to drop the dependence on the regularizing operator. Therefore,  by the strong convergence in norm \eqref{lim:strong_convergence_ell_L2}, we have that for all $t \in I(M) $,
			\begin{equation}
				\lim_{n \rightarrow \infty} \big\| \ulg(R^{[n]}\ulpsi^{[n]}(t) ) - \ulg( R^{[n]} \ulpsi(t) ) \big\|_{\ell^2(L^2+ L^{\nu'})} = 0.
			\end{equation}
			\item For \eqref{subeq:identification-limit-non-linearity2}: Again, by the same non-linearity estimate \eqref{est:Lnu_L2_norms}, the uniform bounds \eqref{est:uniform_est_theorem}, \eqref{est:energy_bound_solution} and the bound of the regularization operator \eqref{est:reg_Lp},  we have
			\begin{equation}
				\big\| \ulg( R^{[n]} \ulpsi(t) ) - \ulg( \ulpsi(t) ) \big\|_{\ell^2(L^2+ L^{\nu'})} \leq C(M) \, \thetasnu  (\| (R^{[n]}-\mathbb{I})\ulpsi(t) \|_{\ell^2(L^2)}),
			\end{equation}
			which vanishes as $n\to\infty$ thanks to the strong convergence \eqref{lim:reg_strong_conv_Hs} of the regularization operator. Hence,
			\begin{equation}
				\forall t \in I(M), \;\ \lim_{n \rightarrow \infty} \big\| \ulg( R^{[n]} \ulpsi(t) ) - \ulg( \ulpsi(t) ) \big\|_{\ell^2(L^2+ L^{\nu'})} = 0 .
			\end{equation}
			\item For \eqref{subeq:identification-limit-non-linearity3}: It suffices to use the strong convergence of the regularization operator~\eqref{lim:reg_strong_conv_plus} 
			to obtain that 
			\begin{equation}
				\forall t \in I(M), \;\ \lim_{n \rightarrow \infty} \big\| ( R^{[n]} - \mathbb{I}) \ulg( \ulpsi(t) ) \big\|_{\ell^2(L^2+ L^{\nu'})} = 0 .
			\end{equation}
		\end{itemize}
		We have thus established \eqref{lim:weak_convergence_nonlince-b}. Together with \eqref{lim:weak_convergence_nonlince-a} and the uniqueness of the weak limit, this shows that 
		\begin{equation}
			\forall t \in I(M), \;\ \ulf(t) =  \ulg (\ulpsi(t) ) .
		\end{equation}
		
		Replacing $f_k$ by $g_k(\ulpsi)$ in \eqref{eq:Kohn_Sham_with_f}, we have proven that $ \ulpsi \in L^\infty(I(M); \ell^2(H^{s}) )$ is a weak solution to the Kohn-Sham equations \eqref{eq:Kohn-Sham-vectorised} on the interval $ I(M) $, in the sense of Definition \ref{def:solutions}, such that:
		\begin{equation}\label{eq:unif-bound-l2Hs}
			\forall t \in I(M), \;\ \| \ulpsi(t) \|_{ \ell^2(H^{s})} \leq 2M,
		\end{equation}
		and each $L^2$ norm is conserved:
		\begin{equation}
			\forall t \in I(M), \, \forall k \in \mathbb{N}, \;\ \| \psi_k(t) \|_{L^2} = \| \varphi_k \|_{L^2} . 
		\end{equation}
		
		Now we prove the energy inequality $ \ \mathcal{E}(\ulpsi(t)) \leq \mathcal{E}(\ulphi ) $ by using the lower semicontinuity of the $ \ell^2(H^{s}) $ norm in the weak topology and the strong convergence of the non-linearity energy as $n\to \infty$: Recall the inequality for the non-linearity energy \eqref{est:nonlinear_energy_L2} and~\eqref{def:approx_nonlin_energy}
		\begin{align}
			| \mathscr{G}^{[n]}(\ulpsi^{[n]}(t)) & - \mathscr{G}(\ulpsi(t)) | \\
			& \leq  C(M) \, \thetasnu (\| R^{[n]}\ulpsi^{[n]}(t) - \ulpsi(t) \|_{\ell^2(L^2)})  \\ 
			&\leq C(M)	\, \thetasnu \Big( \| R^{[n]} \|_{\mathcal{B}(L^{2})} \| \ulpsi^{[n]}(t) - \ulpsi(t) \|_{\ell^2(L^2)} + \| (R^{[n]}-\mathbb{I}) \ulpsi (t) \|_{\ell^2(L^2)} \Big) .
		\end{align}
		The first term on the right-hand side vanishes as $n\to\infty$ by \eqref{lim:strong_convergence_ell_L2}, while the second term vanishes by strong convergence of the regularization operator \eqref{lim:reg_strong_conv_Hs}. Therefore, 
		\begin{equation}\label{eq:conv-nonlinear}
			\forall t \in I(M), \;\ \lim_{n \rightarrow \infty} | \mathscr{G}^{[n]}(\ulpsi^{[n]}(t)) - \mathscr{G}(\ulpsi(t)) | = 0.
		\end{equation}
		From the convergence of the non-linear energy \eqref{eq:conv-nonlinear}, the lower semicontinuity of the $ \ell^2(H^{s}) $ norm, and the conservation of energy of the approximated problem (see Theorem \ref{thm:existence_regularized}), we conclude that
		\begin{equation}
			\mathcal{E}(\ulpsi(t)) \leq \liminf_{n \rightarrow  \infty} \mathcal{E}^{[n]}(\ulpsi^{[n]}(t)) = \liminf_{n \rightarrow  \infty} \Big( \frac{1}{2} \| \ulpsi^{[n]}(t) \|_{\ell^2(H^{s})}^2 + \mathscr{G}( R^{[n]}\ulpsi^{[n]}(t)) \Big) = \mathcal{E}(\ulphi) ,
		\end{equation}
		which is the energy inequality we stated.
		
		To conclude the proof of Theorem \ref{thm:local_existence}, the only thing that remains to be proven is that
		\begin{equation}\label{eq:last-step}
			\ulpsi\in\mathscr{C}^{0,1/(2p_{s,r})} \big(I(M);\ell^2(L^2\cap L^r)\big),
		\end{equation}
		for all~$r\in[2,\twoasts)$. Applying \eqref{eq:bound-hoelder-sobolev-general-r}, and using that $\|\cdot\|_{\ell^2(L^2)}\le\|\cdot\|_{\ell^2(H^s)}$, we can estimate, for all~$r\in[2,\twoasts)$ and $t,t'\in I(M)$,
		\begin{align}
			&\| \ulpsi(t)-\ulpsi(t^\prime)  \|_{\ell^2(L^2\cap L^{r})} \notag \\
			&\leq (1+C_s^{1/p_{s,r}'}) \| \ulpsi(t)-\ulpsi(t^\prime)  \|_{\ell^2(H^{s})}^{1/p_{s,r}'}  \| \ulpsi(t)-\ulpsi(t^\prime)  \|_{\ell^2(L^2)}^{1/p_{s,r}} \notag \\
			&\leq (1+C_s^{1/p_{s,r}'}) (4M)^{1/p_{s,r}'}  \| \ulpsi(t)-\ulpsi(t^\prime)  \|_{\ell^2(L^2)}^{1/p_{s,r}}, \label{eq:c0}
		\end{align}
		where we used \eqref{eq:unif-bound-l2Hs} in the second inequality. Taking the limit $n\to\infty$ in \eqref{est:L2_holder_continuity}, using \eqref{lim:strong_convergence_ell_L2}, gives
		\begin{equation*}
			\| \ulpsi(t)-\ulpsi(t^\prime)  \|_{\ell^2(L^2)}\le 2 \max\{2M, \tilde{C}(M)\} |t-t^\prime|^{1/2} .
		\end{equation*}
		Inserting this into \eqref{eq:c0} establishes \eqref{eq:last-step}, and thus concludes the proof of Theorem~\ref{thm:local_existence}.
	\end{proof}

	\section{Extension to global solutions}\label{sec:global}
	
	In this section we prove Theorem \ref{thm:global_existence}, namely that the local in time solution constructed in Theorem \ref{thm:local_existence} can be extended to a global in time solution, assuming that the nonlinearity belongs to~$\Nglob$. The argument is a rather straightforward adaptation  from e.g.~\cite{cazenave2003semilinear}.
	
	\begin{proof}[Proof of Theorem \ref{thm:global_existence}]\label{proof:global_esitence}
		Consider an interval $ I \ni 0 $ over which a local solution of \eqref{eq:Kohn-Sham-vectorised} with initial condition $ \ulphi $ exists by Theorem \ref{thm:local_existence}. By points $ii)$ and $iii)$ of Theorem \ref{thm:local_existence}, we have that 
		\begin{equation}
			\forall t \in I, \;\ \| \ulpsi(t) \|_{\ell^2(L^2)} = \| \ulphi \|_{\ell^2(L^2)} \;\  \text{ and } \;\ \mathcal{E}(\ulpsi(t)) \leq \mathcal{E} (\ulphi ).
		\end{equation}
		We can therefore estimate the $\ell^2(H^{s}) $ norm for all $ t \in I $ by
		\begin{align} \notag
			\| \ulpsi(t) \|_{\ell^2(H^s)}^2 & =  2 \mathcal{E}(\ulpsi(t)) - 2  \mathscr{G}(\ulpsi(t)) \leq  2 \mathcal{E}(\ulpsi(t)) - 2  \mathscr{G}^-(\ulpsi(t))
			\\ \label{H12_estimate_global_ex}
			&\leq  \| \ulphi \|_{\ell^2(H^s)}^2 + 2  |\mathscr{G}(\ulphi)| + 2|\mathscr{G}^-(\ulpsi(t))|  . 
		\end{align}
		By \ref{ass4:nonlinearity_global_estimate} and the conservation of the $\ell^2(L^2)$ norm (see \ref{it:cons-l2L2} in Theorem \ref{thm:local_existence}), we deduce that, for all $t \in I$,
		\begin{equation*}
			\| \ulpsi(t) \|_{\ell^2(H^s)}^2 
			\leq \| \ulphi \|_{\ell^2(H^s)}^2  + 2  |\mathscr{G}(\ulphi)|  + a  \| \ulpsi(t) \|_{\ell^2(H^{s})}^2 +2D(\|\ulphi \|_{\ell^2(L^2)}^2) .
		\end{equation*}
		Rearranging the terms, we obtain a bound on the $ \ell^2(H^{s}) $ norm which depends only on the initial condition $ \ulphi $, and is independent of time:
		\begin{equation}\label{est:bound_dep_only_initial_cond}
			\forall t \in I , \;\ 	\| \ulpsi(t) \|_{\ell^2(H^s)} 
			\leq (1-a)^{-\frac{1}{2}}\Big( \| \ulphi \|_{\ell^2(H^s)}^2 + 2  |\mathscr{G}(\ulphi)| +  2D(\|\ulphi \|_{\ell^2(L^2)}^2) \Big)^{\frac{1}{2}} =:M.
		\end{equation}
		Observe that, in particular, $ \| \ulphi \|_{\ell^2(H^s)} \leq M $.
		
		We can now use this uniform bound to extend iteratively the domain of definition of the solution. Considering the constant $M>0$, and the bound on the initial condition, we can apply Theorem \ref{thm:local_existence} to obtain a local solution $ \ulpsi(t) \in \ell^2(H^s) $ over the interval $ [0,T(M)] $. Notice that the inequality \eqref{est:bound_dep_only_initial_cond} holds over the whole interval $[0,T(M)]$ as by Theorem \ref{thm:local_existence} \ref{it:cons-l2L2} and \ref{it:en-ineq}, we have conservation of the $\ell^2(L^2)$ norm and the energy inequality, which imply that we can estimate as in \eqref{H12_estimate_global_ex} to get that 
		\begin{equation}
			\sup_{t \in [0,T(M)]} \| \ulpsi(t) \|_{\ell^2(H^{s})} \leq M. 
		\end{equation}
		Setting $ {\tilde{\ulphi}} \coloneqq \ulpsi(T(M)) $, we have that
		\begin{equation}
			\| \tilde{\ulphi} \|_{\ell^2(H^s)} \leq M,
		\end{equation}
		and we can use $\tilde{\ulphi} $ as an initial condition to construct a solution $ \tilde{\ulu}(t) $ over the interval $[T(M),2T(M)] $. Defining the glued solution, with the same notation, as 
		\begin{equation}
			\ulpsi(t) =  
			\begin{cases}
				\ulpsi(t) , \;\ t \in [0,T(M)], \\
				\tilde{\ulu}(t), \;\ t \in [T(M),2T(M)],	
			\end{cases}
		\end{equation}
		we have that the $ \ell^2(L^2) $ norm is conserved and the energy inequality holds on the whole interval. In particular, for all $t \in [T(M),2T(M)]$,
		\begin{align}
			\| \ulpsi(t) \|_{\ell^2(L^2)} &=  \| \tilde{\ulphi} \|_{\ell^2(L^2)} = \| \ulpsi(T(M)) \|_{\ell^2(L^2)} = \| \ulphi \|_{\ell^2(L^2)},   \\
			\mathcal{E}(\ulpsi(t)) &\leq  \mathcal{E}(\tilde{\ulphi}) = \mathcal{E}(\ulpsi(T(M))) \leq \mathcal{E}(\ulphi) .
		\end{align}
		Notice that only the estimate from Assumption \ref{ass4:nonlinearity_global_estimate}, the conservation of the $\ell^2(L^2)$ norm and the energy inequality are needed to prove \eqref{est:bound_dep_only_initial_cond}: it is then clear that it holds also for the glued solution over the interval $[0,2T(M)]$. In particular 
		\begin{equation}
			\| \ulpsi(2T(M)) \|_{\ell^2(H^s)} \leq M.
		\end{equation}
		Therefore we can use the solution at the maximal time of the interval as an initial condition and iterate this process indefinitely to construct a solution on the whole real line, as each time, by \eqref{est:bound_dep_only_initial_cond}, we always have the same bound $ \| \ulpsi(mT(M)) \|_{\ell^2(H^s)} \leq M $, for any $m \in \mathbb{N}, \, m \geq 1 $. In fact, denoting again by $ \ulpsi(t)$ the solution obtained by gluing the solutions on each interval, the bound holds on the whole line:
		\begin{equation}
			\| \ulpsi \|_{L^\infty(\mathbb{R};\ell^2(H^s))} \leq M.
		\end{equation}
		Note that while the constructed solution is continuous in time with the regularities given by Theorem \ref{thm:local_existence}, for any interval $[m T(M), (m+1) T(M)] $, the gluing procedure allows to extend the continuity properties only with a slightly weaker notion.
		The glued global solution is clearly continuous in time, as right and left limits coincide at the intersection points of the intervals; in contrast, the H{\"o}lder regularity is lost globally, but holds locally, as we can glue H{\"o}lder functions with the same exponent smaller than 1 a finite number of times:
		\begin{equation*}
			\ulpsi \in \mathscr{C}\big(\mathbb{R};\ell^2(H^{s})_{\mathrm{weak}}\big) \cap \bigcap_{\nu \in [2,\twoasts ) } \mathscr{C}_{\mathrm{loc}}^{0,1/\psnu}\big(\mathbb{R};\ell^2(L^2\cap L^\nu)\big).
		\end{equation*}
		Still, by the fact that functions in $ W^{1,\infty}(\ell^2(H^{s})) $ are just absolutely continuous functions with~$L^\infty$ weak derivative, and absolutely continuous functions on intervals can be glued to yield absolutely continuous functions, the global solution constructed this way  admits a weak derivative:
		\begin{equation}
			\ulpsi \in W^{1, \infty}\big(\mathbb{R};\ell^2(H^{-s})\big) .
		\end{equation}
		This concludes the proof.
	\end{proof}

	\section{Well-posedness}\label{sec:Strichartz}
	
	\subsection{Dispersive Estimates and Uniqueness for \texorpdfstring{$1\le s<\frac32$}{s between 1 and 3/2}}
	
	In this section, we suppose that $s\in[1,\frac32)$. 
	The dispersive properties without loss of derivatives that hold in this case allow us to prove the uniqueness of the solutions. Denote the dispersion relation in Fourier space by $ \omega(|k|) = (1+|k|^2)^s $. We use the results from \cite{GUO20081642} to obtain Strichartz estimates relative to $\omega(-i\nabla)$.
	
	\begin{prop}\label{prop:strichartz_general_s_geq_1}
		Let $s \in [1,\frac32) $. A pair of indices $(q,r) \in \left[ 2 , \infty \right] $ are called (Schrödinger) admissible if $ \frac{1}{q} = \frac{3}{2}(\frac{1}{2} - \frac{1}{r} ) $. For any admissible pairs $(q,r)$, $(\tilde{q},\tilde{r})$ and any interval $I \subset \mathbb{R} $, the following estimates hold: 
		\begin{align} \label{eq:strichartz_general_s_geq_1}
			\| e^{-it(1-\Delta)^s} f \|_{L^q_t(I;L^r_x(\mathbb{R}^3)} &  \lesssim \| f \|_{L^2_x(\mathbb{R}^3)} , \\
			\Bigg\| \int_{0}^{t} e^{-i(t-\tau)(1-\Delta)^s} F(\tau) \diff \tau \Bigg\|_{L_t^{q}(I;L^r_x(\mathbb{R}^3))} & \lesssim \| F \|_{L^{\tilde{q}^\prime}_t(I;L^{\tilde{r}^\prime}_x(\mathbb{R}^3))},\label{eq:strichartz_general_s_geq_2}
		\end{align}
		where $\tilde{q}^\prime, \, \tilde{r}^\prime $ denote the H\"older conjugate exponent of $\tilde q$, $\tilde r$, and $0,t \in \overline{I}$.
	\end{prop}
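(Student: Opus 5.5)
The strategy is to reduce to the abstract dispersive framework of Keel–Tao, as in \cite{GUO20081642}. Everything follows once we have the energy bound $\|e^{-it(1-\Delta)^s}\|_{L^2\to L^2}=1$ and a frequency-uniform dispersive decay $\|e^{-it(1-\Delta)^s}f\|_{L^\infty}\lesssim |t|^{-3/2}\|f\|_{L^1}$. Neither bound loses derivatives. Given these, the $TT^*$ argument and the Christ–Kiselev lemma (or Keel–Tao directly, including the endpoint $(q,r)=(2,6)$) produce the homogeneous estimate \eqref{eq:strichartz_general_s_geq_1} and the retarded estimate \eqref{eq:strichartz_general_s_geq_2} for all Schrödinger admissible pairs, and on any interval $I$ with $0,t\in\bar I$.

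The dispersive estimate is the heart of the matter. I would use a Littlewood–Paley decomposition $f=\sum_{j\ge0}P_jf$ and treat each dyadic piece separately.

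\emph{Low frequencies} ($|\xi|\lesssim1$). Here $\omega(\xi)=(1+|\xi|^2)^s$ has non-degenerate Hessian: its radial and angular eigenvalues at $0$ are both $2s>0$, and they remain bounded away from zero on a ball. Stationary phase then gives the decay $|t|^{-3/2}$.

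\emph{Dyadic frequencies} $|\xi|\sim 2^j$, $j\ge1$. The radial second derivative is $\omega''(r)\sim 2^{j(2s-2)}$ and the angular eigenvalues are $\omega'(r)/r\sim 2^{j(2s-2)}$. Both are $\gtrsim1$ precisely because $s\ge1$. Rescaling $\xi=2^j\eta$ and applying stationary phase gives a kernel bound
\begin{equation*}
\|e^{-it\omega}P_j\|_{L^1\to L^\infty}\lesssim 2^{3j}\,(2^{2js}|t|)^{-3/2}=2^{3j(1-s)}|t|^{-3/2}\le |t|^{-3/2}.
\end{equation*}
The same radial computation as in \cite{GUO20081642} covers this step.

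\emph{Reassembling.} The frequency-localized Strichartz estimates are uniform in $j$. Summing them with Littlewood–Paley square-function estimates (valid for $2\le r<\infty$) and orthogonality in $L^2$ gives the full estimate. Alternatively, since the bound holds for every piece with constants $\lesssim1$, one can simply quote the general theorem of \cite{GUO20081642} for radial dispersion $\omega(|\xi|)$ with $\omega'(r)\sim r^{2s-1}$ and $\omega''(r)\sim r^{2s-2}$ at high frequency. This yields Strichartz estimates with $|\nabla|^{\frac{(2-2s)}{q}}$-type gain or loss, which for $s\ge1$ is a nonnegative power of the multiplier and is therefore harmless in $L^2$.

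The main obstacle is the uniform non-degeneracy of the Hessian at high frequency. This is exactly where $s\ge1$ is needed: for $s<1$ the curvature decays and the estimate loses $(1-s)$ derivatives. I would verify the exponent bookkeeping carefully at this point, then cite Keel–Tao for the inhomogeneous estimate with arbitrary pairs $(q,r)$ and $(\tilde q,\tilde r)$.
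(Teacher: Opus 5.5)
Your proof is correct, and its local analysis matches the paper's. Both rest on the dyadic dispersive bound $\|e^{-it(1-\Delta)^s}P_k\|_{L^1\to L^\infty}\lesssim 2^{3k(1-s)}|t|^{-3/2}$, which the paper takes from \cite[Theorem 1 and Remark 2]{GUO20081642}. Both use $2^{3k(1-s)}\le 1$ for $s\ge 1$ and a separate low-frequency bound $(1+|t|)^{-3/2}$.

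The difference is where the dyadic pieces are reassembled. The paper does it at fixed time. After Riesz--Thorin, it uses $\|f\|_{L^r}\lesssim\|f\|_{B^0_{r,2}}$ and $\|f\|_{B^0_{r',2}}\lesssim\|f\|_{L^{r'}}$ to get an unlocalized decay estimate $\|e^{-it(1-\Delta)^s}f\|_{L^r}\lesssim|t|^{-3(\frac12-\frac1r)}\|f\|_{L^{r'}}$. It then invokes the abstract Strichartz result \cite[Proposition 1]{GUO20081642}, which needs this decay exponent to be $<1$, i.e.\ $r<6$.

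You instead prove frequency-localized Strichartz estimates, uniformly in $j$, via Keel--Tao. You then sum them in space-time norms using the square function and $L^2$ orthogonality. For the retarded estimate, the summation on the right-hand side uses $\tilde q',\tilde r'\le 2$ and $\tilde r'>1$, which admissibility guarantees.

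What each route buys:
\begin{itemize}
\item Your route covers the endpoint $(q,r)=(2,6)$. The statement formally includes it, since $q=2$ is allowed, but the paper's argument does not reach it. This is harmless for the paper, which later only uses $\nu_j<6$.
\item The paper's route needs only the elementary non-endpoint $TT^*$/Hardy--Littlewood--Sobolev machinery once the unlocalized decay estimate is in hand.
\end{itemize}

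Your opening claim of a global bound $\|e^{-it(1-\Delta)^s}\|_{L^1\to L^\infty}\lesssim|t|^{-3/2}$ can in fact be obtained directly. For $s>1$ the dyadic bounds are summable, since $\sum_{k\ge1}2^{3k(1-s)}<\infty$. For $s=1$ the propagator is the Schr\"odinger group times a phase. Keel--Tao then applies to the full propagator with no Littlewood--Paley reassembly at all.

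One small slip: in your ``alternative'' via \cite{GUO20081642}, the exponent $(2-2s)/q$ is nonpositive for $s\ge1$. It is therefore a gain, not a ``nonnegative power''. Moreover, $|\nabla|^{(2-2s)/q}$ is bounded on $L^2$ only away from zero frequency, which is exactly why the low-frequency block must be treated separately, as you do.
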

	\begin{proof}
		We consider the non-homogeneous Littlewood-Paley projections
		\begin{equation*}
			\forall k \geq 1, \;\ P_k = \phi(2^{-k} (-i \nabla) ), \;\ P_{\leq 0} = \chi(-i \nabla) ,
		\end{equation*}
		where the radial functions $\phi \in \mathscr{C}^\infty_c(B(0,2) \setminus \overline{B(0,\frac12)}) $ and $\chi \in \mathscr{C}^\infty_c(B(0,2))$ have the properties
		\begin{equation}\label{eq:dyadic_decomp_1}
			\forall \xi \in \mathbb{R}^3 , \;\ 1 = \chi(\xi) + \sum_{k\geq1} \phi(2^{-k} \xi) 
		\end{equation}
		and
		\begin{equation}\label{eq:dyadic_decomp_2}
			\mathrm{supp} (\chi) \cap \mathrm{supp}(\phi(2^{-k} \cdot ) ) = \emptyset, \,\ k \geq 2, \;\  \mathrm{supp} (\phi(2^{-k} \cdot )) \cap \mathrm{supp}(\phi(2^{-k^\prime} \cdot ) ) = \emptyset, \,\ |k-k^\prime| \geq 2,
		\end{equation}
		see e.g. \cite[Section 2.2]{bahouri2011fourier}. We abstain from using the usual notation $\Delta_k$ for the Littlewood-Paley projections to avoid confusion with the Laplace operator.
		Note the following properties of the radial function $ \omega(\eta) = (1+\eta^2)^s ,\; \eta \in \mathbb{R}^+$:
		\begin{itemize}
			\item[] $ |\omega^\prime(\eta)| \sim \eta^{2s-1}$ and $ \forall m \in \mathbb{N}, \, m \geq 2, \;\ | \omega^{(m)}( \eta) | \lesssim \eta^{2s-m}, \;\ \eta \geq 1$;
			\item[] $ |\omega^\prime(\eta)| \sim \eta $ and $ \forall m \in \mathbb{N}, \, m \geq 2, \;\  | \omega^{(m)}( \eta ) | \lesssim \eta^{\mathrm{mod}(m,2)} \lesssim \eta^{2-m}, \;\ \eta < 1$;
			\item[] $ |\omega^{\prime\prime}(\eta)| \sim \eta^{2s-2}, \,\ \eta \geq 1$ and $ |\omega^{\prime\prime}(\eta)| \sim 1, \,\ \eta < 1$.
		\end{itemize}
		These conditions allow us to apply \cite[Theorem 1 and Remark 2]{GUO20081642} to obtain the estimates 
		\begin{equation*}
			\forall k \geq1, \;\ \| e^{-it(1-\Delta)^s} P_k f \|_{L^\infty} \lesssim |t|^{-3/2} 2^{3k(1-s)} \| f \|_{L^1}, \;\
			\| e^{-it(1-\Delta)^s} P_{\leq0} f \|_{L^\infty} \lesssim (1+|t|)^{-3/2} \| f \|_{L^1} . 
		\end{equation*}
		As $s \geq 1$, we can always estimate $ 2^{3k(1-s)}$ by $1$ uniformly in $k \geq 1$. 
		By the Riesz-Thorin Theorem we immediately obtain the estimates for $ r \in [2,\infty] $:
		\begin{equation*}
			\forall k \geq 1, \;\ \| e^{-it(1-\Delta)^s} P_k f \|_{L^r} \lesssim |t|^{-\frac{3(r-2)}{2r}} \| f \|_{L^{r^\prime}}, \;\
			\| e^{-it(1-\Delta)^s} P_{\leq0} f \|_{L^r} \lesssim (1+|t|)^{-\frac{3(r-2)}{2r}} \| f \|_{L^{r^\prime}}  .
		\end{equation*}
		By \eqref{eq:dyadic_decomp_1} and \eqref{eq:dyadic_decomp_2}, we can always estimate in the following way:
		\begin{align}
			\| e^{-it(1-\Delta)^s} P_k f \|_{L^r} 
			& = \| e^{-it(1-\Delta)^s} P_k (P_{\leq0} + \sum_{k^\prime\geq0} P_{k^\prime}) f \|_{L^r} \\
			& = \| e^{-it(1-\Delta)^s} P_k ( \delta_{k,1} P_{\leq0} + \sum_{|k^\prime-k|\leq1} \Delta_{k^\prime}) f \|_{L^r} \\
			& \lesssim |t|^{-\frac{3(r-2)}{2r}} \Big( \delta_{k,1} \| P_{\leq0} f \|_{L^{r^\prime}} + \sum_{|k^\prime-k|\leq1} \|  P_{k^\prime} f \|_{L^{r^\prime}}  \Big) ,
		\end{align}
		and similarly
		\begin{align}
			\| e^{-it(1-\Delta)^s} P_{\leq0} f \|_{L^r} 
			& = \| e^{-it(1-\Delta)^s} P_{\leq0} (P_{\leq0} + \sum_{k\geq0} P_k) f \|_{L^r} \\
			& = \| e^{-it(1-\Delta)^s} P_{\leq0} ( P_{\leq0} + \Delta_{1}) f \|_{L^r} \\
			& \lesssim (1+|t|)^{-\frac{3(r-2)}{2r}} ( \| P_{\leq0} f \|_{L^{r^\prime}} + \|  P_{1} f \|_{L^{r^\prime}}  ) .
		\end{align}
		Now recall the definition of the Besov norm, for $r \in [1,\infty) $,
		\begin{equation*}
			\| f \|_{B^{0}_{r,2}}^2 = \| P_{\leq0} f \|_{L^r}^2 + \sum_{k \geq 1} \| P_k f \|_{L^r}^2 ,
		\end{equation*}
		and note that by Minkowski inequality and \eqref{eq:dyadic_decomp_1}, we have the following relations for $r \in [2,\infty]$:
		\begin{equation*}
			\|  f \|_{L^r} \leq \| f \|_{B^{0}_{r,2}},  \quad \| f \|_{B^{0}_{r^\prime,2}} \leq \| f \|_{L^{r^\prime}} . 
		\end{equation*} 
		We can therefore estimate
		\begin{align}
			\| e^{-it(1-\Delta)^s} f \|_{L^r}^2 
			& \leq \| e^{-it(1-\Delta)^s} f \|_{B^{0}_{r,2}}^2 \\
			& = \| e^{-it(1-\Delta)^s} P_{\leq0} f \|_{L^r}^2 + \sum_{k \geq 1} \| e^{-it(1-\Delta)^s} P_k f \|_{L^r}^2 \\
			& \lesssim (1+|t|)^{-\frac{3(r-2)}{r}} ( \| P_{\leq0} f \|_{L^{r^\prime}}^2 + \|  P_{1} f \|_{L^{r^\prime}}^2  ) \\
			& \qquad + |t|^{-\frac{3(r-2)}{r}}  \sum_{k \geq 1} ( \delta_{k,1} \| P_{\leq0} f \|_{L^{r^\prime}}^2 + \sum_{|k^\prime-k|\leq1} \|  P_{k^\prime} f \|_{L^{r^\prime}}^2  ) \\
			& \lesssim |t|^{-\frac{3(r-2)}{r}} \| f \|_{B^{0}_{r^\prime,2}}^2 \lesssim |t|^{-\frac{3(r-2)}{r}} \| f \|_{L^{r^\prime}}^2 .
		\end{align}
		We are now in position to apply \cite[Proposition 1]{GUO20081642} to obtain the Strichartz estimates \eqref{eq:strichartz_general_s_geq_1} and~\eqref{eq:strichartz_general_s_geq_2}. Here it should be noted that, in the sense of \cite[Definition 1a)]{GUO20081642}, our exponents are~$\theta_1=\theta_2= \frac{3(r-2)}{2r}$, which are $<1$ for $ r \in [2,6) $, and the corresponding time integrability is~$ \frac{1}{q} = \frac{3}{2}(\frac{1}{2}-\frac{1}{r})$, with $q \in (6/5,\infty] $ forming the admissible pair $ (q,r) $.
	\end{proof}
	
	Note that it could be possible to obtain slightly finer Strichartz estimates, but \eqref{eq:strichartz_general_s_geq_1} and~ \eqref{eq:strichartz_general_s_geq_2} are sufficient for our purpose. 
	\begin{remark} 
		For $s > 1$ (and $s<\frac32$), the range of space integrability $r$ of the admissible pairs~$(q,r)$ does not cover the whole range of $ r \in [2,\twoasts) $ as in \ref{ass2:nonlinearity_Lp_bound}, only $ r \in [2,6) $. This is the reason for the restriction of the integrability range in \ref{ass2stri:nonlinearity_Lp_bound} and therefore in the class $\NStri$.
	\end{remark}
	
	We further adapt the Strichartz estimates \eqref{eq:strichartz_general_s_geq_1}--\eqref{eq:strichartz_general_s_geq_2} to the vector case in order to prove the uniqueness of the solution to the Kohn-Sham equations for $ s \geq 1$. 
	
	\begin{prop}
		Let $ s \in [1,\frac32) $. For any admissible pairs $(q,r)$, $(\tilde{q},\tilde{r})$ and any interval $I \subset \mathbb{R} $, the following estimates hold: 
		\begin{align}
			&\| e^{-it(1-\Delta)^s} \ulu \|_{L^q_t(\mathbb{R};\ell^2(L^{r}_x))} \lesssim \| \ulu \|_{\ell^2(L^2_x)} \\
			&\left\| \int_{0}^t e^{i(t-\tau)(1-\Delta)^s} F(\tau) \diff \tau \right\|_{L^q_t(I;\ell^2(L^r_x))} \lesssim \| F \|_{L^{\tilde{q}^\prime}_t(I;\ell^2(L^{\tilde{r}^\prime}_x))} , \label{est:strichartz_retarded}
		\end{align}
		where $\tilde{q}^\prime, \, \tilde{r}^\prime $ denote the H\"older conjugate exponent of $\tilde q$, $\tilde r$, and $0,t \in \overline{I}$.
	\end{prop}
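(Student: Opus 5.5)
The idea is to lift the scalar Strichartz estimates of Proposition \ref{prop:strichartz_general_s_geq_1} to $\ell^2$-valued functions componentwise, and to use Minkowski's inequality to swap the $\ell^2$ sum with the $L^q_t$ and $L^r_x$ norms. The only constraint is that each exponent must be at least $2$ on the side where the sum is pulled outside, and at most $2$ on the side where it is pulled inside.

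For the homogeneous estimate, note that $q\ge2$ for admissible pairs. Minkowski's inequality in $L^{q/2}_t$ applied to $\sum_k\|e^{-it(1-\Delta)^s}u_k\|_{L^r}^2$ gives
\begin{equation*}
\| e^{-it(1-\Delta)^s} \ulu \|_{L^q_t(\mathbb{R};\ell^2(L^{r}_x))}\le \Big(\sum_{k\ge0}\|e^{-it(1-\Delta)^s}u_k\|_{L^q_t(\mathbb{R};L^r_x)}^2\Big)^{1/2}.
\end{equation*}
Applying \eqref{eq:strichartz_general_s_geq_1} to each $u_k$, with a constant independent of $k$, bounds the right-hand side by $C\|\ulu\|_{\ell^2(L^2)}$.

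For the retarded estimate \eqref{est:strichartz_retarded}, write $F=(F_k)_k$ and apply the same Minkowski step, using $q\ge2$, to reduce the left-hand side to $\big(\sum_k\|\int_0^te^{-i(t-\tau)(1-\Delta)^s}F_k\diff\tau\|_{L^q_tL^r_x}^2\big)^{1/2}$. Then use \eqref{eq:strichartz_general_s_geq_2} on each component to bound this by $C\big(\sum_k\|F_k\|_{L^{\tilde q'}_tL^{\tilde r'}_x}^2\big)^{1/2}$. The sign convention in the exponent ($e^{i(t-\tau)}$ versus $e^{-i(t-\tau)}$) is irrelevant here, since the scalar estimates hold for either time direction by conjugation or time reversal.

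The main step, though an easy one, is moving the $\ell^2$ sum back inside:
\begin{equation*}
\Big(\sum_k\|F_k\|_{L^{\tilde q'}_tL^{\tilde r'}_x}^2\Big)^{1/2}\le \|F\|_{L^{\tilde q'}_t(I;\ell^2(L^{\tilde r'}_x))}.
\end{equation*}
Since $\tilde q'\le2$, this is the reverse Minkowski inequality, i.e.\ Minkowski's integral inequality for the $\ell^2$ norm of $L^{\tilde q'}_t$-valued sequences with $2/\tilde q'\ge1$. Concretely, $\|(\|F_k\|_{L^{\tilde q'}_t})_k\|_{\ell^2}\le\|\,\|F_k(t)\|_{\ell^2_k}\|_{L^{\tilde q'}_t}$ holds because $\ell^2$ norms of $L^p$ functions with $p\le2$ are dominated by the $L^p$ norm of the $\ell^2$ norm. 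The inner norm already matches the target, since $\|F(t)\|_{\ell^2(L^{\tilde r'})}=(\sum_k\|F_k(t)\|_{L^{\tilde r'}}^2)^{1/2}$. If one instead wants the sum inside $L^{\tilde r'}_x$, the analogous inequality with $\tilde r'\le2$ applies as well.

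Finally, the implicit constants are exactly those of Proposition \ref{prop:strichartz_general_s_geq_1}, uniform in $k$. The statement on $\mathbb{R}$ versus $I$ follows by restriction, which completes the plan.
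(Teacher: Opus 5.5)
Your proposal is correct and follows the paper's proof. Both apply the scalar Strichartz estimates of Proposition~\ref{prop:strichartz_general_s_geq_1} componentwise and use Minkowski's inequality twice: the $\ell^2$ sum is pulled outside $L^q_t$ because $q\ge 2$, and put back inside $L^{\tilde q'}_t$ because $\tilde q'\le 2$. You also correctly supply two details the paper leaves implicit: the sign of the exponent is harmless by complex conjugation, since $(1-\Delta)^s$ commutes with conjugation, and the constants are uniform in $k$.
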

	\begin{proof}
		The estimates follow from the Strichartz estimates  \eqref{eq:strichartz_general_s_geq_1}--\eqref{eq:strichartz_general_s_geq_2}, observing that as $ q \geq 2 $ and $ 2 \geq \tilde{q}^\prime $ as the exponents are admissible, by Minkowski inequality,
		\begin{equation}
			\| \cdot \|_{L^q_t(I;\ell^2(L^r_x))} \leq  \| \cdot \|_{\ell^2(L^q_t(I;L^r_x))} \text{  and  } \| \cdot \|_{\ell^2(L^{\tilde{q}^\prime}_t(I;L^{\tilde{r}^\prime}_x))} \leq  \| \cdot \|_{L^{\tilde{q}^\prime}_t(I;\ell^2(L^{\tilde{r}^\prime}_x))} .
		\end{equation}
	\end{proof}
	
	We can now prove a uniqueness result which will imply Theorem \ref{thm:well_posedness}.
	
	\begin{theorem}\label{thm:uniqueness}
		Let $s \in [1,\frac32) $ and $\ulg\in \Nloc\cap\NStri$. Any weak solution to the Kohn-Sham equations \eqref{eq:Kohn-Sham-vectorised} with initial condition $\ulphi \in \ell^2(H^s) $ over an interval $I \ni 0$ is unique.
	\end{theorem}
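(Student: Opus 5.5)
Suppose $\ulpsi_1,\ulpsi_2$ are two weak solutions in the sense of Definition \ref{def:solutions} on $I\ni 0$ with the same initial datum $\ulphi$. The plan is a Gronwall-type argument in Strichartz norms on short intervals, following \cite[Section 4.2]{cazenave2003semilinear}. Both solutions lie in $L^\infty(I;\ell^2(H^s))$, so there is $M$ with $\|\ulpsi_i\|_{L^\infty(I;\ell^2(H^s))}\le M$ for $i=1,2$. Both satisfy the Duhamel formula in $\ell^2(H^{-s})$. Subtracting the two formulas gives, for a.e.\ $t\in I$,
\begin{equation*}
\ulpsi_1(t)-\ulpsi_2(t) = -i\int_0^t e^{-i(t-\tau)(1-\Delta)^s}\big(\ulg(\ulpsi_1(\tau))-\ulg(\ulpsi_2(\tau))\big)\diff\tau .
\end{equation*}
Write $\ulg=\sum_{j\in J}\ulg_j$ as in \ref{ass2stri:nonlinearity_Lp_bound}, with $\nu_j\in[2,6)$. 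For each $j$ let $(q_j,\nu_j)$ be the Schrödinger admissible pair, so $\frac1{q_j}=\frac32(\frac12-\frac1{\nu_j})$ and $q_j\in(2,\infty]$; the case $\nu_j=2$ gives $q_j=\infty$.

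The first step is to fix a short interval $I_T=[0,T]\cap I$ (negative times are symmetric). The norm to control is
\begin{equation*}
X(T)=\sum_{i\in J}\|\ulpsi_1-\ulpsi_2\|_{L^{q_i}(I_T;\ell^2(L^{\nu_i}))}.
\end{equation*}
This is finite, since $\ell^2(H^s)\hookrightarrow \ell^2(L^{\nu_i})$ and the solutions are bounded in time.

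Apply the vector Strichartz estimate \eqref{est:strichartz_retarded} with $(q,r)=(q_i,\nu_i)$ and dual pair $(\tilde q,\tilde r)=(q_j,\nu_j)$ to each $\ulg_j$-piece. Then apply \ref{ass2stri:nonlinearity_Lp_bound}, which is usable because both solutions stay in $B_{\ell^2(H^s)}(0,M)$ (enlarge $M$ slightly if needed). This gives
\begin{equation*}
X(T)\lesssim \sum_{j}\|\ulg_j(\ulpsi_1)-\ulg_j(\ulpsi_2)\|_{L^{q_j'}(I_T;\ell^2(L^{\nu_j'}))}\le L_M\sum_j\|\ulpsi_1-\ulpsi_2\|_{L^{q_j'}(I_T;\ell^2(L^{\nu_j}))}.
\end{equation*}
Since $q_j'\le 2< q_j$, Hölder in time gives $\|\cdot\|_{L^{q_j'}(I_T)}\le T^{1/q_j'-1/q_j}\|\cdot\|_{L^{q_j}(I_T)}$, and the exponent is positive. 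Hence $X(T)\le C L_M T^{\gamma}X(T)$ for some $\gamma>0$. Taking $T$ small forces $X(T)=0$, so $\ulpsi_1=\ulpsi_2$ a.e.\ on $[0,T]$.

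The main obstacle is justifying the Strichartz manipulation for merely weak solutions, for two reasons. First, Duhamel holds a priori only in $\ell^2(H^{-s})$, so we must check that the inhomogeneous Strichartz estimate applies to the difference. I would handle this by using $F=\ulg(\ulpsi_1)-\ulg(\ulpsi_2)\in L^{q_j'}(I_T;\ell^2(L^{\nu_j'}))$, which follows from \ref{ass2stri:nonlinearity_Lp_bound} and the boundedness of the solutions. The Strichartz bound then identifies the Duhamel integral with an element of $L^{q_i}(I_T;\ell^2(L^{\nu_i}))$. One can argue by density, approximating $F$ by nice functions, and uniqueness of distributional limits gives the identification. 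Second, the solutions are defined only a.e.\ in time, and only weakly continuous. This is handled by the continuity in $\ell^2(L^2)$ from Theorem \ref{thm:local_existence}, or directly from Lemma \ref{lem:estimate_holder_t}, which holds for any weak solution.

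Finally, to pass from $[0,T]$ to all of $I$, note that $T$ depends only on $M$ and $L_M$, not on the starting point. Let $t^*=\sup\{t\in I:\ \ulpsi_1=\ulpsi_2 \text{ on }[0,t]\}$. Continuity gives $\ulpsi_1(t^*)=\ulpsi_2(t^*)$, and restarting the argument at $t^*$ with the time-translated Duhamel formula shows $t^*$ is the right endpoint of $I$. The same holds backward in time.
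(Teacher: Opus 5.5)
Your proposal is correct and follows essentially the same route as the paper. Both arguments use Duhamel's formula for the difference, the inhomogeneous vector Strichartz estimate with the admissible pairs $(q_j,\nu_j)$ from \ref{ass2stri:nonlinearity_Lp_bound}, the Lipschitz bound, and H\"older in time using $q_j'<q_j$ to obtain a factor $<1$ on short intervals. Your continuation argument via $t^*$, which uses that $T$ depends only on $M$, $L_M$ and the Strichartz constant, spells out the gluing step that the paper only sketches.
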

	\begin{proof}
		Consider two solutions $\ulpsi$ and $ \tilde{\ulpsi}$ to the problem \eqref{eq:Kohn-Sham-vectorised} over the interval $I \ni 0 $, both with initial condition $ \ulphi \in \ell^2(H^s) $. By Duhamel's formula over any subinterval $I^\prime$ of $I$ containing $0$ and for any $t \in I' \subset I$, 
		\begin{equation}
			\ulpsi(t) - \tilde{\ulpsi}(t) = -i \sum_{j\in J}\int_{0}^t e^{i(t-\tau)\Delta}  \big(\ulg_j(\ulpsi(\tau)) - \ulg_j( \tilde{\ulpsi}(\tau)) \big) \diff \tau ,
		\end{equation}
		holds in $ \ell^2(H^{-s})$ for a.a.~$ t \in I^\prime $. This relation holds also in $L^q_t(I^\prime;\ell^2(L^{r}_x))$ for $(q,r)$ admissible by \eqref{est:strichartz_retarded}. For any admissible pair $(q_{j},\nu_{j})$, where $\nu_{j} \in \left[ 2, \twoastsone  \right) $ is the exponent of \ref{ass2stri:nonlinearity_Lp_bound} for the term $\ulg_{j} $, we can estimate by the inhomogeneous Strichartz estimate \eqref{est:strichartz_retarded} to obtain
		\begin{align}
			\| \ulpsi(t) - \tilde{\ulpsi}(t) \|_{L^{q_{j}}_t(I';\ell^2(L^{\nu_{j}}_x))}
			& \leq \sum_{j\in J} \| \int_{0}^{t}  \big(\ulg_j(\ulpsi(\tau)) - \ulg_j( \tilde{\ulpsi}(\tau)) \big) \diff \tau \|_{L^{q_j}_t(I^\prime;\ell^2(L^{\nu_j}_x))} \\
			& \leq  C_{\mathrm{Str}} \sum_{j\in J}\| \ulg_j(\ulpsi(t)) - \ulg_j( \tilde{\ulpsi}(t) ) \|_{L_t^{q_j^\prime}(I^\prime;\ell^2(L_x^{\nu_j^\prime}))}.
		\end{align}
		By \ref{ass2stri:nonlinearity_Lp_bound}, we further estimate
		\begin{equation}
			\| \ulg_j(\ulpsi(t)) - \ulg_j(\tilde{\ulpsi}(t)) \|_{L^{q'_{j}}_t(I^\prime;\ell^2(L^{\nu'_{j}}_x))} 
			\leq  C_{\mathrm{Str}} L_M \| \ulpsi(t) - \tilde{\ulpsi}(t) \|_{L_t^{q'_j}(I^\prime;\ell^2(L_x^{\nu_j}))} \,,
		\end{equation}
		where $M>0$ is such that $\| \ulpsi(t) \|_{L^{\infty}(I^\prime;\ell^2(H^1))}\|\le M$ and 
		$\| \tilde{\ulpsi} (t) \|_{L^{\infty}(I^\prime;\ell^2(H^1))}\leq M$. 
		Note that on the interval $I$ we may assume that both solutions are uniformly bounded by a constant $M>0$. Therefore, summing over all $j$ in $J$, we get
		\begin{equation}
			\| \ulpsi(t) - \tilde{\ulpsi}(t) \|_{L^{q_{j}}_t(I^\prime;\ell^2(L^{\nu_{j}}_x))} \leq  C_{\mathrm{Str}} L_M \| \ulpsi(t) - \tilde{\ulpsi}(t) \|_{L_t^{q'_j}(I^\prime;\ell^2(L_x^{\nu_j}))} .
		\end{equation}
		Recall that $q_j^\prime < q_j$ as $ r_j < \twoasts  $ and thus, by H\"older's inequality, 
		\begin{equation}
			\sum_{j\in J} \| \ulpsi(t) - \tilde{\ulpsi}(t) \|_{L^{q_{j}}_t(I^\prime;\ell^2(L^{\nu_{j}}_x))} \\ 
			\leq C_{\mathrm{Str}} \Clip{M}
			\max_{j'\in J}|I^\prime|^{\frac{1}{q'_{j'}}-\frac{1}{q_{j'}}}
			\sum_{j\in J}     \| \ulpsi(t) - \tilde{\ulpsi}(t) \|_{L^{q_{j}}_t(I^\prime;\ell^2(L^{\nu_{j}}_x))} ,
		\end{equation}
		which, for any choice of $|I^\prime|$ small enough yields a factor $<1$ and therefore $\ulpsi(t) \equiv \tilde{\ulpsi}(t) $ for a.e. in $I^\prime$. As this holds for any arbitrary small interval $I^\prime \subset I$, we can glue this result to obtain the uniqueness on the whole interval.
	\end{proof}
	
	Once uniqueness has been established, the next result follows from classical arguments.
	
	\begin{proof}[Proof of Theorem \ref{thm:well_posedness}]\label{proof:well_posedness}
		Consider from Theorem \ref{thm:local_existence} a weak $\ell^2(H^s)$ solution $ \ulpsi $ in any open interval~$I^\prime$ of size smaller than $2T(M)$. Let $\tau \in I^\prime$. Set $\ulpsi(\tau)$ as the initial condition to obtain a local solution~$\tilde{\ulpsi}(t)$ in a subinterval $\tilde{I}$ such that $ \tilde{\ulpsi}(0) = \ulpsi(\tau)$. Then, by uniqueness, $\ulpsi(\cdot+\tau) (0) = \tilde{\ulpsi}(0) $ and they must coincide on the whole interval $I^\prime$. The energy must therefore be conserved, in fact 
		\begin{equation} 
			\mathcal{E}(\ulpsi(\tau)) \leq \mathcal{E}(\ulpsi(0)) = \mathcal{E}(\tilde{\ulpsi}(-\tau)) \leq \mathcal{E}(\tilde{\ulpsi}(0)) = \mathcal{E}(\ulpsi(\tau)),
		\end{equation}
		and as $\tau$ is arbitrary, the energy must be constant on $I^\prime$. Recall that by \eqref{est:holder_time} and \eqref{est:nonlinear_energy_L2} the non-linearities energy is continuous in time, and as $ \| \ulpsi(t) \|_{\ell^2(H^s)}^2 = 2 \mathcal{E}(\ulpsi(t)) - 2 \mathscr{G}(\ulpsi(t)) $ and the total energy is constant, the map $ t \mapsto \| \ulpsi(t) \|_{\ell^2(H^s)} $ is continuous. Recall that $ \ulpsi$ belongs to $\mathscr{C}(I^\prime;\ell^2(H^s)_{\mathrm{weak}}) $ by Theorem \ref{thm:local_existence}. Therefore we obtain that $ \ulpsi \in \mathscr{C}(I^\prime;\ell^2(H^s)) $. As \eqref{eq:Kohn-Sham-vectorised} holds in $\ell^2(H^{-s}) $ by Definition \ref{def:solutions} of a weak solution, we have that $ i \partial_t \ulpsi \in \mathscr{C}(I^\prime;\ell^2(H^{-s})) $ and thus $ \ulpsi \in \mathscr{C}^1(I^\prime; \ell^{2}(H^{-s}))$. Therefore $\ulpsi$ is a strong solution to \eqref{eq:Kohn-Sham-vectorised} (see again Definition \ref{def:solutions}).
		
		The blow-up alternative is proven by a standard contradiction argument, using the local existence result provided by Theorem \ref{thm:local_existence}, yielding the maximal interval $(T_{\mathrm{min}},T_{\mathrm{max}})$.

		Assume that there is a sequence of initial conditions $ (\ulphi_{n})_n \subset \ell^2(H^1) $ converging to $\ulphi$ in~$\ell^2(H^s)$. Consider now any closed interval $ \bar{I} \subset (T_{\mathrm{min}},T_{\mathrm{max}})$ and set $M>0$ as 
		\begin{equation}
			M \coloneq 2 \sup_{t \in \bar{I} } \| \ulpsi(t) \|_{\ell^2(H^s)} .
		\end{equation}
		Consider the strong solutions $\ulpsi_{n}$ to the equations \eqref{eq:Kohn-Sham-vectorised} with initial conditions $\ulphi_{n}$.  As $ \ulphi_{n} $ converges to $\ulphi$, there exists $N\in\mathbb{N}$ such that $ \| \ulphi_{n} \|_{\ell^2(H^s)} < M $ for all $n \geq N$ as $\| \ulphi \|_{\ell^2(H^s)} < M$ by definition. As the interval of definition $ \left( -T(M),T(M) \right) $ of the weak solutions constructed by Theorem \ref{thm:local_existence} depends only on the norm of the initial condition, we have that for all $n \geq N$, the solutions $\ulpsi_{n}$ are well defined on an interval 
		\begin{equation}
			(-T(M),T(M) ) \subset \left( -T(\|\ulphi\|_{\ell^2(H^s)}),T(\|\ulphi\|_{\ell^2(H^s)}) \right),
		\end{equation}
		and we have that, for all $n \geq N$,
		\begin{equation}
			\ulpsi_{n} \in L^\infty((-T(M),T(M) );\ell^2(H^s)) \cap W^{1,\infty} ((-T(M),T(M) );\ell^2(H^{-s})) , 
		\end{equation}
		with a uniform bound in $n$ for both norms (for the second one, it suffices to combine the fact that~$\ulpsi_{n}$ satisfies \eqref{eq:Kohn-Sham-vectorised} in $\ell^2(H^{-s})$ together with \eqref{est:single_nonlinearity_Lp} and the Sobolev embeddings \eqref{eq:sobolev_embedding_plus} and~\eqref{eq:sobolev_embedding_minus}). As in the proof of Lemma \ref{lem:convergence-solutions}, we then deduce that, for a.e. $t\in(-T(M),T(M))$, there exists a subsequence $\ulpsi_{n_m}(t)$ which converges weakly in $\ell^2(H^s)$ to an element $ \ulu(t)$ in~$\ell^2(H^s) $. Moreover, as in the proof of Theorem \ref{thm:local_existence}, we can prove convergence of $\ulpsi_{n_m}(t) $ to weak solutions of \eqref{eq:Kohn-Sham-vectorised}, and therefore we conclude that $\ulu \equiv \ulpsi$ by uniqueness.
		
		Next we strengthen the weak convergence of the subsequence $\ulpsi_{n_m}(t)$ to $\ulpsi(t)$ in $\ell^2(H^s)$ to the strong convergence in $\ell^2(L^2)$. Observe that the uniqueness of solutions in fact assures that the sequence $\ulpsi^{[n]}(t)$ converges to $\ulpsi(t)$ weakly in $\ell^2(H^s)$. Indeed, the previous argument can be repeated, using as a starting point any subsequence of the original sequence of initial conditions~$\ulphi_{n}$, obtaining that for any subsequence there is a further subsequence $\ulpsi_{n_{m_l}}(t)$ that converges to $\ulpsi(t)$. This implies that the sequence $ \ulpsi_{n}(t) $ converges to $\ulpsi(t)$. From the conservation of energy and the continuity in time of the non-linearities energies, we then deduce that~$\| \ulpsi_{n}(t) \|_{\ell^2(H^s)}$ converges uniformly to $\| \ulpsi(t) \|_{\ell^2(H^s)}$ in the interval $(-T(M),T(M))$, since
		\begin{equation}
			\| \ulpsi_{n} (t) \|_{\ell^2(H^s)} =  \lim_{n \rightarrow \infty} (2\mathcal{E}(\ulpsi_{n}(t)) -2 \mathscr{G}(\ulpsi_{n}(t) )) 
		\end{equation}
		and the first term is constant while the second is H\"older continuous. 
		Then, by \cite[Proposition 1.1.2 ii)]{cazenave2003semilinear}, it follows that
		\begin{equation}
			\ulpsi_{n} \xrightarrow[n\to \infty]{} \ulpsi 
		\end{equation}
		in $\mathscr{C}((-T(M),T(M));\ell^2(H^s))$.
		Note that this construction depends only on $M$ defined through the arbitrary closed interval $\bar{I}$ and as such continuous dependence must hold on $\bar{I}$. From the fact that continuous dependence holds on any closed interval in the maximal interval of definition of the solution $ (T_{\mathrm{min}},T_{\mathrm{max}})$ we conclude.
	\end{proof}
	
	\subsection{The \texorpdfstring{$ 0 < s < 1 $}{ s smaller than 1} case}
	We conclude this section with a short discussion concerning the problem of uniqueness in the case where $s < 1$. The dispersive estimates for the Laplacian are the crucial step in the previous proof of uniqueness in Theorem \ref{thm:uniqueness}. If $ s \in (0,1) $, the dispersive estimates of the fractional operator $ (1 - \Delta)^{s}$ involve additional derivatives on the right-hand side of \eqref{est:strichartz_retarded}, see the lack of a uniform estimate of $ 2^{3k(1-s)} $ in the proof of Proposition \ref{prop:strichartz_general_s_geq_1} or \cite[Appendix B]{breteaux2025propagation} for the $s=\frac12$ case. This poses a problem in the case of a non-smooth non-linear term as $ g(\ulpsi) = \rho_{\ulpsi}^\alpha \ulpsi $ (with $\alpha$ small) covered by our analysis. In the case of smoother pure power non-linear terms (i.e. for $\alpha$ large enough), we could provide the estimates required by a contraction mapping argument, as is done for the fractional non-linear Schr\"odinger equation with cubic non-linearity in \cite{guo2010global}. Likewise, if $\alpha=0$, similar results hold for Hartree non-linearities, at least for pseudo-relativistic dispersion relation ($=\frac12$), see the analysis of the boson star equation in e.g. \cite{breteaux2025propagation,breteaux2025exponentialdecayoutsidelight,lenzmann2007well}.
	
	The same issue concerning uniqueness of solutions appears in the one-particle fractional Schr\"odinger equation
	\begin{equation}
		\label{eq:1particle_FRNLS}
		i \partial_t u(t,x) = (1-\Delta)^{s} u(t,x) + \mu | u(t,x) |^{2 \alpha} u(t,x). 
	\end{equation}
	Given the assumptions on the non-linearity we consider here, say $ \alpha = \frac13 $, there is a clear obstruction to prove that the Duhamel formula defines a contraction map in $L^p$ spaces as the typical estimate is of the form appearing in Condition \ref{ass2:nonlinearity_Lp_bound}. Our solution to this issue when~$s\in[1,\frac32)$ is to use dispersive estimates in Theorem \ref{thm:uniqueness} to trade space and time integrability and prove that the map is a contraction. In the case where $s \in (0,1) $, however, we have to introduce an amount of regularity to use dispersive estimates, and we are naturally led to address this question using Sobolev or Besov spaces to find a contraction. This strategy does not prove successful, in fact the Duhamel formula for the difference of two terms involves estimating the term
	\begin{equation}
		\| | u |^{2 \alpha} u - | v |^{2 \alpha} v \|_{B^{\tau}_{p^\prime,2}} .
	\end{equation}
	The most convenient way to rewrite this term, to extract the difference of $u$ and $v$ needed to show that the formula defines a contraction, seems to be 
	\begin{multline}
		| u |^{2 \alpha} u - | v |^{2 \alpha} v \\ 
		= \int_{0}^{1} \big( (\alpha+1) \overline{(u-v)} |\lambda u + (1-\lambda) v |^{2 \alpha} + \alpha (u-v) |\lambda u + (1-\lambda) v |^{2\alpha-2} (\lambda u + (1-\lambda) v s)^2 \big),
	\end{multline}
	which contains the main technical difficulty. In fact any analogue to the Leibniz rule (Kato-Ponce inequality, tame inequalities for products in Besov spaces and related properties of algebras of regular functions) all imply to estimate the term $ |\lambda u + (1-\lambda) v |^{2 \alpha} $ in $B^{\tau}_{p^\prime,2}$, or $H^{\tau,p}$, which is not well defined.
	
	It should also be noted that, while at a first look the case $ s= \frac12 $ with propagator $ e^{-it(1-\Delta)^\frac12} $ can be expected to be similar to the analysis for the propagator of the free Klein-Gordon equation, its dispersive properties are worse. For instance, the uniqueness argument for the nonlinear Klein-Gordon equation with a (possibly small) power non-linearity in \cite{GINIBRE198915} hinges on an estimate of the type (see for instance \cite{brenner1975p}, for $d = 3, \, 2 \leq p < \infty $)
	\begin{equation}
		\Big\| \frac{\sin{t|\nabla|}}{|\nabla|} u \Big\|_{B^{0}_{p,2}} \lesssim |t|^{1-2 \cdot 3 (\tfrac{1}{p^\prime} - \tfrac{1}{2})} \|  u \|_{B^{0}_{p^\prime,2}} ,
	\end{equation}
	which is not available in our case.
	
	The lack of a unique solution appears to be a technical problem due to the interplay between the lack of smoothness of the pure power non-linearity and the dispersive estimates that require additional smoothness in the case $s \in (0,1) $.

	\printbibliography
	
	\doclicenseThis
\end{document}